\numberwithin{equation}{section}
\numberwithin{figure}{section}
\theoremstyle{plain}
\newtheorem{thm}{\protect\theoremname}[section]
\theoremstyle{remark}
\newtheorem{rem}[thm]{\protect\remarkname}
\theoremstyle{definition}
\newtheorem{defn}[thm]{\protect\definitionname}
\theoremstyle{plain}
\newtheorem{cor}[thm]{\protect\corollaryname}
\theoremstyle{plain}
\newtheorem{prop}[thm]{\protect\propositionname}
\theoremstyle{plain}
\newtheorem{fact}[thm]{\protect\factname}
\theoremstyle{definition}
\newtheorem{example}[thm]{\protect\examplename}
\theoremstyle{plain}
\newtheorem{lem}[thm]{\protect\lemmaname}
\theoremstyle{remark}
\newtheorem{notation}[thm]{\protect\notationname}
\mathchardef\mhyphen="2D
\global\long\def\ns#1{\prescript{\ast}{}{#1}}
\global\long\def\ss#1{\prescript{S}{}{#1}}
\global\long\def\ssig#1{\prescript{\sigma}{}{#1}}
\global\long\def\st#1{\prescript{\circ}{}{#1}}
\newcommand{\FIN}{\operatorname{FIN}}
\newcommand{\INF}{\operatorname{INF}}
\newcommand{\NS}{\operatorname{NS}}
\newcommand{\CPT}{\operatorname{CPT}}
\newcommand{\PNS}{\operatorname{PNS}}
\newcommand{\PCPT}{\operatorname{PCPT}}
\newcommand{\NF}{\operatorname{NF}}
\newcommand{\cl}{\operatorname{cl}}
\newcommand{\intr}{\operatorname{int}}
\newcommand{\img}{\operatorname{im}}
\newcommand{\id}{\operatorname{id}}
\newcommand{\hexp}{\mhyphen\exp}
\newcommand{\Ult}{\operatorname{Ult}}
\newcommand{\bUlt}{\operatorname{\flat Ult}}
\providecommand{\corollaryname}{Corollary}
\providecommand{\definitionname}{Definition}
\providecommand{\examplename}{Example}
\providecommand{\factname}{Fact}
\providecommand{\lemmaname}{Lemma}
\providecommand{\notationname}{Notation}
\providecommand{\propositionname}{Proposition}
\providecommand{\remarkname}{Remark}
\providecommand{\theoremname}{Theorem}
\begin{document}
\title{Nonstandard methods in large-scale topology II}
\author{Takuma Imamura}
\address{Research Institute for Mathematical Sciences\\
Kyoto University\\
Kitashirakawa Oiwake-cho, Sakyo-ku, Kyoto 606-8502, Japan}
\curraddr{AaaS Bridge, inc.\\
2-1-3 Amanuma, Suginami-ku, Tokyo 167-0032, Japan}
\email{imamura.takuma.66s@kyoto-u.jp}
\begin{abstract}
This paper is a sequel of \citep{Ima19} where we set up a framework
of nonstandard large-scale topology. In the present paper, we apply
our framework to various topics in large-scale topology: spaces having
with both small-scale and large-scale structures, large-scale structures
on nonstandard extensions, size properties of subsets of coarse spaces,
and coarse hyperspaces.
\end{abstract}

\keywords{bornological spaces; coarse spaces; ultrafilter spaces; size properties;
coarse hyperspaces.}
\subjclass[2020]{51F30; 46A08; 54J05.}
\maketitle

\section*{Introduction}

This paper is a continuation of the paper \citep{Ima19}. In the preceding
paper, we set up a framework for treating (pre)bornological spaces
and coarse spaces in nonstandard analysis, where a prebornological
space is a generalisation of a bornological space which better fits
(non-connected) coarse spaces. In the present paper, we apply our
framework to various topics in large-scale topology: spaces having
both small-scale and large-scale structures, large-scale structures
on nonstandard extensions, size properties of subsets of coarse spaces,
and coarse hyperspaces. The present paper is organised as follows.

Given a standard space $X$, its nonstandard extension $\ns{X}$ contains
several types of nonstandard points, such as nearstandard points $\NS\left(X\right)$
(for topological spaces), prenearstandard points $\PNS\left(X\right)$
(for uniform spaces), and finite points $\FIN\left(X\right)$ (for
bornological spaces). It is well-known that an inclusion of one class
into another characterises various (standard) properties of $X$.
For instance, \citet{Rob66} proved the following celebrated theorem:
$X$ is compact if and only if $\ns{X}\subseteq\NS\left(X\right)$.
In \prettyref{sec:Nonstandard-points}, we identify seven classes
of nonstandard points (including $X$ and $\ns{X}$), and complete
the correspondence between the properties of $X$ and the inclusion
relations among seven classes (see \prettyref{fig:Structure-of-nonstandard-points}).
The properties include von Neumann completeness, properness, and various
compatibility conditions between small-scale and large-scale structures.
In particular, a new compatibility condition, called weak u-II-compatibility,
is extracted from its nonstandard characterisation.

In \prettyref{sec:Large-scale-structures-on-ns-ext}, we explore large-scale
structures of nonstandard spaces. \citet{KK16} introduced bornologies
on nonstandard extensions $\ns{X}$ of bornological spaces $X$, called
S-bornologies. S-bornology is a large-scale counterpart of S-topology
\citep{Lux69,ST00,Str72}. Generalising to prebornological spaces
and coarse spaces, we obtain the notions of S-prebornology and S-coarse
structure. Firstly, we deal with S-prebornological structures. It
is well-known that the Stone\textendash \v{C}ech compactification
$\beta X$ can be obtained as a quotient of the S-topological space
$S^{t}X$ \citep{Lux69,Str72}. It is natural to consider its large-scale
analogue. To do this, for each prebornological space $X$, we define
a new prebornological space $\bUlt X$, which consists of appropriate
ultrafilters on $X$. We then show that $\bUlt X$ can be represented
as a quotient of the S-prebornological space $SX$. Secondly, we deal
with S-coarse structures. The S-boundary $\partial_{S}X$ of a coarse
space $X$ is defined as a subspace of the S-coarse space $S^{c}X$
consisting of all infinite points. We prove that the coarse structure
of $X$ can be recovered from the induced prebornology of $X$ and
the coarse structure of $\partial_{S}X$.

In \prettyref{sec:Large-scale-structures-on-Pow}, we are devoted
to studying various size properties of subsets of coarse spaces, which
originally arose in the context of group theory \citep{BM99,BM01},
and were extended to general coarse spaces \citep{PB03,PZ07}. In
the first half of this section, we provide some nonstandard characterisations
of the size properties. We then give nonstandard proofs for some (known)
fundamental results, such as lattice-theoretic criteria for extralargeness
and smallness. The relationship among thin coarse spaces, satellite
coarse spaces and slowly oscillating maps is also discussed. Interestingly,
despite the \emph{large-scale} nature of these results, some of our
proofs will be evident from the elementary knowledge of \emph{small-scale}
topology. In the last half of this section, we concern with natural
coarse structures on powersets of coarse spaces. Given a metric space
$X$, its powerset $\mathcal{P}\left(X\right)$ has the Hausdorff
metric $d_{H}$, and thereby can be considered as a uniform and coarse
space. This construction can be generalised to arbitrary uniform and
coarse spaces, and leads to the notions of uniform hyperspaces \citep{Bou07}
and coarse hyperspaces \citep{PP18,DPPZ19}. We prove some theorems
on coarse hyperspaces, including the characterisation of thinness
in terms of hyperspaces.

\section{Preliminaries}

We refer to \citep{HN77} for bornology, \citep{Roe03} for coarse
topology (in terms of coarse spaces), \citep{PB03,PZ07} for coarse
topology in terms of balleans, \citep{Dav05,DD95,Rob66,SL76} for
nonstandard (small-scale) topology. We also refer to the surveys \citep{Pro11,PP18b,Zav19}
for size properties, coarse hyperspaces and their use in group theory.

\subsection{Notation and terminology}
\begin{enumerate}
\item Let $X$ be a set, $E\subseteq X\times X$ and $A\subseteq X$. The
\emph{$E$-closure} of $A$ is the set defined by $E\left[A\right]=\bigcup_{x\in A}E\left[x\right]$,
where $E\left[x\right]=\set{y\in X|\left(x,y\right)\in E}$. The\emph{
$E$-interior} of $A$ is the set defined by $\intr_{X,E}A=\set{x\in X|E\left[x\right]\subseteq A}$.
The $E$-closure and the $E$-interior are related with each other
as follows: $E\left[A\right]=X\setminus\intr_{X,E^{-1}}\left(X\setminus A\right)$;
$\intr_{X,E}A=X\setminus E^{-1}\left[X\setminus A\right]$.
\item Let $\left(X,\mathcal{T}_{X}\right)$ be a standard topological space.
The \emph{monad} of a point $x\in X$ is the set $\mu_{X}\left(x\right)=\bigcap\set{\ns{U}|x\in U\in\mathcal{T}_{X}}$.
The elements of $\NS\left(X\right)=\bigcup_{x\in X}\mu_{X}\left(x\right)$
are called \emph{nearstandard points}. The non-nearstandard points
of $\ns{X}$ are called \emph{remote points}.
\item Let $\left(X,\mathcal{U}_{X}\right)$ be a standard uniform space.
We say that two points $x,y\in\ns{X}$ are \emph{infinitely close}
(write $x\approx_{X}y$) if $\left(x,y\right)\in\ns{U}$ holds for
all $U\in\mathcal{U}_{X}$. The \emph{(uniform) monad} of a point
$x\in\ns{X}$ is the set $\mu_{X}^{u}\left(x\right)=\bigcap_{U\in\mathcal{U}_{X}}\ns{U}\left[x\right]=\set{y\in\ns{X}|x\approx_{X}y}$.
For each (standard) $x\in X$, $\mu_{X}^{u}\left(x\right)=\mu_{X}\left(x\right)$
holds. The elements of $\PNS\left(X\right)=\bigcap_{U\in\mathcal{U}_{X}}\bigcup_{x\in X}\ns{U}\left[x\right]$
are called \emph{prenearstandard points}.
\item A \emph{prebornology} on a set $X$ is a family $\mathcal{B}_{X}$
of subsets of $X$ satisfying the following conditions: (i) $\bigcup\mathcal{B}_{X}=X$;
(ii) if $A\subseteq B\in\mathcal{B}_{X}$, then $A\in\mathcal{B}_{X}$;
(iii) if $A,B\in\mathcal{B}_{X}$ and $A\cap B\neq\varnothing$, then
$A\cup B\in\mathcal{B}_{X}$. The prebornology $\mathcal{B}_{X}$
is called a \emph{bornology} if $A\cup B\in\mathcal{B}_{X}$ holds
for arbitrary $A,B\in\mathcal{B}_{X}$. The point of this generalisation
is that every coarse structure induces a prebornology, that is not
necessarily a bornology. Now, let $\left(X,\mathcal{B}_{X}\right)$
be a standard prebornological space. The \emph{galaxy} of a point
$x\in X$ is the set $G_{X}\left(x\right)=\bigcup\set{\ns{B}|x\in B\in\mathcal{B}_{X}}$.
The elements of $\FIN\left(X\right)=\bigcup_{x\in X}G_{X}\left(x\right)$
are called \emph{finite points}. The non-finite points of $\ns{X}$
are called \emph{infinite points}. The set of all infinite points
of $\ns{X}$ is denoted by $\INF\left(X\right)$, i.e., $\INF\left(X\right)=\ns{X}\setminus\FIN\left(X\right)$.
\item Let $\left(X,\mathcal{C}_{X}\right)$ be a standard coarse space.
We say that two points $x,y\in\ns{X}$ are \emph{finitely close} (write
$x\sim_{X}y$) if $\left(x,y\right)\in\ns{E}$ holds for some $E\in\mathcal{C}_{X}$.
The \emph{(coarse) galaxy} of a point $x\in\ns{X}$ is the set $G_{X}^{c}\left(x\right)=\bigcup_{E\in\mathcal{C}_{X}}\ns{E}\left[x\right]=\set{y\in\ns{X}|x\sim_{X}y}$.
The coarse structure $\mathcal{C}_{X}$ induces a prebornology $\mathcal{B}_{X}=\set{B\subseteq X|B\times B\in\mathcal{C}_{X}}$.
For each (standard) $x\in X$, $G_{X}^{c}\left(x\right)=G_{X}\left(x\right)$
holds by \citep[Proposition 3.12]{Ima19}.
\item A map $f\colon X\to Y$ between prebornological spaces is said to
be \emph{bornological} if $f\left(B\right)\in\mathcal{B}_{Y}$ for
all $B\in\mathcal{B}_{X}$; and $f$ is \emph{proper} if $f^{-1}\left(B\right)\in\mathcal{B}_{X}$
holds for all $B\in\mathcal{B}_{Y}$. On the other hand, a map $f\colon X\to Y$
between coarse spaces is said to be \emph{bornologous} (or \emph{uniformly
bornological}) if $\left(f\times f\right)\left(E\right)=\set{\left(f\left(x\right),f\left(y\right)\right)|\left(x,y\right)\in E}\in\mathcal{C}_{Y}$
holds for all $E\in\mathcal{C}_{X}$.
\end{enumerate}
\begin{rem}
A prebornological space is \emph{not} a large-scale counterpart of
a pretopological space, also known as a \v{C}ech closure space (see
\citep[Definition 14 A.1]{CFK66} for the definition). Comparing the
local definition of prebornology \citep[Lemma 2.3]{Ima19} with that
of pretopology \citep[Theorem 14 B.10]{CFK66}, the assumption of
emptiness, (BN1) and (BN3) correspond to (nbd1), (nbd2) and (nbd3),
respectively; (BN2) corresponds to the last statement of \citep[Theorem 14 B.3]{CFK66};
however, there is no counterpart of (BN4). In fact, a prebornological
space is a large-scale counterpart of a topological space: (BN4) corresponds
to (nbd4) of the local definition of topology \citep[Theorem 15 A.4]{CFK66}.
Because of this, it might be better to rename `prebornology' to something
more appropriate. Some researchers use the term `bounded structure'
instead (e.g. \citep{Dyd19}).
\end{rem}

\subsection{Coarse galaxy and galactic core}

Galaxy is a key concept in nonstandard large-scale topology, as we
have demonstrated in \citep{Ima19}. It can be considered as the nonstandard
counterpart of coarse closure. We here introduce the notion of galactic
core as the nonstandard counterpart of coarse interior.
\begin{defn}
Let $X$ be a standard coarse space and $A\subseteq\ns{X}$. The \emph{(coarse)
galaxy} of $A$ is the set defined by
\begin{align*}
G_{X}^{c}\left(A\right) & =\set{x\in\ns{X}|x\sim_{X}a\text{ for some }a\in A}\\
 & =\bigcup_{a\in A}G_{X}^{c}\left(a\right)\\
 & =\bigcup_{E\in\mathcal{C}_{X}}\ns{E}\left[A\right].
\end{align*}
The \emph{(coarse) galactic core }of $A$ is the set defined by
\begin{align*}
C_{X}^{c}\left(A\right) & =\set{x\in\ns{X}|x\sim_{X}y\text{ for no }y\in\ns{X}\setminus A}\\
 & =\set{x\in\ns{X}|G_{X}^{c}\left(x\right)\subseteq A}\\
 & =\bigcap_{E\in\mathcal{C}_{X}}\mathop{\ns{\left(\intr_{X,E}\right)}}A.
\end{align*}
\end{defn}

The galaxy map and the galactic core map behave like topological closure
and interior.
\begin{thm}
\label{thm:galaxy-and-core}Let $X$ be a standard coarse space, $A$,
$B$ and $A_{i}\ \left(i\in I\right)$ subsets of $\ns{X}$.
\begin{enumerate}
\item \label{enu:G-is-closure}$G_{X}^{c}$ is a closure operator on $\mathcal{P}\left(\ns{X}\right)$:
\begin{enumerate}
\item \label{enu:A-sub-GA}$A\subseteq G_{X}^{c}\left(A\right)$;
\item \label{enu:G-empty=00003Dempty}$G_{X}^{c}\left(\varnothing\right)=\varnothing$;
\item \label{enu:GG=00003DG}$G_{X}^{c}\left(G_{X}^{c}\left(A\right)\right)=G_{X}^{c}\left(A\right)$;
\item \label{enu:G-union=00003Dunion-G}$G_{X}^{c}\left(\bigcup_{i\in I}A_{i}\right)=\bigcup_{i\in I}G_{X}^{c}\left(A_{i}\right)$.
\end{enumerate}
\item $C_{X}^{c}$ is an interior operator on $\mathcal{P}\left(\ns{X}\right)$:
\begin{enumerate}
\item $C_{X}^{c}\left(A\right)\subseteq A$;
\item $C_{X}^{c}\left(\ns{X}\right)=\ns{X}$;
\item $C_{X}^{c}\left(C_{X}^{c}\left(A\right)\right)=C_{X}^{c}\left(A\right)$;
\item $C_{X}^{c}\left(\bigcap_{i\in I}A_{i}\right)=\bigcap_{i\in I}C_{X}^{c}\left(A_{i}\right)$.
\end{enumerate}
\item \label{enu:C-is-neg-G-neg}$\ns{X}\setminus C_{X}^{c}\left(A\right)=G_{X}^{c}\left(\ns{X}\setminus A\right)$.
\item \label{enu:GC-eq-C}$G_{X}^{c}\left(C_{X}^{c}\left(A\right)\right)=C_{X}^{c}\left(A\right)$.
\end{enumerate}
\end{thm}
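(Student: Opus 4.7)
The entire proof hinges on a single observation: the relation $\sim_X$ is an equivalence relation on $\ns{X}$. Reflexivity comes from $\Delta_X\in\mathcal{C}_X$ (so $\ns{\Delta_X}\supseteq\Delta_{\ns{X}}$ by transfer), symmetry from $\mathcal{C}_X$ being closed under inverses, and transitivity from closure under composition: if $(x,y)\in\ns{E}$ and $(y,z)\in\ns{F}$ with $E,F\in\mathcal{C}_X$, then $(x,z)\in\ns{(F\circ E)}$ and $F\circ E\in\mathcal{C}_X$. Consequently $G_X^c(x)=[x]_{\sim_X}$ is the equivalence class of $x$, and $G_X^c(A)=\bigcup_{a\in A}[a]_{\sim_X}$ is the $\sim_X$-saturation of $A$. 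Dually, by the second defining formula, $C_X^c(A)$ is the largest $\sim_X$-saturated subset of $A$.

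From this viewpoint the four clauses of (\ref{enu:G-is-closure}) become essentially trivial. (a) is reflexivity of $\sim_X$; (b) holds since an empty union is empty; (d) is immediate from $G_X^c(A)=\bigcup_{a\in A}G_X^c(a)$; and (c) follows from transitivity, for if $x\sim_X y$ and $y\sim_X a$ with $a\in A$, then $x\sim_X a$, so $x\in G_X^c(A)$.

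For (\ref{enu:C-is-neg-G-neg}) I would simply unfold the definitions: $x\notin C_X^c(A)$ iff $G_X^c(x)\not\subseteq A$ iff there is $y\in G_X^c(x)$ with $y\in\ns{X}\setminus A$ iff $x\sim_X y$ for some $y\in\ns{X}\setminus A$ iff $x\in G_X^c(\ns{X}\setminus A)$. Part (2) then follows from (1) purely formally via the duality (\ref{enu:C-is-neg-G-neg}) (complement swaps union with intersection and inclusion with reverse inclusion).

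Finally, for (\ref{enu:GC-eq-C}), the inclusion $C_X^c(A)\subseteq G_X^c(C_X^c(A))$ is part (\ref{enu:A-sub-GA}). For the reverse, suppose $x\in G_X^c(C_X^c(A))$, so $x\sim_X y$ for some $y$ with $G_X^c(y)\subseteq A$. Since $\sim_X$ is an equivalence relation, $G_X^c(x)=G_X^c(y)\subseteq A$, whence $x\in C_X^c(A)$. The only real care needed in the whole argument is the verification that $\sim_X$ is an equivalence relation on the \emph{nonstandard} extension; once this is in hand, the rest is bookkeeping. I therefore do not anticipate any serious obstacle.
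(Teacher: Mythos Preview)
Your proof is correct and follows essentially the same approach as the paper: both reduce everything to the fact that $\sim_X$ is an equivalence relation on $\ns{X}$, from which the closure/interior axioms, the duality $(\ref{enu:C-is-neg-G-neg})$, and $(\ref{enu:GC-eq-C})$ all fall out. Your write-up is in fact slightly more explicit than the paper's (you spell out the verification that $\sim_X$ is an equivalence relation and the derivation of (2) from (1) via $(\ref{enu:C-is-neg-G-neg})$, whereas the paper leaves these as ``trivial'' or ``immediate''), but the underlying argument is identical.
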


\begin{proof}
\eqref{enu:G-empty=00003Dempty} and \eqref{enu:G-union=00003Dunion-G}
are trivial. \eqref{enu:A-sub-GA} and \eqref{enu:GG=00003DG} immediately
follow from the reflexivity and the transitivity of $\sim_{X}$, respectively.
\eqref{enu:C-is-neg-G-neg} and \eqref{enu:GC-eq-C} follow from the
symmetricity and the transitivity of $\sim_{X}$, respectively. Let
us only prove \eqref{enu:GC-eq-C}: let $x\in G_{X}^{c}\left(C_{X}^{c}\left(A\right)\right)$.
There exists a $y\in C_{X}^{c}\left(A\right)$ such that $x\sim_{X}y$.
By the transitivity of $\sim_{X}$, we have that $G_{X}^{c}\left(x\right)\subseteq G_{X}^{c}\left(y\right)\subseteq\ns{A}$,
and therefore $x\in C_{X}^{c}\left(A\right)$. The reverse inclusion
follows from \eqref{enu:A-sub-GA}.
\end{proof}
\begin{cor}
\label{cor:galactic-topology-of-starX}Let $X$ be a standard coarse
space. There exists a (unique) topology on $\ns{X}$ such that the
closure and the interior operators are given by $G_{X}^{c}$ and $C_{X}^{c}$.
This topology is almost discrete (in the sense that every open set
is closed, and vice versa).
\end{cor}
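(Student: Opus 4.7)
The plan is to apply the standard Kuratowski closure-operator theorem. Part~(\ref{enu:G-is-closure}) of Theorem~\ref{thm:galaxy-and-core} already verifies all four Kuratowski axioms for $G_X^c$: extensivity~(\ref{enu:A-sub-GA}), preservation of $\varnothing$~(\ref{enu:G-empty=00003Dempty}), idempotence~(\ref{enu:GG=00003DG}), and additivity over arbitrary (and in particular finite) unions~(\ref{enu:G-union=00003Dunion-G}). The usual construction therefore produces a unique topology on $\ns{X}$ whose closure operator is $G_X^c$: declare $F \subseteq \ns{X}$ to be closed iff $G_X^c(F) = F$, declare $U$ to be open iff $\ns{X}\setminus U$ is closed, and verify that the resulting family of open sets is a topology whose closure agrees with $G_X^c$ on every subset.

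Once the topology is in place, identifying $C_X^c$ as its interior operator is immediate from clause~\eqref{enu:C-is-neg-G-neg}: combining the tautology $\intr A = \ns{X}\setminus \cl(\ns{X}\setminus A)$ with $\ns{X}\setminus C_X^c(A) = G_X^c(\ns{X}\setminus A)$ gives $\intr A = C_X^c(A)$ for every $A \subseteq \ns{X}$.

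Finally, for almost discreteness the main input is clause~\eqref{enu:GC-eq-C}. If $U \subseteq \ns{X}$ is open, then $U = C_X^c(U)$, and so
\[
\cl U = G_X^c(U) = G_X^c(C_X^c(U)) = C_X^c(U) = U,
\]
which shows $U$ is closed. Passing to complements, every closed set is open. Since Theorem~\ref{thm:galaxy-and-core} already packages all the heavy lifting, there is no real obstacle; the proof amounts to a translation of its clauses into topological language.
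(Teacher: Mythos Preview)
Your proof is correct and is exactly the argument the paper intends: the corollary is stated without proof immediately after Theorem~\ref{thm:galaxy-and-core}, and the expected justification is precisely to read off the Kuratowski axioms from clause~\eqref{enu:G-is-closure}, the interior identification from clause~\eqref{enu:C-is-neg-G-neg}, and almost discreteness from clause~\eqref{enu:GC-eq-C}, as you have done.
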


\begin{rem}
The finite part $\FIN\left(X\right)$ is equal to $G_{X}^{c}\left(X\right)$
by definition. $X$ can be considered as a $G_{X}^{c}$-dense subset
of $\FIN\left(X\right)$; and $\FIN\left(X\right)$ can be considered
as a $G_{X}^{c}$-closed subset of $\ns{X}$.
\end{rem}

\subsection{Asymorphisms and coarse equivalences}

We provide nonstandard characterisations of asymorphisms and coarse
equivalences which will be used throughout.
\begin{defn}[Standard]
Let $X$ and $Y$ be coarse spaces. A map $f\colon X\to Y$ is said
to be
\begin{enumerate}
\item \emph{effectively proper} if $\left(f^{-1}\times f^{-1}\right)\left(E\right)=\set{\left(x,y\right)\in X\times X|\left(f\left(x\right),f\left(y\right)\right)\in E}\in\mathcal{C}_{X}$
holds for any $E\in\mathcal{C}_{Y}$;
\item an \emph{asymorphism} if it is a bornologous bijection with a bornologous
inverse;
\item an \emph{asymorphic embedding} if $f$ is an asymorphism between $X$
and $\img\left(f\right)$.
\end{enumerate}
\end{defn}

\begin{thm}
\label{thm:nonst-charact-eff-proper}Let $X$ and $Y$ be standard
coarse spaces and let $f\colon X\to Y$ be a map. The following are
equivalent:
\begin{enumerate}
\item \label{enu:nonst-charact-eff-proper-cond-1}$f$ is effectively proper;
\item \label{enu:nonst-charact-eff-proper-cond-2}for any $x,y\in\ns{X}$,
$\ns{f}\left(x\right)\sim_{Y}\ns{f}\left(y\right)$ implies $x\sim_{X}y$.
\end{enumerate}
\end{thm}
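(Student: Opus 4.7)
The plan is to reduce both implications to the nonstandard characterisation of controlled sets established in \citep{Ima19}, namely that a subset $F\subseteq X\times X$ lies in $\mathcal{C}_{X}$ if and only if $x\sim_{X}y$ holds for every $(x,y)\in\ns{F}$. The non-trivial direction of this criterion is where saturation enters: it is used to combine the pointwise family of witnessing entourages $\{E_{(x,y)}\in\mathcal{C}_{X}\}_{(x,y)\in\ns{F}}$ into a single standard entourage containing $F$. With this criterion in hand, both implications become straightforward applications of transfer.

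For \ref{enu:nonst-charact-eff-proper-cond-1} $\Rightarrow$ \ref{enu:nonst-charact-eff-proper-cond-2}, I would fix $x,y\in\ns{X}$ with $\ns{f}(x)\sim_{Y}\ns{f}(y)$, so that $(\ns{f}(x),\ns{f}(y))\in\ns{E}$ for some $E\in\mathcal{C}_{Y}$. Setting $F=(f^{-1}\times f^{-1})(E)$, effective properness directly gives $F\in\mathcal{C}_{X}$, and by transferring the defining formula of $F$ we obtain $(x,y)\in\ns{F}$, whence $x\sim_{X}y$. For the converse, I would fix an arbitrary $E\in\mathcal{C}_{Y}$ and let $F=(f^{-1}\times f^{-1})(E)$. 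Transferring the definition of $F$ again, the condition $(x,y)\in\ns{F}$ is equivalent to $(\ns{f}(x),\ns{f}(y))\in\ns{E}$, hence to $\ns{f}(x)\sim_{Y}\ns{f}(y)$. The hypothesis \ref{enu:nonst-charact-eff-proper-cond-2} then yields $x\sim_{X}y$ for every such $(x,y)$, and the characterisation of $\mathcal{C}_{X}$ concludes that $F\in\mathcal{C}_{X}$.

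The main (and essentially only) obstacle is the appeal to the nonstandard membership criterion for $\mathcal{C}_{X}$, which rests on the saturation of the nonstandard universe; the rest is a mechanical unfolding of the definitions of $F$, $\sim_{X}$, and $\sim_{Y}$ via transfer.
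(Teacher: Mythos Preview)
Your proof is correct and follows essentially the same route as the paper's. The only cosmetic difference is packaging: you invoke the nonstandard membership criterion for $\mathcal{C}_{X}$ as a black-box result from \citep{Ima19}, whereas the paper carries out that step inline by applying the General Approximation Lemma (Lemma~\ref{lem:General-Approximation-Lemma}) to obtain an illimited $F\in\ns{\mathcal{C}_{X}}$ with ${\sim_{X}}\subseteq F$, then concluding $\ns{\left(\left(f^{-1}\times f^{-1}\right)\left(E\right)\right)}\subseteq F$ and transferring down.
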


\begin{proof}
\eqref{enu:nonst-charact-eff-proper-cond-1}$\Rightarrow$\eqref{enu:nonst-charact-eff-proper-cond-2}:
Let $E\in\mathcal{C}_{Y}$ with $\left(\ns{f}\left(x\right),\ns{f}\left(y\right)\right)\in\ns{E}$.
Then $\left(x,y\right)\in\ns{\left(\left(f^{-1}\times f^{-1}\right)\left(E\right)\right)}$.
Since $f$ is effectively proper, $\left(f^{-1}\times f^{-1}\right)\left(E\right)\in\mathcal{C}_{X}$.
Hence $x\sim_{X}y$.

\eqref{enu:nonst-charact-eff-proper-cond-2}$\Rightarrow$\eqref{enu:nonst-charact-eff-proper-cond-1}:
Let $E\in\mathcal{C}_{Y}$. For any $x,y\in\ns{X}$ with $\left(\ns{f}\left(x\right),\ns{f}\left(y\right)\right)\in\ns{E}$,
we have that $\ns{f}\left(x\right)\sim_{Y}\ns{f}\left(y\right)$,
so $x\sim_{X}y$ by assumption. On the other hand, there exists an
$F\in\ns{\mathcal{C}_{X}}$ such that ${\sim_{X}}\subseteq F$ by
\prettyref{lem:General-Approximation-Lemma} (to be proved in \ref{sec:Overspill-and-underspill}).
Hence $\ns{\left(\left(f^{-1}\times f^{-1}\right)\left(E\right)\right)}\subseteq{\sim_{X}}\subseteq F\in\ns{\mathcal{C}_{X}}$.
Therefore $\left(f^{-1}\times f^{-1}\right)\left(E\right)\in\mathcal{C}_{X}$
by transfer.
\end{proof}
\begin{prop}[Standard]
\label{prop:effectively-properness-and-inverse}Let $X$ and $Y$
be coarse spaces and let $f\colon X\to Y$ be a bijection. The following
are equivalent:
\begin{enumerate}
\item $f$ has a bornologous inverse;
\item $f$ is effectively proper.
\end{enumerate}
\end{prop}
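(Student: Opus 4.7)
The plan is to unpack the two definitions and observe that, under the bijectivity hypothesis, they reduce to the same set-level condition on the coarse structures. Let $g\colon Y\to X$ denote the inverse function $f^{-1}$ (as a bona fide bijection, not merely a preimage operator). Bornologousness of $g$ means that for every $E\in\mathcal{C}_{Y}$, the forward image
\[
(g\times g)(E)=\set{(g(u),g(v))|(u,v)\in E}
\]
lies in $\mathcal{C}_{X}$. Effective properness of $f$ means that for every $E\in\mathcal{C}_{Y}$, the set
\[
(f^{-1}\times f^{-1})(E)=\set{(x,y)\in X\times X|(f(x),f(y))\in E}
\]
lies in $\mathcal{C}_{X}$, where here $f^{-1}$ is read as the preimage operation.

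The single key step is to verify the set-equality $(g\times g)(E)=(f^{-1}\times f^{-1})(E)$ for every $E\subseteq Y\times Y$. For the inclusion from left to right, if $(u,v)\in E$ then taking $x=g(u)$ and $y=g(v)$ yields $(f(x),f(y))=(u,v)\in E$, so $(g(u),g(v))\in(f^{-1}\times f^{-1})(E)$. Conversely, if $(f(x),f(y))\in E$ then, setting $u=f(x)$ and $v=f(y)$, we have $(u,v)\in E$ and $(x,y)=(g(u),g(v))\in(g\times g)(E)$; surjectivity of $f$ is what makes both directions exhaust their respective sides. Once this equality is in hand, the equivalence of the two conditions is immediate: each asserts membership of the very same subset of $X\times X$ in $\mathcal{C}_{X}$ for every $E\in\mathcal{C}_{Y}$.

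There is no genuine obstacle; the statement is essentially a tautology made slightly opaque by the clash between the two uses of the symbol $f^{-1}$ (inverse function versus preimage). The whole argument is a one-line verification of a set identity, and no nonstandard machinery or appeal to Theorem \ref{thm:nonst-charact-eff-proper} is required.
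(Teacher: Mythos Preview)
Your argument is correct. You directly verify the set identity $(g\times g)(E)=(f^{-1}\times f^{-1})(E)$ for every $E\subseteq Y\times Y$, which immediately gives the equivalence. One small expository quibble: the phrase ``surjectivity of $f$ is what makes both directions exhaust their respective sides'' is slightly off-target---what you actually use is that $g$ is a two-sided inverse (so $f\circ g=\id_{Y}$ for the $\subseteq$ direction and $g\circ f=\id_{X}$ for the $\supseteq$ direction), which of course requires both injectivity and surjectivity. This does not affect the validity of the proof.

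The paper takes a different route: it passes through the nonstandard characterisations, invoking \cite[Theorem 3.23]{Ima19} (bornologousness of a map is equivalent to preservation of $\sim$ on nonstandard points) and \prettyref{thm:nonst-charact-eff-proper} (effective properness is equivalent to reflection of $\sim$ on nonstandard points), and then observes that bijectivity of $\ns{f}$ makes the two pointwise conditions interchangeable. Your approach is more elementary and entirely self-contained, requiring no nonstandard machinery; the paper's approach is in keeping with its programme of demonstrating that the nonstandard characterisations provide a uniform language for such results. Both are perfectly valid, and yours is arguably the more natural proof of what is, as you note, essentially a definitional tautology.
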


\begin{proof}
$f^{-1}$ is bornologous $\iff$ $x\sim_{Y}y$ implies $\ns{f^{-1}}\left(x\right)\sim_{X}\ns{f^{-1}}\left(y\right)$
for all $x,y\in\ns{Y}$ (by \citep[Theorem 3.23]{Ima19}) $\iff$
$\ns{f}\left(x\right)\sim_{Y}\ns{f}\left(y\right)$ implies $x\sim_{X}y$
for all $x,y\in\ns{X}$ (by bijectivity) $\iff$ $f$ is effectively
proper (by \prettyref{thm:nonst-charact-eff-proper}).
\end{proof}
\begin{cor}
\label{cor:asymorphism-preserves-G-and-C}Let $f\colon X\to Y$ be
a standard asymorphism. Then $\ns{f}\circ G_{X}^{c}=G_{Y}^{c}\circ\ns{f}$
and $\ns{f}\circ C_{X}^{c}=C_{Y}^{c}\circ\ns{f}$. In other words,
$\ns{f}\colon\ns{X}\to\ns{Y}$ is a homeomorphism with respect to
the topology defined in \prettyref{cor:galactic-topology-of-starX}.
\end{cor}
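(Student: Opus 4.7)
The plan is first to rephrase the hypothesis on $f$ nonstandardly, and then to deduce the two equalities by essentially direct set manipulations using bijectivity of $\ns{f}$.

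First, I would unpack what it means for $f$ to be an asymorphism. Since $f$ is a bornologous bijection with bornologous inverse, \prettyref{prop:effectively-properness-and-inverse} tells us that $f$ is also effectively proper. Combining the nonstandard characterisation of bornologous maps (cited as \citep[Theorem 3.23]{Ima19}) with \prettyref{thm:nonst-charact-eff-proper}, this is equivalent to saying that $\ns{f}\colon\ns{X}\to\ns{Y}$ is a bijection (by transfer) satisfying
\[
x\sim_{X}y\iff\ns{f}(x)\sim_{Y}\ns{f}(y)
\]
for all $x,y\in\ns{X}$. This biconditional is the whole content of the hypothesis that will be used.

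Next, I would verify $\ns{f}(G_{X}^{c}(A))=G_{Y}^{c}(\ns{f}(A))$ by two inclusions. For $\subseteq$, take $\ns{f}(x)$ with $x\sim_{X}a$, $a\in A$; the forward direction of the biconditional gives $\ns{f}(x)\sim_{Y}\ns{f}(a)$, so $\ns{f}(x)\in G_{Y}^{c}(\ns{f}(A))$. For $\supseteq$, take $z\in G_{Y}^{c}(\ns{f}(A))$ with $z\sim_{Y}\ns{f}(a)$; since $\ns{f}$ is a bijection, write $z=\ns{f}(x)$, and the reverse direction of the biconditional yields $x\sim_{X}a$, so $z=\ns{f}(x)\in\ns{f}(G_{X}^{c}(A))$. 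The equality $\ns{f}\circ C_{X}^{c}=C_{Y}^{c}\circ\ns{f}$ then follows formally from \prettyref{thm:galaxy-and-core}\eqref{enu:C-is-neg-G-neg}, since $\ns{f}$ is a bijection and therefore preserves complements:
\[
\ns{f}(C_{X}^{c}(A))=\ns{Y}\setminus\ns{f}(G_{X}^{c}(\ns{X}\setminus A))=\ns{Y}\setminus G_{Y}^{c}(\ns{Y}\setminus\ns{f}(A))=C_{Y}^{c}(\ns{f}(A)).
\]

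Finally, since $\ns{f}$ is a bijection $\ns{X}\to\ns{Y}$ commuting with the closure operators of the galactic topologies supplied by \prettyref{cor:galactic-topology-of-starX}, it is a homeomorphism. There is no serious obstacle here; the only point worth flagging is that one genuinely needs bijectivity of $\ns{f}$ (rather than of $f$ alone) both to realise $z$ as $\ns{f}(x)$ in the reverse inclusion and to pass complements through $\ns{f}$ in the core identity, but this is free by transfer.
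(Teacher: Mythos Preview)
Your proof is correct and follows essentially the same approach as the paper, which simply states that the result is immediate from the nonstandard characterisation of asymorphisms (\citep[Theorem 3.23]{Ima19} together with \prettyref{thm:nonst-charact-eff-proper}). You have merely spelled out in detail the argument the paper leaves implicit.
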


\begin{proof}
Immediate from the nonstandard characterisation of asymorphisms (\citep[Theorem 3.23]{Ima19}
and \prettyref{thm:nonst-charact-eff-proper}).
\end{proof}
\begin{defn}[Standard]
Let $X$ and $Y$ be coarse spaces. A map $f\colon X\to Y$ is said
to be
\begin{enumerate}
\item \emph{coarsely surjective} if there exists an $E\in\mathcal{C}_{Y}$
such that $E\left[f\left(X\right)\right]=Y$;
\item a \emph{coarse equivalence} (a.k.a.\emph{ bornotopy equivalence})
if it is a bornologous map with a bornotopy inverse (a bornologous
map $g\colon Y\to X$ such that $g\circ f$ and $f\circ g$ are bornotopic
to $\id_{X}$ and $\id_{Y}$, respectively).
\end{enumerate}
\end{defn}

\begin{thm}
\label{thm:nonst-charact-coarse-surjection}Let $X$ and $Y$ be standard
coarse spaces and let $f\colon X\to Y$ be a map. The following are
equivalent:
\begin{enumerate}
\item $f$ is coarsely surjective;
\item $G_{Y}^{c}\left(\ns{f}\left(\ns{X}\right)\right)=\ns{Y}$.
\end{enumerate}
\end{thm}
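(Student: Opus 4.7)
The plan is to prove the two directions separately, with the forward direction being essentially a direct application of transfer and the reverse relying on the General Approximation Lemma cited in the excerpt.

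For the forward direction, I would begin by picking an $E\in\mathcal{C}_{Y}$ witnessing coarse surjectivity, i.e.\ $E\left[f\left(X\right)\right]=Y$. Applying transfer to this equality produces $\ns{E}\left[\ns{f}\left(\ns{X}\right)\right]=\ns{Y}$. Since $\ns{E}\left[\ns{f}\left(\ns{X}\right)\right]\subseteq G_{Y}^{c}\left(\ns{f}\left(\ns{X}\right)\right)$ by the formula $G_{Y}^{c}\left(A\right)=\bigcup_{E\in\mathcal{C}_{Y}}\ns{E}\left[A\right]$ from the definition, we immediately conclude $G_{Y}^{c}\left(\ns{f}\left(\ns{X}\right)\right)=\ns{Y}$.

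For the reverse direction, assume $G_{Y}^{c}\left(\ns{f}\left(\ns{X}\right)\right)=\ns{Y}$; unpacking, this says that every $y\in\ns{Y}$ satisfies $y\sim_{Y}\ns{f}\left(x\right)$ for some $x\in\ns{X}$. The obstacle here is that the witness $E\in\mathcal{C}_{Y}$ realising the relation $\sim_{Y}$ depends a priori on $y$, and we must collapse this family of witnesses into a single standard entourage. This is where the General Approximation Lemma enters: it supplies an internal $F\in\ns{\mathcal{C}_{Y}}$ with ${\sim_{Y}}\subseteq F$. Then for every $y\in\ns{Y}$ we obtain some $x\in\ns{X}$ with $\left(\ns{f}\left(x\right),y\right)\in F$, which means $F\left[\ns{f}\left(\ns{X}\right)\right]=\ns{Y}$.

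To finish, I would observe that the statement ``there exists $E\in\mathcal{C}_{Y}$ such that $E\left[f\left(X\right)\right]=Y$'' has, as its transfer, ``there exists $F\in\ns{\mathcal{C}_{Y}}$ such that $F\left[\ns{f}\left(\ns{X}\right)\right]=\ns{Y}$''. Since the latter has just been established, transfer (in the downward direction) yields the former, which is exactly coarse surjectivity of $f$. The only real content of the argument is the use of the approximation lemma; everything else is bookkeeping via transfer and the definition of $G_{Y}^{c}$.
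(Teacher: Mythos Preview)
Your proof is correct and follows essentially the same approach as the paper: transfer for the forward direction, and the General Approximation Lemma followed by downward transfer for the reverse direction. The paper's write-up is terser but the logical structure is identical.
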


\begin{proof}
Suppose $f$ is coarsely surjective, i.e., there exists an $E\in\mathcal{C}_{Y}$
such that $E\left[f\left(X\right)\right]=Y$. By transfer, $\ns{Y}=\ns{E}\left[\ns{f}\left(\ns{X}\right)\right]\subseteq G_{Y}^{c}\left(\ns{f}\left(\ns{X}\right)\right)\subseteq\ns{Y}$.
Hence $G_{Y}^{c}\left(\ns{f}\left(\ns{X}\right)\right)=\ns{Y}$.

Conversely, suppose $G_{Y}^{c}\left(\ns{f}\left(\ns{X}\right)\right)=\ns{Y}$.
By \prettyref{lem:General-Approximation-Lemma}, there exists an $E\in\ns{\mathcal{C}_{Y}}$
such that ${\sim_{Y}}\subseteq E$. Then $\ns{Y}=G_{Y}^{c}\left(\ns{f}\left(\ns{X}\right)\right)\subseteq E\left[\ns{f}\left(\ns{X}\right)\right]\subseteq\ns{Y}$,
so $E\left[\ns{f}\left(\ns{X}\right)\right]=\ns{Y}$. By transfer,
there exists an $F\in\mathcal{C}_{Y}$ such that $F\left[f\left(X\right)\right]=Y$.
\end{proof}
\begin{prop}[Standard]
\label{prop:coarse-surjectivity-and-bornotopy-inverse}Let $X$ and
$Y$ be coarse spaces and let $f\colon X\to Y$ be a map. The following
are equivalent:
\begin{enumerate}
\item $f$ has a bornotopy inverse;
\item $f$ is effectively proper and coarsely surjective.
\end{enumerate}
\end{prop}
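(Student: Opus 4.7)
The plan is to prove each direction using the nonstandard characterisation of effective properness (\prettyref{thm:nonst-charact-eff-proper}) for the forward implication, and an explicit construction of the inverse via the axiom of choice for the reverse.

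For (1)$\Rightarrow$(2), suppose $g\colon Y\to X$ is a bornotopy inverse of $f$. To check effective properness via \prettyref{thm:nonst-charact-eff-proper}, assume $\ns{f}(x)\sim_{Y}\ns{f}(y)$ for $x,y\in\ns{X}$. Since $g$ is bornologous, $\ns{g}(\ns{f}(x))\sim_{X}\ns{g}(\ns{f}(y))$ (using the nonstandard characterisation of bornologous maps, \citep[Theorem 3.23]{Ima19}). Because $g\circ f$ is bornotopic to $\id_{X}$, the set $\{(g(f(z)),z):z\in X\}$ is an entourage of $X$, so by transfer $x\sim_{X}\ns{g}(\ns{f}(x))$ and $y\sim_{X}\ns{g}(\ns{f}(y))$. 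Transitivity of $\sim_{X}$ then yields $x\sim_{X}y$. Coarse surjectivity is even simpler: the closeness of $f\circ g$ to $\id_{Y}$ provides an entourage $D\in\mathcal{C}_{Y}$ containing $\{(f(g(y)),y):y\in Y\}$, so $D[f(X)]\supseteq Y$.

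For (2)$\Rightarrow$(1), use coarse surjectivity to pick $E\in\mathcal{C}_{Y}$ with $E[f(X)]=Y$, and for each $y\in Y$ choose (by the axiom of choice) a point $g(y)\in X$ with $(f(g(y)),y)\in E$. Then $f\circ g$ is bornotopic to $\id_{Y}$ by construction. For $g\circ f$ bornotopic to $\id_{X}$, observe that for every $x\in X$ we have $(f(g(f(x))),f(x))\in E$, hence $(g(f(x)),x)\in(f^{-1}\times f^{-1})(E)\in\mathcal{C}_{X}$ by effective properness. For $g$ bornologous, take any $F\in\mathcal{C}_{Y}$; for $(y_{1},y_{2})\in F$ the chain
\[
f(g(y_{1}))\;\overset{E^{-1}}{\longrightarrow}\;y_{1}\;\overset{F}{\longrightarrow}\;y_{2}\;\overset{E}{\longrightarrow}\;f(g(y_{2}))
\]
shows $(f(g(y_{1})),f(g(y_{2})))\in E\circ F\circ E^{-1}\in\mathcal{C}_{Y}$, so effective properness applied to this composite entourage gives $(g\times g)(F)\in\mathcal{C}_{X}$.

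There is no serious obstacle here; the proof is a matter of entourage bookkeeping, the one structural point being that effective properness is precisely the tool that lets us pull entourages on $Y$ back to entourages on $X$ through the set-theoretic section $g$ that coarse surjectivity produces.
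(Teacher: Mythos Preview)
Your proof is correct and follows essentially the same skeleton as the paper's: the forward direction uses the nonstandard characterisation of effective properness (\prettyref{thm:nonst-charact-eff-proper}) together with the bornotopy relations, and the reverse direction constructs $g$ as a set-theoretic section via coarse surjectivity. The only methodological difference is that in (2)$\Rightarrow$(1) the paper verifies $g\circ f\sim\id_{X}$ and the bornologousness of $g$ by passing to nonstandard points and invoking \prettyref{thm:nonst-charact-eff-proper} and \citep[Theorem 3.23]{Ima19}, whereas you do these steps by direct standard entourage manipulation. Your version is slightly more elementary and avoids a second appeal to the nonstandard dictionary; the paper's version keeps the argument uniformly in the nonstandard language established earlier.

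One cosmetic slip: in your chain for the bornologousness of $g$, the labels $E$ and $E^{-1}$ are interchanged. Since you chose $g$ so that $\left(f\left(g\left(y\right)\right),y\right)\in E$, the step from $f\left(g\left(y_{1}\right)\right)$ to $y_{1}$ lies in $E$ and the step from $y_{2}$ to $f\left(g\left(y_{2}\right)\right)$ lies in $E^{-1}$. The composite is controlled either way, so the argument is unaffected.
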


\begin{proof}
Suppose $f$ has a bornotopy inverse $g\colon Y\to X$. For any $y\in\ns{Y}$,
$y\sim_{Y}\ns{f}\circ\ns{g}\left(y\right)\in\ns{f}\left(\ns{X}\right)$,
so $y\in G_{X}^{c}\left(\ns{f}\left(\ns{X}\right)\right)$. By \prettyref{thm:nonst-charact-coarse-surjection},
$f$ is coarsely surjective. For any $x,y\in\ns{X}$, if $\ns{f}\left(x\right)\sim_{Y}\ns{f}\left(y\right)$,
then $x\sim_{X}\ns{g}\circ\ns{f}\left(x\right)\sim_{Y}\ns{g}\circ\ns{f}\left(y\right)\sim_{Y}y$
by \citep[Theorem 3.23]{Ima19}. Hence $f$ is effectively proper
by \prettyref{thm:nonst-charact-eff-proper}.

Conversely, suppose $f$ is effectively proper and coarsely surjective.
Let $E\in\mathcal{C}_{X}$ be such that $E\left[f\left(X\right)\right]=Y$.
Choose a (non-unique) map $g\colon Y\to X$ such that $y\in E\left[f\circ g\left(y\right)\right]$
for all $y\in Y$. Clearly $f\circ g$ is bornotopic to $\id_{Y}$.
For any $x\in\ns{X}$, since $\ns{f}\circ\ns{g}\circ\ns{f}\left(x\right)\sim_{Y}\ns{f}\left(x\right)$,
we have that $\ns{g}\circ\ns{f}\left(x\right)\sim_{X}x$ by \prettyref{thm:nonst-charact-eff-proper}.
Hence $g\circ f$ is bornotopic to $\id_{X}$. For any $y,y'\in\ns{Y}$
with $y\sim_{Y}y'$, since $\ns{f}\circ\ns{g}\left(y\right)\sim_{Y}y\sim_{Y}y'\sim_{Y}\ns{f}\circ\ns{g}\left(y\right)$,
it follows that $\ns{g}\left(y\right)\sim_{X}\ns{g}\left(y'\right)$
by \prettyref{thm:nonst-charact-eff-proper}. Hence $g$ is bornologous
by \citep[Theorem 3.23]{Ima19}.
\end{proof}

\section{\label{sec:Nonstandard-points}Several classes of nonstandard points}

Given a standard space $X$ with small-scale and/or large-scale structures,
we have the following classes of nonstandard points:
\begin{align*}
\CPT\left(X\right) & =\bigcup_{K\colon\text{compact}}\ns{K},\\
\NS\left(X\right) & =\bigcup_{x\in X}\mu_{X}\left(x\right),\\
\PCPT\left(X\right) & =\bigcup_{P\colon\text{precompact}}\ns{P},\\
\PNS\left(X\right) & =\bigcap_{U\colon\text{entourage}}\bigcup_{x\in X}\ns{U}\left[x\right],\\
\FIN\left(X\right) & =\bigcup_{B\colon\text{bounded}}\ns{B}.
\end{align*}
where $\CPT\left(X\right)$ and $\NS\left(X\right)$ are defined for
standard topological spaces, $\PCPT\left(X\right)$ and $\PNS\left(X\right)$
for standard uniform spaces, and $\FIN\left(X\right)$ for standard
(pre)bornological spaces. The first four classes have been extensively
studied in the existing literature, while the last class has been
studied only for special cases (such as topological vector spaces
\citep{HM72}). The aim of this section is to clarify the relationship
among those classes (together with $X$ and $\ns{X}$). As we shall
see, the inclusion relations characterise various properties of spaces
(see \prettyref{fig:Structure-of-nonstandard-points}).
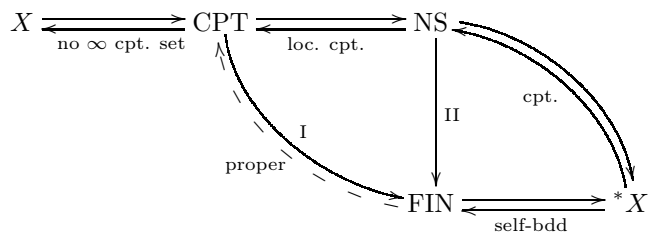
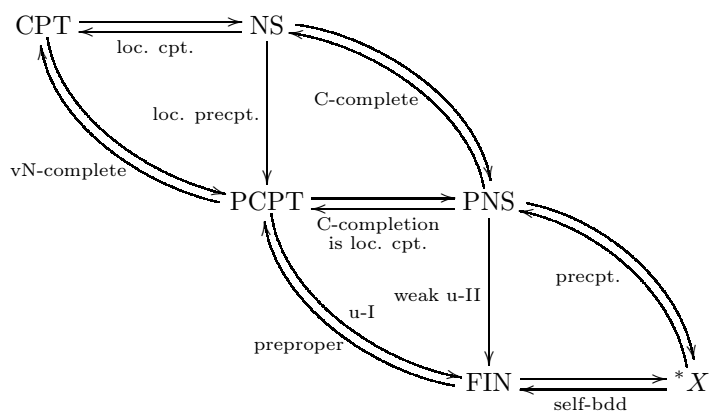
\begin{figure}
\centering\subfloat[topological-bornological]{$\xymatrix{X\ar@<2pt>[rr] &  & \CPT\ar@<2pt>[ll]^{\text{no \ensuremath{\infty} cpt. set}}\ar@<2pt>[rr]\ar@<2pt>@(d,l)[ddrr]^{\text{I}} &  & \NS\ar@<2pt>[ll]^{\text{loc. cpt.}}\ar@<2pt>[dd]^{\text{II}}\ar@<2pt>@(r,u)[ddrr]\\
\\
 &  &  &  & \FIN\ar@<2pt>[rr]\ar@<2pt>@(l,d)@{-->}[uull]^{\text{proper}} &  & \ns{X}\ar@<2pt>[ll]^{\text{self-bdd}}\ar@<2pt>@(u,r)[uull]^{\text{cpt.}}
}
$

}

\subfloat[uniform-bornological]{$\xymatrix{\CPT\ar@<2pt>@(d,l)[ddrr]\ar@<2pt>[rr] &  & \NS\ar@<2pt>[ll]^{\text{loc. cpt.}}\ar[dd]_{\text{loc. precpt.}}\ar@<2pt>@(r,u)[ddrr]\\
\\
 &  & \PCPT\ar@<2pt>@(l,d)[uull]^{\text{vN-complete}}\ar@<2pt>[rr]\ar@<2pt>@(d,l)[ddrr]^{\text{u-I}} &  & \PNS\ar@<2pt>[ll]^{\substack{\text{C-completion}\\
\text{is loc. cpt.}
}
}\ar@<2pt>@(u,r)[uull]^{\text{C-complete}}\ar[dd]_{\text{weak u-II}}\ar@<2pt>@(r,u)[ddrr]\\
\\
 &  &  &  & \FIN\ar@<2pt>@(l,d)[uull]^{\text{preproper}}\ar@<2pt>[rr] &  & \ns{X}\ar@<2pt>[ll]^{\text{self-bdd}}\ar@<2pt>@(u,r)[uull]^{\text{precpt.}}
}
$}\caption{\label{fig:Structure-of-nonstandard-points}Each arrow $A\protect\overset{P}{\protect\longrightarrow}B$
(except for $\FIN\protect\dashrightarrow\CPT$) indicates that the
inclusion $A\subseteq B$ holds if and only if the space has property
$P$. The broken line arrow $\FIN\protect\dashrightarrow\CPT$ indicates
that the inclusion $\FIN\subseteq\CPT$ is equivalent to properness
under closure-stability.}
\end{figure}

First of all, we notice that some of the inclusions hold without any
extra condition.
\begin{fact}[{Corollary to \citep[Theorem 4.1.13]{Rob66}}]
\label{fact:CPT-sub-NS}The inclusions $X\subseteq\CPT\left(X\right)\subseteq\NS\left(X\right)$
hold for all standard topological spaces $X$.
\end{fact}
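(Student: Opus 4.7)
The plan is to prove each inclusion separately, invoking Robinson's compactness theorem only where it is genuinely needed.

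For the first inclusion $X \subseteq \CPT(X)$, I would argue pointwise. Given a standard $x \in X$, the singleton $\{x\}$ is a compact subset of $X$; by transfer $\ns{\{x\}} = \{x\}$, so $x \in \ns{\{x\}} \subseteq \CPT(X)$. This is essentially free and requires no property of $X$.

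For the second inclusion $\CPT(X) \subseteq \NS(X)$, I would reduce to Robinson's classical theorem. Fix $y \in \CPT(X)$ and pick a compact $K \subseteq X$ with $y \in \ns{K}$. Applying Robinson's characterisation of compactness (\cite[Theorem 4.1.13]{Rob66}) to $K$ endowed with the subspace topology yields an $x \in K$ with $y \in \mu_K(x)$. The one nontrivial point is the comparison between the subspace monad $\mu_K(x)$ and the ambient monad $\mu_X(x)$: for every open $U \subseteq X$ with $x \in U$, the set $U \cap K$ is open in $K$ and contains $x$, so $y \in \ns{(U \cap K)} = \ns{U} \cap \ns{K} \subseteq \ns{U}$, giving $y \in \mu_X(x) \subseteq \NS(X)$.

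There is no real obstacle here; the only place where a beginner might slip is the monad-inclusion $\mu_K(x) \subseteq \mu_X(x)$, which I would state explicitly rather than gloss over, since $\NS(X)$ is defined using the topology of $X$ rather than of $K$. Everything else is transfer applied to the definition of $\CPT$ together with the cited theorem.
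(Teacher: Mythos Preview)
Your argument is correct and is exactly the natural way to extract this fact from Robinson's compactness theorem: the first inclusion via singletons, the second by applying the theorem to the subspace $K$ and checking $\mu_K(x)\subseteq\mu_X(x)$. The paper itself gives no proof at all---it simply records the statement as a corollary to \cite[Theorem 4.1.13]{Rob66}---so there is nothing to compare against beyond noting that your derivation is the intended one.
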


\begin{fact}
The inclusion $X\subseteq\FIN\left(X\right)$ holds for all standard
(pre)bornological spaces $X$.
\end{fact}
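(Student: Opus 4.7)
The plan is to unwind the definitions and use the covering axiom (i) of a prebornology. Fix a standard prebornological space $(X,\mathcal{B}_X)$ and a point $x \in X$. By axiom (i), $\bigcup \mathcal{B}_X = X$, so there exists some $B \in \mathcal{B}_X$ with $x \in B$. Then $x \in B \subseteq \ns{B}$ (since every standard element of a standard set belongs to its nonstandard extension), and $\ns{B} \subseteq G_X(x)$ by the definition of the galaxy $G_X(x) = \bigcup\{\ns{B} \mid x \in B \in \mathcal{B}_X\}$. Hence $x \in G_X(x) \subseteq \bigcup_{y \in X} G_X(y) = \FIN(X)$.

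Since $x \in X$ was arbitrary, this yields $X \subseteq \FIN(X)$. There is no real obstacle: the argument is a one-line chain of inclusions, and the only fact used beyond the definitions is the covering axiom of a prebornology together with the trivial standard-set inclusion $B \subseteq \ns{B}$. Unlike the topological analogue in \prettyref{fact:CPT-sub-NS}, no nontrivial compactness or approximation argument is needed here, because the definition of a finite point is phrased directly via a cover by bounded sets rather than via an intersection over entourages.
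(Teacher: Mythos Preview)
Your proof is correct; the paper itself states this fact without proof, treating it as immediate from the definitions. Your argument is exactly the natural one: the covering axiom $\bigcup\mathcal{B}_X = X$ gives a bounded $B\ni x$, whence $x\in B\subseteq\ns{B}\subseteq G_X(x)\subseteq\FIN(X)$.
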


\begin{fact}[{Corollary to \citep[Theorem 8.4.34]{SL76}}]
\label{fact:NS-sub-PNS-and-CPT-sub-PCPT-sub-PNS}The inclusions $\NS\left(X\right)\subseteq\PNS\left(X\right)$
and $\CPT\left(X\right)\subseteq\PCPT\left(X\right)\subseteq\PNS\left(X\right)$
hold for all standard uniform spaces $X$.
\end{fact}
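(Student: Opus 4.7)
The plan is to verify each of the three inclusions directly from the definitions, using transfer where needed. All three proofs are routine and independent of any nontrivial machinery; the main (mild) obstacle is simply unpacking the standard definition of precompactness and applying transfer at the right moment.

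First I would establish $\NS(X)\subseteq\PNS(X)$. Take $y\in\NS(X)$. By definition there is a standard $x\in X$ with $y\in\mu_X(x)$. For a uniform space the topological monad at a standard point coincides with the uniform monad, so $y\in\ns{U}[x]$ for every $U\in\mathcal{U}_X$. Hence for each entourage $U$ we have $y\in\bigcup_{x'\in X}\ns{U}[x']$, and intersecting over $U$ yields $y\in\PNS(X)$.

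Next I would handle $\CPT(X)\subseteq\PCPT(X)$: since in a uniform space every compact subset is precompact (totally bounded), any $y\in\ns{K}$ with $K$ compact also lies in $\ns{P}$ for the precompact $P=K$, so $y\in\PCPT(X)$. Finally, for $\PCPT(X)\subseteq\PNS(X)$, let $y\in\ns{P}$ with $P$ precompact. Given an entourage $U\in\mathcal{U}_X$, precompactness of $P$ provides a finite $F\subseteq X$ with $P\subseteq U[F]=\bigcup_{x\in F}U[x]$. By transfer, $\ns{P}\subseteq\bigcup_{x\in F}\ns{U}[x]\subseteq\bigcup_{x\in X}\ns{U}[x]$, so $y\in\bigcup_{x\in X}\ns{U}[x]$. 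Since $U$ was arbitrary, $y\in\PNS(X)$.

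The only subtle step is the third inclusion, where one must be careful to take $F$ standard and finite so that $\ns{F}=F$ and the equality $\ns{U}[F]=\bigcup_{x\in F}\ns{U}[x]$ is legitimate; everything else is a bookkeeping exercise on the definitions.
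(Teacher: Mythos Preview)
Your proof is correct. Note, however, that the paper does not actually supply its own proof of this statement: it is recorded as a \emph{Fact} and attributed as a corollary to \cite[Theorem 8.4.34]{SL76}, with no argument given in the text. Your direct verification from the definitions---using that $\mu_X(x)=\mu_X^u(x)$ at standard points for the first inclusion, that compact implies precompact for the second, and total boundedness plus transfer for the third---is precisely the routine unpacking one expects, and it matches how the cited reference treats these inclusions. There is nothing to compare beyond that; your write-up would serve perfectly well as an inline proof if the paper had chosen to include one.
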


\subsection{Hybrid spaces and compatibility conditions}

It is often the case that a space is equipped with both small-scale
and large-scale structures. Typically, a metric space is equipped
with small-scale structures (such as the metric topology and the metric
uniformity) and large-scale ones (such as the bounded bornology and
the bounded coarse structure). Generalising this situation, we introduce
the notion of hybrid spaces.
\begin{defn}[Standard]
A \emph{tb-space} (topological-bornological space) is a set equipped
with topology and bornology. An \emph{ub-space} (uniform-bornological
space) is a set equipped with uniformity and bornology. The notions
of \emph{tc} (topological-coarse) and \emph{uc} (uniform-coarse) are
defined in a similar fashion. We call those spaces \emph{hybrid spaces}.
\end{defn}

Having two structures at hand, it is natural to consider compatibility
conditions between them.
\begin{defn}[Standard]
Let $X$ be a set, $\mathcal{T}_{X}$ a topology, $\mathcal{U}_{X}$
a uniformity, and $\mathcal{B}_{X}$ a bornology on $X$. We say that
\begin{enumerate}
\item $\mathcal{T}_{X}$ and $\mathcal{B}_{X}$ are \emph{I-compatible}
if every compact set is bounded;
\item $\mathcal{T}_{X}$ and $\mathcal{B}_{X}$ are \emph{II-compatible}
if each point has a bounded neighbourhood;
\item $\mathcal{T}_{X}$ and $\mathcal{B}_{X}$ are \emph{III-compatible}
if each bounded set has a bounded neighbourhood;
\item $\mathcal{T}_{X}$ and $\mathcal{B}_{X}$ are \emph{closure-stable}
if the (topological) closure of every bounded set is bounded;
\item $\mathcal{U}_{X}$ and $\mathcal{B}_{X}$ are \emph{u-I-compatible}
if every precompact set is bounded;
\item $\mathcal{U}_{X}$ and $\mathcal{B}_{X}$ are \emph{u-II-compatible}
if there is a $U\in\mathcal{U}_{X}$ such that $U\left[x\right]\in\mathcal{B}_{X}$
for all $x\in X$;
\item $\mathcal{U}_{X}$ and $\mathcal{B}_{X}$ are \emph{u-III-compatible}
if there is a $U\in\mathcal{U}_{X}$ such that $U\left[B\right]\in\mathcal{B}_{X}$
for all $B\in\mathcal{B}_{X}$.
\end{enumerate}
\end{defn}

\begin{rem}
Bunke and Engel \citep{BE16} use the following stronger compatibility
condition for a tb-space: $\mathcal{T}_{X}$ and $\mathcal{B}_{X}$
are called \emph{compatible} if every bounded set has a bounded neighbourhood
and a bounded closure. In our terminology, Bunke\textendash Engel's
condition is the conjunction of III-compatibility and closure-stability. 
\end{rem}

\begin{example}
Every metric space satisfies all of the compatibility conditions including
the weak u-II-compatibility (defined later).
\end{example}

\begin{example}
Let $X$ be a topological space. Obviously the compact bornology
\[
\mathcal{P}_{c}\left(X\right)=\set{A\subseteq X|A\text{ is contained in some compact set}}
\]
is I-compatible with the topology of $X$.
\end{example}

\begin{example}
Let $X$ be a uniform space. The precompact bornology
\[
\mathcal{P}_{pc}\left(X\right)=\set{A\subseteq X|A\text{ is precompact}}
\]
is u-I-compatible with the uniformity of $X$.
\end{example}

\begin{prop}[Standard]
\begin{enumerate}
\item III-compatibility implies II-compatibility.
\item u-III-compatibility implies u-II-compatibility.
\item u-I-compatibility implies I-compatibility (with respect to the induced
topology).
\item u-II-compatibility implies II-compatibility.
\item u-III-compatibility implies III-compatibility and closure-stability.
\end{enumerate}
\end{prop}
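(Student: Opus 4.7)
The plan is to observe that all five implications reduce to short verifications based on two standard facts: singletons are always bounded (from the axiom $\bigcup\mathcal{B}_X=X$ combined with downward closure of the bornology), and for any entourage $U$ the set $U[x]$ is a neighbourhood of $x$ in the induced uniform topology (so that more generally $U[B]=\bigcup_{x\in B}U[x]$ is a neighbourhood of $B$).

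First I would dispatch the four easy items. For (1), pick $x\in X$; since $\{x\}\in\mathcal{B}_X$, III-compatibility supplies a bounded neighbourhood of $\{x\}$, which is a bounded neighbourhood of $x$. For (2), the same entourage $U$ that witnesses u-III-compatibility witnesses u-II-compatibility, since $U[x]=U[\{x\}]$ is bounded by hypothesis applied to the singleton $\{x\}\in\mathcal{B}_X$. For (3), every compact subset of a uniform space is precompact, so a bornology containing all precompact sets automatically contains all compact sets. For (4), if $U$ witnesses u-II-compatibility, then the bounded set $U[x]$ is itself a basic neighbourhood of $x$ in the induced topology.

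The only part with real content is (5). Let $U\in\mathcal{U}_X$ witness u-III-compatibility. For III-compatibility: for each $B\in\mathcal{B}_X$, the set $U[B]$ is bounded by hypothesis and contains the basic uniform neighbourhood $U[x]$ of every $x\in B$, hence is a bounded neighbourhood of $B$. For closure-stability, I would invoke the standard uniform-topology identity
\[
\overline{B}\;=\;\bigcap_{V\in\mathcal{U}_X}V[B],
\]
which follows from the characterisation of closure via basic neighbourhoods $V[y]$ together with inverse-closure of entourages. Specialising to the given $U$ yields $\overline{B}\subseteq U[B]$; since $U[B]\in\mathcal{B}_X$, downward closure of $\mathcal{B}_X$ gives $\overline{B}\in\mathcal{B}_X$. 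There is no genuine obstacle; the only point demanding a sliver of care is citing the standard closure formula in (5) rather than trying to derive closure-stability from purely bornological data.
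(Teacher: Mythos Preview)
Your proposal is correct; the paper's own proof is the single word ``Trivial,'' and your detailed verifications are exactly the routine checks one would expect behind that assertion. The only minor remark is that in (5) you could avoid citing the full closure formula and simply note that $\overline{B}\subseteq U[B]$ follows directly from the fact that each $U[x]$ is a neighbourhood of $x$ meeting $B$ whenever $x\in\overline{B}$, but this is a matter of taste.
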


\begin{proof}
Trivial.
\end{proof}
\begin{rem}
There is an ub-space that is u-II-compatible but not closure-stable.
Consider the real line $\mathbb{R}$ with the usual uniformity. The
family
\[
\mathcal{F}_{\mathbb{R}}=\set{A\subseteq\mathbb{R}|A\text{ is contained in some measurable set with finite measure}}
\]
forms a bornology on $\mathbb{R}$. Each point $x\in\mathbb{R}$ has
a $\mathcal{F}_{\mathbb{R}}$-bounded neighbourhood of the form $\left(x-1,x+1\right)$,
so $\mathcal{F}_{\mathbb{R}}$ is u-II-compatible with the uniformity.
The set $\mathbb{Q}$ of rational numbers has measure $0$, so it
is $\mathcal{F}_{\mathbb{R}}$-bounded. However, the closure $\cl_{\mathbb{R}}\mathbb{Q}=\mathbb{R}$
is not $\mathcal{F}_{\mathbb{R}}$-bounded. Hence $\mathcal{F}_{\mathbb{R}}$
is not closure-stable.
\end{rem}

The implications II$\Rightarrow$I and u-II$\Rightarrow$u-I are non-trivial,
and will be proved by using the following nonstandard characterisations.
They are of the form $\CPT,\NS,\PCPT,\PNS\subseteq\FIN$.
\begin{fact}[{Corollary to \citep[Proposition 2.6]{Ima19}}]
\label{fact:CPT-sub-FIN}A standard tb-space $X$ is I-compatible
if and only if $\CPT\left(X\right)\subseteq\FIN\left(X\right)$.
\end{fact}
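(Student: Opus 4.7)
The proof plan is to unwind the definitions in both directions and invoke \citep[Proposition 2.6]{Ima19} as a black box for the nonstandard characterisation of boundedness (the standard reading of which should be: a subset $A\subseteq X$ lies in $\mathcal{B}_X$ if and only if $\ns{A}\subseteq\FIN(X)$, or some close variant thereof).

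For the forward direction, I would assume I-compatibility and take any $x\in\CPT(X)$. By definition $x\in\ns{K}$ for some compact $K\subseteq X$. By I-compatibility $K\in\mathcal{B}_X$, so $\ns{K}\subseteq\FIN(X)$ and in particular $x\in\FIN(X)$. This direction needs no nontrivial nonstandard input; it is just an inclusion of the defining families.

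For the converse, I would assume $\CPT(X)\subseteq\FIN(X)$ and let $K\subseteq X$ be any compact set. Then $\ns{K}\subseteq\CPT(X)\subseteq\FIN(X)$, and \citep[Proposition 2.6]{Ima19} immediately yields $K\in\mathcal{B}_X$; hence every compact set is bounded and $X$ is I-compatible.

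The only real obstacle is extracting the right consequence from \citep[Proposition 2.6]{Ima19}. I expect that proposition to give the expected nonstandard characterisation of (pre)boundedness directly, but one should be slightly careful because $\mathcal{B}_X$ is only assumed to be a bornology, so the cover $\FIN(X)=\bigcup_{B\in\mathcal{B}_X}\ns{B}$ is a cover by internal sets and one may need a saturation or overspill argument (already handled inside Proposition 2.6) to pass from a cover to a single bounded superset of $K$. Since the author labels the fact a corollary, this passage is supplied by the cited proposition and no further work is required here.
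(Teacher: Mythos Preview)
Your proposal is correct and is precisely the argument the paper has in mind: the fact is stated without proof as an immediate corollary to \citep[Proposition 2.6]{Ima19}, and your two-direction unwinding (compact $\Rightarrow$ bounded $\Rightarrow$ $\ns{K}\subseteq\FIN(X)$ for the forward direction, and $\ns{K}\subseteq\FIN(X)\Rightarrow K\in\mathcal{B}_X$ via Proposition~2.6 for the converse) is exactly that corollary spelled out. Your caveat about saturation is well placed and, as you note, is absorbed into the cited proposition.
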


\begin{fact}[{\citep[Theorem 2.37]{Ima19}}]
\label{fact:NS-sub-FIN}A standard tb-space $X$ is II-compatible
if and only if $\NS\left(X\right)\subseteq\FIN\left(X\right)$.
\end{fact}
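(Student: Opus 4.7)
The plan is to treat the two directions separately, with the forward implication being a direct unpacking of definitions and the reverse implication relying on a saturation/finite-intersection-property argument.

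For the forward direction, assume $X$ is II-compatible and fix $y \in \NS(X)$, so that $y \in \mu_X(x)$ for some standard $x \in X$. By II-compatibility, $x$ admits a bounded neighbourhood $N \in \mathcal{B}_X$, and by definition of neighbourhood there is an open $U \in \mathcal{T}_X$ with $x \in U \subseteq N$. Since $\mathcal{B}_X$ is a bornology (in particular downward closed) we have $U \in \mathcal{B}_X$, so $x \in U \in \mathcal{B}_X$ exhibits $\ns{U} \subseteq G_X(x) \subseteq \FIN(X)$. Combined with $\mu_X(x) \subseteq \ns{U}$ (from the definition of the monad), this yields $y \in \FIN(X)$.

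For the reverse direction, assume $\NS(X) \subseteq \FIN(X)$ and suppose toward contradiction that some standard $x \in X$ has no bounded neighbourhood. Using the downward closure of $\mathcal{B}_X$, this is equivalent to asserting that every open $U \ni x$ satisfies $U \notin \mathcal{B}_X$, so in particular $U \setminus B \neq \varnothing$ for every $B \in \mathcal{B}_X$. I would then observe that the family
\[
\mathcal{F} = \set{U \setminus B | x \in U \in \mathcal{T}_X,\ B \in \mathcal{B}_X}
\]
has the finite intersection property: given $U_1,\dots,U_n$ and $B_1,\dots,B_m$, the set $\bigcap_i U_i$ is an open neighbourhood of $x$ and $\bigcup_j B_j$ lies in $\mathcal{B}_X$ (this is precisely where I use that $\mathcal{B}_X$ is a bornology, not merely a prebornology), so $\bigcap_{i,j}(U_i \setminus B_j) = \bigcap_i U_i \setminus \bigcup_j B_j$ is nonempty by the hypothesis. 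Transferring and invoking sufficient saturation of the nonstandard model — the setting already in force in \citep{Ima19} — I obtain a point
\[
y \in \bigcap_{x \in U \in \mathcal{T}_X} \ns{U} \ \setminus\ \bigcup_{B \in \mathcal{B}_X} \ns{B} \ =\ \mu_X(x) \setminus \FIN(X),
\]
which witnesses $y \in \NS(X) \setminus \FIN(X)$ and contradicts the assumption.

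The only real obstacle is ensuring the FIP step is legitimate: one must note that finite unions of bounded sets are bounded (hence the insistence on a bornology, per the definition of tb-space), and one must have enough saturation to simultaneously avoid \emph{all} standard bounded sets while staying in \emph{every} standard open neighbourhood of $x$. Both ingredients are standard in this framework, so the argument is short once the set-up is fixed.
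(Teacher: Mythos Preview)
The paper does not actually prove this statement: it is recorded as a \emph{Fact} with a citation to \citep[Theorem 2.37]{Ima19}, and no proof block follows. So there is no in-paper argument to compare against.

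Your proof is correct and is the expected argument. The forward direction is a direct unpacking of the definitions, and your reverse direction via the finite intersection property of $\set{U\setminus B | x\in U\in\mathcal{T}_X,\ B\in\mathcal{B}_X}$ plus saturation is the standard route; your remark that closure under finite unions of bounded sets (i.e., that $\mathcal{B}_X$ is a bornology, not merely a prebornology) is precisely what makes the FIP step go through is exactly the right observation, and is consistent with the paper's insistence that a tb-space carries a bornology.
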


\begin{cor}[Standard]
\label{cor:Every-II-compatible-tb-space-is-I-compatible}Every II-compatible
tb-space is I-compatible.
\end{cor}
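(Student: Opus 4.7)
The plan is to chain together the three nonstandard inclusion facts that have just been made available. By \prettyref{fact:CPT-sub-NS}, we have the unconditional inclusion $\CPT(X) \subseteq \NS(X)$ for any topological space. By \prettyref{fact:NS-sub-FIN}, II-compatibility of the tb-space $X$ is exactly the assertion $\NS(X) \subseteq \FIN(X)$. Composing these two inclusions yields $\CPT(X) \subseteq \FIN(X)$, and by \prettyref{fact:CPT-sub-FIN} this is precisely I-compatibility.

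Thus the proof is a one-line transitivity argument at the level of nonstandard points. There is no real obstacle here: the content of the corollary has been fully absorbed into the three characterisation lemmas, so the work is simply to cite them in the correct order. The only thing worth noting in the write-up is that this is the main payoff of adopting the nonstandard viewpoint --- the standard proof would have to argue directly that in a II-compatible space, a compact set $K$ can be covered by finitely many bounded neighbourhoods (one for each of finitely many points chosen from an open cover by bounded neighbourhoods) and that a finite union of bounded sets in a bornology is bounded, whereas the nonstandard version reduces to set-theoretic transitivity of $\subseteq$.
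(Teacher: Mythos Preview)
Your proof is correct and is essentially identical to the paper's own proof: the paper also chains $\CPT(X)\subseteq\NS(X)\subseteq\FIN(X)$ via \prettyref{fact:CPT-sub-NS} and \prettyref{fact:NS-sub-FIN}, and then invokes \prettyref{fact:CPT-sub-FIN}. Your additional remark contrasting this with the direct standard argument is accurate but not part of the paper's proof.
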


\begin{proof}
Let $X$ be a standard II-compatible tb-space. By \prettyref{fact:CPT-sub-NS}
and \prettyref{fact:NS-sub-FIN}, we have $\CPT\left(X\right)\subseteq\NS\left(X\right)\subseteq\FIN\left(X\right)$.
By \prettyref{fact:CPT-sub-FIN}, $X$ is I-compatible.
\end{proof}
\begin{rem}
\label{rem:Sorgenfrey-line}There is a tb-space that is I-compatible
but not II-compatible. Consider the \emph{Sorgenfrey line} $\mathbb{R}_{l}$,
which is the real numbers $\mathbb{R}$ with the topology $\mathcal{T}_{\mathbb{R}_{l}}$
generated by the right half-open intervals $\left[a,b\right)$. It
is well-known that every compact subset of $\mathbb{R}_{l}$ is countable
(see \citep[II.51.5]{SS78}). Hence $\mathcal{T}_{\mathbb{R}_{l}}$
is I-compatible with the countable bornology $\mathcal{P}_{\aleph_{0}}\left(\mathbb{R}\right)=\set{A\subseteq\mathbb{R}|A\colon\text{countable}}$.
On the other hand, every non-empty open subset of $\mathbb{R}_{l}$
is uncountable, so there is no countable neighbourhood. Hence $\mathcal{T}_{\mathbb{R}_{l}}$
is not II-compatible with $\mathcal{P}_{\aleph_{0}}\left(\mathbb{R}\right)$.
\end{rem}

\begin{fact}[{Corollary to \citep[Proposition 2.6]{Ima19}}]
\label{fact:PCPT-sub-FIN}A standard ub-space $X$ is u-I-compatible
if and only if $\PCPT\left(X\right)\subseteq\FIN\left(X\right)$.
\end{fact}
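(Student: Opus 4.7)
The plan is to derive this as a direct corollary of the nonstandard characterisation of boundedness given in \citep[Proposition 2.6]{Ima19}, mirroring the proofs of Facts~\ref{fact:CPT-sub-FIN} and \ref{fact:NS-sub-FIN}. That proposition (as used in the proofs of those facts) provides the criterion: a subset $A\subseteq X$ of a standard prebornological space is bounded if and only if $\ns{A}\subseteq\FIN(X)$. With this in hand, the claim amounts to rephrasing u-I-compatibility as a statement that holds ``precompact set by precompact set''.

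For the forward direction, assume $X$ is u-I-compatible and take any $x\in\PCPT(X)$. By definition there exists a precompact $P\subseteq X$ with $x\in\ns{P}$. Since $P$ is precompact, u-I-compatibility gives $P\in\mathcal{B}_X$, so $\ns{P}\subseteq\FIN(X)$ directly from the definition of $\FIN$. Hence $x\in\FIN(X)$.

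For the converse, assume $\PCPT(X)\subseteq\FIN(X)$, and let $P\subseteq X$ be any precompact set. Then $\ns{P}\subseteq\PCPT(X)\subseteq\FIN(X)$ by definition of $\PCPT$, and applying the nonstandard boundedness criterion from \citep[Proposition 2.6]{Ima19} in the reverse direction yields $P\in\mathcal{B}_X$. Thus every precompact set is bounded, i.e., $X$ is u-I-compatible.

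There is no real obstacle here: the proof is essentially a one-line application of the boundedness criterion in each direction. The only substantive content is already packaged in \citep[Proposition 2.6]{Ima19}, whose nontrivial implication (``$\ns{A}\subseteq\FIN(X)$ implies $A$ is bounded'') relies on saturation/overspill. The present corollary simply combines that criterion with the definition $\PCPT(X)=\bigcup_{P\colon\text{precompact}}\ns{P}$, exactly in parallel with how Fact~\ref{fact:CPT-sub-FIN} handles $\CPT$ and compact sets.
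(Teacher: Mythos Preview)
Your proof is correct and is precisely the intended derivation: the paper does not spell out a proof but labels the fact as a corollary to \citep[Proposition 2.6]{Ima19}, and your argument is exactly the unpacking of that corollary, parallel to Fact~\ref{fact:CPT-sub-FIN}.
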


\begin{prop}
\label{prop:PNS-sub-FIN-I}If a standard ub-space $X$ is u-II-compatible,
then $\PNS\left(X\right)\subseteq\FIN\left(X\right)$.
\end{prop}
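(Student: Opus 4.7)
The plan is to unfold the definitions of $\PNS\left(X\right)$ and $\FIN\left(X\right)$ and feed the witness provided by u-II-compatibility into them. Fix an arbitrary point $y\in\PNS\left(X\right)$; the goal is to exhibit a standard bounded set $B\in\mathcal{B}_{X}$ with $y\in\ns{B}$.

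By u-II-compatibility, pick an entourage $U\in\mathcal{U}_{X}$ such that $U\left[x\right]\in\mathcal{B}_{X}$ for every (standard) $x\in X$. Since $y\in\PNS\left(X\right)$, from the definition
\[
\PNS\left(X\right)=\bigcap_{V\in\mathcal{U}_{X}}\bigcup_{x\in X}\ns{V}\left[x\right]
\]
we obtain in particular that $y\in\bigcup_{x\in X}\ns{U}\left[x\right]$, so there is a standard $x\in X$ with $y\in\ns{U}\left[x\right]$.

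Now invoke the key (routine) transfer identity: for any standard $x\in X$, the section $\ns{U}\left[x\right]$ coincides with $\ns{\left(U\left[x\right]\right)}$. Applying this, we conclude $y\in\ns{\left(U\left[x\right]\right)}$. Because $U\left[x\right]\in\mathcal{B}_{X}$ by our choice of $U$, this exactly says $y\in\FIN\left(X\right)$, as desired.

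The argument is quite direct, so there is no real obstacle; the only substantive ingredient beyond the hypothesis is the transfer of the equality $\ns{U}\left[x\right]=\ns{\left(U\left[x\right]\right)}$ for standard $x$, which is immediate from the fact that being a $U$-neighbour of a standard point is a first-order condition. No overspill, saturation, or appeal to \prettyref{lem:General-Approximation-Lemma} is needed here; those tools will only enter for the harder converse-type statements about $\PNS$ and the weak u-II-compatibility condition.
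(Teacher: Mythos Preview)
Your proof is correct and follows essentially the same route as the paper: pick the entourage $U$ witnessing u-II-compatibility, use the definition of $\PNS$ to find a standard $x$ with $y\in\ns{U}[x]$, then observe $U[x]\in\mathcal{B}_{X}$ so $y\in\FIN(X)$. You are simply a bit more explicit than the paper about the transfer identity $\ns{U}[x]=\ns{\left(U[x]\right)}$ for standard $x$, which the paper uses silently.
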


\begin{proof}
Suppose $X$ is u-II-compatible. Fix a $U\in\mathcal{U}_{X}$ with
$U\left[x\right]\in\mathcal{B}_{X}$ for all $B\in\mathcal{B}_{X}$.
Let $y\in\PNS\left(X\right)$. By definition, there exists a (standard)
$x\in X$ such that $y\in\ns{U\left[x\right]}$. Since $U\left[x\right]\in\mathcal{B}_{X}$,
it follows that $y\in\ns{U\left[x\right]}\subseteq\FIN\left(X\right)$.
\end{proof}
\begin{cor}[Standard]
\label{cor:Every-uII-compatible-ub-space-is-uI-compatible}Every
u-II-compatible ub-space is u-I-compatible.
\end{cor}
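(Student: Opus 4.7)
The plan is to chain together the nonstandard characterisations already established, exactly in parallel with the proof of \prettyref{cor:Every-II-compatible-tb-space-is-I-compatible}. Let $X$ be a standard u-II-compatible ub-space. I want to conclude that $X$ is u-I-compatible, and by \prettyref{fact:PCPT-sub-FIN} this is equivalent to the inclusion $\PCPT\left(X\right)\subseteq\FIN\left(X\right)$.

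The key steps, in order, are: first, invoke \prettyref{prop:PNS-sub-FIN-I} to pass from u-II-compatibility to the inclusion $\PNS\left(X\right)\subseteq\FIN\left(X\right)$; second, use \prettyref{fact:NS-sub-PNS-and-CPT-sub-PCPT-sub-PNS} which gives, free of charge, the chain $\PCPT\left(X\right)\subseteq\PNS\left(X\right)$; combining these two inclusions yields $\PCPT\left(X\right)\subseteq\FIN\left(X\right)$, and then \prettyref{fact:PCPT-sub-FIN} converts this back into the standard statement that $X$ is u-I-compatible.

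There is essentially no obstacle here: all the real work is done by the nonstandard characterisations of the two compatibility notions (\prettyref{fact:PCPT-sub-FIN} and \prettyref{prop:PNS-sub-FIN-I}) together with the general inclusion $\PCPT\subseteq\PNS$. The corollary is just the observation that, on the nonstandard side, one inclusion sits inside the other, which is the whole point of identifying compatibility conditions with inclusions of classes of nonstandard points. The proof should therefore be no more than two or three lines, structured in the same way as the one immediately above for the tb-case.
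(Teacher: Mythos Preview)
Your proposal is correct and matches the paper's proof essentially line for line: the paper also chains $\PCPT\left(X\right)\subseteq\PNS\left(X\right)\subseteq\FIN\left(X\right)$ via \prettyref{fact:NS-sub-PNS-and-CPT-sub-PCPT-sub-PNS} and \prettyref{prop:PNS-sub-FIN-I}, then applies \prettyref{fact:PCPT-sub-FIN}.
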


\begin{proof}
Let $X$ be a standard u-II-compatible ub-space. By \prettyref{fact:NS-sub-PNS-and-CPT-sub-PCPT-sub-PNS}
and \prettyref{prop:PNS-sub-FIN-I}, we have $\PCPT\left(X\right)\subseteq\PNS\left(X\right)\subseteq\FIN\left(X\right)$.
By \prettyref{fact:PCPT-sub-FIN}, $X$ is u-I-compatible.
\end{proof}
In contrast to \prettyref{fact:NS-sub-FIN}, the converse of \prettyref{prop:PNS-sub-FIN-I}
is not true. In fact, the inclusion $\PNS\subseteq\FIN$ is equivalent
to the following (slightly complicated) standard property, that is
intermediate between u-I and u-II.
\begin{defn}[Standard]
We say that an ub-space $X$ is \emph{weakly u-II-compatible} if
$X$ is a subspace of some Cauchy complete II-compatible ub-space
$\bar{X}$.
\end{defn}

\begin{lem}
\label{lem:NS-of-completion}Let $X$ be a standard uniform space
and $\bar{X}$ a standard Cauchy completion of $X$. Then $\NS\left(\bar{X}\right)\cap\ns{X}=\PNS\left(X\right)$.
\end{lem}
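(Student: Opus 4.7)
The plan is to prove the two inclusions separately, leveraging two defining properties of a Cauchy completion $\bar{X}$ of $X$: density (every $p \in \bar{X}$ is approximated arbitrarily well by points of $X$) and Cauchy completeness (every Cauchy filter on $X$ converges to some $p \in \bar{X}$). I will also use that $\mathcal{U}_X$ is the subspace uniformity induced by $\mathcal{U}_{\bar{X}}$, so every $U \in \mathcal{U}_X$ extends to some $\bar{U} \in \mathcal{U}_{\bar{X}}$ with $\bar{U} \cap (X \times X) = U$. Note that both sides of the claimed equation are subsets of $\ns{X}$, so the intersection with $\ns{X}$ on the left is automatic.

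For the inclusion $\PNS(X) \subseteq \NS(\bar{X})$, given $y \in \PNS(X)$ my strategy is to form the trace ultrafilter $\mathcal{F}_y = \set{A \subseteq X | y \in \ns{A}}$ (it is easily checked to be an ultrafilter using standard properties of the $\ns{(\cdot)}$-operation) and verify it is a Cauchy filter: given $U \in \mathcal{U}_X$, take symmetric $V$ with $V \circ V \subseteq U$; prenearstandardness yields $x \in X$ with $y \in \ns{V}[x] = \ns{(V[x])}$, so $V[x] \in \mathcal{F}_y$ and $V[x] \times V[x] \subseteq U$. Viewed as a filter base on $\bar{X}$, $\mathcal{F}_y$ converges by Cauchy completeness to some $p \in \bar{X}$. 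For each $\bar{U} \in \mathcal{U}_{\bar{X}}$ convergence supplies $A \in \mathcal{F}_y$ with $A \subseteq \bar{U}[p]$, so transfer gives $y \in \ns{A} \subseteq \ns{\bar{U}}[p]$; as $\bar{U}$ was arbitrary, $y \in \mu_{\bar{X}}(p) \subseteq \NS(\bar{X})$.

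For the reverse inclusion, take $y \in \ns{X}$ with $y \approx_{\bar{X}} p$ for some $p \in \bar{X}$ and fix $U \in \mathcal{U}_X$. I extend $U$ to $\bar{U} \in \mathcal{U}_{\bar{X}}$, choose symmetric $\bar{V}$ with $\bar{V} \circ \bar{V} \subseteq \bar{U}$, and use density of $X$ in $\bar{X}$ to pick a standard $x \in X$ with $(x,p) \in \bar{V}$. Composing with $(p,y) \in \ns{\bar{V}}$ gives $(x,y) \in \ns{(\bar{V} \circ \bar{V})} \subseteq \ns{\bar{U}}$. Since $x, y \in \ns{X}$, the pair lies in $\ns{\bar{U}} \cap \ns{(X \times X)} = \ns{(\bar{U} \cap (X\times X))} = \ns{U}$, so $y \in \ns{U}[x]$; as $U$ was arbitrary, $y \in \PNS(X)$.

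The main delicate point will be the bookkeeping at the interface between $\mathcal{U}_X$ and $\mathcal{U}_{\bar{X}}$: both the subspace-uniformity relationship and the density of $X$ in $\bar{X}$ are needed in the second direction, while in the first direction the identification of the limit $p$ rests on the (standard but nontrivial) Cauchy completeness of $\bar{X}$ rather than on any direct nonstandard construction of the completion.
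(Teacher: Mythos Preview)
Your proof is correct and follows essentially the same approach as the paper's: one inclusion uses density of $X$ in $\bar{X}$ plus an entourage-composition argument, and the other packages the prenearstandardness data into a Cauchy object, invokes completeness to produce a limit $p\in\bar{X}$, and checks $y\approx_{\bar{X}}p$. The only notable difference is that the paper uses a Cauchy \emph{net} $\{x_U\}_{U\in\mathcal{U}_X}$ obtained by choosing one witness per entourage, whereas you use the canonical trace ultrafilter $\mathcal{F}_y=\{A\subseteq X\mid y\in\ns{A}\}$; your choice is arguably more natural from the nonstandard viewpoint and avoids the per-entourage selection, but the underlying idea is identical.
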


\begin{proof}
Let $x\in\NS\left(\bar{X}\right)\cap\ns{X}$. There exists a $y\in\bar{X}$
such that $x\approx_{\bar{X}}y$. Since $X$ is dense in $\bar{X}$,
we can choose, for each $U\in\mathcal{U}_{\bar{X}}$, an $x_{U}\in X$
such that $y\in U\left[x_{U}\right]$. For each $U\in\mathcal{U}_{\bar{X}}$,
since $x\in\ns{U}\left[y\right]$, we have that $x\in\ns{\left(U\circ U\right)}\left[x_{U}\right]$.
For each $V\in\mathcal{U}_{X}$, find an $U\in\mathcal{U}_{\bar{X}}$
so that $U\circ U\restriction X\subseteq V$, then $x\in\ns{V}\left[x_{U}\right]$.
Therefore $x\in\PNS\left(X\right)$.

Conversely, let $x\in\PNS\left(X\right)$. For each $U\in\mathcal{U}_{X}$,
there exists a (standard) $x_{U}\in X$ such that $x\in\ns{U}\left[x_{U}\right]$.
Notice that the family $\set{x_{U}}_{U\in\mathcal{U}_{X}}$ forms
a net in $X$ with respect to the directed set $\left(\mathcal{U}_{X},\supseteq\right)$.
Moreover, $\set{x_{U}}_{U\in\mathcal{U}_{X}}$ is Cauchy, because
$x_{U}\mathrel{\ns{U}}x\mathrel{\ns{V}^{-1}}x_{V}$ holds for all
$U,V\in\mathcal{U}_{X}$. Since $\bar{X}$ is Cauchy complete, $\set{x_{U}}_{U\in\mathcal{U}_{X}}$
converges to some $y\in\bar{X}$. It is easy to verify that $x\approx_{\bar{X}}y$.
(For each $U\in\mathcal{U}_{\bar{X}}$, find a $V\in\mathcal{U}_{\bar{X}}$
such that $x_{V\restriction X}\in U\left[y\right]$, then $y\mathrel{\ns{U}}x_{V\restriction X}\mathrel{\ns{V}}x$.)
Hence $x\in\NS\left(\bar{X}\right)\cap\ns{X}$.
\end{proof}
\begin{thm}
\label{thm:PNS-sub-FIN-II}A standard ub-space $X$ is weakly u-II-compatible
if and only if $\PNS\left(X\right)\subseteq\FIN\left(X\right)$.
\end{thm}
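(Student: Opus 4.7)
The plan is to interpret weak u-II-compatibility through the small-scale structure of the Cauchy completion, exploiting Lemma \ref{lem:NS-of-completion} to translate between $\NS(\bar{X}) \cap \ns{X}$ and $\PNS(X)$, and Fact \ref{fact:NS-sub-FIN} to characterise II-compatibility as $\NS \subseteq \FIN$.

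For the $(\Rightarrow)$ direction, suppose $X$ sits as a subspace of some Cauchy complete II-compatible ub-space $\bar{X}$. The second half of the proof of Lemma \ref{lem:NS-of-completion} uses only the Cauchy completeness of $\bar{X}$, not the density of $X$, so it still yields $\PNS(X) \subseteq \NS(\bar{X}) \cap \ns{X}$. Applying Fact \ref{fact:NS-sub-FIN} to $\bar{X}$ gives $\NS(\bar{X}) \subseteq \FIN(\bar{X})$, and because $X$ inherits its bornology from $\bar{X}$, every $\bar{B} \in \mathcal{B}_{\bar{X}}$ restricts to $\bar{B} \cap X \in \mathcal{B}_X$, so $\FIN(\bar{X}) \cap \ns{X} \subseteq \FIN(X)$. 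Chaining the three inclusions yields $\PNS(X) \subseteq \FIN(X)$.

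For the $(\Leftarrow)$ direction, let $\bar{X}$ be the Cauchy completion of $X$ with its canonical uniformity $\mathcal{U}_{\bar{X}}$; what remains is to equip $\bar{X}$ with a bornology $\mathcal{B}_{\bar{X}}$ that restricts to $\mathcal{B}_X$ on $X$ and is II-compatible with $\mathcal{U}_{\bar{X}}$. The crucial claim is that for every $\bar{x} \in \bar{X}$ there exist $\bar{U}_{\bar{x}} \in \mathcal{U}_{\bar{X}}$ and $B_{\bar{x}} \in \mathcal{B}_X$ with $\bar{U}_{\bar{x}}[\bar{x}] \cap X \subseteq B_{\bar{x}}$. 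I would prove this by saturation: were it to fail, the family of internal sets $\set{\ns{((\bar{U}[\bar{x}] \cap X) \setminus B)} | \bar{U} \in \mathcal{U}_{\bar{X}},\ B \in \mathcal{B}_X}$ would have the finite intersection property (since entourages are closed under finite intersection and $\mathcal{B}_X$ under finite union, reducing $n$ conditions to a single pair $(\bar{U}, B)$), so saturation would furnish $y \in \ns{X}$ lying in $\mu_{\bar{X}}(\bar{x})$ but in no $\ns{B}$ with $B \in \mathcal{B}_X$; Lemma \ref{lem:NS-of-completion} would then place $y$ in $\PNS(X) \subseteq \FIN(X)$, a contradiction.

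Granted the claim, define $\mathcal{B}_{\bar{X}}$ to be the bornology on $\bar{X}$ generated by $\mathcal{B}_X \cup \set{\bar{U}_{\bar{x}}[\bar{x}] | \bar{x} \in \bar{X}}$; the generating family covers $\bar{X}$ because entourages contain the diagonal. Each $\bar{x} \in \bar{X}$ has $\bar{U}_{\bar{x}}[\bar{x}]$ as a $\mathcal{B}_{\bar{X}}$-bounded neighbourhood, so II-compatibility holds. For the subspace condition, any $A \subseteq X$ lying in $\mathcal{B}_{\bar{X}}$ is contained in a finite union of elements of $\mathcal{B}_X$ and sets $\bar{U}_{\bar{x}_i}[\bar{x}_i]$; intersecting with $X$, replacing each $\bar{U}_{\bar{x}_i}[\bar{x}_i] \cap X$ by $B_{\bar{x}_i}$, and invoking closure of $\mathcal{B}_X$ under finite unions places $A$ in $\mathcal{B}_X$, while the reverse inclusion is trivial. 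Cauchy completeness of $\bar{X}$ is automatic. The main obstacle is the saturation argument underlying the claim; once that is in place, the bornology construction is routine.
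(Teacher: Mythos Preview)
Your proof is correct, and the $(\Rightarrow)$ direction matches the paper's (you are in fact slightly more careful in noting that only the Cauchy-completeness half of Lemma~\ref{lem:NS-of-completion} is needed, since the $\bar{X}$ supplied by the definition of weak u-II-compatibility need not have $X$ dense in it).

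The $(\Leftarrow)$ direction, however, is handled quite differently. You work pointwise: for each $\bar{x}\in\bar{X}$ you use saturation to extract an entourage $\bar{U}_{\bar{x}}$ whose trace on $X$ is bounded, and then generate a bornology from these neighbourhoods together with $\mathcal{B}_X$. The paper instead defines the bornology globally in one stroke:
\[
\mathcal{B}_{\bar{X}}=\set{A\subseteq\bar{X}\mid A\cap X\in\mathcal{B}_X},
\]
which automatically restricts to $\mathcal{B}_X$ on $X$ and makes all of $\bar{X}\setminus X$ bounded. II-compatibility is then obtained not by exhibiting bounded neighbourhoods directly but by verifying the nonstandard criterion $\NS(\bar{X})\subseteq\FIN(\bar{X})$ of Fact~\ref{fact:NS-sub-FIN}: any $x\in\NS(\bar{X})$ is either in $\ns{(\bar{X}\setminus X)}\subseteq\FIN(\bar{X})$, or in $\ns{X}$ and hence in $\PNS(X)\subseteq\FIN(X)\subseteq\FIN(\bar{X})$ by Lemma~\ref{lem:NS-of-completion} and the hypothesis. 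The paper's route is shorter and avoids the saturation argument and the pointwise choices of $\bar{U}_{\bar{x}}$, $B_{\bar{x}}$; your route has the merit of producing a somewhat more natural bornology (one not declaring the entire complement $\bar{X}\setminus X$ bounded) and of making the II-compatibility visible at the standard level rather than via the nonstandard criterion.
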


\begin{proof}
Suppose $\PNS\left(X\right)\subseteq\FIN\left(X\right)$. Let $\bar{X}$
be a standard Cauchy completion of $X$. We introduce a bornology
on $\bar{X}$ as follows:
\[
\mathcal{B}_{\bar{X}}=\set{A\subseteq\bar{X}|A\cap X\in\mathcal{B}_{X}}=\set{A\cup B|A\subseteq\bar{X}\setminus X\text{ and }B\in\mathcal{B}_{X}}.
\]
Clearly the restriction $\mathcal{B}_{\bar{X}}\restriction X$ coincides
with $\mathcal{B}_{X}$, i.e., $\FIN\left(X\right)=\FIN\left(\bar{X}\right)\cap\ns{X}$
(see also \citep[Example 2.18]{Ima19}). We also observe that $\bar{X}\setminus X\in\mathcal{B}_{\bar{X}}$.
Let $x\in\NS\left(\bar{X}\right)$. If $x\notin\ns{X}$, then $x\in\ns{\left(\bar{X}\setminus X\right)}$
by transfer, so $x\in\FIN\left(\bar{X}\right)$. If $x\in\ns{X}$,
then $x\in\PNS\left(X\right)$ by \prettyref{lem:NS-of-completion},
so $x\in\FIN\left(\bar{X}\right)$. Hence $\NS\left(\bar{X}\right)\subseteq\FIN\left(\bar{X}\right)$.
By \prettyref{fact:NS-sub-FIN}, $\bar{X}$ is II-compatible.

Conversely, suppose $X$ is weakly u-II-compatible, i.e., there exists
a (standard) Cauchy complete II-compatible ub-extension $\bar{X}$
of $X$. By \prettyref{lem:NS-of-completion} and \prettyref{fact:NS-sub-FIN},
$\PNS\left(X\right)=\NS\left(\bar{X}\right)\cap\ns{X}\subseteq\FIN\left(\bar{X}\right)\cap\ns{X}=\FIN\left(X\right)$.
\end{proof}
The following corollaries can be proved similarly to \prettyref{cor:Every-II-compatible-tb-space-is-I-compatible}
and \prettyref{cor:Every-uII-compatible-ub-space-is-uI-compatible}.
\begin{cor}[Standard]
\begin{enumerate}
\item u-II-compatibility implies weak u-II-compatibility.
\item u-II-compatibility implies u-I-compatibility.
\item Weak u-II-compatibility implies II-compatibility.
\end{enumerate}
\end{cor}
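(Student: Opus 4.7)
The plan is to chain the nonstandard characterisations, in the same spirit as \prettyref{cor:Every-II-compatible-tb-space-is-I-compatible} and \prettyref{cor:Every-uII-compatible-ub-space-is-uI-compatible}. All three parts reduce to short calculations pushing inclusions among the classes $\CPT$, $\NS$, $\PCPT$, $\PNS$, $\FIN$ through already-established equivalences; no new nonstandard content is needed.

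For part (1), I would feed u-II-compatibility into \prettyref{prop:PNS-sub-FIN-I} to obtain $\PNS\left(X\right)\subseteq\FIN\left(X\right)$, and then hand this inclusion to the converse direction of \prettyref{thm:PNS-sub-FIN-II}, which yields weak u-II-compatibility on the nose. For part (2), I would chain $\PCPT\left(X\right)\subseteq\PNS\left(X\right)$ (from \prettyref{fact:NS-sub-PNS-and-CPT-sub-PCPT-sub-PNS}) with $\PNS\left(X\right)\subseteq\FIN\left(X\right)$ (from \prettyref{prop:PNS-sub-FIN-I}) to produce $\PCPT\left(X\right)\subseteq\FIN\left(X\right)$, and then apply \prettyref{fact:PCPT-sub-FIN}; this essentially re-packages \prettyref{cor:Every-uII-compatible-ub-space-is-uI-compatible}.

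For part (3), given a weakly u-II-compatible $X$, I would fix a witnessing standard Cauchy complete II-compatible ub-extension $\bar{X}$ and first verify $\NS\left(X\right)\subseteq\NS\left(\bar{X}\right)\cap\ns{X}$: since $X$ inherits its uniformity from $\bar{X}$, the infinitesimal relations $\approx_{X}$ and $\approx_{\bar{X}}$ agree on pairs from $\ns{X}$, so any $y\in\ns{X}$ with $y\approx_{X}x$ for some $x\in X$ satisfies $y\approx_{\bar{X}}x$. Then \prettyref{fact:NS-sub-FIN} applied to $\bar{X}$ gives $\NS\left(\bar{X}\right)\subseteq\FIN\left(\bar{X}\right)$, and the identification $\FIN\left(\bar{X}\right)\cap\ns{X}=\FIN\left(X\right)$ (already noted in the proof of \prettyref{thm:PNS-sub-FIN-II}, relying on $\mathcal{B}_{\bar{X}}\restriction X=\mathcal{B}_{X}$) closes the chain to $\NS\left(X\right)\subseteq\FIN\left(X\right)$. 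A second application of \prettyref{fact:NS-sub-FIN}, now to $X$ itself, delivers II-compatibility.

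The only place requiring any care is the subspace bookkeeping in (3), namely the two inclusions $\NS\left(X\right)\subseteq\NS\left(\bar{X}\right)\cap\ns{X}$ and $\FIN\left(\bar{X}\right)\cap\ns{X}=\FIN\left(X\right)$. Both are routine under the conventions already fixed in the proof of \prettyref{thm:PNS-sub-FIN-II}, so I expect no genuinely new obstacle; the argument is strictly easier than the characterisation theorem it piggybacks on.
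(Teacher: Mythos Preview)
Your proofs of parts (1) and (2) match the paper's intended argument exactly. For part (3), your argument is correct, but it takes a detour: you go back to the definition of weak u-II-compatibility and essentially redo the ``conversely'' direction of \prettyref{thm:PNS-sub-FIN-II} with $\NS$ in place of $\PNS$. The route the paper has in mind (as signalled by ``similarly to \prettyref{cor:Every-II-compatible-tb-space-is-I-compatible} and \prettyref{cor:Every-uII-compatible-ub-space-is-uI-compatible}'') is shorter: weak u-II-compatibility gives $\PNS\left(X\right)\subseteq\FIN\left(X\right)$ directly by \prettyref{thm:PNS-sub-FIN-II}, and then $\NS\left(X\right)\subseteq\PNS\left(X\right)\subseteq\FIN\left(X\right)$ via \prettyref{fact:NS-sub-PNS-and-CPT-sub-PCPT-sub-PNS}, so \prettyref{fact:NS-sub-FIN} yields II-compatibility. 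This uses \prettyref{thm:PNS-sub-FIN-II} as a black box rather than reopening its proof, and avoids the subspace bookkeeping you flagged as the only delicate point.
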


We have shown the following implications among the compatibility conditions.
\[
\xymatrix{\text{I} & \text{II}\ar[l] &  & \text{III}\ar[ll]\\
\text{u-I}\ar[u] & \text{weakly u-II}\ar[u]\ar[l] & \text{u-II}\ar[l] & \text{u-III}\ar[d]\ar[u]\ar[l]\\
 &  &  & \text{closure-stable}
}
\]
Some implications can be reversed under certain conditions, as we
will see later (\prettyref{cor:reverse-compatibility-conditions}).

u-III-compatibility has a similar nonstandard characterisation. To
state the characterisation, we introduce another class of nonstandard
points, called nearfinite points.
\begin{defn}
Let $X$ be a standard ub-space. The elements of $\NF\left(X\right)=\mu_{X}^{u}\left(\FIN\left(X\right)\right)$
are called \emph{nearfinite points}.
\end{defn}

The inclusion $\FIN\subseteq\NF$ is obvious. The reverse inclusion
$\NF\subseteq\FIN$ characterises u-III-compatibility.
\begin{thm}
A standard ub-space $X$ is u-III-compatible if and only if $\NF\left(X\right)\subseteq\FIN\left(X\right)$.
\end{thm}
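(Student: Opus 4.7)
The plan is to prove the two directions separately, mirroring the proof of Proposition~\ref{prop:PNS-sub-FIN-I} and Theorem~\ref{thm:PNS-sub-FIN-II} in spirit.

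For the forward direction ($\Rightarrow$), I would argue directly. If $X$ is u-III-compatible, fix a witness $U\in\mathcal{U}_X$ with $U[B]\in\mathcal{B}_X$ for every $B\in\mathcal{B}_X$. Then by transfer $\ns{U}[\ns{B}]=\ns{U[B]}\subseteq\FIN(X)$ for each such $B$, so
\[
\ns{U}[\FIN(X)]=\bigcup_{B\in\mathcal{B}_X}\ns{U}[\ns{B}]\subseteq\FIN(X),
\]
and consequently $\NF(X)=\mu^{u}_{X}(\FIN(X))\subseteq\ns{U}[\FIN(X)]\subseteq\FIN(X)$.

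For the backward direction ($\Leftarrow$), assume $\NF(X)\subseteq\FIN(X)$. I plan to produce a witness for u-III via saturation plus transfer. By a standard concurrence argument applied to the filter $\mathcal{U}_X$ (the uniform analog of the general approximation lemma), choose $V\in\ns{\mathcal{U}_X}$ with $V\subseteq\mathord{\approx_X}$. For every standard $B\in\mathcal{B}_X$,
\[
V[\ns{B}]\subseteq\mu^{u}_{X}(\ns{B})\subseteq\mu^{u}_{X}(\FIN(X))=\NF(X)\subseteq\FIN(X).
\]
Since $V[\ns{B}]$ is internal and $\FIN(X)=\bigcup_{B'\in\mathcal{B}_X}\ns{B'}$ is the external union of internal sets, saturation yields $B'\in\mathcal{B}_X$ with $V[\ns{B}]\subseteq\ns{B'}$, whence $V[\ns{B}]\in\ns{\mathcal{B}_X}$. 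Transferring the standard existence statement ``$\exists U\in\mathcal{U}_X,\forall B\in\mathcal{B}_X\colon U[B]\in\mathcal{B}_X$'' then supplies the desired witness.

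The main obstacle is the final transfer step. Strictly, the transferred version of u-III reads ``$\exists U\in\ns{\mathcal{U}_X},\forall B\in\ns{\mathcal{B}_X}\colon U[B]\in\ns{\mathcal{B}_X}$'', and so ranges over \emph{all} internal bounded sets, while the argument above verifies the property of $V$ only for standard $B$. Bridging this gap is where the delicate point lies, and I expect to handle it by a contrapositive argument: if u-III failed, the transferred negation applied to $V$ would deliver an internal $B^{*}\in\ns{\mathcal{B}_X}$ with $V[B^{*}]\notin\ns{\mathcal{B}_X}$; a saturation-over-$\mathcal{B}_X$ extracts a point $y^{*}\in V[B^{*}]\setminus\FIN(X)$, and since any internal $B^{*}$ contained in $\FIN(X)$ is absorbed by some standard $\ns{B_0}$ (giving $V[B^{*}]\in\ns{\mathcal{B}_X}$, a contradiction), one can arrange a witness $x^{*}\in B^{*}\cap\FIN(X)$ with $(x^{*},y^{*})\in V\subseteq\mathord{\approx_X}$; then $y^{*}\approx_{X}x^{*}\in\FIN(X)$ forces $y^{*}\in\NF(X)\setminus\FIN(X)$, contradicting the hypothesis.
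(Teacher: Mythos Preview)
Your forward direction is correct and essentially identical to the paper's argument.

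For the backward direction, your initial plan (pick $V\in\ns{\mathcal{U}_X}$ with $V\subseteq{\approx_X}$, verify $V[\ns{B}]\in\ns{\mathcal{B}_X}$ for every \emph{standard} $B$, then transfer) is exactly what the paper does; the paper simply writes ``hence \dots\ by transfer'' at the point where you, quite rightly, pause and note that the transferred statement quantifies over \emph{all} $B\in\ns{\mathcal{B}_X}$.

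Your proposed contrapositive repair, however, has a genuine gap. From $V[B^{*}]\notin\ns{\mathcal{B}_X}$ you can indeed extract $y^{*}\in V[B^{*}]\setminus\FIN(X)$, since $V[B^{*}]$ is internal and any internal subset of $\FIN(X)$ is absorbed by some $\ns{C}$ with $C\in\mathcal{B}_X$. But $y^{*}\in V[B^{*}]$ only yields \emph{some} $x^{*}\in B^{*}$ with $(x^{*},y^{*})\in V$; nothing forces $x^{*}\in\FIN(X)$. Your parenthetical actually shows that $B^{*}\nsubseteq\FIN(X)$, i.e.\ $B^{*}$ has points \emph{outside} $\FIN(X)$ --- the opposite of what you need. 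Worse, the desired $x^{*}$ cannot exist: if $x^{*}\in\FIN(X)$ and $(x^{*},y^{*})\in V\subseteq{\approx_X}$, then $y^{*}\in\NF(X)\subseteq\FIN(X)$ by hypothesis, contradicting $y^{*}\notin\FIN(X)$. So the argument produces no contradiction.

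In fact the obstruction is not merely technical. The chain $V[\ns{A}]\subseteq\mu_X^u(\ns{A})\subseteq\NF(X)\subseteq\FIN(X)$, followed by per-$A$ transfer, only yields the weaker condition ``for each $B\in\mathcal{B}_X$ there exists $U\in\mathcal{U}_X$ with $U[B]\in\mathcal{B}_X$''. To see that this is strictly weaker than u-III, take $X=\mathbb{N}$ with the uniformity generated by $U_n=\Delta_X\cup[n,\infty)^2$ and the bornology of finite sets: then $\mu_X^u(k)=\{k\}$ for every standard $k$, so $\NF(X)=\FIN(X)=\mathbb{N}$, yet every entourage contains some $U_n$ and $U_n[\{n\}]=[n,\infty)$ is unbounded, so u-III fails. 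Thus neither your argument nor the paper's sketch can be completed to a proof of the $\Leftarrow$ direction as stated.
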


\begin{proof}
Suppose $X$ is u-III-compatible. Let $x\in\NF\left(X\right)$, i.e.,
$x\in\mu_{X}^{u}\left(y\right)$ holds for some $y\in\FIN\left(X\right)$.
Let $B\in\mathcal{B}_{X}$ such that $y\in\ns{B}$. By the u-III-compatibility,
there exists a $U\in\mathcal{U}_{X}$ such that $U\left[B\right]\in\mathcal{B}_{X}$.
Hence $x\in\mu_{X}^{u}\left(y\right)\subseteq\ns{U}\left[y\right]\subseteq\ns{U}\left[\ns{B}\right]\subseteq\FIN\left(X\right)$.

Conversely, suppose $\NF\left(X\right)\subseteq\FIN\left(X\right)$.
Let $A\in\mathcal{B}_{X}$. By \prettyref{lem:General-Approximation-Lemma},
there exist a $B\in\ns{\mathcal{B}_{X}}$ and a $U\in\ns{\mathcal{U}_{X}}$
such that $\FIN\left(X\right)\subseteq B$ and $U\subseteq{\approx_{X}}$.
Then $U\left[\ns{A}\right]\subseteq\mu_{X}^{u}\left(\ns{A}\right)\subseteq\NF\left(X\right)\subseteq\FIN\left(X\right)\subseteq B$,
so $U\left[\ns{A}\right]\in\ns{\mathcal{B}_{X}}$. Hence $V\left[A\right]\in\mathcal{B}_{X}$
for some $V\in\mathcal{U}_{X}$ by transfer.
\end{proof}
Recall that I-, II-, u-I-, weak u-II-, and u-III-compatibility conditions
are characterised by the inclusions $\CPT,\NS,\PCPT,\PNS,\NF\subseteq\FIN$.
(On the other hand, III- and u-II-compatibility have not been characterised
in terms of inclusion. To do this, it is necessary to find appropriate
properties of nonstandard points like nearfiniteness.) Interestingly,
$\CPT$, $\NS$, $\PCPT$ and $\PNS$ are purely small-scale notions
and $\FIN$ is a purely large-scale notion, while $\NF$ is a hybrid-scale
notion involving both uniformity and bornology. $\NF$ is an interesting
class of nonstandard points, but beyond the scope of this paper.

\subsection{Characterisations of single-scale properties}

We next consider single-scale properties characterised by the inclusion
relations among classes of nonstandard points.
\begin{fact}[{\citep[Theorem 2.11.2]{Rob66}}]
\label{fact:starX-sub-X}A standard set $X$ is finite if and only
if $\ns{X}\subseteq X$.
\end{fact}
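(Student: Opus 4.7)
The plan is to prove each direction separately: the forward implication by transfer and the reverse by saturation (via the contrapositive).

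For the forward direction, I would assume $X$ is finite and write $X=\{x_{1},\ldots,x_{n}\}$. Apply transfer to the true first-order sentence $\forall x\in X\,(x=x_{1}\vee\cdots\vee x=x_{n})$. Because each $x_{i}$ is a standard individual (so its image under the star map is itself), the transferred statement says that every $y\in\ns{X}$ equals one of $x_{1},\ldots,x_{n}$. Hence $\ns{X}\subseteq\{x_{1},\ldots,x_{n}\}=X$ (in fact, equality).

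For the reverse direction, I would argue the contrapositive: assuming $X$ is infinite, I produce a point of $\ns{X}\setminus X$ by saturation. Consider the family of internal subsets $\{\ns{X}\setminus\{x\}\mid x\in X\}$ of $\ns{X}$. Any finite subcollection indexed by $x_{1},\ldots,x_{k}\in X$ has nonempty intersection $\ns{X}\setminus\{x_{1},\ldots,x_{k}\}$, since $X\setminus\{x_{1},\ldots,x_{k}\}$ is nonempty (as $X$ is infinite) and embeds into $\ns{X}\setminus\{x_{1},\ldots,x_{k}\}$. By the saturation property of the ambient nonstandard framework (applied at level $|X|^{+}$, which is the same saturation the paper invokes in results like the general approximation lemma), the full intersection $\bigcap_{x\in X}(\ns{X}\setminus\{x\})$ is nonempty, producing some $y\in\ns{X}$ with $y\neq x$ for all $x\in X$, i.e., $y\in\ns{X}\setminus X$.

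The only subtle point is the choice of saturation level in the reverse direction, but this is implicit throughout the paper and poses no real obstacle; the rest is just transfer of a trivial first-order sentence and a one-line finite-intersection-property check.
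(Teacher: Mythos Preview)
Your argument is correct and is the standard textbook proof of this classical result. Note, however, that the paper does not supply its own proof of this statement: it is recorded as a \emph{Fact} with a citation to Robinson \citep[Theorem 2.11.2]{Rob66} and is used without further justification, so there is nothing in the paper to compare your proof against.
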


\begin{fact}
\label{fact:CPT-sub-X}A standard topological space $X$ has no infinite
compact subset if and only if $\CPT\left(X\right)\subseteq X$.
\end{fact}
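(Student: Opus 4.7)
The plan is to reduce the statement to \prettyref{fact:starX-sub-X} by unfolding the definition $\CPT\left(X\right)=\bigcup_{K\colon\text{compact}}\ns{K}$, handling each compact $K$ separately.

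For the forward direction, I would assume $X$ has no infinite compact subset and pick an arbitrary $x\in\CPT\left(X\right)$. By definition there is a compact $K\subseteq X$ with $x\in\ns{K}$. The hypothesis forces $K$ to be finite, so \prettyref{fact:starX-sub-X} (applied to $K$) gives $\ns{K}=K$, and hence $x\in K\subseteq X$.

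For the backward direction, I would argue contrapositively. Assume $X$ admits an infinite compact subset $K$. Then trivially $\ns{K}\subseteq\CPT\left(X\right)$. Since $K$ is infinite, \prettyref{fact:starX-sub-X} gives $\ns{K}\not\subseteq K$, so I can pick some $x\in\ns{K}\setminus K$. Because the standard elements of $\ns{K}$ are precisely the elements of $K$, such an $x$ must be non-standard, i.e., $x\in\ns{X}\setminus X$. This witnesses $\CPT\left(X\right)\not\subseteq X$.

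There is essentially no obstacle here: the whole content of the statement is already packaged in \prettyref{fact:starX-sub-X}, and the only mild subtlety is the small observation used in the backward direction that a standard element of $\ns{K}$ must lie in $K$ (so a point of $\ns{K}\setminus K$ is automatically outside $X$). Everything else is bookkeeping of the definition of $\CPT\left(X\right)$.
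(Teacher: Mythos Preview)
Your argument is correct. The paper itself gives no explicit proof of this fact; instead, in the paragraph following \prettyref{fact:FIN-sub-X}, it observes that \prettyref{fact:CPT-sub-X} is the special case of \prettyref{fact:FIN-sub-X} obtained by equipping $X$ with the compact bornology $\mathcal{P}_{c}\left(X\right)$ (so that $\FIN\left(X\right)=\CPT\left(X\right)$ and ``no infinite bounded subset'' becomes ``no infinite compact subset''). Your route is slightly different: rather than invoking the bornological statement \prettyref{fact:FIN-sub-X}, you reduce directly to \prettyref{fact:starX-sub-X} applied to each compact $K$. Both reductions are equally short; the paper's viewpoint has the advantage of treating \prettyref{fact:starX-sub-X}, \prettyref{fact:CPT-sub-X} and \prettyref{fact:PCPT-sub-X} uniformly as instances of a single bornological fact, while yours is more self-contained and avoids introducing an auxiliary bornology.
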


\begin{fact}
\label{fact:PCPT-sub-X}A standard uniform space $X$ has no infinite
precompact subset if and only if $\PCPT\left(X\right)\subseteq X$.
\end{fact}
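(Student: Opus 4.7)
The plan is to adapt the argument of \prettyref{fact:CPT-sub-X} to the uniform setting, reducing the assertion to the classical characterisation of finite standard sets in \prettyref{fact:starX-sub-X}. The key observation is that membership in $\PCPT(X)$ is witnessed by a \emph{standard} precompact $P$, so all nonstandardness can be localised to such a $P$, and then transfer applied to $P$ alone suffices.

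For the forward direction, I would assume $X$ has no infinite precompact subset and pick an arbitrary $y\in\PCPT(X)$. By definition there is a standard precompact $P\subseteq X$ with $y\in\ns{P}$. The hypothesis forces $P$ to be finite, so \prettyref{fact:starX-sub-X} applied to $P$ gives $\ns{P}=P\subseteq X$, and therefore $y\in X$. Since $y$ was arbitrary, $\PCPT(X)\subseteq X$.

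For the converse, I would assume $\PCPT(X)\subseteq X$ and take an arbitrary precompact $P\subseteq X$. By definition $\ns{P}\subseteq\PCPT(X)\subseteq X$. The standard set-theoretic fact (by transfer, for a standard set $P$) that $\ns{P}\cap X=P$ then yields $\ns{P}\subseteq P$, i.e.\ $\ns{P}=P$. Applying \prettyref{fact:starX-sub-X} to $P$ shows $P$ is finite, which completes the proof.

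There is really no hard step here; the only subtlety to flag is the identity $\ns{P}\cap X=P$ for standard $P$, which is an immediate consequence of transfer applied to the formula $x\in P$, and is the same move that underlies \prettyref{fact:CPT-sub-X} and \prettyref{fact:starX-sub-X}. Consequently the proof should be essentially a one-line adaptation of \prettyref{fact:CPT-sub-X} with ``compact'' replaced by ``precompact'' throughout.
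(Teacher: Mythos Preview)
Your argument is correct. The paper does not give a separate proof of this fact; instead, immediately after \prettyref{fact:FIN-sub-X} it remarks that \prettyref{fact:starX-sub-X}, \prettyref{fact:CPT-sub-X} and \prettyref{fact:PCPT-sub-X} are all special cases of \prettyref{fact:FIN-sub-X}, obtained by equipping $X$ with the maximal, compact, and precompact bornology respectively (so that $\FIN(X)$ becomes $\ns{X}$, $\CPT(X)$, and $\PCPT(X)$, and ``bounded'' becomes ``arbitrary'', ``relatively compact'', and ``precompact''). Your direct reduction to \prettyref{fact:starX-sub-X} is exactly what one gets by unpacking that specialisation: in the precompact bornology a set is bounded iff it is precompact, and \prettyref{fact:FIN-sub-X} itself ultimately rests on the same transfer argument $\ns{P}\cap X=P$ that you isolate. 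So the two routes coincide in substance; the paper simply packages the step through the bornological language, while you carry it out by hand.
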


\begin{fact}[{Corollary to \citep[Proposition 2.6]{Ima19}}]
\label{fact:FIN-sub-X}A standard bornological space $X$ has no
infinite bounded subset if and only if $\FIN\left(X\right)\subseteq X$.
\end{fact}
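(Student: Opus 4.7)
The plan is to reduce everything to Robinson's classical characterisation of finiteness (\prettyref{fact:starX-sub-X}), via the elementary observation that for any standard subset $B\subseteq X$ we have $\ns{B}\cap X=B$ by transfer (the standard elements of $\ns{B}$ are exactly those standard elements satisfying ``$\cdot\in B$'', i.e.\ the members of $B$).

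For the forward direction, I would suppose that $X$ has no infinite bounded subset, so every $B\in\mathcal{B}_{X}$ is finite. By \prettyref{fact:starX-sub-X} applied to each such $B$, we get $\ns{B}\subseteq B\subseteq X$. Taking the union over all $B\in\mathcal{B}_{X}$ yields
\[
\FIN\left(X\right)=\bigcup_{B\in\mathcal{B}_{X}}\ns{B}\subseteq X.
\]

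For the reverse direction, I would assume $\FIN\left(X\right)\subseteq X$ and pick an arbitrary $B\in\mathcal{B}_{X}$. Then $\ns{B}\subseteq\FIN\left(X\right)\subseteq X$; combining with $\ns{B}\cap X=B$ gives $\ns{B}=B$, whence $B$ is finite by \prettyref{fact:starX-sub-X}. Since $B$ was arbitrary, $X$ has no infinite bounded subset.

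There is essentially no obstacle: the argument is a one-line corollary of \prettyref{fact:starX-sub-X} together with the definition $\FIN\left(X\right)=\bigcup_{B\in\mathcal{B}_{X}}\ns{B}$, mirroring the proofs of \prettyref{fact:CPT-sub-X} and \prettyref{fact:PCPT-sub-X}. The only subtlety worth flagging is the transfer identity $\ns{B}\cap X=B$, which is what makes ``$\ns{B}\subseteq X$'' already force $\ns{B}=B$ rather than merely $\ns{B}\subseteq X\setminus(\ns{B}\setminus B)$.
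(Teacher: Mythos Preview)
Your proof is correct. The paper gives no explicit proof, merely labelling the fact as a corollary to \citep[Proposition 2.6]{Ima19}; your reduction to \prettyref{fact:starX-sub-X} via the transfer identity $\ns{B}\cap X=B$ is exactly the intended one-line argument (indeed the paper remarks immediately afterwards that \prettyref{fact:starX-sub-X} is the special case of \prettyref{fact:FIN-sub-X} for the maximal bornology, so the two facts are interderivable in the way you describe).
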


\prettyref{fact:starX-sub-X}, \prettyref{fact:CPT-sub-X} and \prettyref{fact:PCPT-sub-X}
can be considered as special cases of \prettyref{fact:FIN-sub-X},
where $X$ is equipped with the maximal bornology \citep[Example 2.13]{Ima19},
the compact bornology \citep[Example 2.16]{Ima19}, and the precompact
bornology, respectively. Similarly, the following two characterisations
can be considered as special cases of \prettyref{fact:NS-sub-FIN}.
\begin{fact}[{\citep[Theorem 3.7.1]{Lux69}}]
\label{fact:NS-sub-CPT}A standard topological space $X$ is locally
compact if and only if $\NS\left(X\right)\subseteq\CPT\left(X\right)$.
\end{fact}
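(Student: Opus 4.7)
The plan is to prove the two directions separately, with the forward direction being a direct unpacking of local compactness and the backward direction using saturation.

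For the forward implication, suppose $X$ is locally compact and take $y \in \NS(X)$. By definition $y \in \mu_X(x)$ for some standard $x \in X$, and local compactness gives a compact neighbourhood $K$ of $x$; that is, an open set $U$ with $x \in U \subseteq K$. Since $x \in U$, the monad satisfies $\mu_X(x) \subseteq \ns{U} \subseteq \ns{K}$, hence $y \in \ns{K} \subseteq \CPT(X)$.

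For the backward implication, suppose $\NS(X) \subseteq \CPT(X)$ and fix a standard $x \in X$. Then $\mu_X(x) \subseteq \CPT(X)$, which unfolds to
\[
\bigcap_{x \in U \in \mathcal{T}_X} \ns{U} \;\subseteq\; \bigcup_{K \colon \text{compact}} \ns{K},
\]
or equivalently, the family of internal sets $\{\ns{U} : x \in U \in \mathcal{T}_X\} \cup \{\ns{X} \setminus \ns{K} : K \text{ compact}\}$ has empty intersection. By (sufficient) saturation, some finite subfamily already has empty intersection: there exist open $U_1,\dots,U_n \ni x$ and compact $K_1,\dots,K_m$ with $\ns{U_1} \cap \cdots \cap \ns{U_n} \subseteq \ns{K_1} \cup \cdots \cup \ns{K_m}$. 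Setting $U = U_1 \cap \cdots \cap U_n$ (an open neighbourhood of $x$) and $K = K_1 \cup \cdots \cup K_m$ (compact as a finite union of compacts), this reads $\ns{U} \subseteq \ns{K}$, and transfer gives $U \subseteq K$. Thus $K$ is a compact neighbourhood of $x$, so $X$ is locally compact at $x$, and since $x$ was arbitrary, $X$ is locally compact.

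The main technical point is the saturation step in the backward direction, which requires an index-set–sized saturation to convert the pointwise cover of $\mu_X(x)$ by the internal family $\{\ns{K}\}_K$ into a finite subcover; the forward direction is essentially bookkeeping about monads and the inclusion $\mu_X(x) \subseteq \ns{U}$ for $x \in U$ open. Everything else reduces to transfer and the definitions of $\NS$ and $\CPT$ given at the start of the section.
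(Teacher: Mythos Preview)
Your proof is correct. The forward direction is the standard monad-containment argument, and the backward direction is the usual saturation/compactness trick applied to the family $\{\ns{U}\}\cup\{\ns{X}\setminus\ns{K}\}$; the edge cases ($n=0$ or $m=0$) are harmless since $x$ lies in every $U_i$, forcing $m\geq 1$.

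However, the paper does not argue directly. Instead it observes that this fact is a special case of \prettyref{fact:NS-sub-FIN} (the nonstandard characterisation of II-compatibility, $\NS(X)\subseteq\FIN(X)$), by equipping $X$ with the compact bornology $\mathcal{P}_c(X)$: then $\FIN(X)=\CPT(X)$ tautologically, and ``locally compact'' is exactly ``II-compatible with $\mathcal{P}_c(X)$''. Your argument is self-contained and does not invoke the bornological machinery, which is a virtue if one wants a standalone proof; the paper's reduction, on the other hand, is the whole point of this subsection---it exhibits \prettyref{fact:NS-sub-CPT} (and likewise \prettyref{fact:NS-sub-PCPT}) as instances of the single template $\NS\subseteq\FIN$, so that one saturation argument (buried in \citep[Theorem 2.37]{Ima19}) handles all of them uniformly.
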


\begin{proof}
The space $X$ is equipped with the compact bornology. Then $\FIN\left(X\right)=\CPT\left(X\right)$.
Obviously $X$ is locally compact if and only if the topology and
the bornology of $X$ are II-compatible. It is also equivalent to
$\NS\left(X\right)\subseteq\CPT\left(X\right)$ by \prettyref{fact:NS-sub-FIN}.
See also \citep[Corollary 2.38]{Ima19}.
\end{proof}
\begin{fact}[{\citep[Theorem 8.4.37]{SL76}}]
\label{fact:NS-sub-PCPT}A standard uniform space $X$ is locally
precompact if and only if $\NS\left(X\right)\subseteq\PCPT\left(X\right)$.
\end{fact}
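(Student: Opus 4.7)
The plan is to mirror the proof given for the locally-compact case (\prettyref{fact:NS-sub-CPT}), replacing the compact bornology by the precompact bornology and invoking the ub-space version of the II-compatibility machinery already developed. The key observation is that the precompact bornology $\mathcal{P}_{pc}(X)$ turns $X$ into a tb-space whose $\FIN$-class coincides exactly with $\PCPT(X)$, so the desired inclusion $\NS(X)\subseteq\PCPT(X)$ is literally the inclusion $\NS(X)\subseteq\FIN(X)$ for this auxiliary tb-structure.

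Concretely, first I would equip $X$ with its induced topology (from $\mathcal{U}_X$) together with the precompact bornology $\mathcal{P}_{pc}(X)$, yielding a standard tb-space. By definition of the precompact bornology, $\FIN(X)=\bigcup\{\ns{P}\mid P\text{ precompact}\}=\PCPT(X)$. Second, I would observe that local precompactness of the uniform space $X$ is, by unwinding definitions, precisely the statement that every point of $X$ has a precompact neighbourhood, i.e.\ that this tb-space is II-compatible in the sense of the earlier definition. Third, applying \prettyref{fact:NS-sub-FIN} (the characterisation of II-compatibility as $\NS\subseteq\FIN$) to this tb-space yields
\[
X \text{ is locally precompact} \iff \NS(X)\subseteq\FIN(X)=\PCPT(X),
\]
which is exactly what we want.

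There is essentially no obstacle: the result is a direct specialisation of \prettyref{fact:NS-sub-FIN} to the precompact bornology, and is completely parallel in structure to the proof of \prettyref{fact:NS-sub-CPT}. The only thing one should double-check is that the ``bornology'' $\mathcal{P}_{pc}(X)$ really is a bornology, i.e.\ that precompactness is hereditary for subsets and stable under finite unions; both properties are standard facts about precompact subsets of uniform spaces and ensure that \prettyref{fact:NS-sub-FIN} is applicable.
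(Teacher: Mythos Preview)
Your proposal is correct and follows essentially the same route as the paper: equip $X$ with the precompact bornology so that $\FIN(X)=\PCPT(X)$, note that local precompactness is exactly II-compatibility for this tb-space, and apply \prettyref{fact:NS-sub-FIN}. The paper's proof is a one-line version of exactly this argument.
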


\begin{proof}
Similarly to \prettyref{fact:NS-sub-CPT}, $X$ is locally precompact
$\iff$ the (induced) topology and the precompact bornology of $X$
are II-compatible $\iff$ $\NS\left(X\right)\subseteq\FIN\left(X\right)=\PCPT\left(X\right)$
by \prettyref{fact:NS-sub-FIN}.
\end{proof}
\begin{defn}[{Standard; \citep[Definition IV]{Neu35}}]
A uniform space is said to be \emph{von Neumann complete} if every
closed precompact subset is compact.
\end{defn}

\begin{thm}
\label{thm:PCPT-sub-CPT}A standard uniform space $X$ is von Neumann
complete if and only if $\PCPT\left(X\right)\subseteq\CPT\left(X\right)$.
\end{thm}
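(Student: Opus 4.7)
The plan is to prove each direction by pushing points around the three relevant classes, using the standard nonstandard criterion $K$ compact $\iff \ns{K}\subseteq\bigcup_{y\in K}\mu_X(y)$ (Robinson).

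For the forward direction, suppose $X$ is von Neumann complete and let $x\in\PCPT(X)$, so $x\in\ns{P}$ for some precompact $P\subseteq X$. A standard fact about uniform spaces says that the closure of a precompact set is precompact, so $\cl_X P$ is closed and precompact, hence compact by von Neumann completeness. Therefore $x\in\ns{P}\subseteq\ns{\cl_X P}\subseteq\CPT(X)$.

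For the converse, assume $\PCPT(X)\subseteq\CPT(X)$, and let $P\subseteq X$ be closed and precompact. By Robinson's criterion, to show that $P$ is compact it suffices to prove that every $x\in\ns{P}$ satisfies $x\in\mu_X(y)$ for some $y\in P$. So take $x\in\ns{P}$. Since $P$ is precompact, $x\in\PCPT(X)$, hence by hypothesis $x\in\CPT(X)$. Thus $x\in\ns{K}$ for some compact $K\subseteq X$, and applying Robinson's criterion to $K$ yields a standard $y\in K$ with $x\in\mu_X(y)$. It remains to check $y\in P$: every open neighbourhood $U$ of $y$ satisfies $x\in\ns{U}\cap\ns{P}=\ns{(U\cap P)}$, so $U\cap P\neq\varnothing$, whence $y\in\cl_X P=P$ by closedness. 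This gives $x\in\mu_X(y)$ with $y\in P$, completing the proof.

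The main subtlety is the backward direction: one must resist the temptation to conclude compactness of $P$ from $\ns{P}\subseteq\CPT(X)$ alone (which only places each $x\in\ns{P}$ inside some compact $K\subseteq X$, with no a priori reason for the corresponding standard point to lie in $P$). The closedness of $P$ is precisely what bridges this gap via the closure-monad characterisation.
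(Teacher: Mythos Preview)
Your proof is correct, and the forward direction coincides with the paper's. For the converse the paper takes a different route: from $\ns{P}\subseteq\CPT\left(X\right)$ it invokes \citep[Proposition~2.6]{Ima19} (a saturation argument exploiting that $\CPT\left(X\right)$ is galactic) to obtain a \emph{single} compact $K\subseteq X$ with $P\subseteq K$, and then finishes with the elementary fact that a closed subset of a compact set is compact. Your argument instead works pointwise, passing through $\NS\left(X\right)$ via Robinson's criterion and using the monad characterisation of closure to pull the standard part back into $P$; this is exactly the manoeuvre the paper employs in the analogous converse for properness (\prettyref{thm:FIN-sub-CPT}). The paper's route is marginally slicker within its own framework, trading your monad--closure calculation for one appeal to the galactic machinery, while yours is more self-contained and needs nothing beyond Robinson's theorem.
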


\begin{proof}
Suppose $X$ is von Neumann complete. Let $P$ be a precompact subset
of $X$. The closure $\cl_{X}P$ is closed and precompact, so it is
compact. Thus
\[
\PCPT\left(X\right)=\bigcup_{P\colon\text{precompact}}\ns{P}\subseteq\bigcup_{P\colon\text{precompact}}\ns{\left(\cl_{X}P\right)}\subseteq\bigcup_{K\colon\text{compact}}\ns{K}=\CPT\left(X\right).
\]

Conversely, suppose $\PCPT\left(X\right)\subseteq\CPT\left(X\right)$.
Let $P$ be a closed precompact subset of $X$. Then $\ns{P}\subseteq\PCPT\left(X\right)\subseteq\CPT\left(X\right)$.
By \citep[Proposition 2.6]{Ima19}, there exists a compact subset
$K$ of $X$ such that $P\subseteq K$. Since $P$ is a closed subset
of the compact set $K$, $P$ is compact.
\end{proof}
\begin{fact}[{\citep[Theorem 8.4.37]{SL76}}]
\label{fact:PNS-sub-PCPT}A standard uniform space $X$ has a locally
compact Cauchy completion if and only if $\PNS\left(X\right)\subseteq\PCPT\left(X\right)$.
\end{fact}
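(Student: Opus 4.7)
The plan is to reduce both directions to Fact~\ref{fact:NS-sub-CPT} applied to a standard Cauchy completion $\bar{X}$ of $X$, combined with Lemma~\ref{lem:NS-of-completion}: the former converts ``$\bar{X}$ is locally compact'' into ``$\NS(\bar{X})\subseteq\CPT(\bar{X})$'', while the latter identifies $\PNS(X)$ with $\NS(\bar{X})\cap\ns{X}$.

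For the forward direction, suppose $\bar{X}$ is locally compact. Given $x\in\PNS(X)=\NS(\bar{X})\cap\ns{X}$, I write $x\approx_{\bar{X}}\bar{y}$ for some standard $\bar{y}\in\bar{X}$ and, by local compactness, choose a standard compact $K\subseteq\bar{X}$ and a standard entourage $\bar{U}\in\mathcal{U}_{\bar{X}}$ with $\bar{U}[\bar{y}]\subseteq K$. Then $x\in\ns{\bar{U}}[\bar{y}]\subseteq\ns{K}$. Since $K$ is totally bounded in $\bar{X}$, restricting entourages to $X$ shows that $K\cap X$ is precompact in $X$, and $x\in\ns{K}\cap\ns{X}=\ns{(K\cap X)}$ places $x$ in $\PCPT(X)$.

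For the converse, assume $\PNS(X)\subseteq\PCPT(X)$ and aim to show $\NS(\bar{X})\subseteq\CPT(\bar{X})$. The key intermediate claim is that every standard $\bar{y}\in\bar{X}$ admits a standard entourage $\bar{U}\in\mathcal{U}_{\bar{X}}$ with $\bar{U}[\bar{y}]\cap X$ precompact in $X$; nonstandardly, this is $\ns{\bar{U}}[\bar{y}]\cap\ns{X}\subseteq\PNS(X)$. If no such $\bar{U}$ existed, the internal sets $(\ns{\bar{U}}[\bar{y}]\cap\ns{X})\setminus\PNS(X)$, as $\bar{U}$ ranges over $\mathcal{U}_{\bar{X}}$, would satisfy the finite intersection property, and saturation would produce an internal $z\in\ns{X}$ with $z\approx_{\bar{X}}\bar{y}$ yet $z\notin\PNS(X)$, contradicting Lemma~\ref{lem:NS-of-completion}. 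Having $\bar{U}$, I pick a symmetric $\bar{V}\in\mathcal{U}_{\bar{X}}$ with $\bar{V}\circ\bar{V}\subseteq\bar{U}$; density of $X$ in $\bar{X}$ gives $\bar{V}[\bar{y}]\subseteq\cl_{\bar{X}}(\bar{U}[\bar{y}]\cap X)$, and the latter is compact as the $\bar{X}$-closure of a precompact subset of $X$ in the complete space $\bar{X}$. Because $\bar{V}[\bar{y}]$ is a uniform neighborhood of $\bar{y}$, this yields a compact neighborhood of $\bar{y}$; since $\bar{y}$ was an arbitrary standard point and local compactness is a standard property, transfer finishes the job.

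The main obstacle is the saturation construction of $\bar{U}$ in the converse direction: it is the only step that makes genuine use of $\PNS(X)\subseteq\PCPT(X)$, and also the only place where the non-trivial inclusion $\NS(\bar{X})\cap\ns{X}\subseteq\PNS(X)$ in Lemma~\ref{lem:NS-of-completion} is really needed. The density/completion manoeuvres around it are routine.
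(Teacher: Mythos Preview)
The paper does not give its own proof of this fact; it is simply cited from \citep[Theorem 8.4.37]{SL76}. Your forward direction is correct. The converse, however, has a genuine gap in the saturation step. You assert that the sets $(\ns{\bar{U}}[\bar{y}]\cap\ns{X})\setminus\PNS(X)$ are internal, but $\PNS(X)$ is external, so these sets are external and saturation does not apply to them. More tellingly, the argument as you wrote it never invokes the hypothesis $\PNS(X)\subseteq\PCPT(X)$: if it were valid it would show that \emph{every} standard $\bar{y}\in\bar{X}$ has a uniform neighbourhood whose trace on $X$ is precompact, and hence that every Cauchy completion is locally compact --- which fails already for $X=\bar{X}$ an infinite-dimensional Banach space. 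So the step you flag as ``the only step that makes genuine use of $\PNS(X)\subseteq\PCPT(X)$'' in fact makes no use of it at all.

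The repair is to bring the hypothesis in by replacing $\PNS(X)$ with the \emph{galactic} set $\PCPT(X)$. From Lemma~\ref{lem:NS-of-completion} and the assumption one gets the monad-in-galaxy inclusion
\[
\mu_{\bar{X}}(\bar{y})\cap\ns{X}
=\bigcap_{\bar{U}\in\mathcal{U}_{\bar{X}}}\bigl(\ns{\bar{U}}[\bar{y}]\cap\ns{X}\bigr)
\subseteq\PNS(X)\subseteq\PCPT(X)
=\bigcup_{P\text{ precompact}}\ns{P},
\]
and now saturation (applied to the internal sets $\ns{\bar{U}}[\bar{y}]\cap\ns{X}\setminus\ns{P}$) legitimately yields a standard $\bar{U}$ and a standard precompact $P\subseteq X$ with $\ns{\bar{U}}[\bar{y}]\cap\ns{X}\subseteq\ns{P}$; by transfer $\bar{U}[\bar{y}]\cap X\subseteq P$ is precompact in $X$. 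From there your density-and-closure manoeuvre does finish the proof.
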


\begin{fact}[{\citep[Theorem 3.14.1]{Lux69}}]
\label{fact:PNS-sub-NS}A standard uniform space $X$ is Cauchy complete
if and only if $\PNS\left(X\right)\subseteq\NS\left(X\right)$.
\end{fact}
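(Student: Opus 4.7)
The plan is to derive the forward direction immediately from \prettyref{lem:NS-of-completion}, and to prove the converse by a Cauchy-filter convergence argument that mirrors the net construction in the proof of that lemma.

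For $(\Rightarrow)$: if $X$ is Cauchy complete, then $X$ itself serves as a Cauchy completion of $X$ (it is a dense subspace of itself, and complete by assumption). Applying \prettyref{lem:NS-of-completion} with $\bar{X}=X$ yields $\PNS\left(X\right)=\NS\left(X\right)\cap\ns{X}=\NS\left(X\right)$, from which the desired inclusion follows at once.

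For $(\Leftarrow)$: assuming $\PNS\left(X\right)\subseteq\NS\left(X\right)$, let $\mathcal{F}$ be a Cauchy filter on $X$; I need to show $\mathcal{F}$ converges. By saturation applied to the family $\set{\ns{F}|F\in\mathcal{F}}$, which has the finite intersection property, I pick $y\in\bigcap_{F\in\mathcal{F}}\ns{F}$. Then $y\in\PNS\left(X\right)$: given $U\in\mathcal{U}_{X}$, the Cauchy property provides $F\in\mathcal{F}$ with $F\times F\subseteq U$; any standard $x\in F$ satisfies $F\subseteq U\left[x\right]$, hence $y\in\ns{F}\subseteq\ns{U}\left[x\right]$. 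By hypothesis, $y\approx_{X}x_{0}$ for some standard $x_{0}\in X$.

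To verify that $\mathcal{F}$ converges to $x_{0}$, I fix $U\in\mathcal{U}_{X}$ and choose a symmetric $\tilde{U}\in\mathcal{U}_{X}$ with $\tilde{U}\circ\tilde{U}\subseteq U$. The Cauchy property provides $F\in\mathcal{F}$ with $F\times F\subseteq\tilde{U}$. For any standard $z\in F$, both $y$ and $z$ lie in $\ns{F}$, so by transfer of $F\times F\subseteq\tilde{U}$ one has $\left(y,z\right)\in\ns{\tilde{U}}$; combined with $\left(x_{0},y\right)\in\ns{\tilde{U}}$ from $y\approx_{X}x_{0}$ and symmetry of $\tilde{U}$, this gives $\left(x_{0},z\right)\in\ns{U}$, whence $\left(x_{0},z\right)\in U$ by transfer. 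Thus $F\subseteq U\left[x_{0}\right]$, so $U\left[x_{0}\right]\in\mathcal{F}$, and $\mathcal{F}$ converges to $x_{0}$. The main subtlety is the saturation step producing $y$, which requires the nonstandard framework to be sufficiently saturated relative to $\left|\mathcal{F}\right|$; this is implicit in the paper's setup. Everything else is routine entourage manipulation via transfer.
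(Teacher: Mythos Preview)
The paper does not supply its own proof of this fact: it is stated as a citation to \cite[Theorem 3.14.1]{Lux69} and left unproved. So there is nothing in the paper to compare your argument against directly.

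That said, your proof is correct. The forward direction is a nice shortcut: since the paper has already established \prettyref{lem:NS-of-completion}, taking $\bar{X}=X$ when $X$ is complete immediately gives $\PNS\left(X\right)=\NS\left(X\right)$. The backward direction is the classical nonstandard argument (pick a point in the monad of a Cauchy filter via saturation, locate its standard part, and verify convergence by entourage chasing), and your execution is clean. Your remark about saturation is apt: the intersection $\bigcap_{F\in\mathcal{F}}\ns{F}$ requires saturation of cardinality at least $\left|\mathcal{F}\right|$, which the paper elsewhere invokes under the name ``weak saturation'' (see e.g.\ the proof of \prettyref{thm:StX-and-UltX}, where the same move is made for an arbitrary ultrafilter). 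One cosmetic point: in the convergence step you could alternatively avoid the choice of $\tilde{U}$ by observing that $y\in\ns{F}$ and $y\approx_{X}x_{0}$ already force $x_{0}\in\cl_{X}F$ for every $F\in\mathcal{F}$, but your version is equally valid.
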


\begin{fact}[{\citep[Theorem 4.1.13]{Rob66}}]
\label{fact:starX-sub-NS}A standard topological space $X$ is compact
if and only if $\ns{X}\subseteq\NS\left(X\right)$.
\end{fact}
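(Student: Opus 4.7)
The plan is to prove both directions by the standard Robinson-style argument, using the characterisation of monads via neighbourhood filters.

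For the forward direction, suppose $X$ is compact and, toward a contradiction, that some $x\in\ns{X}$ is remote, i.e. $x\notin\mu_X(y)$ for every $y\in X$. Unpacking the definition of $\mu_X(y)=\bigcap\set{\ns{U}\mid y\in U\in\mathcal{T}_X}$, for each $y\in X$ I can pick an open neighbourhood $U_y$ of $y$ with $x\notin\ns{U_y}$. The family $\set{U_y}_{y\in X}$ is an open cover of $X$, so by compactness finitely many $U_{y_1},\dots,U_{y_n}$ already cover $X$. Transfer then gives $\ns{X}=\ns{U_{y_1}}\cup\dots\cup\ns{U_{y_n}}$, forcing $x\in\ns{U_{y_i}}$ for some $i$, a contradiction.

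For the reverse direction, assume $\ns{X}\subseteq\NS(X)$ and let $\set{U_i}_{i\in I}$ be an arbitrary open cover of $X$; I must produce a finite subcover. If no finite subcover exists, then the family $\set{X\setminus U_i}_{i\in I}$ has the finite intersection property, so by saturation (applied to the standard family of internal sets $\set{\ns{(X\setminus U_i)}}_{i\in I}$) there is some $x\in\bigcap_{i\in I}\ns{(X\setminus U_i)}$, i.e. $x\in\ns{X}$ with $x\notin\ns{U_i}$ for every $i\in I$. By hypothesis $x\in\mu_X(y)$ for some standard $y\in X$, and since the $U_i$ cover $X$ there is $i_0\in I$ with $y\in U_{i_0}$; then $x\in\mu_X(y)\subseteq\ns{U_{i_0}}$, contradicting the choice of $x$.

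The main subtlety is the reverse direction: the forward direction is elementary (open covers and transfer), while the converse relies essentially on saturation of the nonstandard universe (equivalently, the concurrence/enlargement principle) to realise the finite intersection property of $\set{\ns{(X\setminus U_i)}}_i$ as a nonempty intersection. This is precisely the point where the proof uses that $\ns{(\cdot)}$ comes from a sufficiently saturated model, as assumed throughout the paper. Everything else is formal manipulation of monads, so I expect no further obstacle.
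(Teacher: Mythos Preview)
Your argument is correct and is the classical Robinson proof. Note, however, that the paper does not actually supply a proof of this fact: it is stated as a \texttt{fact} environment with a citation to Robinson's original \emph{Non-standard Analysis} and is used as a black box throughout. So there is nothing in the paper to compare against; your write-up is precisely the standard proof one would find in the cited reference, and the use of saturation in the reverse direction matches what the paper elsewhere calls ``weak saturation''.
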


\begin{fact}[{\citep[Theorem 3.13.1]{Lux69}}]
\label{fact:starX-sub-PNS}A standard uniform space $X$ is precompact
if and only if $\ns{X}\subseteq\PNS\left(X\right)$.
\end{fact}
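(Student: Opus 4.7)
The plan is to prove both directions by standard transfer/saturation arguments, in parallel with how \prettyref{fact:starX-sub-NS} is typically handled (Robinson's compactness theorem).

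For the forward direction, assume $X$ is precompact. Fix $y \in \ns{X}$ and an arbitrary $U \in \mathcal{U}_X$. Precompactness gives a finite $F \subseteq X$ with $X = \bigcup_{x \in F} U[x]$. Since $F$ is standard and finite, $\ns{F} = F$, and transfer of this finite union yields $\ns{X} = \bigcup_{x \in F} \ns{U}[x]$. Hence $y \in \ns{U}[x]$ for some $x \in F \subseteq X$, and since $U$ was arbitrary $y \in \PNS(X)$.

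For the converse, assume $\ns{X} \subseteq \PNS(X)$ and fix any $U \in \mathcal{U}_X$. Suppose, toward a contradiction, that no finite $F \subseteq X$ satisfies $X = \bigcup_{x \in F} U[x]$. Then the family $\{X \setminus U[x] : x \in X\}$ has the finite intersection property. By saturation (the same saturation principle invoked implicitly throughout the paper, e.g. in \prettyref{lem:General-Approximation-Lemma}), there exists a point
\[
y \in \bigcap_{x \in X} \ns{(X \setminus U[x])} = \ns{X} \setminus \bigcup_{x \in X} \ns{U}[x].
\]
But this $y \in \ns{X}$ fails to lie in any $\ns{U}[x]$ for $x \in X$, contradicting $y \in \PNS(X) \subseteq \bigcup_{x \in X} \ns{U}[x]$. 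Hence a finite subcover exists for every $U$, which is exactly precompactness.

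No step is really an obstacle: the forward direction is immediate from transfer of a finite cover, and the reverse is a routine saturation argument identical in form to the classical proofs of Robinson's theorem and Luxemburg's characterisation of precompactness. The only mild subtlety is the implicit appeal to saturation in the converse, but this is part of the ambient nonstandard framework of the paper.
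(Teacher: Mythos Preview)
Your proof is correct in both directions: the forward direction is a clean transfer argument, and the converse is the standard saturation argument via the finite intersection property. Note that the paper does not actually prove this statement---it is recorded as a Fact with a citation to \citep[Theorem 3.13.1]{Lux69}---so there is no in-paper proof to compare against; your argument is the expected one and matches the classical Luxemburg proof.
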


\prettyref{fact:PNS-sub-NS} to \prettyref{fact:starX-sub-PNS} make
easy to prove the following well-known theorem in elementary topology.
Since we will use a similar technique later (\prettyref{cor:compact-loc-cpt-and-preproper}),
we here review a nonstandard proof in \citep[Theorem 8.4.35]{SL76}.
\begin{cor}[Standard]
A uniform space is compact if and only if it is Cauchy complete and
precompact.
\end{cor}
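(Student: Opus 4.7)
The plan is to translate each of the three properties into an inclusion statement about classes of nonstandard points, then chain the inclusions together. By \prettyref{fact:starX-sub-NS}, compactness is equivalent to $\ns{X}\subseteq\NS\left(X\right)$; by \prettyref{fact:starX-sub-PNS}, precompactness is equivalent to $\ns{X}\subseteq\PNS\left(X\right)$; and by \prettyref{fact:PNS-sub-NS}, Cauchy completeness is equivalent to $\PNS\left(X\right)\subseteq\NS\left(X\right)$. With these reformulations, both directions become trivial.

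For the ``if'' direction, assuming $X$ is Cauchy complete and precompact, I would simply compose the two inclusions
\[
\ns{X}\subseteq\PNS\left(X\right)\subseteq\NS\left(X\right),
\]
which by \prettyref{fact:starX-sub-NS} yields compactness.

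For the ``only if'' direction, assume $X$ is compact, so $\ns{X}\subseteq\NS\left(X\right)$. Combined with the unconditional inclusion $\NS\left(X\right)\subseteq\PNS\left(X\right)$ from \prettyref{fact:NS-sub-PNS-and-CPT-sub-PCPT-sub-PNS}, this gives $\ns{X}\subseteq\PNS\left(X\right)$, and hence precompactness by \prettyref{fact:starX-sub-PNS}. For Cauchy completeness, observe that any $x\in\PNS\left(X\right)\subseteq\ns{X}$ lies in $\NS\left(X\right)$ by compactness, so $\PNS\left(X\right)\subseteq\NS\left(X\right)$ and \prettyref{fact:PNS-sub-NS} applies.

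There is essentially no obstacle here: the entire proof is a bookkeeping exercise once the three nonstandard characterisations are in hand, and the only mild subtlety is remembering that $\NS\subseteq\PNS$ always holds, so that compactness automatically upgrades to precompactness. This is precisely the kind of argument that illustrates the value of having uniform nonstandard characterisations of each class.
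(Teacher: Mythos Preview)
Your proof is correct and follows essentially the same approach as the paper: both translate the three properties into inclusions among $\NS\left(X\right)$, $\PNS\left(X\right)$, and $\ns{X}$ via \prettyref{fact:starX-sub-NS}, \prettyref{fact:PNS-sub-NS}, and \prettyref{fact:starX-sub-PNS}, using the unconditional inclusion $\NS\left(X\right)\subseteq\PNS\left(X\right)$ from \prettyref{fact:NS-sub-PNS-and-CPT-sub-PCPT-sub-PNS}. The paper phrases the ``only if'' direction slightly more compactly by observing at once that compactness forces $\NS\left(X\right)=\PNS\left(X\right)=\ns{X}$, but the content is identical.
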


\begin{proof}
Let $X$ be a standard uniform space. By \prettyref{fact:NS-sub-PNS-and-CPT-sub-PCPT-sub-PNS},
$\NS\left(X\right)\subseteq\PNS\left(X\right)\subseteq\ns{X}$. If
$X$ is compact, then $\ns{X}\subseteq\NS\left(X\right)$ by \prettyref{fact:starX-sub-NS},
so $\NS\left(X\right)=\PNS\left(X\right)=\ns{X}$. The first equality
implies the Cauchy completeness by \prettyref{fact:PNS-sub-NS}; and
the second one implies the precompactness by \prettyref{fact:starX-sub-PNS}.

Conversely, if $X$ is Cauchy complete and precompact, then $\ns{X}\subseteq\PNS\left(X\right)\subseteq\NS\left(X\right)$,
so $X$ is compact by \prettyref{fact:starX-sub-NS}.
\end{proof}
These characterisations have some consequences on the compatibility
conditions.
\begin{cor}[Standard]
\label{cor:reverse-compatibility-conditions}
\begin{enumerate}
\item \label{enu:compatibility-conditions-reverse-1}Every locally compact
I-compatible tb-space is II-compatible.
\item \label{enu:compatibility-conditions-reverse-2}Every u-I-compatible
ub-space having a locally compact Cauchy completion is weakly u-II-compatible.
\item Every locally precompact u-I-compatible ub-space is II-compatible.
\item Every von Neumann complete I-compatible ub-space is u-I-compatible.
\item Every Cauchy complete II-compatible ub-space is weakly u-II-compatible.
\end{enumerate}
\end{cor}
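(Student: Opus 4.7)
The plan is to prove each item by chaining together the nonstandard characterisations established earlier in this section, in exactly the manner illustrated by \prettyref{cor:Every-II-compatible-tb-space-is-I-compatible} and \prettyref{cor:Every-uII-compatible-ub-space-is-uI-compatible}. Each of the five single-scale or compatibility properties at play has been translated into an inclusion between two of the classes $\CPT, \NS, \PCPT, \PNS, \FIN$, so every assertion reduces to a trivial chain of set inclusions.

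More precisely, the proof proceeds as follows. For (1), local compactness gives $\NS\left(X\right)\subseteq\CPT\left(X\right)$ by \prettyref{fact:NS-sub-CPT} and I-compatibility gives $\CPT\left(X\right)\subseteq\FIN\left(X\right)$ by \prettyref{fact:CPT-sub-FIN}, so composing yields $\NS\left(X\right)\subseteq\FIN\left(X\right)$, which is II-compatibility by \prettyref{fact:NS-sub-FIN}. For (2), having a locally compact Cauchy completion gives $\PNS\left(X\right)\subseteq\PCPT\left(X\right)$ by \prettyref{fact:PNS-sub-PCPT} and u-I-compatibility gives $\PCPT\left(X\right)\subseteq\FIN\left(X\right)$ by \prettyref{fact:PCPT-sub-FIN}, so $\PNS\left(X\right)\subseteq\FIN\left(X\right)$, which is weak u-II-compatibility by \prettyref{thm:PNS-sub-FIN-II}. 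For (3), local precompactness gives $\NS\left(X\right)\subseteq\PCPT\left(X\right)$ by \prettyref{fact:NS-sub-PCPT} and u-I-compatibility gives $\PCPT\left(X\right)\subseteq\FIN\left(X\right)$, so $\NS\left(X\right)\subseteq\FIN\left(X\right)$, which is II-compatibility. For (4), von Neumann completeness gives $\PCPT\left(X\right)\subseteq\CPT\left(X\right)$ by \prettyref{thm:PCPT-sub-CPT} and I-compatibility gives $\CPT\left(X\right)\subseteq\FIN\left(X\right)$, so $\PCPT\left(X\right)\subseteq\FIN\left(X\right)$, which is u-I-compatibility. For (5), Cauchy completeness gives $\PNS\left(X\right)\subseteq\NS\left(X\right)$ by \prettyref{fact:PNS-sub-NS} and II-compatibility gives $\NS\left(X\right)\subseteq\FIN\left(X\right)$, so $\PNS\left(X\right)\subseteq\FIN\left(X\right)$, which is weak u-II-compatibility.

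Since each item is a two-step inclusion chase using two facts already recorded, there is essentially no obstacle. The only care required is in (2) and (5), where one must land at weak u-II-compatibility rather than u-II-compatibility, which is exactly what the characterisation $\PNS\subseteq\FIN$ provides via \prettyref{thm:PNS-sub-FIN-II}; this is why the hypotheses in these items are carefully stated to produce only $\PNS\subseteq\FIN$ and not the stronger condition. I would write the proof as a single short paragraph presenting the five inclusion chains in the above order.
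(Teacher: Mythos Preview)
Your proposal is correct and matches the paper's approach exactly: the paper explicitly carries out the inclusion chains for items \eqref{enu:compatibility-conditions-reverse-1} and \eqref{enu:compatibility-conditions-reverse-2} just as you do, and then states that the remaining three are proved ``in a similar way''. Your write-up simply spells out those remaining chains, which are precisely the ones you give.
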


\begin{proof}
Let us only prove \eqref{enu:compatibility-conditions-reverse-1}
and \eqref{enu:compatibility-conditions-reverse-2}. The others can
be proved in a similar way.

Let $X$ be a standard locally compact I-compatible tb-space. By \prettyref{fact:NS-sub-CPT}
and \prettyref{fact:CPT-sub-FIN}, $\NS\left(X\right)\subseteq\CPT\left(X\right)\subseteq\FIN\left(X\right)$.
By \prettyref{fact:NS-sub-FIN}, $X$ is II-compatible.

Let $X$ be a standard u-I-compatible ub-space. Suppose $X$ has a
locally compact Cauchy completion. By \prettyref{fact:PNS-sub-PCPT}
and \prettyref{fact:PCPT-sub-FIN}, $\PNS\left(X\right)\subseteq\PCPT\left(X\right)\subseteq\FIN\left(X\right)$.
Hence $X$ is weakly u-II-compatible by \prettyref{thm:PNS-sub-FIN-II}.
\end{proof}
We have shown the following (reverse) implications among the compatibility
conditions.
\[
\xymatrix{\text{I}\ar@<-0.5ex>[dd]_{\text{vN-complete}}\ar@<0.5ex>[rr]^{\text{loc. cpt.}} &  & \text{II}\ar@<0.5ex>[dd]^{\text{C-complete}}\ar@<0.5ex>[ll] &  & \text{III}\ar[ll]\\
\\
\text{u-I}\ar@<-0.5ex>[uu]\ar@<-0.5ex>[rr]_{\substack{\text{C-completion}\\
\text{is loc. cpt.}
}
}\ar[uurr]^{\text{loc. precpt.}} &  & \text{weakly u-II}\ar@<0.5ex>[uu]\ar@<-0.5ex>[ll] & \text{u-II}\ar[l] & \text{u-III}\ar[d]\ar[uu]\ar[l]\\
 &  &  &  & \text{closure-stable}
}
\]

\subsection{Characterisations of properness and preproperness}

We shall show that the inclusions $\FIN\subseteq\NS$ and $\FIN\subseteq\PNS$
characterise properness and preproperness, respectively. These characterisations
are known for the case of metric spaces (see e.g. \citep[Theorem 5.6 of Chapter 3]{Dav05},
\citep[Proposition 10.1.25]{SL76} and \citep[Definition 6.6.4 and Answer to Exercise 6.6.1]{DD95}).
In our general setting, the former characterisation requires the closure
stability condition.
\begin{defn}[Standard]
A tb-space is said to be \emph{proper} if every bounded closed set
is compact.
\end{defn}

\begin{thm}
\label{thm:FIN-sub-CPT}Let $X$ be a standard closure-stable tb-space.
The following are equivalent:
\begin{enumerate}
\item \label{enu:FIN-sub-CPT-cond-1}$X$ is proper;
\item \label{enu:FIN-sub-CPT-cond-2}$\FIN\left(X\right)\subseteq\CPT\left(X\right)$;
\item \label{enu:FIN-sub-CPT-cond-3}$\FIN\left(X\right)\subseteq\NS\left(X\right)$.
\end{enumerate}
\end{thm}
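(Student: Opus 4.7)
The plan is to prove the cycle $(1)\Rightarrow(2)\Rightarrow(3)\Rightarrow(1)$, where the last implication is the most delicate.

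For $(1)\Rightarrow(2)$, I would take $x\in\FIN(X)$, so $x\in\ns{B}$ for some bounded $B\in\mathcal{B}_X$. Closure-stability makes $\cl_X B$ bounded, and properness upgrades this to compactness of $\cl_X B$. Then $x\in\ns{B}\subseteq\ns{(\cl_X B)}\subseteq\CPT(X)$, giving the inclusion. The implication $(2)\Rightarrow(3)$ is immediate from $\CPT(X)\subseteq\NS(X)$ (\prettyref{fact:CPT-sub-NS}).

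For $(3)\Rightarrow(1)$, let $C\subseteq X$ be bounded and closed; I want $C$ compact. By \prettyref{fact:starX-sub-NS} it suffices to show $\ns{C}\subseteq\NS(C)$, where $C$ carries the subspace topology. Boundedness of $C$ gives $\ns{C}\subseteq\FIN(X)$, and by hypothesis $\ns{C}\subseteq\NS(X)$. So any $x\in\ns{C}$ admits a standard $y\in X$ with $x\in\mu_X(y)$. The key point is that $y$ must lie in $C$: for every open neighbourhood $U$ of $y$ in $X$, transfer gives $\ns{U}\cap\ns{C}\supseteq\{x\}\neq\varnothing$, so $U\cap C\neq\varnothing$, whence $y\in\cl_X C=C$ by closedness. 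Since the trace monad satisfies $\mu_C(y)=\mu_X(y)\cap\ns{C}$, we obtain $x\in\mu_C(y)\subseteq\NS(C)$.

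The main obstacle is the last step, in that one must be careful to distinguish nearstandardness in $X$ from nearstandardness in the subspace $C$; the bridge is supplied by the standard part belonging to $C$, which in turn uses closedness of $C$ together with $x\in\ns{C}$. Closure-stability is only used in $(1)\Rightarrow(2)$, where without it a bounded set need not sit inside a compact set even when the space is proper. The rest amounts to unpacking the definitions and chaining \prettyref{fact:CPT-sub-NS} and \prettyref{fact:starX-sub-NS}.
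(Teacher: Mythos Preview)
Your proof is correct and follows essentially the same cycle $(1)\Rightarrow(2)\Rightarrow(3)\Rightarrow(1)$ as the paper's own argument. The only cosmetic difference is that in $(3)\Rightarrow(1)$ you spell out explicitly why the standard part $y$ lies in $C$ (via the closure argument using transfer), whereas the paper just invokes Robinson's nonstandard characterisation of closedness; your observation that closure-stability is needed only for $(1)\Rightarrow(2)$ is also correct.
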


\begin{proof}
\eqref{enu:FIN-sub-CPT-cond-1}$\Rightarrow$\eqref{enu:FIN-sub-CPT-cond-2}:
$\FIN\left(X\right)=\bigcup_{B\colon\text{bounded}}\ns{B}\subseteq\bigcup_{B\colon\text{bounded}}\ns{\left(\cl_{X}\left(B\right)\right)}\subseteq\bigcup_{K\colon\text{compact}}\ns{K}=\CPT\left(X\right)$.

\eqref{enu:FIN-sub-CPT-cond-2}$\Rightarrow$\eqref{enu:FIN-sub-CPT-cond-3}:
Trivial.

\eqref{enu:FIN-sub-CPT-cond-3}$\Rightarrow$\eqref{enu:FIN-sub-CPT-cond-1}:
Let $B$ be a bounded closed set of $X$. By \prettyref{fact:starX-sub-NS},
we only need to show that $\NS\left(B\right)=\ns{B}$. Let $x\in\ns{B}$.
Since $\ns{B}\subseteq\FIN\left(X\right)\subseteq\NS\left(X\right)$,
$x\in\NS\left(X\right)$. One can find a $y\in X$ so that $x\in\mu_{X}\left(y\right)$,
i.e., $\mu_{X}\left(y\right)\cap\ns{B}$ is non-empty. By the nonstandard
characterisation of closedness \citep[Theorem 4.1.5]{Rob66}, we have
that $y\in B$. Therefore $x\in\NS\left(B\right)$.
\end{proof}
\begin{defn}[Standard]
An ub-space is said to be \emph{preproper} if every bounded set is
precompact.
\end{defn}

\begin{thm}
\label{thm:FIN-sub-PCPT}Let $X$ be a standard ub-space. The following
are equivalent:
\begin{enumerate}
\item \label{enu:FIN-sub-PCPT-cond-1}$X$ is preproper;
\item \label{enu:FIN-sub-PCPT-cond-2}$\FIN\left(X\right)\subseteq\PCPT\left(X\right)$;
\item \label{enu:FIN-sub-PCPT-cond-3}$\FIN\left(X\right)\subseteq\PNS\left(X\right)$.
\end{enumerate}
\end{thm}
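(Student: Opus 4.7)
The plan is to mirror the structure of the preceding \prettyref{thm:FIN-sub-CPT}, exploiting the fact that the three inclusions track precompactness the way the earlier three tracked compactness. Notice that no closure-stability is required here, because precompactness (unlike compactness) is already insensitive to taking closures.

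For \eqref{enu:FIN-sub-PCPT-cond-1}$\Rightarrow$\eqref{enu:FIN-sub-PCPT-cond-2}, I would argue by a direct unpacking of definitions: if every bounded set is precompact, then
\[
\FIN\left(X\right)=\bigcup_{B\colon\text{bounded}}\ns{B}\subseteq\bigcup_{P\colon\text{precompact}}\ns{P}=\PCPT\left(X\right).
\]
For \eqref{enu:FIN-sub-PCPT-cond-2}$\Rightarrow$\eqref{enu:FIN-sub-PCPT-cond-3}, the inclusion $\PCPT\left(X\right)\subseteq\PNS\left(X\right)$ from \prettyref{fact:NS-sub-PNS-and-CPT-sub-PCPT-sub-PNS} makes this immediate.

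The substantive step is \eqref{enu:FIN-sub-PCPT-cond-3}$\Rightarrow$\eqref{enu:FIN-sub-PCPT-cond-1}. Here I would first establish a subset version of \prettyref{fact:starX-sub-PNS}, namely that for any $A\subseteq X$, the set $A$ is precompact if and only if $\ns{A}\subseteq\PNS\left(X\right)$. The forward direction is routine transfer: given $U\in\mathcal{U}_{X}$, pick a finite $F\subseteq X$ with $A\subseteq U\left[F\right]$, so $\ns{A}\subseteq\bigcup_{y\in F}\ns{U}\left[y\right]\subseteq\bigcup_{y\in X}\ns{U}\left[y\right]$. For the converse, assume $A$ is not precompact and produce $U\in\mathcal{U}_{X}$ with $A\setminus U\left[F\right]\neq\varnothing$ for every finite $F\subseteq X$; by saturation there exists $x\in\bigcap_{F}\ns{\left(A\setminus U\left[F\right]\right)}\subseteq\ns{A}$, and then $x\notin\bigcup_{y\in X}\ns{U}\left[y\right]$, for otherwise $x\in\ns{U}\left[y_{0}\right]$ for some $y_{0}\in X$ would give $x\in\ns{U}\left[\left\{ y_{0}\right\} \right]$, contradicting the choice of $x$ at $F=\left\{ y_{0}\right\} $. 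Once this characterisation is in hand, \eqref{enu:FIN-sub-PCPT-cond-1} follows: for any bounded $B$, $\ns{B}\subseteq\FIN\left(X\right)\subseteq\PNS\left(X\right)$, whence $B$ is precompact.

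The only genuine obstacle is the subset-level nonstandard characterisation of precompactness, and this is essentially the same saturation argument underlying \prettyref{fact:starX-sub-PNS}; I would either invoke the latter applied to the induced uniformity on $A$ (after checking that $\PNS\left(A\right)\subseteq\PNS\left(X\right)\cap\ns{A}$ suffices here, since only one direction of the subspace comparison is actually used), or simply spell out the saturation argument in the paragraph above. Either way, no appeal to $\prettyref{lem:General-Approximation-Lemma}$ or closure-stability is needed.
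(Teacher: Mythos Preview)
Your proposal is correct and follows essentially the same route as the paper: the implications \eqref{enu:FIN-sub-PCPT-cond-1}$\Rightarrow$\eqref{enu:FIN-sub-PCPT-cond-2}$\Rightarrow$\eqref{enu:FIN-sub-PCPT-cond-3} are handled identically, and for \eqref{enu:FIN-sub-PCPT-cond-3}$\Rightarrow$\eqref{enu:FIN-sub-PCPT-cond-1} the paper simply cites \citep[Theorem 8.4.34]{SL76} for the subset-level characterisation of precompactness that you spell out via the saturation argument. Your extra paragraph proving that $\ns{A}\subseteq\PNS\left(X\right)$ implies $A$ precompact is thus just an inlining of the cited result, not a different approach.
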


\begin{proof}
\eqref{enu:FIN-sub-PCPT-cond-1}$\Rightarrow$\eqref{enu:FIN-sub-PCPT-cond-2}:
$\FIN\left(X\right)=\bigcup_{B\colon\text{bounded}}\ns{B}\subseteq\bigcup_{P\colon\text{precompact}}\ns{P}=\PCPT\left(X\right)$.

\eqref{enu:FIN-sub-PCPT-cond-2}$\Rightarrow$\eqref{enu:FIN-sub-PCPT-cond-3}:
Trivial.

\eqref{enu:FIN-sub-PCPT-cond-3}$\Rightarrow$\eqref{enu:FIN-sub-PCPT-cond-1}:
Let $B$ be a bounded set. Then $\ns{B}\subseteq\FIN\left(X\right)\subseteq\PNS\left(X\right)$.
By \citep[Theorem 8.4.34]{SL76}, $B$ is precompact.
\end{proof}
The results we have obtained so far are summarised in \prettyref{fig:Structure-of-nonstandard-points}.
Just by looking at the figure, we may produce various (complex) statements
on general topology. An example is as follows.
\begin{cor}[Standard]
\label{cor:compact-loc-cpt-and-preproper}A closure-stable weakly
u-II-compatible ub-space (such as a metric space) is proper if and
only if it is Cauchy complete and preproper. Such spaces are locally
compact.
\end{cor}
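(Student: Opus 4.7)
The strategy is to convert every property appearing in the statement into an inclusion between the classes $X,\CPT,\NS,\PCPT,\PNS,\FIN$ from \prettyref{fig:Structure-of-nonstandard-points}, and then do a diagram chase. The relevant translations are: under closure-stability, properness is equivalent to $\FIN\left(X\right)\subseteq\NS\left(X\right)$ (and also to $\FIN\left(X\right)\subseteq\CPT\left(X\right)$) by \prettyref{thm:FIN-sub-CPT}; preproperness is equivalent to $\FIN\left(X\right)\subseteq\PNS\left(X\right)$ by \prettyref{thm:FIN-sub-PCPT}; Cauchy completeness is equivalent to $\PNS\left(X\right)\subseteq\NS\left(X\right)$ by \prettyref{fact:PNS-sub-NS}; weak u-II-compatibility is equivalent to $\PNS\left(X\right)\subseteq\FIN\left(X\right)$ by \prettyref{thm:PNS-sub-FIN-II}; and local compactness is equivalent to $\NS\left(X\right)\subseteq\CPT\left(X\right)$ by \prettyref{fact:NS-sub-CPT}. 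The unconditional inclusion $\NS\left(X\right)\subseteq\PNS\left(X\right)$ from \prettyref{fact:NS-sub-PNS-and-CPT-sub-PCPT-sub-PNS} will be used silently.

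For the forward implication, suppose $X$ is proper. Closure-stability plus \prettyref{thm:FIN-sub-CPT} gives $\FIN\left(X\right)\subseteq\NS\left(X\right)$. Combining with weak u-II-compatibility yields $\PNS\left(X\right)\subseteq\FIN\left(X\right)\subseteq\NS\left(X\right)$, so $X$ is Cauchy complete; appending $\NS\left(X\right)\subseteq\PNS\left(X\right)$ gives $\FIN\left(X\right)\subseteq\PNS\left(X\right)$, so $X$ is preproper.

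For the reverse implication, suppose $X$ is Cauchy complete and preproper. Then $\FIN\left(X\right)\subseteq\PNS\left(X\right)\subseteq\NS\left(X\right)$, and closure-stability lets us invoke \prettyref{thm:FIN-sub-CPT} to conclude properness. Finally, to see that such spaces are locally compact, observe that weak u-II-compatibility implies II-compatibility (by the corollary just before \prettyref{thm:PNS-sub-FIN-II} was used, i.e.\ the implication chart following \prettyref{thm:PNS-sub-FIN-II}), so by \prettyref{fact:NS-sub-FIN} we have $\NS\left(X\right)\subseteq\FIN\left(X\right)$; combined with properness $\FIN\left(X\right)\subseteq\CPT\left(X\right)$ from \prettyref{thm:FIN-sub-CPT}, this produces $\NS\left(X\right)\subseteq\CPT\left(X\right)$, i.e.\ local compactness by \prettyref{fact:NS-sub-CPT}.

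There is no real obstacle here: the entire proof is a diagram chase that has effectively been pre-computed in \prettyref{fig:Structure-of-nonstandard-points}. The only point demanding attention is that closure-stability cannot be dropped from the forward direction, since \prettyref{thm:FIN-sub-CPT} needs it to upgrade $\FIN\left(X\right)\subseteq\NS\left(X\right)$ to the full inclusion $\FIN\left(X\right)\subseteq\CPT\left(X\right)$ that underlies the local compactness conclusion.
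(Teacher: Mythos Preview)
Your proof is correct and follows the same diagram-chasing approach as the paper. The paper organises it slightly more economically: from $\CPT\left(X\right)\subseteq\NS\left(X\right)\subseteq\PNS\left(X\right)\subseteq\FIN\left(X\right)$ together with properness ($\FIN\left(X\right)\subseteq\CPT\left(X\right)$) it obtains the chain of equalities $\CPT\left(X\right)=\NS\left(X\right)=\PNS\left(X\right)=\FIN\left(X\right)$ and reads off local compactness, Cauchy completeness, and preproperness simultaneously, so your detour through II-compatibility for local compactness is unnecessary (though valid).
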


\begin{proof}
Let $X$ be a standard closure-stable u-II-compatible ub-space. By
\prettyref{fact:CPT-sub-NS}, \prettyref{fact:NS-sub-PNS-and-CPT-sub-PCPT-sub-PNS}
and \prettyref{thm:PNS-sub-FIN-II}, $\CPT\left(X\right)\subseteq\NS\left(X\right)\subseteq\PNS\left(X\right)\subseteq\FIN\left(X\right)$.
If $X$ is proper, then $\FIN\left(X\right)\subseteq\CPT\left(X\right)$
by \prettyref{thm:FIN-sub-CPT}, so $\CPT\left(X\right)=\NS\left(X\right)=\PNS\left(X\right)=\FIN\left(X\right)$.
The first equality implies the local compactness by \prettyref{fact:NS-sub-CPT};
the second equality implies the Cauchy completeness by \prettyref{fact:PNS-sub-NS};
and the third equality implies the preproperness by \prettyref{thm:FIN-sub-PCPT}.

Conversely, if $X$ is Cauchy complete and preproper, then $\NS\left(X\right)\supseteq\PNS\left(X\right)\supseteq\FIN\left(X\right)$
by \prettyref{fact:PNS-sub-NS} and \prettyref{thm:FIN-sub-PCPT},
so $X$ is proper by \prettyref{thm:FIN-sub-CPT}.
\end{proof}

\section{\label{sec:Large-scale-structures-on-ns-ext}Large-scale structures
on nonstandard extensions}

In \prettyref{sec:Nonstandard-points}, we studied the structure of
a standard space $X$ by using the nonstandard extension $\ns{X}$
as an auxiliary tool. In the present section, we focus, in contrast,
on the structure of the nonstandard space $\ns{X}$ itself. For this
purpose, we introduce two large-scale structures on $\ns{X}$: S-prebornology
and S-coarse structure.

\subsection{S-prebornologies}
\begin{prop}[S-prebornology]
Given a standard prebornological space $\left(X,\mathcal{B}_{X}\right)$,
the family
\[
\ss{\mathcal{B}_{X}}=\set{A\subseteq\FIN\left(X\right)|A\subseteq\ns{B}\text{ for some }B\in\mathcal{B}_{X}}
\]
is a prebornology on $\FIN\left(X\right)$.
\end{prop}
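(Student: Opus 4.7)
The plan is to verify the three axioms (i)--(iii) of prebornology for the family $\ss{\mathcal{B}_X}$ viewed as a family of subsets of $\FIN(X)$. Each element of $\ss{\mathcal{B}_X}$ is by definition a subset of $\FIN(X)$, so the ambient set is correct.

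For (i), I would show $\bigcup \ss{\mathcal{B}_X} = \FIN(X)$. The inclusion $\subseteq$ is immediate. For the reverse, pick any $y \in \FIN(X) = \bigcup_{x \in X} G_X(x)$; then there exist $x \in X$ and $B \in \mathcal{B}_X$ with $x \in B$ and $y \in \ns{B}$. Since $\ns{B} \subseteq G_X(x) \subseteq \FIN(X)$, we have $\{y\} \subseteq \ns{B}$ and $\{y\} \subseteq \FIN(X)$, so $\{y\} \in \ss{\mathcal{B}_X}$ and $y \in \bigcup \ss{\mathcal{B}_X}$.

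For (ii), downward closure is trivial: if $A \subseteq A' \in \ss{\mathcal{B}_X}$, pick $B \in \mathcal{B}_X$ with $A' \subseteq \ns{B}$; then $A \subseteq \ns{B}$ and $A \subseteq \FIN(X)$, so $A \in \ss{\mathcal{B}_X}$.

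The only step with any content is (iii). Suppose $A_1, A_2 \in \ss{\mathcal{B}_X}$ with $A_1 \cap A_2 \neq \varnothing$, and pick $B_1, B_2 \in \mathcal{B}_X$ with $A_i \subseteq \ns{B_i}$. Any witness $z \in A_1 \cap A_2$ lies in $\ns{B_1} \cap \ns{B_2} = \ns{(B_1 \cap B_2)}$, so this last set is nonempty. By transfer, $B_1 \cap B_2 \neq \varnothing$, so axiom (iii) for $\mathcal{B}_X$ yields $B_1 \cup B_2 \in \mathcal{B}_X$. Then
\[
A_1 \cup A_2 \subseteq \ns{B_1} \cup \ns{B_2} = \ns{(B_1 \cup B_2)},
\]
and $A_1 \cup A_2 \subseteq \FIN(X)$, giving $A_1 \cup A_2 \in \ss{\mathcal{B}_X}$. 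The only nontrivial move in the whole argument is the use of transfer to push nonemptiness of $\ns{(B_1 \cap B_2)}$ back to nonemptiness of $B_1 \cap B_2$; everything else is definition-chasing.
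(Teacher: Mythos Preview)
Your proof is correct and follows essentially the same line as the paper's. The paper abbreviates by noting that $\ss{\mathcal{B}_X}$ is the downward closure of the generating family $\ssig{\mathcal{B}_X}=\{\ns{B}\mid B\in\mathcal{B}_X\}$, so that one only needs to check covering and closure under non-disjoint unions for $\ssig{\mathcal{B}_X}$; but the substantive step in both arguments is identical---using transfer to pass from $\ns{(B_1\cap B_2)}\neq\varnothing$ to $B_1\cap B_2\neq\varnothing$, and then invoking axiom~(iii) for $\mathcal{B}_X$.
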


\begin{proof}
$\ss{\mathcal{B}_{X}}$ is generated by $\ssig{\mathcal{B}_{X}}=\set{\ns{B}|B\in\mathcal{B}_{X}}$.
It suffices to prove that $\ssig{\mathcal{B}_{X}}$ covers $\FIN\left(X\right)$
and is closed under finite non-disjoint unions. The former is trivial
by the definition of $\FIN\left(X\right)=\bigcup_{B\in\mathcal{B}_{X}}\ns{B}=\bigcup\ssig{\mathcal{B}_{X}}$.
The latter follows from the transfer principle.
\end{proof}
\begin{notation}
We denote the prebornological space $\left(\FIN\left(X\right),\ss{\mathcal{B}_{X}}\right)$
by $SX$.
\end{notation}

\begin{lem}
\label{lem:S-bounded-sets}Let $X$ be a standard prebornological
space. For every subset $B$ of $X$, $B\in\mathcal{B}_{X}$ if and
only if $\ns{B}\in\ss{\mathcal{B}_{X}}$.
\end{lem}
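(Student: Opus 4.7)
The plan is to prove the two directions separately, with the forward implication being essentially definitional and the reverse implication relying on the transfer principle together with the hereditary property (ii) of prebornologies.

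For the forward direction, assume $B \in \mathcal{B}_X$. Then by definition of $\FIN(X) = \bigcup_{C \in \mathcal{B}_X} \ns{C}$, we have $\ns{B} \subseteq \FIN(X)$, and moreover $\ns{B} \subseteq \ns{B}$ witnesses membership in $\ss{\mathcal{B}_X}$. This step requires no real work.

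For the reverse direction, suppose $\ns{B} \in \ss{\mathcal{B}_X}$. By the definition of $\ss{\mathcal{B}_X}$, there exists some $C \in \mathcal{B}_X$ with $\ns{B} \subseteq \ns{C}$. Here I would invoke transfer: the statement $\ns{B} \subseteq \ns{C}$ is the nonstandard interpretation of the (bounded) statement $B \subseteq C$, so $B \subseteq C$ holds in the standard world. Since $C \in \mathcal{B}_X$ and prebornologies are closed under taking subsets (condition (ii) in the definition of prebornology), we conclude $B \in \mathcal{B}_X$.

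There is no real obstacle here; the only substantive ingredient is the transfer principle applied to the inclusion of standard sets, plus the hereditary axiom (ii). This lemma essentially says that $\ssig{\mathcal{B}_X} = \{\ns{B} : B \in \mathcal{B}_X\}$ faithfully encodes $\mathcal{B}_X$ via the $\ns{(\cdot)}$ operation, which is exactly what one expects from the construction.
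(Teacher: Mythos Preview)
Your proof is correct and matches the paper's argument essentially verbatim: the forward direction is dismissed as trivial, and the reverse direction picks a witness $B'\in\mathcal{B}_X$ with $\ns{B}\subseteq\ns{B'}$, applies transfer to get $B\subseteq B'$, and concludes via the hereditary property of prebornologies.
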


\begin{proof}
The `only if' part is trivial. Suppose $\ns{B}\in\ss{\mathcal{B}_{X}}$.
There exists a $B'\in\mathcal{B}_{X}$ such that $\ns{B}\subseteq\ns{B'}$.
By transfer, $B\subseteq B'$, so $B\in\mathcal{B}_{X}$.
\end{proof}
\begin{prop}
A standard prebornological space $X$ is connected if and only if
$SX$ is connected.
\end{prop}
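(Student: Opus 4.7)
The plan is to unpack both directions by using the standard fact that a prebornological space is connected precisely when any two of its points lie in a common bounded set (equivalently, the prebornology is a bornology). Both directions then reduce to straightforward manipulations with condition (iii) of prebornology, namely closure under non-disjoint finite unions.

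For the forward direction, suppose $X$ is connected and pick $x,y\in\FIN(X)$. By definition of $\FIN(X)$ there exist $B_{1},B_{2}\in\mathcal{B}_{X}$ with $x\in\ns{B_{1}}$ and $y\in\ns{B_{2}}$. Choose standard points $x_{0}\in B_{1}$ and $y_{0}\in B_{2}$; by connectedness of $X$ there is $B\in\mathcal{B}_{X}$ containing both $x_{0}$ and $y_{0}$. Applying axiom (iii) twice (first because $x_{0}\in B_{1}\cap B$, then because $y_{0}\in(B_{1}\cup B)\cap B_{2}$), we obtain $B_{1}\cup B\cup B_{2}\in\mathcal{B}_{X}$. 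Hence $\ns{(B_{1}\cup B\cup B_{2})}\in\ssig{\mathcal{B}_{X}}\subseteq\ss{\mathcal{B}_{X}}$, and it contains both $x$ and $y$.

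For the converse, suppose $SX$ is connected. Any standard $x,y\in X$ lie in $\FIN(X)$, so there exists $A\in\ss{\mathcal{B}_{X}}$ with $x,y\in A$. By definition $A\subseteq\ns{B}$ for some $B\in\mathcal{B}_{X}$; since $x,y$ are standard points of $\ns{B}$ they already belong to $B$. Hence $X$ is connected.

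There is essentially no obstacle here: the proof is just a careful application of the definitions of $\FIN(X)$ and $\ss{\mathcal{B}_{X}}$, together with the elementary observation that a standard point lies in $\ns{B}$ iff it lies in $B$. The only nuance is the order in which one invokes axiom (iii) to glue $B_{1}$, $B$, and $B_{2}$ into a single bounded set through overlapping standard witnesses $x_{0}$ and $y_{0}$.
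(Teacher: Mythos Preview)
Your proof is correct and follows essentially the same route as the paper's. The only cosmetic differences are that the paper phrases connectedness as ``every finite subset is bounded'' rather than ``any two points share a bounded set,'' and in the converse the paper applies the lemma $B\in\mathcal{B}_{X}\iff\ns{B}\in\ss{\mathcal{B}_{X}}$ to the finite set $B=\{x,y\}$ (noting $B=\ns{B}$), whereas you unpack the definition of $\ss{\mathcal{B}_{X}}$ directly; these amount to the same thing.
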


\begin{proof}
Suppose $X$ is connected. Let $B=\set{x_{1},\ldots,x_{n}}$ be a
finite subset of $\mathrm{FIN}\left(X\right)$. For each $i\leq n$,
choose a (standard) $B_{i}\in\mathcal{B}_{X}$ so that $x_{i}\in\ns{B_{i}}$.
Since $X$ is connected, $B_{1}\cup\cdots\cup B_{n}\in\mathcal{B}_{X}$
holds. Hence $B\in\ss{\mathcal{B}_{X}}$.

Conversely, suppose $SX$ is connected. Let $B$ be a finite subset
of $X$. Then $B$ is a finite subset of $\FIN\left(X\right)$, so
$B\in\ss{\mathcal{B}_{X}}$. Since $B=\ns{B}$ (by transfer), we have
that $B\in\mathcal{B}_{X}$ by \prettyref{lem:S-bounded-sets}. Hence
$X$ is connected.
\end{proof}
The S-prebornology construction can be extended to a functor from
the category of standard prebornological spaces to the category of
(external) prebornological spaces, where the morphisms are bornological
maps.
\begin{thm}
\label{thm:S-bornology-is-functorial}A map $f\colon X\to Y$ between
standard prebornological spaces is bornological if and only if $\ns{f}\colon SX\to SY$
is well-defined and bornological.
\end{thm}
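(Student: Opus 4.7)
The plan is to prove both directions by transferring between the standard bornology $\mathcal{B}_X$ and the S-prebornology $\ss{\mathcal{B}_X}$, using the fact that transfer commutes nicely with images: $\ns{f}(\ns{B}) = \ns{(f(B))}$.

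For the forward direction, assume $f$ is bornological. I first check that $\ns{f}$ maps $\FIN(X)$ into $\FIN(Y)$. Given $x \in \FIN(X)$, pick a standard $B \in \mathcal{B}_X$ with $x \in \ns{B}$; then $f(B) \in \mathcal{B}_Y$ by hypothesis, and $\ns{f}(x) \in \ns{f}(\ns{B}) = \ns{(f(B))} \subseteq \FIN(Y)$. Next, for any $A \in \ss{\mathcal{B}_X}$ there is $B \in \mathcal{B}_X$ with $A \subseteq \ns{B}$; then $\ns{f}(A) \subseteq \ns{(f(B))}$ with $f(B) \in \mathcal{B}_Y$, so $\ns{f}(A) \in \ss{\mathcal{B}_Y}$.

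For the backward direction, assume $\ns{f}\colon SX \to SY$ is well-defined and bornological. Let $B \in \mathcal{B}_X$; then $\ns{B} \subseteq \FIN(X)$ and $\ns{B} \in \ss{\mathcal{B}_X}$ by definition of the S-prebornology. By hypothesis, $\ns{f}(\ns{B}) \in \ss{\mathcal{B}_Y}$. Since $\ns{f}(\ns{B}) = \ns{(f(B))}$, \prettyref{lem:S-bounded-sets} gives $f(B) \in \mathcal{B}_Y$, so $f$ is bornological.

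There is no real obstacle here; the only subtlety is the need to check both well-definedness of $\ns{f}\colon SX \to SY$ (not merely its bornologicity) in the forward direction, and the application of \prettyref{lem:S-bounded-sets} in the converse, which is what lets us descend from ``$\ns{(f(B))}$ is S-bounded'' to ``$f(B)$ is bounded'' without having to invoke a saturation argument.
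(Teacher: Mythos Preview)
Your proof is correct and follows essentially the same route as the paper's own argument. The only cosmetic difference is that the paper invokes the nonstandard characterisation of bornologicity from \citep[Theorem 2.24]{Ima19} to obtain $\ns{f}(\FIN(X))\subseteq\FIN(Y)$, whereas you spell out that step directly; the remaining reasoning, including the use of \prettyref{lem:S-bounded-sets} in the converse, is identical.
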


\begin{proof}
Suppose that $f\colon X\to Y$ is bornological. By the nonstandard
characterisation of bornologicity \citep[Theorem 2.24]{Ima19}, we
have $\ns{f}\left(\FIN\left(X\right)\right)\subseteq\FIN\left(Y\right)$.
Therefore $\ns{f}\colon SX\to SY$ is well-defined. Let $B\in\ss{\mathcal{B}_{X}}$.
Choose a $B'\in\mathcal{B}_{X}$ so that $B\subseteq\ns{B'}$. Obviously,
$\ns{f}\left(B\right)\subseteq\ns{f}\left(\ns{B'}\right)=\ns{\left(f\left(B'\right)\right)}$.
Since $f$ is bornological, $f\left(B'\right)\in\mathcal{B}_{Y}$
holds. Hence $\ns{f}\left(B\right)\in\ss{\mathcal{B}_{Y}}$.

Conversely, suppose that $\ns{f}\colon SX\to SY$ is well-defined
and bornological. Let $B\in\mathcal{B}_{X}$. Then $\ns{B}\in\ss{\mathcal{B}_{X}}$,
so $\ns{\left(f\left(B\right)\right)}=\ns{f}\left(\ns{B}\right)\in\ss{\mathcal{B}_{Y}}$.
Hence $f\left(B\right)\in\mathcal{B}_{Y}$ by \prettyref{lem:S-bounded-sets}.
\end{proof}
The inclusion map $i_{X}\colon X\hookrightarrow SX$ can be considered
as a natural embedding.
\begin{prop}
\label{prop:inclusion-of-S-born}For each standard prebornological
space $X$, the inclusion map $i_{X}\colon X\hookrightarrow SX$ is
bornological and proper.
\end{prop}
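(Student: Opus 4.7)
The plan is to verify both conditions directly from the definitions, using only two elementary facts: first, that $B \subseteq \ns{B}$ for every standard set $B$ (so any standard bounded set sits inside $\FIN(X)$ as a subset of its own nonstandard extension); and second, that $\ns{B} \cap X = B$ for every standard $B$, since the only standard points in $\ns{B}$ are the elements of $B$ itself.

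For bornologicity, I would take $B \in \mathcal{B}_X$ and show that $i_X(B) = B$ belongs to $\ss{\mathcal{B}_X}$. Since $B \subseteq \ns{B} \subseteq \FIN(X)$ and $B \subseteq \ns{B}$ with $B \in \mathcal{B}_X$, the defining condition of $\ss{\mathcal{B}_X}$ is satisfied with witness $B$ itself. So $i_X(B) \in \ss{\mathcal{B}_X}$, as required.

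For properness, I would take $A \in \ss{\mathcal{B}_X}$ and show that $i_X^{-1}(A) = A \cap X$ lies in $\mathcal{B}_X$. By definition of $\ss{\mathcal{B}_X}$, there is some $B \in \mathcal{B}_X$ with $A \subseteq \ns{B}$. Intersecting with $X$ gives $A \cap X \subseteq \ns{B} \cap X = B$, and since prebornologies are closed under taking subsets (condition (ii) of the definition), we conclude $A \cap X \in \mathcal{B}_X$.

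There is no real obstacle here; the statement is essentially a consistency check between the definition of $\ss{\mathcal{B}_X}$ and the prebornology axioms, phrased so that the inclusion $i_X$ becomes both a bornological and a proper map. The only subtlety worth flagging in the write-up is the implicit identification of $X$ with the subset of standard points of $\ns{X}$, which is what legitimises the equality $\ns{B} \cap X = B$ used in the properness argument.
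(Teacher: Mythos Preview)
Your proof is correct and essentially identical to the paper's: both verify bornologicity via $B\subseteq\ns{B}$ and properness via $A\cap X\subseteq\ns{B}\cap X=B$, using downward closure of the prebornology. The only cosmetic difference is that the paper phrases the equality $\ns{B}\cap X=B$ as $i_X^{-1}(\ns{B})=B$ and attributes it to the transfer principle, which is exactly the fact you isolate.
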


\begin{proof}
Let $A\in\mathcal{B}_{X}$. Since $i_{X}\left(A\right)=A\subseteq\ns{A}\in\ss{\mathcal{B}_{X}}$,
we have that $i_{X}\left(A\right)\in\ss{\mathcal{B}_{X}}$. Next,
let $B\in\ss{\mathcal{B}_{X}}$. Choose a $C\in\mathcal{B}_{X}$ so
that $B\subseteq\ns{C}$. Then, $i_{X}^{-1}\left(B\right)\subseteq i_{X}^{-1}\left(\ns{C}\right)=C$,
where the latter equality follows from the transfer principle. Hence
$i_{X}^{-1}\left(B\right)\in\mathcal{B}_{X}$.
\end{proof}

\subsection{Prebornological ultrafilter spaces}

We first recall the connection between S-topologies and ultrafilters.
\begin{defn}[\citep{Lux69,ST00,Str72}]
Let $\left(X,\mathcal{T}_{X}\right)$ be a standard topological space.
The \emph{S-topology} on $\ns{X}$ is the topology $\ss{\mathcal{T}_{X}}$
generated by $\prescript{\sigma}{}{\mathcal{T}_{X}}=\set{\ns{U}|U\in\mathcal{T}_{X}}$.
We denote the space $\ns{X}$ together with $\ss{\mathcal{T}_{X}}$
by $S^{t}X$.
\end{defn}

\begin{rem}
The Robinson's S-topology appeared in \citep{Rob66} is different
from the above (Luxemburg's) one. Let $\left(X,d_{X}\right)$ be a
standard metric space. The Robinson's S-topology on $\ns{X}$ is generated
by $\set{\ns{B}\left(x;\varepsilon\right)|x\in\ns{X}\text{ and }\varepsilon\in\mathbb{R}_{+}}$,
while the Luxemburg's S-topology on $\ns{X}$ is generated by $\set{\ns{B\left(x;\varepsilon\right)}|x\in X\text{ and }\varepsilon\in\mathbb{R}_{+}}$.
\end{rem}

The (Luxemburg's) S-topology is non-trivial and highly complicated
in general. For instance, if $X$ is completely regular Hausdorff,
the $T_{2}$-reflection of $S^{t}X$ coincides with the Stone\textendash \v{C}ech
compactification $\beta X$ \citep[Theorem 4.2]{ST00}. More precisely,
the following connection holds (see also \citep{Lux69} and \citep{Str72}).
\begin{defn}[Standard; \citep{Sal00}]
Let $\left(X,\mathcal{T}_{X}\right)$ be a topological space. Let
$\Ult X$ be the set of all ultrafilters on $X$. The sets of the
form $\set{F\in\Ult X|U\in F}$, where $U\in\mathcal{T}_{X}$, generate
a topology on $\Ult X$. The topological space $\Ult X$ is called
the \emph{(topological) ultrafilter space} of $X$.
\end{defn}

\begin{thm}
\label{thm:StX-and-UltX}Let $X$ be a standard topological space.
For each $x\in\ns{X}$, let $F_{x}=\set{A\in\mathcal{P}\left(X\right)|x\in\ns{A}}$.
Then the map $\Phi_{X}\colon x\mapsto F_{x}$ is an open continuous
surjection from $S^{t}X$ to $\Ult X$.
\end{thm}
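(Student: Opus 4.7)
The plan is to verify the three properties (well-definedness/surjectivity, continuity, openness) using the fact that finite Boolean operations on standard sets commute with the $\ns{\cdot}$ operator, together with sufficient saturation of the nonstandard universe.

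\textbf{Well-definedness of $\Phi_X$.} First I would check that $F_x$ really is an ultrafilter. Since $\ns{X} \ni x$, we have $X \in F_x$; since $\ns{(A\cap B)} = \ns{A}\cap\ns{B}$, $F_x$ is closed under finite intersection; since $\ns{A}\subseteq\ns{B}$ whenever $A\subseteq B$, $F_x$ is an up-set; and since $\ns{A} \cup \ns{(X\setminus A)} = \ns{X}$, exactly one of $A$, $X\setminus A$ lies in $F_x$. These are all immediate transfers.

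\textbf{Continuity.} The sets $V_U = \{F \in \Ult X \mid U \in F\}$ with $U \in \mathcal{T}_X$ form a base for the topology of $\Ult X$, so it suffices to compute $\Phi_X^{-1}(V_U)$. By the definition of $F_x$, we have $U \in F_x$ iff $x \in \ns{U}$, hence $\Phi_X^{-1}(V_U) = \ns{U}$, which is a basic open set in $S^tX$ by definition of the S-topology.

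\textbf{Surjectivity.} Given any ultrafilter $F$ on $X$, the family $\ssig{F} = \{\ns{A} \mid A \in F\}$ has the finite intersection property, because $\ns{A_1}\cap\cdots\cap\ns{A_n} = \ns{(A_1\cap\cdots\cap A_n)} \neq \varnothing$ whenever $A_1,\ldots,A_n \in F$. By enough saturation of the nonstandard universe (which is the standing assumption throughout the paper and its predecessor \citep{Ima19}), there exists $x \in \bigcap \ssig{F}$. Then $F \subseteq F_x$, and since both $F$ and $F_x$ are ultrafilters, we conclude $F = F_x = \Phi_X(x)$.

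\textbf{Openness.} Since $\ssig{\mathcal{T}_X}$ is closed under finite intersection (as $\ns{U}\cap\ns{V}=\ns{(U\cap V)}$) and covers $\ns{X}$, it forms a base for $\ss{\mathcal{T}_X}$; thus it suffices to show that $\Phi_X(\ns{U})$ is open for each $U \in \mathcal{T}_X$. I claim $\Phi_X(\ns{U}) = V_U$. The inclusion $\subseteq$ is immediate from the definition of $F_x$. For $\supseteq$, take any $F \in V_U$; by surjectivity there exists $x \in \ns{X}$ with $F_x = F$, and since $U \in F$ we have $x \in \ns{U}$, so $F = \Phi_X(x) \in \Phi_X(\ns{U})$. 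Thus $\Phi_X(\ns{U}) = V_U$ is a basic open set in $\Ult X$.

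The only non-routine step is surjectivity, which genuinely uses saturation; everything else is essentially formal manipulation of the $\ns{\cdot}$ operator together with the fact that $\ssig{\mathcal{T}_X}$ is a base of $S^tX$ and $\{V_U\}_{U \in \mathcal{T}_X}$ is a base of $\Ult X$.
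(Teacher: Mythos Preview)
Your proof is correct and follows essentially the same approach as the paper: both verify the ultrafilter axioms for $F_x$ via transfer, compute $\Phi_X^{-1}(V_U)=\ns{U}$ for continuity, and use saturation on the family $\{\ns{A}\mid A\in F\}$ for the surjectivity/openness step. The only cosmetic difference is that the paper establishes the identity $\Phi_X(\ns{A})=\{F\in\Ult X\mid A\in F\}$ directly (choosing $x\in\bigcap_{B\in F}\ns{B}\subseteq\ns{A}$) and reads off surjectivity as the special case $A=X$, whereas you prove surjectivity first and then invoke it to get the reverse inclusion for openness; the underlying argument is identical.
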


\begin{proof}
We first verify the well-definedness. Let $x\in\ns{X}$. If $A,B\in F_{x}$,
then $x\in\ns{A}\cap\ns{B}=\ns{\left(A\cap B\right)}$, so $A\cap B\in F_{x}$.
If $A\supseteq B\in F_{x}$, then $x\in\ns{B}\subseteq\ns{A}$, so
$A\in F_{x}$. If $A\notin F_{x}$, then $x\in\ns{\left(X\setminus A\right)}$,
so $X\setminus A\in F_{x}$. Clearly $\varnothing\notin F_{x}$. Hence
$F_{x}$ is an ultrafilter over $X$.

Let $A\subseteq X$. Then
\begin{align*}
\Phi_{X}^{-1}\left(\set{F\in\Ult X|A\in F}\right) & =\set{x\in\ns{X}|A\in F_{x}}\\
 & =\set{x\in\ns{X}|x\in\ns{A}}\\
 & =\ns{A}.
\end{align*}
Conversely, we show that $\Phi_{X}\left(\ns{A}\right)=\set{F\in\Ult X|A\in F}$.
The inclusion $\subseteq$ is trivial. Let $F\in\Ult X$ be such that
$A\in F$. Since $F$ has the finite intersection property, the intersection
$\bigcap_{B\in F}\ns{B}$ is non-empty by weak saturation. Fix an
$x\in\bigcap_{B\in F}\ns{B}\subseteq\ns{A}$. Then $F\subseteq F_{x}$,
so $F=F_{x}$ by the maximality of $F$. Hence $F\in\Phi_{X}\left(\ns{A}\right)$.

Since $\Phi_{X}\left(\ns{U}\right)=\set{F\in\Ult X|U\in F}$ and $\Phi_{X}^{-1}\left(\set{F\in\Ult X|U\in F}\right)=\ns{U}$
hold for all $U\in\mathcal{T}_{X}$, the map $\Phi_{X}$ is open and
continuous. This also implies the surjectivity: $\Phi_{X}\left(\ns{X}\right)=\set{F\in\Ult X|X\in F}=\Ult X$.
\end{proof}
As a by-product, we obtain a nonstandard construction of the ultrafilter
space.
\begin{cor}
$\Ult X\cong S^{t}X/\ker\Phi_{X}$.
\end{cor}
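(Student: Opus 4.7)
The plan is to derive the corollary directly from \prettyref{thm:StX-and-UltX} by invoking the standard universal property of quotient maps induced by open continuous surjections. Since $\Phi_X\colon S^tX\to\Ult X$ is established to be a continuous surjection, the universal property of the quotient topology yields a unique continuous bijection $\bar{\Phi}_X\colon S^tX/\ker\Phi_X\to\Ult X$ making the obvious triangle (with the canonical projection $\pi\colon S^tX\to S^tX/\ker\Phi_X$) commute.

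The remaining task is to upgrade $\bar{\Phi}_X$ to a homeomorphism. For this I would use the openness of $\Phi_X$ from \prettyref{thm:StX-and-UltX}: given an open $V\subseteq S^tX/\ker\Phi_X$, its preimage $\pi^{-1}(V)$ is open in $S^tX$ by definition of the quotient topology, and then $\bar{\Phi}_X(V)=\Phi_X(\pi^{-1}(V))$ is open in $\Ult X$ because $\Phi_X$ is an open map. Thus $\bar{\Phi}_X$ is an open continuous bijection, hence a homeomorphism.

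I expect no genuine obstacle here: both the existence of the continuous bijection and the openness argument are textbook consequences of \prettyref{thm:StX-and-UltX}. The only minor point to be careful about is the bookkeeping of what $\ker\Phi_X$ means as a set-theoretic equivalence relation (namely $x\sim y$ iff $F_x=F_y$, i.e., iff $x$ and $y$ lie in exactly the same standard subsets of $X$), and checking that the quotient topology on $S^tX/\ker\Phi_X$ is the intended structure on the right-hand side of the displayed isomorphism.
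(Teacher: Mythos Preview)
Your proposal is correct and is exactly the standard argument the paper is tacitly invoking: the corollary is stated without proof because it is immediate from \prettyref{thm:StX-and-UltX}, and the way to make ``immediate'' precise is precisely the open-continuous-surjection-implies-quotient-map reasoning you spell out. There is nothing to add.
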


\begin{cor}[{Standard; \citep[Theorem 1]{Sal00}}]
$\Ult X$ is compact.
\end{cor}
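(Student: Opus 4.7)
The plan is to apply Robinson's nonstandard compactness criterion (\prettyref{fact:starX-sub-NS}): it suffices to show $\ns{\Ult X}\subseteq\NS\left(\Ult X\right)$. So fix an arbitrary $F\in\ns{\Ult X}$; by transfer, $F$ is an internal ultrafilter on $\ns{X}$. The natural candidate to put in $F$'s monad is
\[
F_0=\set{A\in\mathcal{P}\left(X\right)|\ns{A}\in F}.
\]
This construction is the natural ``standard part'' operation, analogous to how $\Phi_{X}$ converts nonstandard points of $\ns{X}$ into ultrafilters on $X$.

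First, I would verify that $F_{0}$ is a (standard) ultrafilter on $X$. Each axiom follows by transferring the corresponding internal statement about $F$: if $A,B\in F_{0}$, then $\ns{A},\ns{B}\in F$, hence $\ns{\left(A\cap B\right)}=\ns{A}\cap\ns{B}\in F$, so $A\cap B\in F_{0}$; upward closure is similar; $\varnothing\notin F_{0}$ since $\ns{\varnothing}=\varnothing\notin F$; and by the internal ultrafilter dichotomy either $\ns{A}\in F$ or $\ns{X}\setminus\ns{A}=\ns{\left(X\setminus A\right)}\in F$, yielding maximality.

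Next, I would check that $F\in\mu_{\Ult X}\left(F_{0}\right)$. Every basic open neighbourhood of $F_{0}$ in $\Ult X$ has the form $W_{U}=\set{G\in\Ult X|U\in G}$ for some $U\in\mathcal{T}_{X}$ with $U\in F_{0}$. By transfer, $\ns{W_{U}}=\set{G\in\ns{\Ult X}|\ns{U}\in G}$, and by definition of $F_{0}$ we have $\ns{U}\in F$, so $F\in\ns{W_{U}}$. Hence $F\in\mu_{\Ult X}\left(F_{0}\right)\subseteq\NS\left(\Ult X\right)$, completing the proof via Robinson's theorem.

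The main thing to be careful with is the bookkeeping around the transfer: one must keep straight that elements of $F$ are internal subsets of $\ns{X}$, so that the statement ``$\ns{A}\in F$'' really is the correct external translation of membership in $F$, and similarly that $\ns{W_{U}}$ is correctly computed by transfer of the defining formula of $W_{U}$. Once these identifications are clean, each step is a direct application of the transfer principle, without appeal to the ultrafilter lemma in the metatheory.
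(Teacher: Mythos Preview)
Your proof is correct, but it takes a different route from the paper's. The paper argues in two lines: by weak saturation the S-topological space $S^{t}X$ is compact, and since $\Phi_{X}\colon S^{t}X\to\Ult X$ (from the preceding theorem) is a continuous surjection, $\Ult X$ is compact as the continuous image of a compact space. Your approach bypasses $S^{t}X$ entirely and applies Robinson's criterion directly to $\Ult X$, building for each internal ultrafilter $F\in\ns{\Ult X}$ a standard ``shadow'' $F_{0}$ and checking $F\in\mu_{\Ult X}(F_{0})$. The paper's argument is shorter because it cashes in the work already done on $\Phi_{X}$; your argument is more self-contained and shows explicitly what the nearstandard structure of $\ns{\Ult X}$ looks like, which is a nice illustration of Robinson's criterion in action. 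Both are perfectly valid; in the context of this paper the author's version is the natural payoff of having just proved \prettyref{thm:StX-and-UltX}.
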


\begin{proof}
By weak saturation, $S^{t}X$ is compact (see \citep[Theorem 2.3]{ST00}).
Since $\Ult X$ is the image of $S^{t}X$ by the continuous map $\Phi_{X}$,
it is compact.
\end{proof}
For instance, if $X$ is a discrete space, then $S^{t}X/\ker\Phi_{X}\cong\Ult X\cong\beta X$
\citep[Theorem 2.5.5]{Lux69}. In fact, all Hausdorff compactifications
can be obtained in a similar way. See \citep{ST11} for more details.

Next, we consider a large-scale analogue of this connection. We shall
introduce a natural prebornology on the set of $\flat$-ultrafilters.
\begin{defn}[Standard]
Let $\left(X,\mathcal{B}_{X}\right)$ be a prebornological space.
We call a filter $F$ on $X$ a \emph{$\flat$-filter} if $F\cap\mathcal{B}_{X}\neq\varnothing$.
Let $\bUlt X$ be the set of all $\flat$-ultrafilters on $X$. The
sets of the form $\set{F\in\bUlt X|B\in F}$, where $B\in\mathcal{B}_{X}$,
generate a prebornology on $\bUlt X$. We call the prebornological
space $\bUlt X$ the \emph{prebornological ultrafilter space} of $X$.
\end{defn}

\begin{rem}
The sets of the form $\set{F\in\Ult X|B\in F}$, where $B\in\mathcal{B}_{X}$,
cover the set $\bUlt X$, while they do not cover $\Ult X$ except
for the case where $X$ is bounded in itself. Because of this, it
is reasonable to restrict the underlying set to $\bUlt X$.
\end{rem}

\begin{thm}
\label{thm:SX-and-bUltX}Let $X$ be a standard prebornological space.
For each $x\in\FIN\left(X\right)$, let $F_{x}=\set{A\in\mathcal{P}\left(X\right)|x\in\ns{A}}$.
Then the map $\Psi_{X}\colon x\mapsto F_{x}$ is a proper bornological
surjection from $SX$ to $\bUlt X$.
\end{thm}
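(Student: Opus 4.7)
The plan is to mirror the proof of Theorem~\ref{thm:StX-and-UltX}, replacing the role of open sets by bounded sets and using the $\flat$-condition as the large-scale analogue of ``lying in some bounded set''.

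First, I would check that $\Psi_X$ is well-defined into $\bUlt X$. The verification that $F_x$ is an ultrafilter on $X$ was already carried out in the first paragraph of the proof of Theorem~\ref{thm:StX-and-UltX} and transfers verbatim. The extra $\flat$-condition is immediate: since $x\in\FIN(X)$, there is $B\in\mathcal{B}_X$ with $x\in\ns{B}$, so $B\in F_x\cap\mathcal{B}_X$.

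Before handling bornologicity and properness, I would make explicit the generating family $\widehat{B}=\set{F\in\bUlt X|B\in F}$, $B\in\mathcal{B}_X$. This family covers $\bUlt X$ (by the $\flat$-condition) and is closed under non-disjoint finite unions: if $\widehat{B_1}\cap\widehat{B_2}\neq\varnothing$, any witness $F$ contains $B_1\cap B_2$, so $B_1\cap B_2\neq\varnothing$, hence $B_1\cup B_2\in\mathcal{B}_X$ and $\widehat{B_1}\cup\widehat{B_2}\subseteq\widehat{B_1\cup B_2}$. Therefore the prebornology on $\bUlt X$ consists precisely of subsets of the $\widehat{B}$. With this in hand, bornologicity is routine: given $A\in\ss{\mathcal{B}_X}$, pick $B\in\mathcal{B}_X$ with $A\subseteq\ns{B}$; then for every $x\in A$ we have $B\in F_x$, so $\Psi_X(A)\subseteq\widehat{B}$. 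Properness is equally direct: for a bounded $\mathcal{A}\subseteq\bUlt X$, pick $B$ with $\mathcal{A}\subseteq\widehat{B}$; then
\[
\Psi_X^{-1}(\widehat{B})=\set{x\in\FIN(X)|B\in F_x}=\set{x\in\FIN(X)|x\in\ns{B}}=\ns{B}\in\ss{\mathcal{B}_X},
\]
so $\Psi_X^{-1}(\mathcal{A})\in\ss{\mathcal{B}_X}$.

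The key step, and the only one requiring a genuine argument, is surjectivity. Given $F\in\bUlt X$, the family $F$ has the finite intersection property (as a filter), so weak saturation yields some $x\in\bigcap_{A\in F}\ns{A}$, exactly as in the proof of Theorem~\ref{thm:StX-and-UltX}. The new ingredient, compared to the topological case, is that we must further place $x$ inside $\FIN(X)$; this is where the $\flat$-condition enters: choose $B\in F\cap\mathcal{B}_X$, and then automatically $x\in\ns{B}\subseteq\FIN(X)$. By construction $F\subseteq F_x$, and maximality of $F$ forces $F=F_x=\Psi_X(x)$, proving surjectivity.

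No step presents a serious obstacle; the argument is a faithful prebornological analogue of the topological one. The only subtlety worth flagging is the verification that the generating family $\set{\widehat{B}|B\in\mathcal{B}_X}$ genuinely produces the claimed prebornology (i.e.\ is closed under non-disjoint unions), since the prebornological axiom (iii) is weaker than the corresponding bornological axiom and must be checked honestly.
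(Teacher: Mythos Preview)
Your proof is correct and follows essentially the same route as the paper: well-definedness via the ultrafilter argument of Theorem~\ref{thm:StX-and-UltX} plus the $\flat$-condition, then the identities $\Psi_X^{-1}(\widehat{B})=\ns{B}$ and (implicitly) $\Psi_X(\ns{B})=\widehat{B}$, with surjectivity coming from weak saturation. The paper is terser, deriving surjectivity directly from the image identity rather than treating it as a separate step, and it does not pause to verify that the generating family $\{\widehat{B}\}$ is closed under non-disjoint unions; your explicit check of this point is a useful addition but not a genuinely different idea.
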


\begin{proof}
We first verify the well-definedness. Let $x\in\FIN\left(X\right)$.
As already shown in the proof of \prettyref{thm:StX-and-UltX}, $F_{x}$
is an ultrafilter over $X$. Since $x$ is finite, we can find a $B\in\mathcal{B}_{X}$
so that $x\in\ns{B}$. Then $B\in F_{x}$. Therefore $F_{x}$ is a
$\flat$-filter.

Similarly to the proof of \prettyref{thm:StX-and-UltX}, we can prove
that $\Psi_{X}^{-1}\left(\set{F\in\bUlt X|B\in F}\right)=\ns{B}$
and $\Psi_{X}\left(\ns{B}\right)=\set{F\in\bUlt X|B\in F}$ hold for
all $B\in\mathcal{B}_{X}$, and therefore $\Psi_{X}$ is bornological,
proper and surjective.
\end{proof}
\begin{cor}
$\bUlt X\cong SX/\ker\Psi_{X}$.
\end{cor}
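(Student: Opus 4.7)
The plan is to realize the isomorphism as the map induced by $\Psi_X$ on the quotient via the universal property. Since $\Psi_X \colon SX \to \bUlt X$ is a surjection by \prettyref{thm:SX-and-bUltX}, it factors uniquely as $\Psi_X = \bar{\Psi}_X \circ \pi$, where $\pi \colon SX \to SX/\ker\Psi_X$ is the canonical projection and $\bar{\Psi}_X \colon SX/\ker\Psi_X \to \bUlt X$ is a bijection. It remains to show that $\bar{\Psi}_X$ is a prebornological isomorphism, that is, both $\bar{\Psi}_X$ and its inverse are bornological when $SX/\ker\Psi_X$ carries the quotient prebornology (the pushforward of $\ss{\mathcal{B}_X}$ along $\pi$, whose bounded sets are generated by images $\pi(A)$ for $A \in \ss{\mathcal{B}_X}$).

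Bornologicity of $\bar{\Psi}_X$ is immediate: a generator of the quotient prebornology has the form $\pi(A)$ with $A \in \ss{\mathcal{B}_X}$, and $\bar{\Psi}_X(\pi(A)) = \Psi_X(A) \in \mathcal{B}_{\bUlt X}$ by \prettyref{thm:SX-and-bUltX}. For the inverse, let $B \in \mathcal{B}_{\bUlt X}$. By the properness half of \prettyref{thm:SX-and-bUltX}, $\Psi_X^{-1}(B) \in \ss{\mathcal{B}_X}$, and surjectivity of $\Psi_X$ gives $\bar{\Psi}_X(\pi(\Psi_X^{-1}(B))) = \Psi_X(\Psi_X^{-1}(B)) = B$, hence $\bar{\Psi}_X^{-1}(B) = \pi(\Psi_X^{-1}(B))$, which is bounded in the quotient by construction.

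The main subtlety I expect is verifying that the quotient prebornology is genuinely a prebornology (rather than merely a base of one) and that the above description of its bounded sets is accurate. This requires checking that the pushforward family covers $SX/\ker\Psi_X$ (trivial, as $\pi$ is surjective and $\ss{\mathcal{B}_X}$ covers $\FIN(X)$), is closed downward (automatic if one takes the generated prebornology), and is closed under finite non-disjoint unions — this last point follows because if $\pi(A) \cap \pi(A')$ is non-empty then one may replace $A$ and $A'$ by saturated representatives using $\pi^{-1}\pi(A) \cap \pi^{-1}\pi(A')$, and then invoke (iii) of the prebornology axioms upstairs together with bornologicity/properness of $\Psi_X$ to control the union. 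Once this bookkeeping is in place, the corollary follows as sketched.
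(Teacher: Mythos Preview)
Your proposal is correct and matches the paper's intent: the paper states this corollary without proof, treating it as immediate from \prettyref{thm:SX-and-bUltX}, and your argument is precisely the standard first-isomorphism-theorem verification that a proper bornological surjection descends to an isomorphism on the quotient. The bookkeeping you flag about the quotient prebornology (in particular, using properness and bornologicity of $\Psi_X$ to pass to saturated representatives for axiom (iii)) is exactly the routine check the paper leaves implicit.
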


It is known that every topological space $X$ is patch-densely embeddable
into $\Ult X$ (through the map $X\hookrightarrow S^{t}X\twoheadrightarrow\Ult X$)
\citep[Proposition 2]{Sal00}. Its prebornological analogue can be
stated as follows.
\begin{defn}[Standard]
A subset $A$ of a prebornological space $X$ is said to be \emph{bornologically
dense} (abbreviated as \emph{B-dense}) if $A$ has a non-empty intersection
with each connected component of $X$.
\end{defn}

\begin{thm}[Standard]
Every prebornological space $X$ is B-densely embeddable into $\bUlt X$.
\end{thm}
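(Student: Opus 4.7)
The plan is to exhibit the map $\iota_X = \Psi_X \circ i_X \colon X \to \bUlt X$ as the required B-dense embedding, where $i_X \colon X \hookrightarrow SX$ is the inclusion from \prettyref{prop:inclusion-of-S-born} and $\Psi_X \colon SX \to \bUlt X$ is the surjection from \prettyref{thm:SX-and-bUltX}. For a standard $x \in X$ one has $x \in \ns{A} \iff x \in A$, so $\iota_X(x) = \set{A \in \mathcal{P}(X) | x \in A}$ is simply the principal ultrafilter at $x$; this is indeed a $\flat$-ultrafilter because $\mathcal{B}_X$ covers $X$.

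Bornologicity, properness, and injectivity of $\iota_X$ follow at once: composing \prettyref{prop:inclusion-of-S-born} and \prettyref{thm:SX-and-bUltX} gives the first two, and distinct points yield distinct principal ultrafilters (the one-point case being trivial). To upgrade these to a prebornological embedding, I would verify that a subset $A \subseteq X$ lies in $\mathcal{B}_X$ if and only if $\iota_X(A)$ is bounded in $\bUlt X$. The forward direction is bornologicity, and for the converse, if $\iota_X(A) \subseteq \set{F \in \bUlt X | B \in F}$ for some $B \in \mathcal{B}_X$, then every $x \in A$ satisfies $B \in \iota_X(x)$, i.e., $x \in B$, whence $A \subseteq B$ and $A \in \mathcal{B}_X$.

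For B-density, take any $F \in \bUlt X$. By definition of $\flat$-ultrafilter there exists $B \in F \cap \mathcal{B}_X$. Pick any $x \in B$; then $B \in \iota_X(x)$, so both $F$ and $\iota_X(x)$ lie in the generating bounded set $\set{G \in \bUlt X | B \in G}$ and therefore belong to the same connected component of $\bUlt X$. Hence $\iota_X(X)$ meets every connected component, establishing B-density. The only mild obstacle is identifying the correct notion of embedding for prebornological spaces and checking that bornological-plus-proper injection captures it, which reduces to the elementary computation $\iota_X^{-1}(\set{F \in \bUlt X | B \in F}) = B$ used above.
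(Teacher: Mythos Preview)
Your proof is correct and follows essentially the same route as the paper: form the composite $\Psi_X \circ i_X$, identify it as the principal-ultrafilter map, obtain properness and bornologicity from \prettyref{prop:inclusion-of-S-born} and \prettyref{thm:SX-and-bUltX}, and verify B-density by choosing $B \in F \cap \mathcal{B}_X$ and any $x \in B$. Your only addition is the explicit verification that $\iota_X$ reflects boundedness (via $\iota_X^{-1}(\set{F | B \in F}) = B$), which the paper leaves implicit in its use of ``embeddable''.
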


\begin{proof}
By \prettyref{prop:inclusion-of-S-born} and \prettyref{thm:SX-and-bUltX},
the inclusion map $i_{X}\colon X\hookrightarrow SX$ and the map $\Psi_{X}\colon SX\twoheadrightarrow\bUlt X$
defined above are proper bornological, so is the the composition $j_{X}=\Psi_{X}\circ i_{X}\colon X\to\bUlt X$.
Notice that $j_{X}\left(x\right)$ is the principal ultrafilter $\set{A\in\mathcal{P}\left(X\right)|x\in A}$
for each $x\in X$. Hence $j_{X}$ is injective.

Let $F\in\bUlt X$. Let $B\in\mathcal{B}_{X}$ be such that $B\in F$,
and pick $x\in B$. Then $F$ and $j_{X}\left(x\right)$ are included
in the same bounded subset $\set{G\in\bUlt X|B\in G}$ of $\bUlt X$.
Hence the image of $j_{X}$ is B-dense in $\bUlt X$.
\end{proof}
Finally, we discuss compatibility issues. Given a tb-space $X$, $\bUlt X$
can be regarded as a tb-space by considering the subspace topology
in $\Ult X$. Similarly, given a standard tb-space $X$, $SX$ can
be be regarded as a tb-space by considering the subspace topology
in $S^{t}X$.
\begin{thm}
\label{thm:preservation-compatibility-by-S}Let $X$ be a standard
tb-space. If $X$ is III-compatible, then so is $SX$.
\end{thm}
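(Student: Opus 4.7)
The plan is to unpack the definitions directly: III-compatibility of $SX$ means that every $\ss{\mathcal{B}_X}$-bounded subset of $\FIN(X)$ admits an $\ss{\mathcal{B}_X}$-bounded neighbourhood in the topology which $SX$ inherits as a subspace of $S^tX$, i.e.\ the relative topology generated by $\ssig{\mathcal{T}_X}=\{\ns{U}\mid U\in\mathcal{T}_X\}$.

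First I would reduce the problem to a single basic bounded set. Given $A\in\ss{\mathcal{B}_X}$, by the definition of $\ss{\mathcal{B}_X}$ there is some $B\in\mathcal{B}_X$ with $A\subseteq\ns{B}$. Now I would invoke III-compatibility of $X$: since $B$ is bounded, it has a bounded neighbourhood in $X$, so one can find an \emph{open} set $W\in\mathcal{T}_X$ with $B\subseteq W$ and $W\in\mathcal{B}_X$ (one just takes the topological interior of a bounded neighbourhood and observes that any subset of a bounded set is bounded).

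Next I would transfer up to $\ns{X}$: the set $\ns{W}$ is a basic open set of the S-topology on $\ns{X}$, it lies in $\FIN(X)$ because $W\in\mathcal{B}_X$, and it belongs to $\ss{\mathcal{B}_X}$ since $W\in\mathcal{B}_X$ (using \prettyref{lem:S-bounded-sets} or its obvious direction). Because $A\subseteq\ns{B}\subseteq\ns{W}$, this $\ns{W}$ is an open neighbourhood of $A$ in $SX$ that is $\ss{\mathcal{B}_X}$-bounded. Hence $SX$ is III-compatible.

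I do not expect any real obstacle: the only points to be careful about are (i) extracting an open bounded enlargement of $B$ from the definition of ``bounded neighbourhood'' (a neighbourhood in the standard sense is a superset of an open set, and subsets of bounded sets are bounded), and (ii) making sure the topology on $SX$ really is the subspace topology inherited from $S^tX$, so that sets of the form $\ns{U}\cap\FIN(X)$ with $U\in\mathcal{T}_X$ are open — here $\ns{W}$ is already contained in $\FIN(X)$, so no intersection is needed.
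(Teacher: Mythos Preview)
Your proof is correct and is essentially the same as the paper's: pick $B\in\mathcal{B}_X$ with $A\subseteq\ns{B}$, use III-compatibility of $X$ to get an open bounded $N$ (your $W$) with $B\subseteq N$, and then $\ns{N}$ serves as the required bounded open neighbourhood of $A$ in $SX$. The only cosmetic difference is that the paper writes $N\in\mathcal{B}_X\cap\mathcal{T}_X$ directly, whereas you spell out the passage to an \emph{open} bounded neighbourhood via taking interiors.
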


\begin{proof}
Let $A\in\ss{\mathcal{B}_{X}}$, i.e., there is a $B\in\mathcal{B}_{X}$
such that $A\subseteq\ns{B}$. By the III-compatibility, there exists
an $N\in\mathcal{B}_{X}\cap\mathcal{T}_{X}$ such that $B\subseteq N$.
By transfer, $A\subseteq\ns{B}\subseteq\ns{N}$ and $\ns{N}\in\left(\ss{\mathcal{T}_{X}}\restriction\FIN\left(X\right)\right)\cap\ss{\mathcal{B}_{X}}$.
Hence $X$ is III-compatible.
\end{proof}
\begin{lem}[Standard]
Let $q\colon X\to Y$ be an open proper bornological surjection between
tb-spaces. If $X$ is III-compatible, then so is $Y$.
\end{lem}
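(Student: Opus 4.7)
The plan is to chase a bounded set in $Y$ back to $X$ via $q^{-1}$, enlarge it there using III-compatibility of $X$, and then push the enlargement forward through $q$. Concretely, let $B' \in \mathcal{B}_Y$. The first step is to observe that properness of $q$ yields $q^{-1}(B') \in \mathcal{B}_X$.

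Next I apply III-compatibility of $X$ to the bounded set $q^{-1}(B')$: there is an $N \in \mathcal{T}_X \cap \mathcal{B}_X$ with $q^{-1}(B') \subseteq N$. The candidate bounded neighbourhood of $B'$ is then $N' := q(N)$. Openness of $q$ gives $N' \in \mathcal{T}_Y$; bornologicity of $q$ gives $N' \in \mathcal{B}_Y$; and surjectivity of $q$ (applied to $q^{-1}(B') \subseteq N$) gives $B' = q(q^{-1}(B')) \subseteq q(N) = N'$. Thus $N'$ is a bounded open set containing $B'$, which is exactly III-compatibility of $Y$.

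There is no real obstacle here: each of the three hypotheses on $q$ (proper, open, bornological) is used precisely once, together with surjectivity to recover $B'$ from $q^{-1}(B')$. The only point worth being careful about is that the neighbourhood extracted from the III-compatibility of $X$ is genuinely \emph{open} (so that its forward image under the open map $q$ is open), which is guaranteed by the formulation of III-compatibility as ``each bounded set has a bounded neighbourhood''.
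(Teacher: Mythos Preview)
Your proof is correct and matches the paper's own argument essentially line for line: pull $B'$ back via properness, enlarge to a bounded open $N$ using III-compatibility of $X$, then push forward using openness, bornologicity, and surjectivity of $q$. Your closing remark about needing $N$ to be genuinely open is also well placed, since III-compatibility only guarantees a bounded neighbourhood; passing to its interior (still bounded, as bornologies are downward closed) handles this, and the paper tacitly does the same by writing $N\in\mathcal{T}_X\cap\mathcal{B}_X$.
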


\begin{proof}[Proof (Standard)]
Let $B\in\mathcal{B}_{Y}$. Since $q$ is proper, $q^{-1}\left(B\right)\in\mathcal{B}_{X}$,
so there is an $N\in\mathcal{T}_{X}\cap\mathcal{B}_{X}$ so that $q^{-1}\left(B\right)\subseteq N$.
Since $q$ is open, bornological and surjective, $B=q\left(q^{-1}\left(B\right)\right)\subseteq q\left(N\right)\in\mathcal{T}_{Y}\cap\mathcal{B}_{Y}$.
\end{proof}
Combining this lemma with \prettyref{thm:preservation-compatibility-by-S}
yields the following preservation result.
\begin{cor}[Standard]
Let $X$ be a tb-space. If $X$ is III-compatible, then so is $\bUlt X$.
\end{cor}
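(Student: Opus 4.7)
The plan is to apply the preceding standard lemma to the canonical map $\Psi_{X}\colon SX\twoheadrightarrow\bUlt X$ introduced in \prettyref{thm:SX-and-bUltX}. By that theorem, $\Psi_{X}$ is already a proper bornological surjection; and by \prettyref{thm:preservation-compatibility-by-S}, if $X$ is III-compatible, so is $SX$. Hence, once we have verified that $\Psi_{X}$ is open with respect to the relevant subspace topologies, the lemma will immediately yield III-compatibility of $\bUlt X$.

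The only genuine task is therefore the openness of $\Psi_{X}$, and this should follow by restricting the openness of the topological map $\Phi_{X}\colon S^{t}X\twoheadrightarrow\Ult X$ from \prettyref{thm:StX-and-UltX}. A basic open set of $SX$ has the form $\ns{U}\cap\FIN(X)$ with $U\in\mathcal{T}_{X}$, and I expect to show
\[
\Psi_{X}\bigl(\ns{U}\cap\FIN(X)\bigr)=\set{F\in\bUlt X|U\in F},
\]
which is open in the subspace $\bUlt X\subseteq\Ult X$. The inclusion $\subseteq$ is trivial. For the reverse, given a $\flat$-ultrafilter $F$ containing $U$, pick $B\in F\cap\mathcal{B}_{X}$; then $U\cap B\in F$, and by the finite intersection property together with weak saturation (exactly as in the proof of \prettyref{thm:StX-and-UltX}), the set $\bigcap_{A\in F}\ns{A}$ is nonempty and contained in $\ns{U}\cap\ns{B}\subseteq\ns{U}\cap\FIN(X)$. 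Any point $x$ in this intersection satisfies $F\subseteq F_{x}$, hence $F=F_{x}=\Psi_{X}(x)$ by maximality of $F$.

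Since $\Psi_{X}$ preserves unions and the basic open sets of $SX$ generate its topology, this shows $\Psi_{X}$ sends open sets of $SX$ to open sets of $\bUlt X$. Combining openness with the properness, bornologicity and surjectivity furnished by \prettyref{thm:SX-and-bUltX}, the preceding standard lemma applies to $\Psi_{X}\colon SX\twoheadrightarrow\bUlt X$ and gives III-compatibility of $\bUlt X$.

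The main (and essentially only) obstacle is the set-theoretic identification of $\Psi_{X}(\ns{U}\cap\FIN(X))$ above; the subtlety lies in making sure that the saturation argument, which in \prettyref{thm:StX-and-UltX} produces a witness in $\ns{U}$, can be arranged to land inside $\FIN(X)$ as well. This is precisely where the $\flat$-filter condition is used, so once that observation is in place the rest of the argument is purely formal.
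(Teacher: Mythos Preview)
Your proposal is correct and follows exactly the paper's intended route: apply the preceding lemma to $\Psi_{X}\colon SX\to\bUlt X$, using \prettyref{thm:preservation-compatibility-by-S} for III-compatibility of $SX$ and \prettyref{thm:SX-and-bUltX} for the proper bornological surjectivity of $\Psi_{X}$. Your explicit verification that $\Psi_{X}$ is open---via the identity $\Psi_{X}\bigl(\ns{U}\cap\FIN(X)\bigr)=\set{F\in\bUlt X|U\in F}$ and the $\flat$-filter condition to force the saturation witness into $\FIN(X)$---is exactly the detail the paper leaves implicit when it says the corollary follows by ``combining'' the lemma with the earlier results.
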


\subsection{S-coarse structures}

We next consider the coarse counterpart of S-prebornology. 
\begin{prop}[S-coarse structure]
Given a standard coarse space $\left(X,\mathcal{C}_{X}\right)$,
the family
\[
\ss{\mathcal{C}_{X}}=\set{E\subseteq\ns{X}\times\ns{X}|E\subseteq\ns{F}\text{ for some }F\in\mathcal{C}_{X}}
\]
is a coarse structure on $\ns{X}$.
\end{prop}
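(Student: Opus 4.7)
The plan is to verify directly that $\ss{\mathcal{C}_{X}}$ satisfies the five defining axioms of a coarse structure: (i) it contains the diagonal $\Delta_{\ns{X}}$, (ii) it is closed under taking subsets, (iii) it is closed under inverses, (iv) it is closed under finite unions, and (v) it is closed under composition. This is directly parallel to the proof that $\ss{\mathcal{B}_{X}}$ is a prebornology, and the same organising idea applies: show that the family $\ssig{\mathcal{C}_{X}} = \set{\ns{F} \mid F \in \mathcal{C}_{X}}$ satisfies the closure properties (i), (iii), (iv), (v) up to equality, and then invoke (ii), which holds by definition of $\ss{\mathcal{C}_{X}}$.

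For (ii), closure under subsets is immediate from the definition. For each of the remaining axioms, the argument is an application of the transfer principle to the corresponding axiom of $\mathcal{C}_{X}$. Concretely, for (i), since $\Delta_{X} \in \mathcal{C}_{X}$ and $\Delta_{\ns{X}} = \ns{\Delta_{X}}$ by transfer, we have $\Delta_{\ns{X}} \in \ssig{\mathcal{C}_{X}} \subseteq \ss{\mathcal{C}_{X}}$. For (iii), if $E \subseteq \ns{F}$ with $F \in \mathcal{C}_{X}$, then by transfer $\ns{F}^{-1} = \ns{(F^{-1})}$ and $F^{-1} \in \mathcal{C}_{X}$, so $E^{-1} \subseteq \ns{(F^{-1})} \in \ssig{\mathcal{C}_{X}}$. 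For (iv), given $E_{1} \subseteq \ns{F_{1}}$ and $E_{2} \subseteq \ns{F_{2}}$ with $F_{i} \in \mathcal{C}_{X}$, transfer gives $\ns{F_{1}} \cup \ns{F_{2}} = \ns{(F_{1} \cup F_{2})}$ with $F_{1} \cup F_{2} \in \mathcal{C}_{X}$, hence $E_{1} \cup E_{2} \in \ss{\mathcal{C}_{X}}$. For (v), the analogous transfer identity $\ns{F_{1}} \circ \ns{F_{2}} = \ns{(F_{1} \circ F_{2})}$ together with $F_{1} \circ F_{2} \in \mathcal{C}_{X}$ yields $E_{1} \circ E_{2} \in \ss{\mathcal{C}_{X}}$.

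None of the steps is genuinely hard; the only potential obstacle is ensuring the transfer identities $\ns{F}^{-1} = \ns{(F^{-1})}$, $\ns{F_{1}} \cup \ns{F_{2}} = \ns{(F_{1} \cup F_{2})}$, and $\ns{F_{1}} \circ \ns{F_{2}} = \ns{(F_{1} \circ F_{2})}$ are invoked correctly. These are standard consequences of the transfer principle applied to the first-order definable operations of inverse, union and composition of binary relations, so they can be cited or stated in a single sentence. The whole proof is therefore essentially the observation that $\ssig{\mathcal{C}_{X}}$ is stable under the operations defining a coarse structure, together with the trivial fact that $\ss{\mathcal{C}_{X}}$ is its downward closure in $\mathcal{P}(\ns{X} \times \ns{X})$.
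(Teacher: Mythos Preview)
Your proof is correct and follows essentially the same approach as the paper: observe that $\ss{\mathcal{C}_{X}}$ is the downward closure of $\ssig{\mathcal{C}_{X}}=\set{\ns{F}\mid F\in\mathcal{C}_{X}}$, and then verify via transfer that $\ssig{\mathcal{C}_{X}}$ contains the diagonal and is closed under inverses, finite unions and compositions. The paper states this in a single sentence and leaves the transfer identities implicit, whereas you spell them out explicitly, but the argument is identical.
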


\begin{proof}
$\ss{\mathcal{C}_{X}}$ is generated by $\ssig{\mathcal{C}_{X}}=\set{\ns{E}|E\in\mathcal{C}_{X}}$.
It suffices to verify that $\ssig{\mathcal{C}_{X}}$ contains the
diagonal set $\Delta_{\ns{X}}$ of $\ns{X}\times\ns{X}$ and is closed
under finite unions, compositions and inversions. However, it is immediate
from the transfer principle.
\end{proof}
\begin{notation}
We denote the coarse space $\left(\ns{X},\ss{\mathcal{C}_{X}}\right)$
by $S^{c}X$.
\end{notation}

\begin{lem}
\label{lem:S-controlled-sets}Let $X$ be a standard coarse space.
For every subset $E$ of $X\times X$, $E\in\mathcal{C}_{X}$ if and
only if $\ns{E}\in\ss{\mathcal{C}_{X}}$.
\end{lem}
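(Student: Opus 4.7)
The plan is to mirror the proof of \prettyref{lem:S-bounded-sets} exactly, replacing bounded sets by controlled sets (entourages) and invoking the fact that coarse structures are downward closed under taking subsets. This is an easy consequence of transfer.

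First, I would dispose of the ``only if'' direction as trivial: if $E \in \mathcal{C}_X$, then $\ns{E} \in \ssig{\mathcal{C}_X} \subseteq \ss{\mathcal{C}_X}$ by definition of $\ss{\mathcal{C}_X}$ (it is generated by, and in particular contains, all sets of the form $\ns{F}$ with $F \in \mathcal{C}_X$).

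For the ``if'' direction, I would suppose $\ns{E} \in \ss{\mathcal{C}_X}$. By the definition of $\ss{\mathcal{C}_X}$, there exists $F \in \mathcal{C}_X$ such that $\ns{E} \subseteq \ns{F}$. Applying the transfer principle to the statement ``$E \subseteq F$'' (which is equivalent to $\ns{E} \subseteq \ns{F}$ since $E, F$ are standard), we obtain $E \subseteq F$. Finally, since $\mathcal{C}_X$ is a coarse structure and hence closed under taking subsets, $E \in \mathcal{C}_X$.

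There is no real obstacle: the argument is a direct translation of the bornological case, with the role of the axiom ``$A \subseteq B \in \mathcal{B}_X \Rightarrow A \in \mathcal{B}_X$'' played by the corresponding downward-closure axiom of coarse structures. No appeal to saturation, overspill, or the approximation lemma is needed; transfer alone suffices.
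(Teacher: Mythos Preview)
Your proposal is correct and essentially identical to the paper's own proof: the paper also dismisses the ``only if'' direction as trivial, and for the ``if'' direction takes $F\in\mathcal{C}_{X}$ with $\ns{E}\subseteq\ns{F}$, applies transfer to get $E\subseteq F$, and concludes $E\in\mathcal{C}_{X}$ by downward closure. Your explicit mention of mirroring \prettyref{lem:S-bounded-sets} is exactly the right observation.
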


\begin{proof}
The ``only if'' part is trivial. Suppose $\ns{E}\in\ss{\mathcal{C}_{X}}$.
There exists an $F\in\mathcal{C}_{X}$ such that $\ns{E}\subseteq\ns{F}$.
By transfer, $E\subseteq F$, so $E\in\mathcal{C}_{X}$.
\end{proof}
\begin{prop}
\label{prop:rel-btw-S-bornology-and-S-coarse-structure}Let $\mathcal{C}_{X}$
be a coarse structure on a standard set $X$ together with the induced
prebornology $\mathcal{B}_{X}$. The induced prebornology of $\ss{\mathcal{C}_{X}}\restriction\FIN\left(X\right)$
is precisely $\ss{\mathcal{B}_{X}}$.
\end{prop}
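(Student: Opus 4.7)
The plan is to prove the two set inclusions $\ss{\mathcal{B}_X} \subseteq \{A \subseteq \FIN(X) : A \times A \in \ss{\mathcal{C}_X}\}$ and its reverse. Both rely on the interaction of the $\ns{(\cdot)}$ operation with the two constructions, plus the defining identity $\mathcal{B}_X = \{B \subseteq X : B \times B \in \mathcal{C}_X\}$.

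For the forward inclusion, suppose $A \in \ss{\mathcal{B}_X}$. Pick a standard $B \in \mathcal{B}_X$ with $A \subseteq \ns{B}$. By the induced-prebornology definition, $B \times B \in \mathcal{C}_X$, so $A \times A \subseteq \ns{B} \times \ns{B} = \ns{(B \times B)} \in \ssig{\mathcal{C}_X} \subseteq \ss{\mathcal{C}_X}$. Since $A \subseteq \FIN(X)$, this shows $A$ is in the induced prebornology of $\ss{\mathcal{C}_X}\restriction\FIN(X)$.

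The reverse inclusion is the substantive direction. Suppose $A \subseteq \FIN(X)$ with $A \times A \in \ss{\mathcal{C}_X}$; choose standard $E \in \mathcal{C}_X$ such that $A \times A \subseteq \ns{E}$. The empty case is trivial, so fix any $a \in A$. Using $a \in \FIN(X)$, pick a standard $B \in \mathcal{B}_X$ with $a \in \ns{B}$. For every $a' \in A$ we have $(a,a') \in \ns{E}$, hence $a' \in \ns{E}[a] \subseteq \ns{E}[\ns{B}] = \ns{(E[B])}$, where the last equality is by transfer. Thus $A \subseteq \ns{(E[B])}$, and it remains only to check that $E[B] \in \mathcal{B}_X$.

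That verification is the main technical obstacle but is short: from $E[B] \times E[B] \subseteq E^{-1} \circ (B \times B) \circ E$ (if $x = E(b_1)$ and $y = E(b_2)$ with $b_1, b_2 \in B$, chain $x \mathrel{E^{-1}} b_1 \mathrel{(B \times B)} b_2 \mathrel{E} y$) together with $B \times B, E, E^{-1} \in \mathcal{C}_X$ and the closure of $\mathcal{C}_X$ under composition, we get $E[B] \times E[B] \in \mathcal{C}_X$, so $E[B] \in \mathcal{B}_X$. This yields $A \in \ss{\mathcal{B}_X}$, completing the proof.
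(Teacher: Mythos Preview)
Your proof is correct and follows essentially the same approach as the paper. The only cosmetic difference is in the reverse inclusion: where you pick a standard bounded set $B$ with $a\in\ns{B}$ and show $A\subseteq\ns{(E[B])}$, the paper instead picks a standard point $x_1$ with $x_0\sim_X x_1$ via some $F\in\mathcal{C}_X$ and shows $A\subseteq\ns{(E\circ F)}[x_1]$; both then verify the resulting standard set is bounded by the same kind of composition argument.
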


\begin{proof}
Suppose that $A\in\ss{\mathcal{B}_{X}}$. There exists a $B\in\mathcal{B}_{X}$
such that $A\subseteq\ns{B}$. Since $B\times B\in\mathcal{C}_{X}$,
$\ns{B}\times\ns{B}=\ns{\left(B\times B\right)}\in\ss{\mathcal{C}_{X}}$.
Therefore $\ns{B}$ is bounded with respect to $\ss{\mathcal{C}_{X}}\restriction\FIN\left(X\right)$,
and so is $A$.

Conversely, suppose that $A$ is bounded with respect to $\ss{\mathcal{C}_{X}}\restriction\FIN\left(X\right)$,
i.e., there exists a bounded set $B$ with respect to $\ss{\mathcal{C}_{X}}$
such that $A=B\cap\FIN\left(X\right)$. By the definition of the induced
prebornology, $B\times B\in\ss{\mathcal{C}_{X}}$ holds. Take an $E\in\mathcal{C}_{X}$
so that $B\times B\subseteq\ns{E}$. If $A$ is empty, then it is
obviously bounded with respect to $\ss{\mathcal{B}_{X}}$. If not,
fix $x_{0}\in A$. Since $x_{0}\in A\subseteq\FIN\left(X\right)$,
there exists an $x_{1}\in X$ such that $x_{0}\sim_{X}x_{1}$. Take
an $F\in\mathcal{C}_{X}$ so that $\left(x_{1},x_{0}\right)\in\ns{F}$.
Then $A\subseteq B\subseteq\ns{E}\left[x_{0}\right]\subseteq\ns{\left(E\circ F\right)}\left[x_{1}\right]\in\ns{\mathcal{B}_{X}}$
by transfer, so $A\in\ss{\mathcal{B}_{X}}$.
\end{proof}
For each map $f\colon X\to Y$ between standard coarse spaces, its
nonstandard extension $\ns{f}\colon\ns{X}\to\ns{Y}$ can naturally
be considered as a map $S^{c}X\to S^{c}Y$. This construction gives
a functor from the category of standard coarse spaces to the category
of (external) coarse spaces. Moreover, this construction not only
preserves but also reflects various properties of the map $f$.
\begin{thm}
\label{thm:S-coarse-str-is-functorial}Let $f\colon X\to Y$ be a
map between standard coarse spaces.
\begin{enumerate}
\item \label{enu:S-coarse-str-functor-1}$f\colon X\to Y$ is bornologous
$\iff$ $\ns{f}\colon S^{c}X\to S^{c}Y$ is bornologous;
\item $f\colon X\to Y$ is effectively proper $\iff$ $\ns{f}\colon S^{c}X\to S^{c}Y$
is effectively proper;
\item $f\colon X\to Y$ is coarsely surjective $\iff$ $\ns{f}\colon S^{c}X\to S^{c}Y$
is coarsely surjective.
\end{enumerate}
\end{thm}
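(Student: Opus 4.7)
The plan is to handle all three equivalences by the same mechanism: reducing statements about $\ss{\mathcal{C}_X}$-controlled sets to statements about $\mathcal{C}_X$-controlled sets via \prettyref{lem:S-controlled-sets}, together with the identity $(\ns{f}\times\ns{f})(\ns{E})=\ns{((f\times f)(E))}$ (and similarly for preimages), which is an instance of transfer. Note that $S^{c}X$ and $S^{c}Y$ are external coarse spaces, so the nonstandard characterisations \prettyref{thm:nonst-charact-eff-proper} and \prettyref{thm:nonst-charact-coarse-surjection} cannot be invoked directly on $\ns{f}\colon S^{c}X\to S^{c}Y$; we must argue from the definitions, using the generating family $\ssig{\mathcal{C}_{Y}}=\set{\ns{F}|F\in\mathcal{C}_{Y}}$.

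For \eqref{enu:S-coarse-str-functor-1}, I would proceed as follows. Forward direction: given $E\in\ss{\mathcal{C}_X}$, pick $F\in\mathcal{C}_X$ with $E\subseteq\ns{F}$; then $(\ns{f}\times\ns{f})(E)\subseteq(\ns{f}\times\ns{f})(\ns{F})=\ns{((f\times f)(F))}$, and since $f$ is bornologous, $(f\times f)(F)\in\mathcal{C}_Y$, so this last set lies in $\ssig{\mathcal{C}_Y}\subseteq\ss{\mathcal{C}_Y}$. Reverse direction: for any $F\in\mathcal{C}_X$ we have $\ns{F}\in\ss{\mathcal{C}_X}$, hence $\ns{((f\times f)(F))}=(\ns{f}\times\ns{f})(\ns{F})\in\ss{\mathcal{C}_Y}$, and \prettyref{lem:S-controlled-sets} yields $(f\times f)(F)\in\mathcal{C}_Y$. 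For part (2), the argument is formally identical after replacing $(f\times f)$ with $(f^{-1}\times f^{-1})$ throughout: the relation $(\ns{f}^{-1}\times\ns{f}^{-1})(\ns{F})=\ns{((f^{-1}\times f^{-1})(F))}$ is again transfer, and both directions go through by the same bookkeeping.

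Part (3) is slightly different but equally direct. Forward: if $F\in\mathcal{C}_Y$ satisfies $F[f(X)]=Y$, then by transfer $\ns{F}[\ns{f}(\ns{X})]=\ns{Y}$, and $\ns{F}\in\ss{\mathcal{C}_Y}$ witnesses coarse surjectivity of $\ns{f}\colon S^{c}X\to S^{c}Y$. Reverse: given $E\in\ss{\mathcal{C}_Y}$ with $E[\ns{f}(\ns{X})]=\ns{Y}$, choose $F\in\mathcal{C}_Y$ with $E\subseteq\ns{F}$; then
\[
\ns{(F[f(X)])}=\ns{F}[\ns{f}(\ns{X})]\supseteq E[\ns{f}(\ns{X})]=\ns{Y},
\]
so $\ns{(F[f(X)])}=\ns{Y}$, and since taking the nonstandard extension is injective on subsets (by transfer of the sentence $\forall y\in Y\,(y\in A)\leftrightarrow(A=Y)$), we conclude $F[f(X)]=Y$.

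I do not anticipate a real obstacle; the only point requiring a moment's thought is that one must not try to reuse Theorems \ref{thm:nonst-charact-eff-proper} and \ref{thm:nonst-charact-coarse-surjection}, which are stated for standard spaces. Working straight from the definitions, together with \prettyref{lem:S-controlled-sets} and the compatibility of nonstandard extensions with images/preimages, all six implications reduce to a line or two each.
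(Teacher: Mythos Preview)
Your proposal is correct and follows essentially the same approach as the paper's proof: both argue directly from the definitions, using \prettyref{lem:S-controlled-sets} together with the transfer identities $(\ns{f}\times\ns{f})(\ns{E})=\ns{((f\times f)(E))}$ and $\ns{F}[\ns{f}(\ns{X})]=\ns{(F[f(X)])}$, and the paper likewise dispatches part~(2) by saying it is ``similar to \eqref{enu:S-coarse-str-functor-1}''. Your explicit remark that the nonstandard characterisations \prettyref{thm:nonst-charact-eff-proper} and \prettyref{thm:nonst-charact-coarse-surjection} are unavailable for the external spaces $S^{c}X$, $S^{c}Y$ is a useful clarification that the paper leaves implicit.
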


\begin{proof}
\begin{enumerate}
\item Suppose $f\colon X\to Y$ is bornologous. Let $E\in\ss{\mathcal{C}_{X}}$.
Choose an $F\in\mathcal{C}_{X}$ such that $E\subseteq\ns{F}$. Obviously,
$\left(\ns{f}\times\ns{f}\right)\left(E\right)=\ns{\left(f\times f\right)}\left(E\right)\subseteq\ns{\left(f\times f\right)}\left(\ns{F}\right)=\ns{\left(\left(f\times f\right)\left(F\right)\right)}$.
Since $f$ is supposed to be bornologous, $\left(f\times f\right)\left(F\right)\in\mathcal{C}_{Y}$
holds. Hence $\left(\ns{f}\times\ns{f}\right)\left(E\right)\in\ss{\mathcal{C}_{Y}}$.
Conversely, suppose $\ns{f}\colon S^{c}X\to S^{c}Y$ is bornologous.
Let $E\in\mathcal{C}_{X}$. Then $\ns{E}\in\ss{\mathcal{C}_{X}}$,
so $\ns{\left(\left(f\times f\right)\left(E\right)\right)}=\left(\ns{f}\times\ns{f}\right)\left(\ns{E}\right)\in\ss{\mathcal{C}_{Y}}$.
By \prettyref{lem:S-controlled-sets}, we have that $\left(f\times f\right)\left(E\right)\in\mathcal{C}_{Y}$.
\item Similar to \eqref{enu:S-coarse-str-functor-1}.
\item Suppose $f\colon X\to Y$ is coarsely surjective, i.e., there is an
$E\in\mathcal{C}_{Y}$ such that $E\left[f\left(X\right)\right]=Y$.
By transfer, $\ns{E}\left[\ns{f}\left(\ns{X}\right)\right]=\ns{Y}$
and $\ns{E}\in\ss{\mathcal{C}_{Y}}$. Conversely, suppose $\ns{f}\colon S^{c}X\to S^{c}Y$
is coarsely surjective, i.e., there is an $E\in\ss{\mathcal{C}_{Y}}$
such that $E\left[\ns{f}\left(\ns{X}\right)\right]=\ns{Y}$. Let $F\in\mathcal{C}_{Y}$
be such that $E\subseteq\ns{F}$. Then $\ns{F}\left[\ns{f}\left(\ns{X}\right)\right]=\ns{Y}$.
By transfer, $F\left[f\left(X\right)\right]=Y$.\qedhere
\end{enumerate}
\end{proof}
Similarly to \prettyref{prop:inclusion-of-S-born}, the inclusion
map $j_{X}\colon X\hookrightarrow S^{c}X$ can be considered as a
natural embedding.
\begin{prop}
\label{prop:inclusion-of-S-coarse}For each standard coarse space
$X$, the inclusion map $j_{X}\colon X\hookrightarrow S^{c}X$ is
an asymorphic embedding.
\end{prop}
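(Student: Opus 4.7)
The plan is to verify directly that the subspace coarse structure induced on $j_X(X) = X$ from $S^c X$ coincides with the original coarse structure $\mathcal{C}_X$. Since $j_X$ is trivially a bijection onto its image, this is all that is needed to conclude that $j_X$ is an asymorphic embedding.

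First I would check bornologousness of $j_X \colon X \to S^c X$. For any $E \in \mathcal{C}_X$, we have $(j_X \times j_X)(E) = E$, and by transfer $E \subseteq \ns{E} \in \ssig{\mathcal{C}_X} \subseteq \ss{\mathcal{C}_X}$, so $E \in \ss{\mathcal{C}_X}$. This shows that $j_X$ is bornologous as a map into $S^c X$, and in particular as a map into the subspace $X \subseteq S^c X$.

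Next I would check that $j_X^{-1}$ is bornologous, i.e., that every entourage $F$ in the subspace coarse structure $\ss{\mathcal{C}_X} \restriction X$ lies in $\mathcal{C}_X$. By definition, such an $F$ has the form $F = E \cap (X \times X)$ for some $E \in \ss{\mathcal{C}_X}$. Pick $G \in \mathcal{C}_X$ with $E \subseteq \ns{G}$; then by transfer $\ns{G} \cap (X \times X) = G$, and hence $F \subseteq G \in \mathcal{C}_X$. Since coarse structures are closed under taking subsets, $F \in \mathcal{C}_X$, and so $(j_X^{-1} \times j_X^{-1})(F) = F \in \mathcal{C}_X$ as required.

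Combining these two directions, $j_X$ is an asymorphism between $X$ and its image $X \subseteq S^c X$, i.e., an asymorphic embedding. There is no real obstacle: the whole statement reduces to the transfer identity $\ns{E} \cap (X \times X) = E$ for standard $E$, together with the fact that $\ss{\mathcal{C}_X}$ is generated (as a coarse structure) by the $\ns{E}$ with $E \in \mathcal{C}_X$, so both inclusions $\mathcal{C}_X \subseteq \ss{\mathcal{C}_X} \restriction X$ and $\ss{\mathcal{C}_X} \restriction X \subseteq \mathcal{C}_X$ are immediate.
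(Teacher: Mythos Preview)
Your proof is correct and essentially the same as the paper's. The only cosmetic difference is that the paper phrases the second half as showing $j_X$ is effectively proper (computing $(j_X^{-1}\times j_X^{-1})(F)\subseteq (j_X^{-1}\times j_X^{-1})(\ns{G})=G$ for $F\in\ss{\mathcal{C}_X}$) and then invokes \prettyref{prop:effectively-properness-and-inverse}, whereas you restrict to the subspace structure first and verify directly that $j_X^{-1}$ is bornologous; the underlying computation via the transfer identity $\ns{G}\cap(X\times X)=G$ is identical.
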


\begin{proof}
Let $E\in\mathcal{C}_{X}$. Since $\left(j_{X}\times j_{X}\right)\left(E\right)=E\subseteq\ns{E}\in\ss{\mathcal{C}_{X}}$,
we have that $\left(j_{X}\times j_{X}\right)\left(E\right)\in\ss{\mathcal{C}_{X}}$.
Therefore $j_{X}$ is bornologous. Next, let $F\in\ss{\mathcal{C}_{X}}$.
There exists an $G\in\mathcal{C}_{X}$ such that $F\subseteq\ns{G}$.
Then, $\left(j_{X}^{-1}\times j_{X}^{-1}\right)\left(F\right)\subseteq\left(j_{X}^{-1}\times j_{X}^{-1}\right)\left(\ns{G}\right)=G\in\mathcal{C}_{X}$
(by transfer). Hence $\left(j_{X}^{-1}\times j_{X}^{-1}\right)\left(F\right)\in\mathcal{C}_{X}$.
Therefore $j_{X}$ is effectively proper. By \prettyref{prop:effectively-properness-and-inverse},
$j_{X}$ is an asymorphic embedding.
\end{proof}

\subsection{S-boundary of coarse spaces}

Giving a bornology on a standard set $X$ is just the same thing as
specifying the infinite points $\INF\left(X\right)$ in $\ns{X}$
in the following sense.
\begin{fact}[{\citep[Corollary 2.12]{Ima19}}]
Let $X$ be a set. For each bornology on $X$, the infinite part
$\INF\left(X\right)$ is a monadic subset of $\ns{X}$ disjoint with
$X$. Conversely, for each monadic subset $I$ of $\ns{X}$, if $I$
is disjoint with $X$, then there is a unique bornology on $X$ such
that $I=\INF\left(X\right)$.
\end{fact}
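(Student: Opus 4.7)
The plan is to build both directions on the identity $\FIN\left(X\right)=\bigcup_{B\in\mathcal{B}_X}\ns{B}$ together with elementary nonstandard set-algebra, and to close the uniqueness clause by a saturation argument.

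For the forward direction, I would start with a bornology $\mathcal{B}_X$ on $X$ and compute
\[
\INF\left(X\right)=\ns{X}\setminus\FIN\left(X\right)=\bigcap_{B\in\mathcal{B}_X}\bigl(\ns{X}\setminus\ns{B}\bigr)=\bigcap_{B\in\mathcal{B}_X}\ns{\left(X\setminus B\right)},
\]
which displays $\INF\left(X\right)$ as an intersection of standard extensions, hence as a monadic set. Disjointness from $X$ is immediate: axiom (i) of a prebornology gives $\bigcup\mathcal{B}_X=X$, so every standard $x\in X$ already lies in some $\ns{B}\subseteq\FIN\left(X\right)$.

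For the backward direction, given a monadic $I=\bigcap_{A\in\mathcal{F}}\ns{A}$ with $I\cap X=\varnothing$, I would set
\[
\mathcal{B}_X=\set{B\subseteq X|\ns{B}\cap I=\varnothing}.
\]
Axiom (ii) (downward closure) is obvious; closure under all binary unions follows from $\ns{\left(A\cup B\right)}=\ns{A}\cup\ns{B}$; and axiom (i) is witnessed by singletons, since $\ns{\set{x}}=\set{x}$ is disjoint from $I$ for each $x\in X$. To identify $I$ with $\INF\left(X\right)$, the inclusion $I\subseteq\INF\left(X\right)$ is built into the definition of $\mathcal{B}_X$; for the reverse, if $x\notin I$, then some $A\in\mathcal{F}$ satisfies $x\in\ns{\left(X\setminus A\right)}$, and $I\subseteq\ns{A}$ gives $\ns{\left(X\setminus A\right)}\cap I=\varnothing$, whence $X\setminus A\in\mathcal{B}_X$ and $x\in\FIN\left(X\right)$.

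The uniqueness step reduces to the claim that any bornology is determined by its finite part through the criterion $B\in\mathcal{B}_X\iff\ns{B}\subseteq\FIN\left(X\right)$. The only delicate direction is the ``if'' direction: when $\ns{B}\subseteq\bigcup_{B'\in\mathcal{B}_X}\ns{B'}$, a saturation argument yields finitely many $B_1,\ldots,B_n\in\mathcal{B}_X$ whose $\ns{\cdot}$-images cover $\ns{B}$, and closure of the bornology under binary (hence finite) unions places $B\subseteq B_1\cup\cdots\cup B_n$ in $\mathcal{B}_X$. This saturation step is where I expect the main work to lie; everything else is bookkeeping on standard extensions of sets.
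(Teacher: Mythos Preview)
The paper does not prove this statement: it is stated as a Fact with a citation to \citep[Corollary 2.12]{Ima19} and no proof is given, so there is nothing in the paper to compare your argument against line by line.

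That said, your proposal is correct and is exactly the standard argument one would expect behind that citation. The forward direction is a direct computation; your backward construction of $\mathcal{B}_X$ is the natural one, and your verification that it is a bornology with $\INF(X)=I$ is clean. The uniqueness clause, which you rightly flag as the only nontrivial point, is precisely the content of \citep[Proposition 2.6]{Ima19} (cited repeatedly in the present paper): the equivalence $B\in\mathcal{B}_X\iff\ns{B}\subseteq\FIN(X)$, whose nontrivial implication is the saturation-based finite-subcover argument you sketch. So your plan matches the intended route; just be aware that in the paper's framework this saturation step is packaged as a known proposition rather than redone from scratch.
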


The set $\INF\left(X\right)$ is a \emph{tabula rasa}, i.e., has no
structure. On the other hand, if $X$ is a coarse space, $\INF\left(X\right)$
is equipped with an additional structure, the subspace coarse structure
induced from $S^{c}X$. We call this subspace the S-boundary of $X$.
\begin{defn}
The \emph{S-boundary} $\partial_{S}X$ of a standard coarse space
$X$ is the subspace $\INF\left(X\right)$ of $S^{c}X$.
\end{defn}

We first consider two examples of S-boundaries which have the same
underlying set but different coarse structures.
\begin{example}
\label{exa:two-coarse-structures-of-R}Consider the real line $\mathbb{R}$
endowed with the usual bornology. There are two distinct galactic
equivalence relations on $\ns{\mathbb{R}}$:
\begin{align*}
x\sim_{\mathbb{R}}y & \iff x-y\in\FIN\left(\mathbb{R}\right),\\
x\sim_{\mathbb{R}}'y & \iff x=y\text{ or }x,y\in\FIN\left(\mathbb{R}\right)
\end{align*}
that correspond to two different coarse structures $\mathcal{C}_{\mathbb{R}}$
and $\mathcal{C}_{\mathbb{R}}'$ on $\mathbb{R}$ (see \citep[Theorem 3.6]{Ima19}).
We denote $\left(\mathbb{R},\mathcal{C}_{\mathbb{R}}\right)$ and
$\left(\mathbb{R},\mathcal{C}_{\mathbb{R}}'\right)$ by $\mathbb{R}$
and $\mathbb{R}'$, respectively. (Recall that galactic equivalence
relations one-to-one correspond to coarse structures .) Clearly the
underlying sets of $\partial_{S}\mathbb{R}$ and $\partial_{S}\mathbb{R}'$
are identical. The connected components of $\partial_{S}\mathbb{R}$
are of the form $x+\FIN\left(\mathbb{R}\right)$, where $x\in\INF\left(\mathbb{R}\right)$.
On the other hand, the connected components of $\partial_{S}\mathbb{R}'$
are singletons $\set{x}$, where $x\in\INF\left(\mathbb{R}\right)$.
So $\partial_{S}\mathbb{R}$ and $\partial_{S}\mathbb{R}'$ are different
as coarse spaces. See also \prettyref{cor:thin-and-S-boundary} and
\prettyref{thm:thin-and-satellite}.
\end{example}

This example suggests that if $X$ and $Y$ are distinct coarse spaces
with the same underlying set, then $\partial_{S}X$ and $\partial_{S}Y$
are different. This statement is true as we shall now prove.
\begin{thm}
\label{thm:functor-property-of-S-boundaries}Let $X$ and $Y$ be
standard coarse spaces and let $f\colon X\to Y$ be a proper bornological
map. Then $f\colon X\to Y$ is bornologous (resp. effectively proper)
if and only if $\ns{f}\colon\partial_{S}X\to\partial_{S}Y$ is bornologous
(resp. effectively proper).
\end{thm}

\begin{proof}
The well-definedness of $\ns{f}\colon\partial_{S}X\to\partial_{S}Y$
follows from the nonstandard characterisation of properness \citep[Theorem 2.28]{Ima19}.
Suppose $f$ is bornologous (resp. effectively proper). Then $\ns{f}\colon S^{c}X\to S^{c}Y$
is bornologous (resp. effectively proper) by \prettyref{thm:S-coarse-str-is-functorial}.
Hence $\ns{f}\colon\partial_{S}X\to\partial_{S}Y$ is bornologous
(resp. effectively proper).

Suppose $\ns{f}\colon\partial_{S}X\to\partial_{S}Y$ is bornologous.
Let $x,y\in\ns{X}$ and assume that $x\sim_{X}y$. Case I: $x,y\in\FIN\left(X\right)$.
Pick a $z\in X$ so that $y\sim_{X}x\sim_{X}z$. Since $f$ is bornological
at $z$, we have that $\ns{f}\left(x\right)\sim_{Y}f\left(z\right)\sim_{Y}\ns{f}\left(y\right)$
by \citep[Theorem 2.24]{Ima19}. Case II: $x,y\in\INF\left(X\right)$.
There is an $E\in\mathcal{C}_{X}$ so that $\left(x,y\right)\in\ns{E}$.
Since $\ns{E}\restriction\INF\left(X\right)\in\ss{\mathcal{C}_{X}}\restriction\INF\left(X\right)$,
we have $\left(\ns{f}\times\ns{f}\right)\left(\ns{E}\restriction\INF\left(X\right)\right)\in\ss{\mathcal{C}_{Y}}\restriction\INF\left(Y\right)$.
Let $F\in\mathcal{C}_{Y}$ be such that $\left(\ns{f}\times\ns{f}\right)\left(\ns{E}\restriction\INF\left(X\right)\right)\subseteq\ns{F}\restriction\INF\left(Y\right)$.
Then $\left(\ns{f}\left(x\right),\ns{f}\left(y\right)\right)\in\ns{F}$,
and therefore $\ns{f}\left(x\right)\sim_{Y}\ns{f}\left(y\right)$.
The other cases are impossible. By \citep[Theorem 3.23]{Ima19}, $f$
is bornologous.

Suppose $\ns{f}\colon\partial_{S}X\to\partial_{S}Y$ is effectively
proper. Let $x,y\in\ns{X}$ and assume that $\ns{f}\left(x\right)\sim_{Y}\ns{f}\left(y\right)$.
Case I: $\ns{f}\left(x\right),\ns{f}\left(y\right)\in\FIN\left(Y\right)$.
Pick a $z\in Y$ so that $\ns{f}\left(x\right)\sim_{Y}z\sim_{Y}\ns{f}\left(y\right)$.
There exists a bounded subset $B$ of $Y$ such that $\ns{f}\left(x\right),z,\ns{f}\left(y\right)\in\ns{B}$,
so $x,y\in\ns{f}^{-1}\left(\ns{B}\right)$. Since $f$ is proper,
$f^{-1}\left(B\right)$ is bounded in $X$. Hence $x\sim_{X}y$. Case
II: $\ns{f}\left(x\right),\ns{f}\left(y\right)\in\INF\left(Y\right)$.
Let $E\in\mathcal{C}_{Y}$ be such that $\left(\ns{f}\left(x\right),\ns{f}\left(y\right)\right)\in\ns{E}$.
Since $\ns{E}\restriction\INF\left(Y\right)\in\ss{\mathcal{C}_{Y}}\restriction\INF\left(Y\right)$,
we have $\left(\ns{f}^{-1}\times\ns{f}^{-1}\right)\left(\ns{E}\restriction\INF\left(Y\right)\right)\in\ss{\mathcal{C}_{X}}\restriction\INF\left(X\right)$,
i.e., there is an $F\in\mathcal{C}_{X}$ such that $\left(\ns{f}^{-1}\times\ns{f}^{-1}\right)\left(\ns{E}\restriction\INF\left(Y\right)\right)\subseteq\ns{F}\restriction\INF\left(X\right)$.
Then $\left(x,y\right)\in\ns{F}$, and therefore $x\sim_{X}y$. The
other cases are impossible. Hence $f$ is effectively proper by \prettyref{thm:nonst-charact-eff-proper}.
\end{proof}
\begin{cor}
Let $X$ and $X'$ be standard coarse spaces. Then $X=X'$ if and
only if $\mathcal{B}_{X}=\mathcal{B}_{X'}$ and $\partial_{S}X=\partial_{S}X'$,
where $\mathcal{B}_{X}$ and $\mathcal{B}_{X'}$ are the induced prebornologies
of $X$ and $X'$, respectively.
\end{cor}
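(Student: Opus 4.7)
The forward implication is immediate. For the converse, suppose $\mathcal{B}_{X}=\mathcal{B}_{X'}$ and $\partial_{S}X=\partial_{S}X'$; I plan to deduce $\mathcal{C}_{X}=\mathcal{C}_{X'}$ by applying \prettyref{thm:functor-property-of-S-coronae} to the identity map $\id\colon X\to X'$, viewed as a map between two (a priori different) coarse structures on a common underlying set.

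First I would verify that $\id\colon X\to X'$ is proper bornological, which is the standing hypothesis of \prettyref{thm:functor-property-of-S-coronae}. This is immediate from $\mathcal{B}_{X}=\mathcal{B}_{X'}$: for every bounded set $B$, both $\id(B)=B$ and $\id^{-1}(B)=B$ lie in the common prebornology, so $\id$ is simultaneously bornological and proper.

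Next, the hypothesis $\partial_{S}X=\partial_{S}X'$ says precisely that $\ns{\id}=\id_{\ns{X}}$, restricted to $\INF(X)=\INF(X')$, is the identity map on a single coarse space; in particular it is trivially bornologous and effectively proper as a map $\partial_{S}X\to\partial_{S}X'$. Both directions of \prettyref{thm:functor-property-of-S-coronae} then yield that $\id\colon X\to X'$ is itself bornologous and effectively proper. Bornologicity gives, for every $E\in\mathcal{C}_{X}$, that $(\id\times\id)(E)=E\in\mathcal{C}_{X'}$, i.e.\ $\mathcal{C}_{X}\subseteq\mathcal{C}_{X'}$; effective properness gives the reverse inclusion. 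Hence $\mathcal{C}_{X}=\mathcal{C}_{X'}$ and so $X=X'$.

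No step is technically challenging once one recognises the corollary as the functoriality statement of \prettyref{thm:functor-property-of-S-coronae} applied to the identity. An alternative direct route, should one wish to avoid invoking \prettyref{thm:functor-property-of-S-coronae}, would be to unfold the galactic equivalence $\sim_{X}$ into its restrictions to $\FIN(X)\times\FIN(X)$ and $\INF(X)\times\INF(X)$ (the non-mixing following by transitivity through any standard witness of finiteness), and to observe that the finite--finite part is determined by $\mathcal{B}_{X}$ via reduction to galaxies of standard points, while the infinite--infinite part is by definition the galactic relation of $\partial_{S}X$.
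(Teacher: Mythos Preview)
Your proof is correct and follows exactly the approach the paper takes: the paper's entire proof reads ``Apply \prettyref{thm:functor-property-of-S-coronae} to the identity map $\id_{X}$,'' and you have simply unpacked this one-line argument in detail. Your supplementary remark about the alternative direct route is also sound.
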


\begin{proof}
Apply \prettyref{thm:functor-property-of-S-boundaries} to the identity
map $\id_{X}$.
\end{proof}
\begin{cor}
\label{cor:S-corona-determines-coarse-structure}Let $X$ and $X'$
be standard connected coarse spaces with the same underlying set.
Then $X=X'$ if and only if $\partial_{S}X=\partial_{S}X'$.
\end{cor}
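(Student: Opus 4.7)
The plan is to reduce the statement to the preceding corollary, whose conclusion requires both $\mathcal{B}_X = \mathcal{B}_{X'}$ and $\partial_S X = \partial_S X'$. Thus it suffices to argue that, under the connectedness hypothesis, the equality of the S-coronae already forces the equality of the induced prebornologies.

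First, I would note that since $X$ and $X'$ are connected, their induced prebornologies are in fact bornologies on the common underlying set. This is exactly the hypothesis needed to apply the Fact recalled in the previous subsection (\cite[Corollary 2.12]{Ima19}), which says that a bornology on a standard set $Y$ is uniquely determined by its infinite part $\INF(Y) \subseteq \ns{Y}$, viewed as a monadic subset of $\ns{Y}$ disjoint from $Y$. Without connectedness one only has a prebornology, and that Fact is not available in the prebornological setting, so connectedness is precisely what makes this step work.

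Second, the hypothesis $\partial_S X = \partial_S X'$ means in particular that $\partial_S X$ and $\partial_S X'$ have the same underlying set, i.e., $\INF(X) = \INF(X')$ as subsets of $\ns{X}$ (recall $X$ and $X'$ share the same underlying set, so $\ns{X}$ is unambiguous). Combined with the uniqueness above, this forces $\mathcal{B}_X = \mathcal{B}_{X'}$. Applying the preceding corollary to the pair $(\mathcal{B}_X = \mathcal{B}_{X'},\, \partial_S X = \partial_S X')$ then yields $X = X'$. The converse implication is trivial, since equal coarse spaces have equal S-coronae by construction.

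The only real obstacle is the conceptual one of identifying why connectedness is needed, namely that $\INF(X)$ determines the bornology uniquely but only among bornologies (not among all prebornologies). Once this is pinned down, the argument is a one-line reduction to the previous corollary.
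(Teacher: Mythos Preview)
Your proposal is correct and matches the paper's intended argument: the paper states this corollary without proof, immediately after the corollary asserting $X=X'\iff(\mathcal{B}_X=\mathcal{B}_{X'}\text{ and }\partial_S X=\partial_S X')$, and right after recalling the Fact from \cite[Corollary 2.12]{Ima19} that bornologies are determined by their infinite parts. Your reduction---connectedness upgrades the induced prebornologies to bornologies, and $\partial_S X=\partial_S X'$ forces $\INF(X)=\INF(X')$, hence $\mathcal{B}_X=\mathcal{B}_{X'}$ by that Fact---is exactly the reasoning the paper leaves implicit.
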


This means that a coarse structure is determined by the structure
of ``the space at infinity''. This phenomenon is ubiquitous. For example,
Dydak \citep{Dyd19} introduced the notion of simple ends and simple
coarse structures for prebornological spaces. A \emph{simple end}
in a prebornological space $X$ is a proper map $\mathbb{N}\to X$,
or in other words, a divergent sequence in $X$. A \emph{simple coarse
structure} on $X$ is an equivalence relation $\mathcal{SCS}_{X}$
on the set of all simple ends in $X$. Intuitively, each simple end
represents an ideal infinite point; and two simple ends represent
the same infinite point if and only if it is $\mathcal{SCS}_{X}$-equivalent.
Each simple coarse structure $\mathcal{SCS}_{X}$ on $X$ induces
a coarse structure $\mathbb{CS}\left(\mathcal{SCS}_{X}\right)$ on
$X$. Conversely, each coarse structure $\mathcal{C}_{X}$ on $X$
induces a simple coarse structure $\mathbb{SCS}\left(\mathcal{C}_{X}\right)$
on $X$. Those two constructions $\mathbb{CS}$ and $\mathbb{SCS}$
are inverses to each other for some cases (but not in general). See
\citep{Dyd19} for more details. The notion of topological ends goes
back to Freudenthal \citep{Fre31}. It is one conception of ``the
space at infinity'' of a topological space. The nonstandard treatment
of topological ends can be found in Goldbring \citep{Gol11} and Insall
\emph{et al.} \citep{ILM14}. Some conceptions of ``the space at infinity''
of a coarse space are studied in, e.g., Hartmann \citep{Har19} and
Grzegrzolka and Siegert \citep{GS19}. The following is an analogous
result to \citep[Lemma 36]{Har19}.
\begin{thm}
Let $f\colon X\to Y$ be a proper map between standard coarse spaces,
where $X$ is non-empty and $Y$ is connected. If $\ns{f}\colon\partial_{S}X\to\partial_{S}Y$
is coarsely surjective, then $f$ is coarsely surjective.
\end{thm}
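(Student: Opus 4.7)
The plan is to invoke the nonstandard characterisation of coarse surjectivity (\prettyref{thm:nonst-charact-coarse-surjection}) and show $G_{Y}^{c}(\ns{f}(\ns{X})) = \ns{Y}$. I would split $\ns{Y} = \FIN(Y) \cup \INF(Y)$ and treat each part separately, with the hypothesis on $\partial_S$ handling the infinite part and the connectedness of $Y$ together with non-emptiness of $X$ handling the finite part.

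For $y \in \INF(Y) = \partial_{S}Y$, I would apply the coarse surjectivity of $\ns{f}\colon \partial_{S}X \to \partial_{S}Y$ directly. Unpacking the definition, there exists $E \in \ss{\mathcal{C}_{Y}} \restriction \INF(Y)$ with $E[\ns{f}(\INF(X))] = \INF(Y)$, and since $E \subseteq \ns{F}$ for some $F \in \mathcal{C}_{Y}$, each such $y$ satisfies $\ns{f}(x) \sim_{Y} y$ for some $x \in \INF(X) \subseteq \ns{X}$. (A small sanity check: if $\INF(Y) \neq \varnothing$ then $\INF(X)$ must also be non-empty, else the image of $\ns{f}\restriction\INF(X)$ is empty and its $E$-closure cannot cover $\INF(Y)$; the case $\INF(Y) = \varnothing$ is vacuous. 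The well-definedness of $\ns{f}\colon \partial_{S}X \to \partial_{S}Y$ itself uses properness of $f$, already ensured in the hypothesis.)

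For $y \in \FIN(Y)$, the argument is an anchoring trick. Since $X$ is non-empty, pick any $x_{0} \in X$, so $f(x_{0}) \in Y$. By definition of finite point, there is a standard $y_{0} \in Y$ with $y \sim_{Y} y_{0}$. Connectedness of $Y$ provides an $F \in \mathcal{C}_{Y}$ with $(y_{0}, f(x_{0})) \in F$, so $y_{0} \sim_{Y} \ns{f}(x_{0})$, and transitivity of $\sim_{Y}$ yields $y \sim_{Y} \ns{f}(x_{0})$. Hence $y \in G_{Y}^{c}(\ns{f}(\ns{X}))$.

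Combining the two cases gives $G_{Y}^{c}(\ns{f}(\ns{X})) = \ns{Y}$, and \prettyref{thm:nonst-charact-coarse-surjection} concludes that $f$ is coarsely surjective. I do not expect any real obstacle: every ingredient has been prepared earlier in the paper. The only delicate point is the finite case, which genuinely requires both the connectedness of $Y$ (to bridge arbitrary standard points of $Y$ to something in the image of $f$) and the non-emptiness of $X$ (to produce the anchor $x_{0}$); removing either hypothesis breaks the argument.
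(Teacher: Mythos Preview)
Your proposal is correct and follows essentially the same route as the paper: split $\ns{Y}=\FIN(Y)\cup\INF(Y)$, cover $\INF(Y)$ using the coarse surjectivity of $\ns{f}$ on the S-coronae, cover $\FIN(Y)$ by anchoring at some $f(x_{0})$ via connectedness of $Y$, and conclude with \prettyref{thm:nonst-charact-coarse-surjection}. The paper compresses your finite-part argument into the single identity $\FIN(Y)=G_{Y}^{c}(f(x_{0}))$ (citing \citep[Corollary 3.13]{Ima19}), but the content is the same.
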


\begin{proof}
Fix an $x_{0}\in X$, then $\FIN\left(Y\right)=G_{Y}^{c}\left(f\left(x_{0}\right)\right)\subseteq G_{Y}^{c}\left(\ns{f}\left(\FIN\left(X\right)\right)\right)$
by \citep[Corollary 3.13]{Ima19}. Take an $E\in\mathcal{C}_{Y}$
so that $\ns{E}\left[\ns{f}\left(\INF\left(X\right)\right)\right]\supseteq\INF\left(Y\right)$,
then $G_{X}^{c}\left(\ns{f}\left(\INF\left(X\right)\right)\right)\supseteq\ns{E}\left[\ns{f}\left(\INF\left(X\right)\right)\right]\supseteq\INF\left(Y\right)$.
Hence
\begin{align*}
\ns{Y} & =\FIN\left(Y\right)\cup\INF\left(Y\right)\\
 & \subseteq G_{X}^{c}\left(\ns{f}\left(\FIN\left(X\right)\right)\right)\cup G_{X}^{c}\left(\ns{f}\left(\INF\left(X\right)\right)\right)\\
 & =G_{X}^{c}\left(\ns{f}\left(\FIN\left(X\right)\right)\cup\ns{f}\left(\INF\left(X\right)\right)\right)\\
 & =G_{X}^{c}\left(\ns{f}\left(\FIN\left(X\right)\cup\INF\left(X\right)\right)\right)\\
 & =G_{X}^{c}\left(\ns{f}\left(\ns{X}\right)\right).
\end{align*}
By \prettyref{thm:nonst-charact-coarse-surjection}, $f$ is coarsely
surjective.
\end{proof}
\begin{thm}
\label{thm:S-corona-preserves-coarse-surjectivity}Let $f\colon X\to Y$
be a proper bornological map between standard coarse spaces. If $f\colon X\to Y$
is coarsely surjective, then $\ns{f}\colon\partial_{S}X\to\partial_{S}Y$
is coarsely surjective.
\end{thm}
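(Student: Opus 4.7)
The plan is to write down an explicit witness for coarse surjectivity of $\ns{f}\colon\partial_{S}X\to\partial_{S}Y$, taking the nonstandard extension of a witness for $f$ and restricting it to $\INF(Y)\times\INF(Y)$.

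First I would verify well-definedness: since $f$ is proper, \citep[Theorem 2.28]{Ima19} gives $\ns{f}^{-1}(\FIN(Y))\subseteq\FIN(X)$, so equivalently $\ns{f}(\INF(X))\subseteq\INF(Y)$; thus $\ns{f}$ indeed maps $\partial_{S}X$ into $\partial_{S}Y$. Next, pick $E\in\mathcal{C}_{Y}$ with $E[f(X)]=Y$ and set $E'=\ns{E}\cap(\INF(Y)\times\INF(Y))$. Since $\ns{E}\in\ssig{\mathcal{C}_{Y}}\subseteq\ss{\mathcal{C}_{Y}}$, we have $E'\in\ss{\mathcal{C}_{Y}}\restriction\INF(Y)$, i.e., $E'$ is a controlled set of $\partial_{S}Y$.

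I then claim $E'[\ns{f}(\INF(X))]=\INF(Y)$. The inclusion $\subseteq$ is automatic because $E'\subseteq\INF(Y)\times\INF(Y)$. For $\supseteq$, fix $y\in\INF(Y)$. By transfer of $E[f(X)]=Y$ we get $\ns{E}[\ns{f}(\ns{X})]=\ns{Y}$, so there exists $x\in\ns{X}$ with $(\ns{f}(x),y)\in\ns{E}$. It remains to show $x\in\INF(X)$: suppose towards contradiction that $x\in\FIN(X)$. Since $f$ is bornological, \citep[Theorem 2.24]{Ima19} yields $\ns{f}(x)\in\FIN(Y)$. But $(\ns{f}(x),y)\in\ns{E}\subseteq{\sim_{Y}}$ gives $y\in G_{Y}^{c}(\ns{f}(x))\subseteq G_{Y}^{c}(\FIN(Y))=\FIN(Y)$, the last equality being the remark after \prettyref{cor:galactic-topology-of-starX} that $\FIN(Y)$ is $G_{Y}^{c}$-closed. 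This contradicts $y\in\INF(Y)$. Hence $x\in\INF(X)$, so $\ns{f}(x)\in\ns{f}(\INF(X))$ and the pair $(\ns{f}(x),y)$ lies in $E'$, witnessing $y\in E'[\ns{f}(\INF(X))]$.

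The argument is short and the only delicate point is the step that upgrades the arbitrary preimage $x\in\ns{X}$ supplied by transfer to one lying in $\INF(X)$. The closure of $\FIN(Y)$ under $G_{Y}^{c}$ handles this cleanly, so I expect no serious obstacle; properness of $f$ is used only for well-definedness, while bornologicity plus the $G^{c}$-closedness of $\FIN$ drives the dichotomy $\ns{f}(x)\in\FIN(Y)$ or $\ns{f}(x)\in\INF(Y)$.
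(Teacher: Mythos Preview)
Your proof is correct and follows essentially the same route as the paper: pick a witness $E\in\mathcal{C}_{Y}$ for coarse surjectivity of $f$, transfer, and show that any preimage $x\in\ns{X}$ of a point $y\in\INF(Y)$ must lie in $\INF(X)$ by exploiting the $G_{Y}^{c}$-closedness of $\FIN(Y)$. The only cosmetic difference is that you invoke bornologicity (\citep[Theorem 2.24]{Ima19}) for the step ``$\ns{f}(x)\in\INF(Y)\Rightarrow x\in\INF(X)$'', whereas the paper cites properness (\citep[Theorem 2.28]{Ima19}) at that point; your choice is in fact the correct one for that implication, and the paper's citation there appears to be a slip.
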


\begin{proof}
Let $E\in\mathcal{C}_{Y}$ be such that $E\left[f\left(X\right)\right]=Y$.
By transfer, $\ns{E}\left[\ns{f}\left(\ns{X}\right)\right]=\ns{Y}\supseteq\INF\left(Y\right)$.
Let $y\in\INF\left(Y\right)$. Take an $x\in\ns{X}$ such that $y\in\ns{E}\left[\ns{f}\left(x\right)\right]$.
Since $\ns{f}\left(x\right)\sim_{Y}y\in\INF\left(Y\right)$, we have
that $\ns{f}\left(x\right)\in\INF\left(Y\right)$, so $x\in\INF\left(X\right)$
by \citep[Theorem 2.28]{Ima19}. Hence $\INF\left(Y\right)\subseteq\left(\ns{E}\restriction\INF\left(Y\right)\right)\left[\ns{f}\left(\INF\left(X\right)\right)\right]$,
where $\ns{E}\restriction\INF\left(Y\right)\in\ss{\mathcal{C}_{Y}}\restriction\INF\left(Y\right)$.
Therefore $\ns{f}\colon\partial_{S}X\to\partial_{S}Y$ is coarsely
surjective.
\end{proof}
\begin{cor}
\label{cor:S-corona-preserves-coarse-equivalences}Let $f\colon X\to Y$
be a coarse equivalence between standard coarse spaces. Then $\ns{f}\colon\partial_{S}X\to\partial_{S}Y$
is a coarse equivalence.
\end{cor}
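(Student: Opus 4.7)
The plan is to assemble the corollary from the three key preparatory results already proved, namely \prettyref{prop:coarse-surjectivity-and-bornotopy-inverse}, \prettyref{thm:functor-property-of-S-coronae}, and \prettyref{thm:S-corona-preserves-coarse-surjectivity}. The only minor bookkeeping step is to verify that the hypotheses on $f$ needed by the latter two theorems (namely, proper and bornological) are automatically satisfied for a coarse equivalence, so that the corollary becomes a straightforward diagram chase.

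First I would unpack ``coarse equivalence'' via \prettyref{prop:coarse-surjectivity-and-bornotopy-inverse}: $f$ is a bornologous map with a bornotopy inverse, which is equivalent to $f$ being bornologous, effectively proper, and coarsely surjective. The standard easy observation, using the definition of the induced prebornology $\mathcal{B}_X = \set{B \subseteq X | B \times B \in \mathcal{C}_X}$, is that a bornologous map is bornological (apply $(f \times f)$ to $B \times B$) and an effectively proper map is proper (apply $(f^{-1} \times f^{-1})$ to $B \times B$). Hence $f$ is in particular proper and bornological, so the hypotheses of \prettyref{thm:functor-property-of-S-coronae} and \prettyref{thm:S-corona-preserves-coarse-surjectivity} are satisfied.

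Next, I would feed these facts into the previous theorems. By \prettyref{thm:functor-property-of-S-coronae}, the bornologousness and effective properness of $f$ transfer to $\ns{f}\colon \partial_S X \to \partial_S Y$. By \prettyref{thm:S-corona-preserves-coarse-surjectivity}, the coarse surjectivity of $f$ likewise transfers to $\ns{f}\colon \partial_S X \to \partial_S Y$. Thus $\ns{f}\colon \partial_S X \to \partial_S Y$ is simultaneously bornologous, effectively proper, and coarsely surjective.

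Finally, applying \prettyref{prop:coarse-surjectivity-and-bornotopy-inverse} in the reverse direction, the effective properness and coarse surjectivity of $\ns{f}\colon \partial_S X \to \partial_S Y$ yield a bornotopy inverse. Combined with the already-established bornologousness of $\ns{f}$, this is precisely the definition of a coarse equivalence. There is no real obstacle here; the statement is a formal corollary of the functorial and ``corona-preserves-coarse-surjectivity'' results just proved, and the slight subtlety is only the trivial verification that effective properness plus bornologousness imply properness plus bornologicalness, which is needed to invoke \prettyref{thm:functor-property-of-S-coronae} and \prettyref{thm:S-corona-preserves-coarse-surjectivity}.
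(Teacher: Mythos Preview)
Your proposal is correct and follows essentially the same approach as the paper's proof: decompose ``coarse equivalence'' via \prettyref{prop:coarse-surjectivity-and-bornotopy-inverse} into bornologous, effectively proper, and coarsely surjective; transfer each property to $\ns{f}\colon\partial_{S}X\to\partial_{S}Y$ via \prettyref{thm:functor-property-of-S-coronae} and \prettyref{thm:S-corona-preserves-coarse-surjectivity}; and reassemble. You even make explicit the minor bookkeeping step (bornologous $\Rightarrow$ bornological, effectively proper $\Rightarrow$ proper) that the paper leaves implicit when invoking those two theorems.
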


\begin{proof}
By \prettyref{prop:coarse-surjectivity-and-bornotopy-inverse}, $f$
is effectively proper, bornologous and coarsely surjective. By \prettyref{thm:functor-property-of-S-boundaries}
and \prettyref{thm:S-corona-preserves-coarse-surjectivity}, $\ns{f}$
is effectively proper, bornologous and coarsely surjective. Again
by \prettyref{prop:coarse-surjectivity-and-bornotopy-inverse}, $\ns{f}$
is a coarse equivalence.
\end{proof}

\section{\label{sec:Large-scale-structures-on-Pow}Size properties and coarse
hyperspaces}

Several concepts of combinatorial size for subsets of a group have
been developed and well-studied (e.g. \citep{BM99,BM01,DMM03,Pro04}).
For example, a subset $L$ of a group $\varGamma$ is said to be\emph{
left large} (resp. \emph{right large}) if $K\cdot L=\varGamma$ (resp.
$L\cdot K=\varGamma$) for some finite subset $K$ of $\varGamma$.
Such properties can be extended to general coarse spaces \citep{PB03,PZ07}.
In this section, we study size properties of subsets of coarse spaces.

\subsection{Size of subsets of coarse spaces}

We first consider the following size properties.
\begin{defn}[Standard; \citep{PB03,DZ17}]
Let $X$ be a coarse space. A subset $A$ of $X$ is said to be
\begin{enumerate}
\item \emph{large} (a.k.a. \emph{coarsely dense}) if $E\left[A\right]=X$
for some $E\in\mathcal{C}_{X}$;
\item \emph{slim} if $E\left[A\right]\neq X$ for all $E\in\mathcal{C}_{X}$;
\item \emph{thick} if $\intr_{X,E}A\neq\varnothing$ for all $E\in\mathcal{C}_{X}$;
\item \emph{meshy} if $\intr_{X,E}A=\varnothing$ for some $E\in\mathcal{C}_{X}$.
\end{enumerate}
Let $\mathcal{L}\left(X\right)$ and $\mathcal{M}\left(X\right)$
be the family of large and meshy subsets of $X$, respectively.

\end{defn}

\begin{example}
Let $\varGamma$ be a group. Then the finite bornology $\mathcal{P}_{f}\left(\varGamma\right)$
on $\varGamma$ induces two coarse structures $\mathcal{C}_{\varGamma,l}$
and $\mathcal{C}_{\varGamma,r}$ on $\varGamma$, the left coarse
structure and the right coarse structure, whose finite closeness relations
are given by $x\sim_{\varGamma,l}y\iff x^{-1}y\in\varGamma$ and $x\sim_{\varGamma,r}y\iff xy^{-1}\in\varGamma$
for $x,y\in\ns{\varGamma}$, respectively (see also \citep[Example 3.18]{Ima19}).
It is easy to see that a subset $L$ of $\varGamma$ is left large
(resp. right large) if and only if $L$ is large with respect to $\mathcal{C}_{\varGamma,r}$
(resp. $\mathcal{C}_{\varGamma,l}$). In other words, \emph{left}
largeness is largeness with respect to the \emph{right} coarse structure;
and \emph{right} largeness is largeness with respect to the \emph{left}
coarse structure.
\end{example}

\begin{thm}
\label{thm:nonst-characterisation-of-size-I}Let $X$ be a standard
coarse space and $A$ a subset of $X$.
\begin{enumerate}
\item \label{enu:large-is-dense}$A$ is large $\iff$ $G_{X}^{c}\left(\ns{A}\right)=\ns{X}$
($\iff$ $\ns{A}$ is $G_{X}^{c}$-dense);
\item $A$ is slim $\iff$ $G_{X}^{c}\left(\ns{A}\right)\neq\ns{X}$ ($\iff$
$\ns{A}$ is not $G_{X}^{c}$-dense);
\item \label{enu:thick-is-open}$A$ is thick $\iff$ $C_{X}^{c}\left(\ns{A}\right)\neq\varnothing$
($\iff$ $\ns{A}$ has non-empty $C_{X}^{c}$-interior);
\item $A$ is meshy $\iff$ $C_{X}^{c}\left(\ns{A}\right)=\varnothing$
($\iff$ $\ns{A}$ has empty $C_{X}^{c}$-interior).
\end{enumerate}
\end{thm}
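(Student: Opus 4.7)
The plan is to prove (1) directly using transfer in one direction and the General Approximation Lemma (which is invoked in the proofs of Theorems 2.6 and 2.10 earlier) in the other, then derive (2) as the contrapositive of (1), and finally obtain (3) and (4) from (1) and (2) by complementation duality.

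For (1), the forward direction is an immediate application of transfer: if $E[A]=X$ for some $E\in\mathcal{C}_X$, then $\ns{E}[\ns{A}]=\ns{X}$, and since $\ns{E}[\ns{A}]\subseteq G_X^c(\ns{A})$, we get $G_X^c(\ns{A})=\ns{X}$. For the converse, suppose $G_X^c(\ns{A})=\ns{X}$. By the General Approximation Lemma (\prettyref{lem:General-Approximation-Lemma}), there exists $E\in\ns{\mathcal{C}_X}$ with ${\sim_X}\subseteq E$. Then $\ns{X}=G_X^c(\ns{A})\subseteq E[\ns{A}]\subseteq\ns{X}$, so $E[\ns{A}]=\ns{X}$ internally. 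By transfer, some standard $F\in\mathcal{C}_X$ satisfies $F[A]=X$, so $A$ is large. Alternatively, (1) can be obtained as the special case of \prettyref{thm:nonst-charact-coarse-surjection} applied to the inclusion $A\hookrightarrow X$ (viewing $A$ as a subspace and noting that $\ns{\iota}(\ns{A})=\ns{A}$).

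Statement (2) is just the contrapositive of (1). For (3) and (4), I would use the relation $\intr_{X,E}A=X\setminus E^{-1}[X\setminus A]$ from the notation subsection: $A$ is thick if and only if $X\setminus A$ is slim (with respect to the same symmetric family $\mathcal{C}_X$, using also that $\mathcal{C}_X$ is closed under inverses, so ``$E[B]\neq X$ for all $E$'' and ``$E^{-1}[B]\neq X$ for all $E$'' coincide). Combined with \prettyref{thm:galaxy-and-core}\eqref{enu:C-is-neg-G-neg}, which gives $\ns{X}\setminus C_X^c(\ns{A})=G_X^c(\ns{X}\setminus\ns{A})=G_X^c(\ns{(X\setminus A)})$ (the last equality by transfer), we see that $C_X^c(\ns{A})\neq\varnothing$ if and only if $G_X^c(\ns{(X\setminus A)})\neq\ns{X}$, which by (2) is equivalent to $X\setminus A$ being slim, equivalently $A$ being thick. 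This proves (3), and (4) is its contrapositive.

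I expect no serious obstacle; the only subtle step is the reverse direction of (1), which is not a literal transfer (the standard set witnessing $E[A]=X$ need not coincide with the internal one from approximation) and requires the Approximation Lemma to bridge the gap. The dualities in (2)\textendash (4) are purely formal once \prettyref{thm:galaxy-and-core}\eqref{enu:C-is-neg-G-neg} is in hand.
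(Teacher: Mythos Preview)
Your proof of (1) and (2) is essentially identical to the paper's: forward direction by transfer, reverse direction via the General Approximation Lemma to obtain an internal $E\in\ns{\mathcal{C}_X}$ containing ${\sim_X}$, then transfer back. Your alternative observation that (1) is the special case of \prettyref{thm:nonst-charact-coarse-surjection} for the inclusion $A\hookrightarrow X$ is a nice remark the paper does not make.

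For (3) and (4) you take a genuinely different route. The paper proves (3) directly, by an argument parallel to (1): in the forward direction it transfers $\intr_{X,E}A\neq\varnothing$ to all $E\in\ns{\mathcal{C}_X}$, picks an illimited $F\supseteq{\sim_X}$ via the Approximation Lemma, and extracts a point $x$ with $G_X^c(x)\subseteq F[x]\subseteq\ns{A}$; the reverse direction is a straightforward transfer. You instead reduce (3) to (2) by the standard duality $\intr_{X,E}A=X\setminus E^{-1}[X\setminus A]$ (so that thick $\Leftrightarrow$ complement slim, using closure of $\mathcal{C}_X$ under inverses) together with the nonstandard duality $\ns{X}\setminus C_X^c(\ns{A})=G_X^c(\ns{X}\setminus\ns{A})$ from \prettyref{thm:galaxy-and-core}\eqref{enu:C-is-neg-G-neg}. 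Your argument is correct and more economical, since it avoids a second invocation of the Approximation Lemma; the paper's direct proof has the virtue of making the argument for thickness self-contained and structurally parallel to that for largeness.
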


\begin{proof}
\begin{enumerate}
\item Suppose $A$ is large, i.e., there is an $E\in\mathcal{C}_{X}$ so
that $E\left[A\right]=X$. By transfer, $\ns{X}=\ns{E}\left[\ns{A}\right]\subseteq G_{X}^{c}\left(\ns{A}\right)\subseteq\ns{X}$.
Hence $G_{X}^{c}\left(\ns{A}\right)=\ns{X}$. Conversely, suppose
$G_{X}^{c}\left(\ns{A}\right)=\ns{X}$. By \prettyref{lem:General-Approximation-Lemma},
there exists an $E\in\ns{\mathcal{C}_{X}}$ such that ${\sim_{X}}\subseteq E$.
Then $\ns{X}=G_{X}^{c}\left(\ns{A}\right)\subseteq E\left[\ns{A}\right]\subseteq\ns{X}$,
so $E\left[\ns{A}\right]=\ns{X}$. By transfer, there exists an $F\in\mathcal{C}_{X}$
such that $F\left[A\right]=X$.
\item Immediate from \eqref{enu:large-is-dense}.
\item Suppose $A$ is thick, i.e., $\intr_{X,E}A\neq\varnothing$ for all
$E\in\mathcal{C}_{X}$. By transfer, $\intr_{\ns{X},E}\ns{A}\neq\varnothing$
for all $E\in\ns{\mathcal{C}_{X}}$. Choose an $F\in\ns{\mathcal{C}_{X}}$
so that ${\sim_{X}}\subseteq F$ by \prettyref{lem:General-Approximation-Lemma}.
Then $\intr_{\ns{X},F}\ns{A}\neq\varnothing$, i.e., $F\left[x\right]\subseteq\ns{A}$
for some $x\in\ns{A}$. Since ${\sim_{X}}\subseteq F$, we have that
$G_{X}^{c}\left(x\right)\subseteq F\left[x\right]\subseteq\ns{A}$.
Hence $x\in C_{X}^{c}\left(\ns{A}\right)\neq\varnothing$. Conversely,
suppose $C_{X}^{c}\left(\ns{A}\right)\neq\varnothing$. Fix an $x\in C_{X}^{c}\left(\ns{A}\right)$.
Let $E\in\mathcal{C}_{X}$. Then $\ns{E}\left[x\right]\subseteq G_{X}^{c}\left(x\right)\subseteq\ns{A}$,
so $x\in\intr_{\ns{X},\ns{E}}\ns{A}\neq\varnothing$. By transfer,
$\intr_{X,E}A\neq\varnothing$.
\item Immediate from \eqref{enu:thick-is-open}.\qedhere
\end{enumerate}
\end{proof}
As Protasov and Zarichnyi \citep[p. 172]{PZ07} pointed out, large
and thick subsets of a coarse space can be considered as large-scale
counterparts of dense and open subsets of a topological space, respectively.
Indeed, many results on size properties can be proved \emph{analogically}
to their small-scale (topological) counterparts. On the other hand,
\prettyref{thm:nonst-characterisation-of-size-I} indicates that large
and thick subsets \emph{precisely} correspond to dense and ``with
non-empty interior'' subsets, respectively. Hence, using our nonstandard
characterisations, we can deduce many large-scale results from their
small-scale counterparts not only \emph{analogically} but also \emph{logically}.
\begin{cor}[Standard]
Let $X$ be a coarse space and $A\subseteq B\subseteq X$. If $A$
is large in $B$ and $B$ is large in $X$, then $A$ is large in
$X$.
\end{cor}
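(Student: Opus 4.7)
The plan is to invoke the nonstandard characterisation of largeness in Theorem~\ref{thm:nonst-characterisation-of-size-I}\eqref{enu:large-is-dense}, which reduces the statement to the assertion that $G_X^c(\ns{A}) = \ns{X}$, mirroring the familiar topological proof that density is transitive.

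First I would record the compatibility of the galaxy relation with the subspace coarse structure: the entourages of $B$ are exactly the sets $E \cap (B \times B)$ with $E \in \mathcal{C}_X$, so by transfer
\[
{\sim_B} = {\sim_X} \cap (\ns{B} \times \ns{B}),
\]
and therefore $G_B^c(S) = G_X^c(S) \cap \ns{B}$ for every $S \subseteq \ns{B}$. This observation is the only piece of bookkeeping needed.

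Next, translate the hypotheses through Theorem~\ref{thm:nonst-characterisation-of-size-I}: $A$ large in $B$ gives $G_B^c(\ns{A}) = \ns{B}$, and $B$ large in $X$ gives $G_X^c(\ns{B}) = \ns{X}$. Pick an arbitrary $x \in \ns{X}$. The second equation produces a $y \in \ns{B}$ with $x \sim_X y$, and the first equation (together with the compatibility above) produces a $z \in \ns{A}$ with $y \sim_X z$. Transitivity of $\sim_X$ yields $x \sim_X z$, so $x \in G_X^c(\ns{A})$. Hence $G_X^c(\ns{A}) = \ns{X}$, and one more appeal to Theorem~\ref{thm:nonst-characterisation-of-size-I} concludes that $A$ is large in $X$.

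There is no real obstacle here; the argument is essentially a two-step chase using transitivity of $\sim_X$. The only point requiring any care is confirming that $\sim_B$ is the restriction of $\sim_X$, which is immediate by transfer from the definition of the subspace coarse structure.
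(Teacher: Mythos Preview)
Your proof is correct and matches the paper's approach: both invoke \prettyref{thm:nonst-characterisation-of-size-I} to reduce to a density statement, record that $G_{B}^{c}=G_{X}^{c}\cap\ns{B}$ (the paper cites \citep[Example 3.19]{Ima19} for this), and then use transitivity. The only cosmetic difference is that the paper phrases the final step as the topological fact ``dense in dense is dense'' with respect to the topology of \prettyref{cor:galactic-topology-of-starX}, whereas you unfold this into an explicit two-step chase via $\sim_{X}$; these are the same argument.
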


\begin{proof}
By \prettyref{thm:nonst-characterisation-of-size-I}, $\ns{B}$ is
a $G_{X}^{c}$-dense subset of $\ns{X}$, and $\ns{A}$ is a $G_{B}^{c}$-dense
subset of $\ns{B}$, so $\ns{A}$ is a $G_{X}^{c}$-dense subset of
$\ns{X}$. (Note that $G_{B}^{c}=G_{X}^{c}\cap\ns{B}$ holds by \citep[Example 3.19]{Ima19},
i.e., $\ns{B}$ is a topological subspace of $\ns{X}$.) Hence $A$
is large in $X$ by \prettyref{thm:nonst-characterisation-of-size-I}.
\end{proof}
\begin{cor}[{Standard; \citep[Proposition 9.1.2]{PZ07}}]
\label{cor:intersection-of-thick-and-large}Let $X$ be a coarse
space and $A$ a subset of $X$. The following are equivalent:
\begin{enumerate}
\item $A$ is thick;
\item $L\cap A\neq\varnothing$ for each large subset $L$ of $X$.
\end{enumerate}
\end{cor}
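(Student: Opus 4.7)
The plan is to translate everything into the nonstandard language supplied by \prettyref{thm:nonst-characterisation-of-size-I} and \prettyref{thm:galaxy-and-core}, after which both directions become near-tautologies. Specifically, thickness of $A$ means $C_X^c(\ns{A}) \neq \varnothing$, largeness of $L$ means $G_X^c(\ns{L}) = \ns{X}$, and the duality $\ns{X} \setminus C_X^c(\ns{A}) = G_X^c(\ns{X} \setminus \ns{A})$ supplied by \prettyref{thm:galaxy-and-core}\eqref{enu:C-is-neg-G-neg} is exactly what relates the two.

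For the forward direction, I would pick an $x \in C_X^c(\ns{A})$, so that $G_X^c(x) \subseteq \ns{A}$. Given any large $L$, the equality $G_X^c(\ns{L}) = \ns{X}$ forces $x \sim_X y$ for some $y \in \ns{L}$; then $y \in G_X^c(x) \subseteq \ns{A}$, so $\ns{(L \cap A)} = \ns{L} \cap \ns{A} \neq \varnothing$, and transfer yields $L \cap A \neq \varnothing$.

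For the reverse direction, I would argue by contrapositive: if $A$ is not thick, then $C_X^c(\ns{A}) = \varnothing$, hence by \prettyref{thm:galaxy-and-core}\eqref{enu:C-is-neg-G-neg},
\[
G_X^c(\ns{(X \setminus A)}) = G_X^c(\ns{X} \setminus \ns{A}) = \ns{X} \setminus C_X^c(\ns{A}) = \ns{X}.
\]
By \prettyref{thm:nonst-characterisation-of-size-I}\eqref{enu:large-is-dense}, $X \setminus A$ is large, and it is obviously disjoint from $A$, contradicting the hypothesis.

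There is no real obstacle here: the nonstandard dictionary converts the condition into an essentially topological statement on $\ns{X}$ (``a set meets every dense set iff its interior is nonempty''), and the duality between $G_X^c$ and $C_X^c$ does all the work. The only point requiring minor care is the set-theoretic identity $\ns{X} \setminus \ns{A} = \ns{(X \setminus A)}$, which holds by transfer applied to the standard subset $A \subseteq X$.
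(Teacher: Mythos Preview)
Your proof is correct and follows essentially the same route as the paper's: both directions use the nonstandard dictionary from \prettyref{thm:nonst-characterisation-of-size-I} and the $G_X^c$/$C_X^c$ duality of \prettyref{thm:galaxy-and-core}\eqref{enu:C-is-neg-G-neg}, with the reverse direction taking $L=X\setminus A$. The only difference is cosmetic: where the paper appeals to the evident topological fact ``a dense set meets any set with non-empty interior,'' you unpack this explicitly by picking $x\in C_X^c(\ns{A})$ and finding $y\in\ns{L}\cap G_X^c(x)$.
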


\begin{proof}
Suppose $A$ is thick. Let $L$ be a large subset of $X$. By \prettyref{thm:nonst-characterisation-of-size-I},
$\ns{A}$ has non-empty interior and $\ns{L}$ is dense. It is evident
that the intersection of a dense subset and a set with non-empty interior
is non-empty. Hence $\ns{L}\cap\ns{A}\neq\varnothing$. We have $L\cap A\neq\varnothing$
by transfer.

Conversely, suppose $A$ is not thick. Set $L=X\setminus A$. Clearly
$L\cap A=\varnothing$. By \prettyref{thm:nonst-characterisation-of-size-I},
$\ns{A}$ has empty interior. It is also evident that the complement
of a subset with empty-interior is dense. Hence $\ns{L}=\ns{X}\setminus\ns{A}$
is dense. By \prettyref{thm:nonst-characterisation-of-size-I}, $L$
is large.
\end{proof}
\begin{cor}[Standard]
\label{cor:bounded-implies-meshy}Let $X$ be an unbounded connected
coarse space. Then $\mathcal{B}_{X}\subseteq\mathcal{M}\left(X\right)$.
\end{cor}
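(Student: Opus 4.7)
The plan is to reduce the claim, via \prettyref{cor:intersection-of-thick-and-large}, to showing that $X \setminus B$ is large in $X$ whenever $B \in \mathcal{B}_X$. Indeed, $X \setminus B$ is disjoint from $B$, so if it is large then, by the contrapositive of \prettyref{cor:intersection-of-thick-and-large}, $B$ cannot be thick, i.e., $B$ is meshy. The case $B = \varnothing$ is trivial, so assume $B \neq \varnothing$.

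By \prettyref{thm:nonst-characterisation-of-size-I}(1), it suffices to verify that $G_X^c\left(\ns{\left(X \setminus B\right)}\right) = \ns{X}$. Fix an arbitrary $x \in \ns{X}$; I seek a $y \in \ns{X} \setminus \ns{B} = \ns{\left(X \setminus B\right)}$ with $x \sim_X y$. If $x \in \INF\left(X\right)$, then $x \notin \ns{B}$ (since $\ns{B} \subseteq \FIN\left(X\right)$ by boundedness of $B$), and $y = x$ works. If instead $x \in \FIN\left(X\right)$, pick a standard $x_0 \in X$ with $x \sim_X x_0$.

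The key step uses connectedness: because $\mathcal{B}_X$ is a genuine bornology, for any standard $z \in X$ the two-point set $\{x_0, z\} = \{x_0\} \cup \{z\}$, being a finite union of (trivially bounded) singletons, is bounded, so $x_0 \sim_X z$. As $X$ is unbounded and $B$ is bounded, $X \setminus B$ is nonempty (else $X = B \in \mathcal{B}_X$); picking any $y \in X \setminus B$, transitivity of $\sim_X$ yields $x \sim_X x_0 \sim_X y$ with $y \in \ns{\left(X \setminus B\right)}$, as required.

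There is no substantive obstacle; the only point to highlight is that connectedness forces every pair of standard points to lie in one and the same galaxy, so a bounded subset can never swallow the full galaxy of one of its finite points and hence can never have nonempty galactic core. Everything else is an immediate reading of the nonstandard characterisations already proved.
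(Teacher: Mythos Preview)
Your proof is correct and is essentially the dual of the paper's argument. The paper shows directly that $C_{X}^{c}\left(\ns{B}\right)=\varnothing$ by the same two-case analysis (finite vs.\ infinite $x$) and then invokes \prettyref{thm:nonst-characterisation-of-size-I}(4); you instead show $G_{X}^{c}\left(\ns{\left(X\setminus B\right)}\right)=\ns{X}$ and route through \prettyref{cor:intersection-of-thick-and-large}. By \prettyref{thm:galaxy-and-core}\eqref{enu:C-is-neg-G-neg} these two conclusions are literally the same statement, so the detour through \prettyref{cor:intersection-of-thick-and-large} is harmless but unnecessary: once you have $G_{X}^{c}\left(\ns{\left(X\setminus B\right)}\right)=\ns{X}$ you already have $C_{X}^{c}\left(\ns{B}\right)=\varnothing$, hence meshiness directly. (Also, your justification of $x_{0}\sim_{X}y$ via ``$\mathcal{B}_{X}$ is a genuine bornology'' is fine, but connectedness of a coarse space already says exactly that any two standard points are finitely close.)
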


\begin{proof}
Let $B\in\mathcal{B}_{X}$. Since $X$ is unbounded, $X\setminus B$
is non-empty. Fix an $x_{0}\in X\setminus B$. Let $x\in\ns{X}$.
Case I: $x\in\FIN\left(X\right)$. By \citep[Proposition 2.11]{Ima19},
$x_{0}\in\FIN\left(X\right)=G_{X}^{c}\left(x\right)$ but $x_{0}\notin\ns{B}$,
so $G_{X}^{c}\left(x\right)\nsubseteq\ns{B}$. Case II: $x\in\INF\left(X\right)$.
By \citep[Proposition 2.6]{Ima19}, $x\in G_{X}^{c}\left(x\right)$
but $x\notin\FIN\left(X\right)\supseteq\ns{B}$, so $G_{X}^{c}\left(x\right)\nsubseteq\ns{B}$.
In both cases, we have that $x\notin C_{X}^{c}\left(\ns{B}\right)$.
Hence $C_{X}^{c}\left(\ns{B}\right)=\varnothing$. By \prettyref{thm:nonst-characterisation-of-size-I},
$B$ is meshy.
\end{proof}
We next consider four more complicated size properties.
\begin{defn}[Standard; \citep{PB03,DZ17}]
Let $X$ be a coarse space. A subset $A$ of $X$ is said to be
\begin{enumerate}
\item \emph{piecewise large} if $E\left[A\right]$ is thick for some $E\in\mathcal{C}_{X}$;
\item \emph{small} if $X\setminus E\left[A\right]$ is large for all $E\in\mathcal{C}_{X}$;
\item \emph{extralarge} if $\intr_{X,E}A$ is large for all $E\in\mathcal{C}_{X}$;
\item \emph{with slim interior} if $\intr_{X,E}A$ is slim for some $E\in\mathcal{C}_{X}$.
\end{enumerate}
\end{defn}

\begin{thm}
\label{thm:nonst-characterisation-of-size-II}Let $X$ be a standard
coarse space and $A$ a subset of $X$.
\begin{enumerate}
\item $A$ is piecewise large $\iff$ $C_{X}^{c}\left(\ns{E}\left[\ns{A}\right]\right)\neq\varnothing$
for some $E\in\mathcal{C}_{X}$;
\item \label{enu:small-is-nowhere-dense}$A$ is small $\iff$ $C_{X}^{c}\left(\ns{E}\left[\ns{A}\right]\right)=\varnothing$
for all $E\in\mathcal{C}_{X}$;
\item $A$ is extralarge $\iff$ $G_{X}^{c}\left(\intr_{\ns{X},\ns{E}}\ns{A}\right)=\ns{X}$
for all $E\in\mathcal{C}_{X}$;
\item $A$ is with slim interior $\iff$ $G_{X}^{c}\left(\intr_{\ns{X},\ns{E}}\ns{A}\right)\neq\ns{X}$
for some $E\in\mathcal{C}_{X}$.
\end{enumerate}
\end{thm}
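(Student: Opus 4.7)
The plan is to reduce each of the four equivalences to the corresponding item of \prettyref{thm:nonst-characterisation-of-size-I}, using transfer and the complement identity from \prettyref{thm:galaxy-and-core}\eqref{enu:C-is-neg-G-neg}. Two preliminary identities will be invoked repeatedly, both immediate from transferring the defining quantifier formulas of the relational closure and interior:
\[
\ns{\left(E\left[A\right]\right)} = \ns{E}\left[\ns{A}\right], \qquad \ns{\left(\intr_{X,E}A\right)} = \intr_{\ns{X},\ns{E}}\ns{A},
\]
valid for each $E\in\mathcal{C}_{X}$ and $A\subseteq X$.

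For (1), $A$ is piecewise large iff there exists $E\in\mathcal{C}_{X}$ with $E[A]$ thick. Applying \prettyref{thm:nonst-characterisation-of-size-I}\eqref{enu:thick-is-open} to the subset $E[A]$, thickness of $E[A]$ is equivalent to $C_{X}^{c}(\ns{(E[A])}) \neq \varnothing$; substituting the first preliminary identity yields the stated form. For (3), $A$ is extralarge iff $\intr_{X,E}A$ is large for every $E\in\mathcal{C}_{X}$; \prettyref{thm:nonst-characterisation-of-size-I}\eqref{enu:large-is-dense} together with the second preliminary identity rewrites this as $G_{X}^{c}(\intr_{\ns{X},\ns{E}}\ns{A}) = \ns{X}$ for every $E$, which is exactly the claim.

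Items (2) and (4) follow by taking logical negations of (1) and (3), respectively; (4) is immediate. For (2), $A$ is small iff $X\setminus E[A]$ is large for every $E\in\mathcal{C}_{X}$. By \prettyref{thm:nonst-characterisation-of-size-I}\eqref{enu:large-is-dense} (applied to the set $X\setminus E[A]$) and transfer, this is equivalent to $G_{X}^{c}(\ns{X}\setminus\ns{E}[\ns{A}]) = \ns{X}$ for every $E$. The identity $G_{X}^{c}(\ns{X}\setminus B) = \ns{X}\setminus C_{X}^{c}(B)$ from \prettyref{thm:galaxy-and-core}\eqref{enu:C-is-neg-G-neg} then rewrites this as $C_{X}^{c}(\ns{E}[\ns{A}]) = \varnothing$ for every $E$. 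The only genuine technicality in the whole argument is verifying the two transfer identities above, which is a routine application of transfer to the formulas $y\in E[A] \iff \exists x\in A,(x,y)\in E$ and $x\in\intr_{X,E}A \iff \forall y\,((x,y)\in E\Rightarrow y\in A)$; hence no substantive obstacle is anticipated.
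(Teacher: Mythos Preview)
Your proposal is correct and follows exactly the approach the paper takes: the paper's entire proof reads ``Immediate from \prettyref{thm:nonst-characterisation-of-size-I}.'' Your write-up simply unpacks that one line, making explicit the transfer identities $\ns{\left(E\left[A\right]\right)}=\ns{E}\left[\ns{A}\right]$ and $\ns{\left(\intr_{X,E}A\right)}=\intr_{\ns{X},\ns{E}}\ns{A}$ and the complement relation from \prettyref{thm:galaxy-and-core}\eqref{enu:C-is-neg-G-neg}, all of which the paper leaves tacit.
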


\begin{proof}
Immediate from \prettyref{thm:nonst-characterisation-of-size-I}.
\end{proof}
The definitions of piecewise large, small, extralarge, and with slim
interior subsets might look slightly complicated, as compared with
those of large, slim, thick and meshy subsets. However, there are
simpler (lattice-theoretic) characterisations of smallness and extralargeness.
We provide a nonstandard proof of the characterisation of extralargeness.
\begin{thm}[{Standard; \citep[Theorem 11.1]{PB03}}]
\label{thm:characterisation-of-extralargeness}Let $X$ be a standard
coarse space and $A$ a subset of $X$. The following are equivalent:
\begin{enumerate}
\item $A$ is extralarge;
\item $L\cap A$ is large for each large subset $L$ of $X$.
\end{enumerate}
\end{thm}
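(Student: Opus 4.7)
My plan is to prove the equivalence in two steps, with the nontrivial direction being the contrapositive of $(2) \Rightarrow (1)$.

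For $(1) \Rightarrow (2)$, suppose $A$ is extralarge and $L$ is large. Fix $E \in \mathcal{C}_X$ with $E\left[L\right] = X$. Extralargeness applied to the entourage $E^{-1}$ yields $F \in \mathcal{C}_X$ with $F\left[\intr_{X, E^{-1}} A\right] = X$. For arbitrary $x \in X$, first pick $v \in \intr_{X, E^{-1}} A$ with $\left(v, x\right) \in F$, then pick $u \in L$ with $\left(u, v\right) \in E$. Since $E^{-1}\left[v\right] \subseteq A$ (that is the meaning of $v \in \intr_{X, E^{-1}} A$), we get $u \in A$; therefore $u \in L \cap A$ and $\left(u, x\right) \in F \circ E$. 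Hence $\left(F \circ E\right)\left[L \cap A\right] = X$, showing $L \cap A$ is large.

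For $(2) \Rightarrow (1)$, I argue by contraposition. Suppose $A$ is not extralarge: there is $E \in \mathcal{C}_X$ such that $I := \intr_{X, E} A$ is not large. Define
\[
L := \left(X \setminus A\right) \cup I.
\]
A direct computation yields $L \cap A = I$, which by hypothesis is not large, so it suffices to verify that $L$ itself is large. Given any $x \in X$: either $x \in L$, in which case the diagonal pair $\left(x, x\right)$ witnesses $x \in \Delta_X\left[L\right]$; or $x \in A \setminus I$, in which case $E\left[x\right] \not\subseteq A$, and any witness $y \in E\left[x\right] \setminus A$ lies in $X \setminus A \subseteq L$ with $\left(x, y\right) \in E$, giving $x \in E^{-1}\left[L\right]$. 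Therefore $\left(E^{-1} \cup \Delta_X\right)\left[L\right] = X$, so $L$ is large, producing the required contradiction with (2).

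The main obstacle is identifying the right set $L$ in the backward direction. My construction is motivated by the topological analogue visible through \prettyref{thm:nonst-characterisation-of-size-I}: a subset of a topological space has dense interior if and only if it meets every dense set densely, and the standard counterexample used in the contrapositive is $\left(X \setminus A\right) \cup \left(X \setminus W\right)$ for a nonempty open set $W$ disjoint from $\intr A$. In the coarse setting the role of $X \setminus W$ is played by $I$ itself (note that $X \setminus I$ automatically avoids $I$), which has the pleasant consequence $L \cap A = I$ on the nose and reduces the verification to the short calculation outlined above.
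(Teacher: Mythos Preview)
Your proof is correct, but it departs from the paper's approach in a notable way: both directions of your argument are purely standard, whereas the paper deliberately works through the nonstandard characterisations of \prettyref{thm:nonst-characterisation-of-size-I} and \prettyref{thm:nonst-characterisation-of-size-II}. In the forward direction the paper passes to $\ns{X}$, uses that $G_{X}^{c}\left(\intr_{\ns{X},\ns{E}}\ns{A}\right)=\ns{X}$, and chases finitely-close points; your argument with the explicit entourage $F\circ E$ achieves the same thing without any transfer. In the backward direction the paper takes $L=\intr_{X,E}A\cup\bigcup_{x\in X\setminus\intr_{X,E}A}\left(E\left[x\right]\setminus A\right)$ and verifies largeness via $G_{X}^{c}\left(\ns{L}\right)=\ns{X}$; your choice $L=\left(X\setminus A\right)\cup I$ is strictly larger (hence still large) and yields the cleaner identity $L\cap A=I$ directly. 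What your approach buys is elementarity and a shorter witness set; what the paper's approach buys is the illustration that these size results are literally instances of topological density statements in the $G_{X}^{c}$-topology on $\ns{X}$, which is the paper's thesis.

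One small technical point: the equality $L\cap A=I$ assumes $I\subseteq A$, which in turn requires $\Delta_{X}\subseteq E$. In a general coarse structure an individual controlled set need not contain the diagonal, so strictly speaking $L\cap A=I\cap A$. This is harmless (either observe $I\cap A\subseteq I$ is still not large, or replace $E$ by $E\cup\Delta_{X}$ at the outset, noting $\intr_{X,E\cup\Delta_{X}}A\subseteq\intr_{X,E}A$), but it is worth stating.
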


\begin{proof}
Suppose $A$ is extralarge. Let $L$ be a large subset of $X$. There
is an $E\in\mathcal{C}_{X}$ so that $E^{-1}\left[L\right]=X$, i.e.,
$L\cap E\left[x\right]\neq\varnothing$ for all $x\in X$. Since $A$
is extralarge, $G_{X}^{c}\left(\intr_{\ns{X},\ns{E}}\ns{A}\right)=\ns{X}$
holds by \prettyref{thm:nonst-characterisation-of-size-II}. Let $x\in\ns{X}$.
Then there exists a $y\in\intr_{\ns{X},\ns{E}}\ns{A}$ such that $x\sim_{X}y$.
By transfer, we have that $\ns{L}\cap\ns{E}\left[y\right]\neq\varnothing$.
Choose a $z\in\ns{L}\cap\ns{E}\left[y\right]$. Then $x\sim_{X}y\sim_{X}z\in\ns{L}\cap\ns{E}\left[y\right]\subseteq\ns{L}\cap\ns{A}$.
Hence $x\in G_{X}^{c}\left(\ns{L}\cap\ns{A}\right)$. By \prettyref{thm:nonst-characterisation-of-size-I},
$L\cap A$ is large.

Suppose $A$ is not extralarge, i.e., $\intr_{X,E}A$ is not large
for some $E\in\mathcal{C}_{X}$. Let $L=\intr_{X,E}A\cup\bigcup_{x\in X\setminus\intr_{X,E}A}\left(E\left[x\right]\setminus A\right)$.
For each $\ns{x}\in\ns{X}\setminus\intr_{\ns{X},\ns{E}}\ns{A}$, since
$\ns{E}\left[x\right]\setminus\ns{A}$ is non-empty by transfer, it
follows that $x\in G_{X}^{c}\left(\ns{L}\right)$. Hence $G_{X}^{c}\left(\ns{L}\right)=\ns{X}$.
By \prettyref{thm:nonst-characterisation-of-size-I}, $L$ is large.
On the other hand, $L\cap A=\intr_{X,E}A$ is not large.
\end{proof}
\begin{cor}[{Standard; \citep[Theorem 11.1]{PB03}}]
Let $X$ be a standard coarse space and $A$ a subset of $X$. The
following are equivalent:
\begin{enumerate}
\item $A$ is small;
\item $L\setminus A$ is large for each large subset $L$ of $X$.
\end{enumerate}
\end{cor}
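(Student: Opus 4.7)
The plan is to deduce this corollary from the preceding characterisation of extralargeness via a duality between smallness and extralargeness, rather than redoing a direct nonstandard argument. The key observation is that $A\subseteq X$ is small if and only if $X\setminus A$ is extralarge.

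To verify this duality, I would invoke the identity $E[A]=X\setminus\intr_{X,E^{-1}}(X\setminus A)$ listed in the preliminaries, which yields $X\setminus E[A]=\intr_{X,E^{-1}}(X\setminus A)$ for every $E\subseteq X\times X$. Since a coarse structure is closed under taking inverses, we have $\{E^{-1}\mid E\in\mathcal{C}_X\}=\mathcal{C}_X$, so the quantifier ``for all $E\in\mathcal{C}_X$'' can be replaced by ``for all $F\in\mathcal{C}_X$'' after the substitution $F=E^{-1}$. Therefore,
\[
A\text{ is small}\iff\forall E\in\mathcal{C}_X,\ X\setminus E[A]\text{ is large}\iff\forall F\in\mathcal{C}_X,\ \intr_{X,F}(X\setminus A)\text{ is large},
\]
and the right-hand side is precisely the definition of extralargeness of $X\setminus A$.

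With the duality in hand, applying \prettyref{thm:characterisation-of-extralargeness} to $X\setminus A$ gives: $X\setminus A$ is extralarge if and only if $L\cap(X\setminus A)=L\setminus A$ is large for every large subset $L$ of $X$. Composing with the duality finishes the proof of the corollary.

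There is no real obstacle here; the only thing to be careful about is the bookkeeping with $E$ versus $E^{-1}$ and the fact that $\mathcal{C}_X$ is symmetric under inversion, which makes the quantifier reindexing harmless. If one preferred a self-contained argument, the proof of \prettyref{thm:characterisation-of-extralargeness} could be mirrored almost verbatim: using \prettyref{thm:nonst-characterisation-of-size-II}\eqref{enu:small-is-nowhere-dense} one checks that smallness of $A$ corresponds to $\ns{L}\setminus G_X^c(\ns{A})$ being $G_X^c$-dense for every large $L$, and this in turn is equivalent to $\ns{(L\setminus A)}$ being $G_X^c$-dense, i.e., $L\setminus A$ being large by \prettyref{thm:nonst-characterisation-of-size-I}. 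But the dual reduction to the extralargeness theorem is shorter and cleaner.
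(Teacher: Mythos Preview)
Your proof is correct and follows exactly the paper's approach: the paper's proof is the single line ``Apply \prettyref{thm:characterisation-of-extralargeness} to the complement $X\setminus A$,'' and you have simply spelled out the small/extralarge duality that makes this application work.
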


\begin{proof}[Proof (Standard)]
Apply \prettyref{thm:characterisation-of-extralargeness} to the
complement $X\setminus A$.
\end{proof}
\begin{cor}[{Standard; \citep[Proposition 2.11]{DZ17}}]
Let $X$ be a coarse space and $A$ a subset of $X$. The following
are equivalent:
\begin{enumerate}
\item $A$ is small;
\item $A\cup B$ is meshy for each meshy subset $B$ of $X$.
\end{enumerate}
\end{cor}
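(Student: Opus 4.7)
The plan is to reduce this to the previous corollary by an exact duality between meshy sets and large sets via complementation. First I would verify the key lemma that a subset $B \subseteq X$ is meshy if and only if $X \setminus B$ is large. This follows directly from the general identity $\intr_{X,E} B = X \setminus E^{-1}\left[X \setminus B\right]$ stated in the Preliminaries: $B$ is meshy iff $\intr_{X,E} B = \varnothing$ for some $E \in \mathcal{C}_{X}$, iff $E^{-1}\left[X \setminus B\right] = X$ for some such $E$, iff $X \setminus B$ is large.

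Next I would set up the correspondence $B \leftrightarrow L = X \setminus B$, which is a bijection between meshy subsets of $X$ and large subsets of $X$ by the observation above. Under this correspondence, the complement identity
\begin{equation*}
X \setminus (A \cup B) \;=\; (X \setminus A) \cap (X \setminus B) \;=\; L \setminus A
\end{equation*}
shows that $A \cup B$ is meshy (equivalently, $X \setminus (A \cup B)$ is large) if and only if $L \setminus A$ is large. Therefore the condition ``$A \cup B$ is meshy for every meshy $B$'' is equivalent to ``$L \setminus A$ is large for every large $L$.''

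Finally, I would invoke the preceding corollary (the lattice-theoretic characterization of smallness), which states that $A$ is small if and only if $L \setminus A$ is large for every large $L \subseteq X$. Combining this with the equivalence established in the previous paragraph yields the desired biconditional. The proof is purely formal duality, so no real obstacle is expected; the only point requiring a moment's care is confirming that the duality between meshy and large subsets is a genuine bijection via complementation, which is immediate from the $E$-interior/$E$-closure duality already recorded in the Preliminaries.
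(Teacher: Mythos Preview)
Your proposal is correct and follows essentially the same route as the paper: both invoke the preceding corollary characterising smallness via $L\setminus A$ being large for all large $L$, and then translate this through the meshy/large complementation duality together with the identity $X\setminus(A\cup B)=(X\setminus B)\setminus A$. The paper's proof is simply a terser one-line chain of equivalences, but the content is identical to yours.
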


\begin{proof}[Proof (Standard)]
$A$ is small $\iff$ $L\setminus A\in\mathcal{L}\left(X\right)$
for all $L\in\mathcal{L}\left(X\right)$ $\iff$ $\left(X\setminus B\right)\setminus A=X\setminus\left(A\cup B\right)\in\mathcal{L}\left(X\right)$
for all $B\in\mathcal{M}\left(X\right)$ $\iff$ $A\cup B\in\mathcal{M}\left(X\right)$
for all $B\in\mathcal{M}\left(X\right)$.
\end{proof}
Protasov and Zarichnyi \citep[p. 172]{PZ07} pointed out that small
subsets of a coarse space can be considered as the large-scale counterpart
of nowhere dense subsets of a topological space. In the light of the
topology of $\ns{X}$ defined by \prettyref{cor:galactic-topology-of-starX},
small subsets do not precisely correspond to nowhere dense subsets:
if $\ns{A}$ is nowhere dense, then $\ns{A}\subseteq G_{X}^{c}\left(\ns{A}\right)=C_{X}^{c}\left(G_{X}^{c}\left(\ns{A}\right)\right)=\varnothing$
by \prettyref{thm:galaxy-and-core}, so $\ns{A}$ must be empty; however,
every unbounded connected coarse space has a non-empty small subset.
\begin{thm}[{Standard; \citep[Theorem 2.14]{DZ17}}]
\label{thm:unboundedness-and-small-subsets}Let $X$ be a non-empty
connected coarse space. The following are equivalent:
\begin{enumerate}
\item \label{enu:unbounded-and-small-finite-cond-1}$X$ is unbounded;
\item \label{enu:unbounded-and-small-finite-cond-2}every finite subset
of $X$ is small.
\end{enumerate}
\end{thm}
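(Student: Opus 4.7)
The plan is to treat the two implications separately and reduce each to the machinery already developed in the paper, specifically Corollary~\prettyref{cor:bounded-implies-meshy} and the galaxy/galactic-core duality from Theorem~\prettyref{thm:galaxy-and-core}.

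For $(1)\Rightarrow(2)$, let $F$ be a finite subset of the unbounded connected coarse space $X$. Connectedness of $X$ implies that $F$ is bounded, and for every $E\in\mathcal{C}_{X}$ the set $E[F]$ is still bounded, via the routine inclusion $E[F]\times E[F]\subseteq E^{-1}\circ(F\times F)\circ E\in\mathcal{C}_{X}$. By Corollary~\prettyref{cor:bounded-implies-meshy}, $E[F]$ is meshy, i.e.\ $C_{X}^{c}(\ns{E}[\ns{F}])=\varnothing$ by Theorem~\prettyref{thm:nonst-characterisation-of-size-I}. Using $\ns{X}\setminus C_{X}^{c}(B)=G_{X}^{c}(\ns{X}\setminus B)$ from Theorem~\prettyref{thm:galaxy-and-core}\eqref{enu:C-is-neg-G-neg}, this gives $G_{X}^{c}(\ns{(X\setminus E[F])})=\ns{X}$, so $X\setminus E[F]$ is large, again by Theorem~\prettyref{thm:nonst-characterisation-of-size-I}. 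Since $E$ was arbitrary, $F$ is small.

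For $(2)\Rightarrow(1)$, I would argue by contrapositive. Suppose $X$ is bounded; then $X\in\mathcal{B}_{X}$, so $E:=X\times X\in\mathcal{C}_{X}$. Fixing any $x\in X$ (possible since $X$ is non-empty), we have $E[\{x\}]=X$, whence $X\setminus E[\{x\}]=\varnothing$, which cannot be large because $E'[\varnothing]=\varnothing\neq X$ for every $E'\in\mathcal{C}_{X}$. Hence the finite set $\{x\}$ fails to be small.

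I do not anticipate any substantial obstacle: the forward direction is essentially a three-line chain (finite $\Rightarrow$ bounded $\Rightarrow$ $E$-enlargement bounded $\Rightarrow$ meshy $\Rightarrow$ complement large), and the converse is a one-line counterexample using the singleton. The only mildly delicate bookkeeping is the translation between the ``complement-of-$E$-enlargement is large'' formulation of smallness and the nonstandard statement $C_{X}^{c}(\ns{E}[\ns{F}])=\varnothing$, but this is an immediate consequence of Theorem~\prettyref{thm:galaxy-and-core}.
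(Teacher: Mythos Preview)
Your proof is correct. The direction $(2)\Rightarrow(1)$ is essentially identical to the paper's argument. For $(1)\Rightarrow(2)$, however, you take a slightly different route: the paper argues by contrapositive, starting from a non-small finite $A$, picking $E$ with $C_{X}^{c}(\ns{E}[\ns{A}])\neq\varnothing$, and then using connectedness to force $\FIN(X)\subseteq\ns{E}[\ns{A}]$, whence $X\subseteq E[A]$ is bounded. You instead argue directly and modularly, feeding the bounded set $E[F]$ into Corollary~\ref{cor:bounded-implies-meshy} and then dualising meshiness to largeness of the complement via Theorem~\ref{thm:galaxy-and-core}\eqref{enu:C-is-neg-G-neg}. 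Your approach has the virtue of reusing the already-established corollary rather than reproving its content inline; the paper's version is more self-contained and makes the role of connectedness (through $\FIN(X)=G_{X}^{c}(x)$ for any standard $x$) more explicit.
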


\begin{proof}
Suppose that \eqref{enu:unbounded-and-small-finite-cond-2} does not
hold, i.e., there is a non-small (i.e. piecewise large) finite subset
$A$ of $X$. For some $E\in\mathcal{C}_{X}$, $C_{X}^{c}\left(\ns{E}\left[\ns{A}\right]\right)\neq\varnothing$
holds by \prettyref{thm:nonst-characterisation-of-size-II}. Note
that $E\left[A\right]$ is bounded. For any $x\in C_{X}^{c}\left(\ns{E}\left[\ns{A}\right]\right)$,
$G_{X}^{c}\left(x\right)\subseteq\ns{E}\left[\ns{A}\right]$ holds.
Since $X$ is connected, $\FIN\left(X\right)\subseteq\ns{E}\left[\ns{A}\right]$
by \citep[Proposition 2.11]{Ima19}. Then $X\subseteq\FIN\left(X\right)\subseteq\ns{E}\left[\ns{A}\right]$,
so $X\subseteq E\left[A\right]$ by transfer. (More rigorously, for
each $x\in X$, apply the transfer principle to ``$x\in\ns{E}\left[\ns{A}\right]$''
to obtain ``$x\in E\left[A\right]$''.) Hence $X$ is bounded. Thus
the implication \eqref{enu:unbounded-and-small-finite-cond-1} to
\eqref{enu:unbounded-and-small-finite-cond-2} is proved.

Suppose \eqref{enu:unbounded-and-small-finite-cond-1} does not hold.
Since $X$ is bounded, $E=X\times X\in\mathcal{C}_{X}$. Fix an $x\in X$.
Obviously $E\left[x\right]=X$ holds. By transfer, $\ns{E}\left[x\right]=\ns{X}$.
Hence $C_{X}^{c}\left(\ns{E}\left[x\right]\right)=\ns{X}\neq\varnothing$.
By \prettyref{thm:nonst-characterisation-of-size-II}, $\set{x}$
is not small. Thus the implication \eqref{enu:unbounded-and-small-finite-cond-2}
to \eqref{enu:unbounded-and-small-finite-cond-1} is proved.
\end{proof}
With a similar argument, we can prove the following theorem.
\begin{thm}[{Standard; \citep[Theorem 4.16]{DZ17}}]
Let $X$ be a coarse space. The following are equivalent:
\begin{enumerate}
\item \label{enu:bounded-components-and-PL-cond-1}every connected component
of $X$ is bounded;
\item \label{enu:bounded-components-and-PL-cond-2}every non-empty subset
of $X$ is piecewise large.
\end{enumerate}
\end{thm}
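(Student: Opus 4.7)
My plan is to prove each implication directly, using the nonstandard characterisations from \prettyref{thm:nonst-characterisation-of-size-I} and \prettyref{thm:nonst-characterisation-of-size-II}, together with the fact (immediate from the definitions and transfer) that the connected component of a standard $a\in X$ coincides with $G_X^c(a)\cap X$.

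For \eqref{enu:bounded-components-and-PL-cond-1}$\Rightarrow$\eqref{enu:bounded-components-and-PL-cond-2}, I would fix a non-empty $A\subseteq X$, pick $a\in A$, and let $C$ be its connected component. By hypothesis $C$ is bounded, so $E=(C\times C)\cup\Delta_{X}\in\mathcal{C}_{X}$; then $E\left[A\right]\supseteq E\left[a\right]=C$. To verify that $E\left[A\right]$ is thick, let $F\in\mathcal{C}_{X}$. For any $y\in F\left[a\right]$ one has $a\sim_{X}y$, so $y$ lies in the connected component of $a$, i.e.\ $y\in C$. Hence $F\left[a\right]\subseteq C\subseteq E\left[A\right]$, which shows $a\in\intr_{X,F}\left(E\left[A\right]\right)$ for every $F\in\mathcal{C}_{X}$. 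Therefore $E\left[A\right]$ is thick and $A$ is piecewise large.

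For \eqref{enu:bounded-components-and-PL-cond-2}$\Rightarrow$\eqref{enu:bounded-components-and-PL-cond-1} I would argue the contrapositive: assume some connected component $C$ of $X$ is unbounded, pick $a\in C$ and set $A=\left\{a\right\}$. By \prettyref{thm:nonst-characterisation-of-size-II} it suffices to show $C_{X}^{c}\left(\ns{E}\left[a\right]\right)=\varnothing$ for every $E\in\mathcal{C}_{X}$. Suppose, for contradiction, that $x\in C_{X}^{c}\left(\ns{E}\left[a\right]\right)$ for some $E$. Since $\sim_{X}$ is reflexive, $x\in G_{X}^{c}\left(x\right)\subseteq\ns{E}\left[a\right]$, so $a\sim_{X}x$; by transitivity this upgrades to $G_{X}^{c}\left(a\right)=G_{X}^{c}\left(x\right)\subseteq\ns{E}\left[a\right]$. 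Then, for every standard $c\in C$, $c\sim_{X}a$ yields $c\in G_{X}^{c}\left(a\right)\subseteq\ns{E}\left[a\right]$, i.e.\ $\left(a,c\right)\in\ns{E}$, and by transfer $c\in E\left[a\right]$. Combined with the easy inclusion $E\left[a\right]\subseteq C$ (any $y\in E\left[a\right]$ satisfies $a\sim_{X}y$, and $y$ is standard), we obtain $E\left[a\right]=C$. Hence $C\times C\subseteq E^{-1}\circ E\in\mathcal{C}_{X}$, contradicting the unboundedness of $C$.

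The most delicate step is the second direction, where one needs to convert the purely internal statement $G_{X}^{c}\left(a\right)\subseteq\ns{E}\left[a\right]$ into the standard statement $C\subseteq E\left[a\right]$, which is where transfer and the description of components via $G_{X}^{c}$ are both essential. Once that conversion is made, the contradiction with unboundedness is automatic; the rest of the argument is routine bookkeeping with galaxies and entourages.
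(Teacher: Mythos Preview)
Your proof is correct. The second implication follows essentially the paper's route: from $x\in C_{X}^{c}\left(\ns{E}\left[a\right]\right)$ deduce $G_{X}^{c}\left(a\right)\subseteq\ns{E}\left[a\right]$, then transfer down to the standard component to get $C\subseteq E\left[a\right]$ and hence boundedness (the paper does this directly rather than by contradiction, but the content is identical).

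Your first implication, however, is genuinely different: the paper invokes the nonstandard characterisation of piecewise largeness, using $G_{X}^{c}\left(x_{0}\right)\subseteq\ns{Q_{x_{0}}}\subseteq\ns{E}\left[x_{0}\right]$ to exhibit a point of $C_{X}^{c}\left(\ns{E}\left[\ns{A}\right]\right)$, whereas you give a purely standard argument straight from the definitions of thick and connected component. Your version is more elementary and avoids nonstandard machinery entirely for this direction; the paper's version is more in keeping with its programme of deriving size results uniformly from the galaxy/core picture on $\ns{X}$.
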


\begin{proof}
For $x\in X$, let $Q_{x}$ be the connected component of $x$, namely,
$\bigcup_{E\in\mathcal{C}_{X}}E\left[x\right]$. Observe that $\ns{Q_{x}}=\bigcup_{E\in\ns{\mathcal{C}_{X}}}E\left[x\right]$;
$G_{X}^{c}\left(x\right)=\bigcup_{E\in\mathcal{C}_{X}}\ns{E}\left[x\right]$;
$Q_{x}\subseteq G_{X}^{c}\left(x\right)\subseteq\ns{Q_{x}}$ (see
also \citep[Corollary 3.13]{Ima19}).

\eqref{enu:bounded-components-and-PL-cond-1}$\Rightarrow$\eqref{enu:bounded-components-and-PL-cond-2}:
let $A$ be a non-empty subset of $X$. Fix $x_{0}\in A$. Since the
connected component $Q_{x_{0}}$ is bounded, there exists an $E\in\mathcal{C}_{0}$
such that $Q_{x_{0}}\subseteq E\left[x_{0}\right]$. By transfer,
$G_{X}^{c}\left(x_{0}\right)\subseteq\ns{Q_{x_{0}}}\subseteq\ns{E}\left[x_{0}\right]\subseteq\ns{E}\left[\ns{A}\right]$,
and therefore $x_{0}\in C_{X}^{c}\left(\ns{E}\left[\ns{A}\right]\right)\neq\varnothing$.
By \prettyref{thm:nonst-characterisation-of-size-II}, $A$ is piecewise
large.

\eqref{enu:bounded-components-and-PL-cond-2}$\Rightarrow$\eqref{enu:bounded-components-and-PL-cond-1}:
Let $x_{0}\in X$. Since $\set{x_{0}}$ is piecewise large, $C_{X}^{c}\left(\ns{E}\left[x_{0}\right]\right)\neq\varnothing$
holds for some $E\in\mathcal{C}_{X}$ by \prettyref{thm:nonst-characterisation-of-size-II}.
For $x\in C_{X}^{c}\left(\ns{E}\left[x_{0}\right]\right)$, $G_{X}^{c}\left(x\right)\subseteq\ns{E}\left[x_{0}\right]$,
so $x\sim_{X}x_{0}$ and $G_{X}^{c}\left(x\right)=G_{X}^{c}\left(x_{0}\right)$.
Then $Q_{x_{0}}\subseteq G_{X}^{c}\left(x_{0}\right)\subseteq\ns{E}\left[x_{0}\right]$,
so $Q_{x_{0}}\subseteq E\left[x_{0}\right]$ by transfer, and therefore
$Q_{x_{0}}$ is bounded. (More rigorously, for each $x\in Q_{x_{0}}$,
apply the transfer principle to ``$x\in\ns{E}\left[x_{0}\right]$''.)
\end{proof}

\subsection{Thin subsets and slowly oscillating maps}

We provide a nonstandard characterisation of thin coarse spaces, and
apply it to proving some standard characterisations of thinness (in
terms of slow oscillation and meshiness).
\begin{defn}[Standard; \citep{Pro03,LP10}]
A subset $A$ of a coarse space $X$ is said to be \emph{thin} (a.k.a.
\emph{pseudodiscrete}) if for every $E\in\mathcal{C}_{X}$ there exists
a bounded subset $B$ of $X$ such that $E\left[x\right]\cap E\left[y\right]=\varnothing$
for all distinct $x,y\in A\setminus B$.
\end{defn}

\begin{thm}
\label{thm:nonst-characterisation-of-thin-subsets}Let $A$ be a subset
of a standard connected coarse space $X$. The following are equivalent:
\begin{enumerate}
\item $A$ is thin;
\item $x\sim_{X}y$ implies $x=y$ for all $x,y\in\ns{A}\cap\INF\left(X\right)$;
\item $G_{X}^{c}\left(x\right)\cap G_{X}^{c}\left(y\right)=\varnothing$
for all distinct $x,y\in\ns{A}\cap\INF\left(X\right)$.
\end{enumerate}
\end{thm}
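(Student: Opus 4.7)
The plan is to establish $(2)\iff(3)$ directly from the symmetry and transitivity of $\sim_X$, and then to handle the content through the two implications between (1) and (2), with the harder direction being a saturation argument relying on connectedness of $X$.

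For $(2)\iff(3)$: if $x\sim_X y$ with $x\neq y$, then $y\in G_X^c(x)\cap G_X^c(y)$, so (3) fails; conversely, any $z\in G_X^c(x)\cap G_X^c(y)$ yields $x\sim_X z\sim_X y$ by transitivity, so $x\sim_X y$, meaning (2) fails when $x\neq y$. Both directions are immediate.

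For $(1)\Rightarrow(2)$: take $x,y\in\ns{A}\cap\INF(X)$ with $x\sim_X y$. Choose $E_0\in\mathcal{C}_X$ with $(x,y)\in\ns{E_0}$ and enlarge to $E=E_0\cup E_0^{-1}\cup\Delta_X\in\mathcal{C}_X$ (symmetric, containing the diagonal). Thinness supplies a bounded $B\subseteq X$ with $E[x']\cap E[y']=\varnothing$ for every pair of distinct $x',y'\in A\setminus B$. Since $\ns{B}\subseteq\FIN(X)$, both $x$ and $y$ lie in $\ns{A}\setminus\ns{B}=\ns{(A\setminus B)}$. If $x\neq y$, transferring the thinness condition gives $\ns{E}[x]\cap\ns{E}[y]=\varnothing$; however $y\in\ns{E}[x]$ (from $(x,y)\in\ns{E}$) and $y\in\ns{E}[y]$ (from the diagonal), a contradiction. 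Hence $x=y$.

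For $(2)\Rightarrow(1)$: argue by contraposition. Assume $A$ is not thin, so some $E\in\mathcal{C}_X$ admits, for every bounded $B$, distinct points $x,y\in A\setminus B$ with $E[x]\cap E[y]\neq\varnothing$; equivalently $(x,y)\in F:=E\circ E^{-1}\in\mathcal{C}_X$. For each standard $B\in\mathcal{B}_X$, the internal set
\[
S_B=\set{(x,y)\in\left(\ns{A}\setminus\ns{B}\right)^2|x\neq y\text{ and }(x,y)\in\ns{F}}
\]
is non-empty by transfer. Connectedness of $X$ implies finite unions of bounded sets are bounded (just as used in the proof that $X$ is connected iff $SX$ is), so $\{S_B\}_{B\in\mathcal{B}_X}$ has the finite intersection property via $S_{B_1\cup\cdots\cup B_n}\subseteq\bigcap_i S_{B_i}$. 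Saturation then yields $(x,y)\in\bigcap_B S_B$, that is, distinct $x,y\in\ns{A}\cap\INF(X)$ with $x\sim_X y$, contradicting (2). The main obstacle is precisely this finite intersection step: without connectedness the family $\{S_B\}$ need not be closed under the required unions, and the saturation argument collapses.
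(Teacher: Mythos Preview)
Your proof is correct and follows essentially the same route as the paper. The only packaging difference is in $(2)\Rightarrow(1)$: the paper invokes its General Approximation Lemma to pick a single illimited $B\in\ns{\mathcal{B}_X}$ containing $\FIN(X)$ and then transfers once, whereas you run the equivalent saturation argument directly via the finite intersection property on the family $\{S_B\}_{B\in\mathcal{B}_X}$; both hinge on connectedness in exactly the same way (to make $\mathcal{B}_X$ directed under $\subseteq$).
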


\begin{proof}
Suppose $A$ is thin. Let $x,y\in\ns{A}\cap\INF\left(X\right)$ with
$x\sim_{X}y$. Choose an $E\in\mathcal{C}_{X}$ so that $\left(x,y\right)\in\ns{E}$
and $\left(y,y\right)\in\ns{E}$. Since $A$ is supposed to be thin,
we can find a bounded subset $B$ of $X$ such that $E\left[u\right]\cap E\left[v\right]=\varnothing$
holds for all distinct $u,v\in A\setminus B$. However, $x,y\in\ns{A}\setminus\FIN\left(X\right)\subseteq\ns{A}\setminus\ns{B}$
and $\ns{E}\left[x\right]\cap\ns{E}\left[y\right]\supseteq\set{y}\neq\varnothing$.
By transfer, $x$ and $y$ cannot be distinct, i.e., $x=y$.

Conversely, suppose $A$ is not thin, i.e., there is an $E\in\mathcal{C}_{X}$
such that for any bounded subset $B$ of $X$, $E\left[x\right]\cap E\left[y\right]\neq\varnothing$
holds for some distinct $x,y\in A\setminus B$. By \prettyref{lem:General-Approximation-Lemma},
we can choose a $B\in\ns{\mathcal{B}_{X}}$ so that $\FIN\left(X\right)\subseteq B$.
(Here we used the connectedness of $X$.) By transfer, $\ns{E}\left[x\right]\cap\ns{E}\left[y\right]\neq\varnothing$
for some distinct $x,y\in\ns{A}\setminus B\subseteq\ns{A}\cap\INF\left(X\right)$.
Pick a $z\in\ns{E}\left[x\right]\cap\ns{E}\left[y\right]$, then $x\sim_{X}z\sim_{X}y$.
Hence we have that $x\sim_{X}y$ but $x\neq y$.
\end{proof}
\begin{cor}
\label{cor:thin-and-S-boundary}For every standard connected coarse
space $X$, the following are equivalent:
\begin{enumerate}
\item $X$ is thin;
\item $\partial_{S}X$ is (bornologically) discrete;
\item $\INF\left(X\right)$ is (topologically) $G_{X}^{c}$-discrete.
\end{enumerate}
\end{cor}

This is the reason why `thin' is also called `pseudodiscrete'.
\begin{example}
Consider the set $X=\set{n^{2}|n\in\mathbb{N}}$ endowed with the
usual metric $d_{X}\left(n,m\right)=\left|n-m\right|$. For any distinct
$n^{2},m^{2}\in\INF\left(X\right)$, since $\left|n^{2}-m^{2}\right|=\left|n-m\right|\left|n+m\right|\geq\left|n+m\right|=\text{infinite}$,
it follows that $n^{2}\nsim_{X}m^{2}$. Hence $X$ is thin. On the
other hand, the set $Y=X\cup\left(X+1\right)$ together with the usual
metric is not thin: for any $n^{2},n^{2}+1\in\INF\left(Y\right)$,
their coarse galaxies are $G_{Y}^{c}\left(n^{2}\right)=G_{Y}^{c}\left(n^{2}+1\right)=\set{n^{2},n^{2}+1}$.
\end{example}

\begin{defn}[Standard; \citep{PZ07}]
Let $X$ be a set and $I$ an ideal on (the powerset algebra of)
$X$. The \emph{ideal coarse structure} of $X$ with respect to $I$
is the coarse structure $\mathcal{C}_{I}$ on $X$ generated by the
sets of the form $\Delta_{X}\cup\left(A\times A\right)$, where $A\in I$.
We denote the coarse space $\left(X,\mathcal{C}_{I}\right)$ by $X_{I}$.
\end{defn}

\begin{lem}[{\citep[Remark 3.9]{Ima19}}]
\label{lem:proximity-of-ideal-coarse-space}Let $X$ be a standard
set and $I$ an ideal on $X$. For any $x,y\in\ns{X}$, $x\sim_{X_{I}}y$
if and only if $x=y$ or $x,y\in\bigcup_{A\in I}\ns{A}$.
\end{lem}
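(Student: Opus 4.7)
The plan is to first establish a structural description of the ideal coarse structure: every $E\in\mathcal{C}_{I}$ is contained in some $E_{A}:=\Delta_{X}\cup\left(A\times A\right)$ with $A\in I$. This reduces to checking that the family $\set{E_{A}|A\in I}$ is closed (up to taking subsets) under inversions, finite unions, and compositions. Inversions are trivial since $E_{A}^{-1}=E_{A}$. For unions, $E_{A}\cup E_{B}\subseteq E_{A\cup B}$, using $A\cup B\in I$ (ideal property). For compositions, a case analysis on whether each factor lies in the diagonal part or in the square part shows $E_{A}\circ E_{B}\subseteq E_{A\cup B}$ as well (the only non-diagonal contribution $B\times A$ appears when $A\cap B\neq\varnothing$, in which case $B\times A\subseteq\left(A\cup B\right)\times\left(A\cup B\right)$). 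Hence $\mathcal{C}_{I}=\set{F\subseteq X\times X|F\subseteq E_{A}\text{ for some }A\in I}$.

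Given this, I would handle the two directions as follows. For the forward direction, suppose $x\sim_{X_{I}}y$, so $\left(x,y\right)\in\ns{E}$ for some $E\in\mathcal{C}_{I}$. By the structural description there is an $A\in I$ with $E\subseteq E_{A}$, hence by transfer $\left(x,y\right)\in\ns{E_{A}}=\Delta_{\ns{X}}\cup\left(\ns{A}\times\ns{A}\right)$. So either $x=y$, or $x,y\in\ns{A}\subseteq\bigcup_{A\in I}\ns{A}$.

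For the reverse direction, the case $x=y$ is immediate because $\Delta_{X}\in\mathcal{C}_{I}$ gives $\left(x,y\right)\in\ns{\Delta_{X}}=\Delta_{\ns{X}}$. Otherwise pick $A_{1},A_{2}\in I$ with $x\in\ns{A_{1}}$ and $y\in\ns{A_{2}}$, and set $A=A_{1}\cup A_{2}\in I$; then $x,y\in\ns{A}$, so $\left(x,y\right)\in\ns{A}\times\ns{A}\subseteq\ns{E_{A}}$ with $E_{A}\in\mathcal{C}_{I}$, which gives $x\sim_{X_{I}}y$.

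The main obstacle is really just the structural lemma, i.e., verifying that the collection $\set{E_{A}|A\in I}$ is closed under compositions modulo subsets. Once that bookkeeping is done, both directions become one-line applications of transfer together with the ideal closure of $I$ under finite unions; no appeal to saturation or to the Approximation Lemma is needed.
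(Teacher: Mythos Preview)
Your proof is correct and follows essentially the same approach as the paper's. The only difference is that you make explicit the structural lemma that $\set{E_{A}\mid A\in I}$ forms a base for $\mathcal{C}_{I}$ (i.e., is stable under inversion, union, and composition up to subsets), whereas the paper simply asserts in its first line that $x\sim_{X_{I}}y$ implies $\left(x,y\right)\in\ns{\left(\Delta_{X}\cup\left(A\times A\right)\right)}$ for some $A\in I$, taking that base property as understood from \citep[Remark 3.9]{Ima19}; the two directions then proceed identically.
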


\begin{proof}
Suppose $x\sim_{X_{I}}y$. For some $A\in I$, we have $\left(x,y\right)\in\ns{\left(\Delta_{X}\cup\left(A\times A\right)\right)}$.
If $\left(x,y\right)\in\ns{\Delta_{X}}$, then $x=y$. Otherwise,
$\left(x,y\right)\in\ns{\left(A\times A\right)}$, so $x,y\in\ns{A}\subseteq\bigcup_{A\in I}\ns{A}$.

Conversely, suppose $x=y$ or $x,y\in\bigcup_{A\in I}\ns{A}$. If
$x,y\in\bigcup_{A\in I}\ns{A}$, then $x\in\ns{A}$ and $y\in\ns{B}$
for some $A,B\in I$, so $\left(x,y\right)\in\ns{\left(\left(A\cup B\right)\times\left(A\cup B\right)\right)}$.
Hence $\left(x,y\right)\in\ns{\left(\Delta_{X}\cup\left(A\times A\right)\right)}$
for some $A\in I$, and therefore $x\sim_{X_{I}}y$.
\end{proof}
\begin{defn}[Standard; \citep{DPPZ19}]
Let $\left(X,\mathcal{B}_{X}\right)$ be a bornological space. The
bornology $\mathcal{B}_{X}$ is an ideal on $X$. The ideal coarse
structure $\mathcal{C}_{\mathcal{B}_{X}}$ of $X$ with respect to
$\mathcal{B}_{X}$ is called the \emph{satellite coarse structure}
of $X$.
\end{defn}

\begin{lem}[{\citep[Remark 3.9]{Ima19}}]
\label{lem:proximity-of-satellite-coarse-space}Let $X$ be a standard
bornological space. For any $x,y\in\ns{X}$, $x\sim_{X_{\mathcal{B}_{X}}}y$
if and only if $x=y$ or $x,y\in\FIN\left(X\right)$.
\end{lem}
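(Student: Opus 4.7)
The plan is to obtain this statement as an immediate specialisation of the preceding lemma on ideal coarse structures (Lemma on the proximity of $X_I$). Since $\mathcal{B}_X$ is an ideal on $X$ (every prebornology that is a bornology is an ideal: downward closed and closed under binary unions, with $\varnothing$ bounded), the satellite coarse structure $\mathcal{C}_{\mathcal{B}_X}$ is by definition the ideal coarse structure associated to the ideal $\mathcal{B}_X$. Thus the previous lemma applies verbatim and gives
\[
x \sim_{X_{\mathcal{B}_X}} y \iff x = y \text{ or } x, y \in \bigcup_{B \in \mathcal{B}_X} \ns{B}.
\]

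The only remaining step is to identify the right-hand disjunct with ``$x, y \in \FIN(X)$''. But this is just the definition of the finite points of a (pre)bornological space: $\FIN(X) = \bigcup_{x \in X} G_X(x) = \bigcup_{B \in \mathcal{B}_X} \ns{B}$, as recorded in the Preliminaries (item 4 of the Notation and terminology). Substituting, we get exactly the desired equivalence.

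There is essentially no obstacle here; the content of the lemma is purely notational, packaging the previous result in terms of $\FIN(X)$ rather than $\bigcup_{A \in I} \ns{A}$. The only thing worth a sentence of commentary is that the galactic equivalence associated to the satellite structure collapses the entire finite part $\FIN(X)$ into a single equivalence class (modulo the diagonal), which is precisely the content of the statement and matches the intuition that the satellite structure ``identifies'' all bounded behaviour. So the proof amounts to citing Lemma (proximity of ideal coarse space) and unfolding the definition of $\FIN(X)$.
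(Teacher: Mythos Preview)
Your proposal is correct and matches the paper's own proof, which simply notes that this is a special case of the preceding lemma on ideal coarse structures. The identification $\bigcup_{B\in\mathcal{B}_{X}}\ns{B}=\FIN(X)$ you spell out is exactly what is implicit in the paper's one-line proof.
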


\begin{proof}
This is a special case of \prettyref{lem:proximity-of-ideal-coarse-space}.
\end{proof}
Imagine that the finite part $\FIN\left(X\right)$ is a star and that
the infinite points $\in\INF\left(X\right)$ are satellites around
the star (\prettyref{fig:satellite-coarse-space}). Each satellite
is of infinite distance away from the star and the other satellites.
\begin{figure}
\centering
\begin{tikzpicture}
	\fill[pattern=north west lines] (0, 0) circle[radius=0.5];
	\foreach \n in {0, 1, 2, 3} {
		\fill ({1.6 * cos(\n * 360 / 4)}, {1.6 * sin(\n * 360 / 4)}) circle[radius=1pt];
	}
	\foreach \n in {0, 1, 2, 3, 4, 5, 6, 7, 8} {
		\fill ({2.5 * cos(\n * 360 / 9)}, {2.5 * sin(\n * 360 / 9)}) circle[radius=1pt];
	}
	\draw (2.5, 0) circle (0.45);
	\draw ({2.5 * cos(1 * 360 / 9)}, {2.5 * sin(1 * 360 / 9)}) circle (0.45);
\end{tikzpicture}\caption{\label{fig:satellite-coarse-space}Intuitive picture of the satellite
coarse space}
\end{figure}
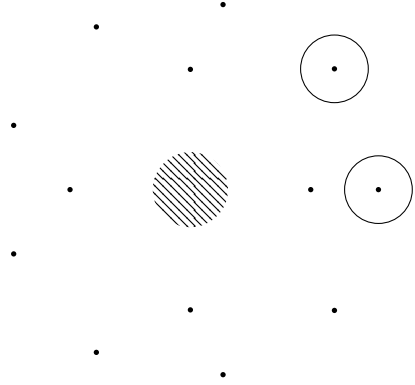

\begin{thm}[{Standard; \citep[Theorem 1]{Pro06}}]
\label{thm:thin-and-satellite}Let $X$ be a connected coarse space.
The following are equivalent:
\begin{enumerate}
\item $X$ is thin;
\item $X=X_{\mathcal{B}_{X}}$.
\end{enumerate}
\end{thm}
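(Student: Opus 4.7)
The plan is to exploit the one-to-one correspondence between coarse structures on a fixed set and their galactic (finite-closeness) equivalence relations, recalled in Example~\ref{exa:two-coarse-structures-of-R} via \citep[Theorem 3.6]{Ima19}. Under this correspondence, $X = X_{\mathcal{B}_{X}}$ is equivalent to the equality ${\sim_{X}} = {\sim_{X_{\mathcal{B}_{X}}}}$ of subsets of $\ns{X} \times \ns{X}$. Two clean descriptions of these relations are already in hand: by \prettyref{thm:nonst-characterisation-of-thin-subsets} applied to $A = X$, thinness of $X$ means that no two \emph{distinct} points of $\INF\left(X\right)$ are $\sim_{X}$-close; and by \prettyref{lem:proximity-of-satellite-coarse-space}, $x \sim_{X_{\mathcal{B}_{X}}} y$ iff $x = y$ or $x, y \in \FIN\left(X\right)$.

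For the backward direction, assume $X = X_{\mathcal{B}_{X}}$. Given distinct $x, y \in \INF\left(X\right)$ with $x \sim_{X} y$, one would have $x \sim_{X_{\mathcal{B}_{X}}} y$, which by the satellite description forces $x, y \in \FIN\left(X\right)$, a contradiction. Applying \prettyref{thm:nonst-characterisation-of-thin-subsets} then yields thinness of $X$.

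For the forward direction, assume $X$ is thin, and verify both inclusions between the two relations. The inclusion ${\sim_{X_{\mathcal{B}_{X}}}} \subseteq {\sim_{X}}$ splits into the trivial diagonal case and the case $x, y \in \FIN\left(X\right)$; in the latter, connectedness of $X$ together with \citep[Proposition 2.11]{Ima19} gives $G_{X}^{c}\left(x_{0}\right) = \FIN\left(X\right)$ for any standard $x_{0} \in X$, so both $x$ and $y$ are $\sim_{X}$-close to $x_{0}$, hence to each other. For the opposite inclusion ${\sim_{X}} \subseteq {\sim_{X_{\mathcal{B}_{X}}}}$, take $x \sim_{X} y$ with $x \neq y$: thinness excludes $x, y \in \INF\left(X\right)$, while the mixed case $x \in \FIN\left(X\right)$, $y \in \INF\left(X\right)$ is excluded by writing $x \in G_{X}^{c}\left(x_{0}\right)$ for some standard $x_{0}$ and using transitivity of $\sim_{X}$ to deduce $y \in G_{X}^{c}\left(x_{0}\right) \subseteq \FIN\left(X\right)$. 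Hence both $x, y \in \FIN\left(X\right)$, giving $x \sim_{X_{\mathcal{B}_{X}}} y$.

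The delicate step is the forward direction, where $\FIN\left(X\right)$ must be exploited simultaneously as (i) $\sim_{X}$-invariant, so that mixed finite/infinite closeness is impossible, and (ii) a single $\sim_{X}$-class, so that the opposite inclusion collapses to the satellite's unique non-trivial class. The first follows from transitivity of $\sim_{X}$ combined with $\FIN\left(X\right) = G_{X}^{c}\left(X\right)$, and the second requires the connectedness hypothesis on $X$; together they ensure that the thinness condition captures exactly the satellite coarse structure.
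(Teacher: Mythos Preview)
Your proof is correct and follows essentially the same approach as the paper's main proof: both reduce the equality $X=X_{\mathcal{B}_{X}}$ to the equality ${\sim_{X}}={\sim_{X_{\mathcal{B}_{X}}}}$ via the correspondence between coarse structures and finite-closeness relations, and then combine \prettyref{thm:nonst-characterisation-of-thin-subsets} with \prettyref{lem:proximity-of-satellite-coarse-space}. The paper compresses your case analysis into a single displayed biconditional (citing \citep[Proposition 3.4]{Ima19} rather than Theorem~3.6), leaving implicit the two facts you spell out carefully---that $\FIN\left(X\right)$ is $\sim_{X}$-invariant and, by connectedness, forms a single $\sim_{X}$-class; the paper also offers a second proof via S-coronae (\prettyref{cor:thin-and-S-corona} and \prettyref{cor:S-corona-determines-coarse-structure}) which you do not use.
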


\begin{proof}
By \prettyref{thm:nonst-characterisation-of-thin-subsets} and \prettyref{lem:proximity-of-satellite-coarse-space},
$X$ is thin if and only if for any $x,y\in\ns{X}$ we have that
\begin{align*}
x\sim_{X}y & \iff x=y\text{ or }x,y\in\FIN\left(X\right)\\
 & \iff x\sim_{X_{\mathcal{B}_{X}}}y.
\end{align*}
By \citep[Proposition 3.4]{Ima19}, it is also equivalent to ``$X=X_{\mathcal{B}_{X}}$''.
\end{proof}
\begin{proof}[Alternative proof]
It can also be proved by looking at S-boundaries. The S-boundary
$\partial_{S}X_{\mathcal{B}_{X}}$ is a discrete coarse space whose
underlying set is the same as that of $\partial_{S}X$ by \prettyref{lem:proximity-of-satellite-coarse-space}.
We then obtain the following equivalences: $X$ is thin $\iff$ the
S-boundary $\partial_{S}X$ is a discrete coarse space (by \prettyref{cor:thin-and-S-boundary})
$\iff$ $\partial_{S}X=\partial_{S}X_{\mathcal{B}_{X}}$ $\iff$ $X=X_{\mathcal{B}_{X}}$
(by \prettyref{cor:S-corona-determines-coarse-structure}).
\end{proof}
Suppose $X$ is non-thin. There are infinite points whose galaxies
are of cardinality $\geq2$ by \prettyref{thm:nonst-characterisation-of-thin-subsets}.
As we will see below, the galaxy of \emph{some} infinite point can
be divided into two (non-empty) parts by a \emph{standard} set. This
fact is intuitively understandable (see \prettyref{fig:non-satellite-coarse-space}),
but the proof is not obvious and depends on the axiom of choice.
\begin{figure}
\centering
\begin{tikzpicture}
	\fill[pattern=north west lines] (0, 0) circle[radius=0.5];
	\foreach \n in {0, 1, 2, 3, 4, 5, 6, 7, 8} {
		\fill ({2.5 * cos(\n * 360 / 9)}, {2.5 * sin(\n * 360 / 9)}) circle[radius=1pt];
	}
	\foreach \n in {0, 1, 2, 4, 8} {
		\fill ({2.5 * cos(\n * 360 / 9) + 0.5}, {2.5 * sin(\n * 360 / 9) + 0.5}) circle[radius=1pt];
	}
	\draw (2.75, 0.25) circle [x radius=1, y radius=0.5, rotate=45];
	\draw[dashed] (2, 1) -- (3.5, -0.5) node[above right] {$\ns{A}$};
	\
\end{tikzpicture}\caption{\label{fig:non-satellite-coarse-space}Non-satellite coarse space}
\end{figure}
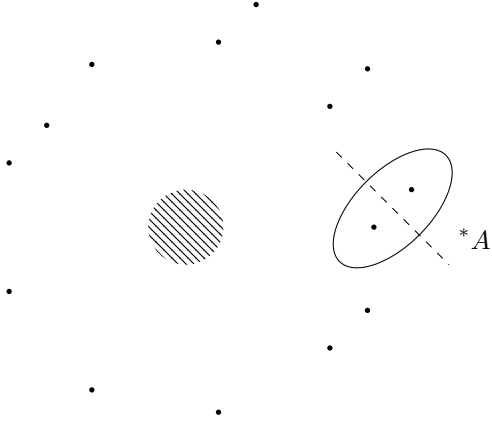

\begin{lem}
\label{lem:dividing-galaxy}Let $X$ be a standard coarse space. If
$\left|G_{X}^{c}\left(x_{0}\right)\right|\geq2$ for some $x_{0}\in\INF\left(X\right)$,
then there exists a subset $A$ of $X$ such that $G_{X}^{c}\left(x\right)\cap\ns{A}\neq\varnothing$
and $G_{X}^{c}\left(x\right)\setminus\ns{A}\neq\varnothing$ for some
$x\in\INF\left(X\right)$.
\end{lem}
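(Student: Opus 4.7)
The plan is to build $A$ from a maximal matching in the graph $(X, E\setminus\Delta_X)$ via Zorn's lemma, and then use the transferred maximality to force a division of $G_X^c(x_0)$ by $\ns{A}$. The point is that one cannot hope to separate $x_0$ from $y_0$ by a standard set directly (their ultrafilter types $F_{x_0}, F_{y_0}$ may well coincide), so we settle for separating \emph{some} pair inside the galaxy $G_X^c(x_0)$.

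\textbf{Setup.} Since $|G_X^c(x_0)|\ge 2$, choose $y_0\in G_X^c(x_0)$ with $y_0\ne x_0$. Then $x_0\sim_X y_0$, so pick $E\in\mathcal{C}_X$ with $(x_0,y_0)\in\ns{E}$ and set $D=E\setminus\Delta_X$. As $x_0\ne y_0$, we have $(x_0,y_0)\in\ns{D}$.

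\textbf{Construction of $A$.} Consider the family of matchings in $D$, i.e., subsets $M\subseteq D$ such that for distinct $(a,b),(a',b')\in M$ one has $\set{a,b}\cap\set{a',b'}=\varnothing$. This family contains $\varnothing$ and is closed under unions of chains, so Zorn's lemma provides a maximal matching $\Pi\subseteq D$. Define
\[
A=\set{a\in X|\exists b\in X,\ (a,b)\in\Pi}.
\]
The matching property forces $A\cap\set{b\in X|\exists a,\ (a,b)\in\Pi}=\varnothing$. By transfer, $\ns{\Pi}$ is an internal maximal matching of $\ns{D}$, $\ns{A}$ is the first projection of $\ns{\Pi}$, and the second projection of $\ns{\Pi}$ is disjoint from $\ns{A}$.

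\textbf{Dichotomy.} Apply the transferred maximality to $(x_0,y_0)\in\ns{D}$: either $(x_0,y_0)\in\ns{\Pi}$, giving directly $x_0\in\ns{A}$ and $y_0\notin\ns{A}$ (with both in $G_X^c(x_0)$); or there exists $(a',b')\in\ns{\Pi}$ with $\set{a',b'}\cap\set{x_0,y_0}\ne\varnothing$. In the latter case, $(a',b')\in\ns{E}$ yields $a'\sim_X b'$, and combined with $x_0\sim_X y_0$ one checks in each of the four sub-cases (according to which coordinate is shared) that both $a'$ and $b'$ lie in $G_X^c(x_0)=G_X^c(y_0)$, while the matching property supplies $a'\in\ns{A}$ and $b'\notin\ns{A}$. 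Thus in every case, taking $x=x_0$, the galaxy $G_X^c(x_0)$ meets both $\ns{A}$ and $\ns{X}\setminus\ns{A}$.

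\textbf{Main obstacle.} The delicate step is arranging that $\ns{A}$ reaches into the infinite part at all; a naive choice such as a finite standard set has $\ns{A}\subseteq X$, which cannot meet $G_X^c(x_0)\subseteq\INF(X)$. The maximal matching guarantees, via transfer, that $\ns{\Pi}$ covers $(x_0,y_0)$ either directly or through an adjacent internal pair, and this is exactly what forces the required split somewhere inside $G_X^c(x_0)$.
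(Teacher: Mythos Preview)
Your proof is correct. The transferred maximality applied to $(x_0,y_0)\in\ns{D}$ immediately yields an edge $(a',b')\in\ns{\Pi}$ meeting $\{x_0,y_0\}$; since $\ns{\Pi}\subseteq\ns{E}$ and the first and second projections of $\ns{\Pi}$ are disjoint (this follows from the matching condition together with $D\cap\Delta_X=\varnothing$, as you note), one gets $a'\in G_X^c(x_0)\cap\ns{A}$ and $b'\in G_X^c(x_0)\setminus\ns{A}$.

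The paper's argument is different. It fixes $E\in\mathcal{C}_X$ with $\Delta_X\subseteq E$ and $|\ns{E}[x_0]|\ge 2$, and uses Zorn's lemma to produce a maximal $E$-\emph{packing} $Y\subseteq X$ (a maximal set with $\{E[y]:y\in Y\}$ pairwise disjoint) rather than a maximal matching. It then splits into two cases according to whether $\ns{Y_0}$ meets $\INF(X)$, where $Y_0=\{y\in Y:|E[y]|\ge 2\}$, and constructs a different $A$ in each case (a selector through the $E[y]$ in Case~I, the union $\bigcup_{y\in Y}E[y]$ in Case~II). In Case~I the witnessing infinite point $x$ may differ from $x_0$. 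Your matching approach is more economical: a single uniform construction of $A$, no case distinction, and the original $x_0$ always serves as the witness $x$. Both proofs rest on the axiom of choice, which the paper notes is unavoidable here.
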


\begin{proof}
Fix an $E\in\mathcal{C}_{X}$ with $\left|\ns{E}\left[x_{0}\right]\right|\geq2$
and $\Delta_{X}\subseteq E$. Using Zorn's lemma, take a maximal subset
$Y$ of $X$ such that $\set{E\left[y\right]|y\in Y}$ is disjoint.
Notice that for each $x\in X$ there exists a $y\in Y$ such that
$E\left[x\right]\cap E\left[y\right]\neq\varnothing$ by the maximality.
Set $Y_{0}=\set{y\in Y|\left|E\left[y\right]\right|\geq2}$.

Case I: $\ns{Y_{0}}\cap\INF\left(X\right)\neq\varnothing$. Pick an
$x\in\ns{Y_{0}}\cap\INF\left(X\right)$. By the axiom of choice, we
may choose a subset $A$ of $X$ such that $\left|E\left[y\right]\cap A\right|=1$
for all $y\in Y_{0}$. Then $\left|\ns{E}\left[x\right]\cap\ns{A}\right|=1$
and $\left|\ns{E}\left[x\right]\setminus\ns{A}\right|\geq1$ by transfer.
Hence $G_{X}^{c}\left(x\right)\cap\ns{A}\neq\varnothing$ and $G_{X}^{c}\left(x\right)\setminus\ns{A}\neq\varnothing$.

Case II: $\ns{Y_{0}}\cap\INF\left(X\right)=\varnothing$. Then $x_{0}\notin\ns{Y}$.
(Otherwise, we have that $x_{0}\in\ns{Y_{0}}$ by transfer, a contradiction.)
Define $A=\bigcup_{y\in\ns{Y}}\ns{E}\left[y\right]$. By transfer,
$\ns{E}\left[x_{0}\right]\cap\ns{E}\left[y\right]\neq\varnothing$
for some $y\in\ns{Y}$, i.e., $\ns{E}\left[x_{0}\right]\cap\ns{A}\neq\varnothing$,
so $G_{X}^{c}\left(x_{0}\right)\cap\ns{A}\neq\varnothing$. Suppose,
on the contrary, that $\ns{E}\left[x_{0}\right]\setminus\ns{A}=\varnothing$.
Choose a $y\in\ns{Y}$ so that $x_{0}\in\ns{E}\left[y\right]$. Since
$y\sim_{X}x_{0}\in\INF\left(X\right)$, it follows that $y\notin\ns{Y_{0}}$,
so $\ns{E}\left[y\right]=\set{y}=\set{x_{0}}$ by transfer. This contradicts
with $x_{0}\notin\ns{Y}$. Hence $\ns{E}\left[x_{0}\right]\setminus\ns{A}\neq\varnothing$,
and therefore $G_{X}^{c}\left(x_{0}\right)\setminus\ns{A}\neq\varnothing$.
\end{proof}
Using this lemma, we can easily prove the following two (standard)
characterisations of thinness.
\begin{thm}[{Standard; \citep[Theorem 4]{Pro06}}]
\label{thm:thin-and-SO}Let $X$ be a connected coarse space. The
following are equivalent:
\begin{enumerate}
\item \label{enu:thin-and-SO-function-cond-1}$X$ is thin;
\item \label{enu:thin-and-SO-function-cond-2}every map $f\colon X\to Y$,
where $Y$ is a uniform space, is slowly oscillating;
\item \label{enu:thin-and-SO-function-cond-3}every function $f\colon X\to\set{0,1}$
is slowly oscillating, where $\set{0,1}$ is thought of as a discrete
uniform space.
\end{enumerate}
\end{thm}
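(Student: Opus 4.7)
My plan is to prove the implications \eqref{enu:thin-and-SO-function-cond-1}$\Rightarrow$\eqref{enu:thin-and-SO-function-cond-2}$\Rightarrow$\eqref{enu:thin-and-SO-function-cond-3}$\Rightarrow$\eqref{enu:thin-and-SO-function-cond-1}, with the main substance concentrated on the third implication. Since \eqref{enu:thin-and-SO-function-cond-2}$\Rightarrow$\eqref{enu:thin-and-SO-function-cond-3} is trivial (a two-point discrete space is a uniform space), everything reduces to the first and third implications, and both will be handled via a nonstandard characterisation of slow oscillation analogous to the other characterisations in this section.

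First I would record the following standard-folklore fact, which is parallel to \citep[Theorem 2.24]{Ima19} and \prettyref{thm:nonst-charact-eff-proper}: a map $f\colon X\to Y$ from a standard connected coarse space to a standard uniform space is slowly oscillating if and only if $\ns{f}\left(x\right)\approx_{Y}\ns{f}\left(y\right)$ whenever $x,y\in\INF\left(X\right)$ satisfy $x\sim_{X}y$. The ``only if'' direction is immediate from transfer, and the ``if'' direction is obtained by the familiar overspill/underspill argument using \prettyref{lem:General-Approximation-Lemma} (here the connectedness of $X$ guarantees the existence of an internal bounded set containing all of $\FIN\left(X\right)$).

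Granted this characterisation, the implication \eqref{enu:thin-and-SO-function-cond-1}$\Rightarrow$\eqref{enu:thin-and-SO-function-cond-2} is immediate: if $X$ is thin, then \prettyref{thm:nonst-characterisation-of-thin-subsets} forces $x=y$ whenever $x,y\in\INF\left(X\right)$ and $x\sim_{X}y$, so $\ns{f}\left(x\right)=\ns{f}\left(y\right)$, which trivially implies $\ns{f}\left(x\right)\approx_{Y}\ns{f}\left(y\right)$ for every target uniform space $Y$ and every map $f$.

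For the nontrivial direction \eqref{enu:thin-and-SO-function-cond-3}$\Rightarrow$\eqref{enu:thin-and-SO-function-cond-1}, I argue contrapositively. Suppose $X$ is not thin. By \prettyref{thm:nonst-characterisation-of-thin-subsets} there exist distinct $u,v\in\INF\left(X\right)$ with $u\sim_{X}v$, so $\left|G_{X}^{c}\left(u\right)\right|\geq2$. I then invoke \prettyref{lem:dividing-galaxy} to obtain a (standard) subset $A\subseteq X$ and a point $x\in\INF\left(X\right)$ such that both $G_{X}^{c}\left(x\right)\cap\ns{A}$ and $G_{X}^{c}\left(x\right)\setminus\ns{A}$ are non-empty. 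Define $f=\chi_{A}\colon X\to\set{0,1}$, so that $\ns{f}=\chi_{\ns{A}}$. Picking $x'\in G_{X}^{c}\left(x\right)\cap\ns{A}$ and $x''\in G_{X}^{c}\left(x\right)\setminus\ns{A}$, the transitivity of $\sim_{X}$ gives $x'\sim_{X}x''$, while $x',x''\in\INF\left(X\right)$ (any point $\sim_{X}$-close to an infinite point is itself infinite) and $\ns{f}\left(x'\right)=1\neq0=\ns{f}\left(x''\right)$. In the discrete uniformity on $\set{0,1}$, infinite closeness is equality, so the nonstandard criterion fails and $f$ is not slowly oscillating. The main technical input is thus \prettyref{lem:dividing-galaxy}, which has already been proved; the only other delicate point is justifying the nonstandard characterisation of slow oscillation, and this is a routine approximation-lemma argument.
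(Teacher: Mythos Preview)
Your proposal is correct and follows essentially the same route as the paper: the cycle \eqref{enu:thin-and-SO-function-cond-1}$\Rightarrow$\eqref{enu:thin-and-SO-function-cond-2}$\Rightarrow$\eqref{enu:thin-and-SO-function-cond-3}$\Rightarrow$\eqref{enu:thin-and-SO-function-cond-1}, with \prettyref{thm:nonst-characterisation-of-thin-subsets} handling the first implication and \prettyref{lem:dividing-galaxy} plus a characteristic function handling the last. The only difference is that the nonstandard characterisation of slow oscillation you propose to record and justify is already available as \citep[Theorem 3.30]{Ima19}, so no additional approximation-lemma argument is needed.
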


\begin{proof}
\eqref{enu:thin-and-SO-function-cond-1}$\Rightarrow$\eqref{enu:thin-and-SO-function-cond-2}:
According to the nonstandard characterisation of slow oscillation
\citep[Theorem 3.30]{Ima19}, it suffices to show that $x\sim_{X}y$
implies $\ns{f}\left(x\right)\approx_{Y}\ns{f}\left(y\right)$ for
all $x,y\in\INF\left(X\right)$. Let $x,y\in\INF\left(X\right)$ and
suppose $x\sim_{X}y$. By \prettyref{thm:nonst-characterisation-of-thin-subsets}
$x=y$, so $\ns{f}\left(x\right)\approx_{Y}\ns{f}\left(y\right)$.

\eqref{enu:thin-and-SO-function-cond-2}$\Rightarrow$\eqref{enu:thin-and-SO-function-cond-3}:
Trivial.

\eqref{enu:thin-and-SO-function-cond-3}$\Rightarrow$\eqref{enu:thin-and-SO-function-cond-1}:
Suppose $X$ is not thin. By \prettyref{thm:nonst-characterisation-of-thin-subsets},
$\left|G_{X}^{c}\left(x_{0}\right)\right|\geq2$ for some $x_{0}\in\INF\left(X\right)$.
By \prettyref{lem:dividing-galaxy}, there exists a subset $A$ of
$X$ such that both $G_{X}^{c}\left(x\right)\cap\ns{A}$ and $G_{X}^{c}\left(x\right)\setminus\ns{A}$
are non-empty for some $x\in\INF\left(X\right)$. Define a function
$f\colon X\to\set{0,1}$ by $f\restriction A\equiv0$ and $f\restriction\left(X\setminus A\right)\equiv1$.
Pick $\xi\in G_{X}^{c}\left(x\right)\cap\ns{A}$ and $\eta\in G_{X}^{c}\left(x\right)\setminus\ns{A}$.
Then $\xi,\eta\in\INF\left(X\right)$, $\xi\sim_{X}\eta$, but $\ns{f}\left(\xi\right)=0\not\approx_{\set{0,1}}1=\ns{f}\left(\eta\right)$
by transfer. By \citep[Theorem 3.30]{Ima19}, $f$ is not slowly oscillating.
\end{proof}
\begin{thm}[{Standard; \citep[Theorem 2.2]{DPPZ19}}]
\label{thm:thin-and-meshy}Let $X$ be a connected coarse space.
The following are equivalent:
\begin{enumerate}
\item $X$ is thin;
\item $\mathcal{M}\left(X\right)\subseteq\mathcal{B}_{X}$.
\end{enumerate}
\end{thm}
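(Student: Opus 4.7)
The plan is to prove both directions via the nonstandard characterisations of meshiness (\prettyref{thm:nonst-characterisation-of-size-II}) and of thinness (\prettyref{thm:nonst-characterisation-of-thin-subsets}), together with the maximal-packing construction underlying \prettyref{lem:dividing-galaxy}. For $(1)\Rightarrow(2)$, I would let $A\in\mathcal{M}\left(X\right)$, so $C_{X}^{c}\left(\ns{A}\right)=\varnothing$, and argue by contradiction: if $A$ were unbounded, connectedness would yield a point $x\in\ns{A}\cap\INF\left(X\right)$. A short transitivity argument on $\sim_{X}$ gives $G_{X}^{c}\left(x\right)\subseteq\INF\left(X\right)$, whence thinness of $X$ forces $G_{X}^{c}\left(x\right)=\set{x}\subseteq\ns{A}$, contradicting $x\notin C_{X}^{c}\left(\ns{A}\right)=\varnothing$. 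Hence $\ns{A}\subseteq\FIN\left(X\right)$; fixing any $x_{0}\in X$ and using $\FIN\left(X\right)=G_{X}^{c}\left(x_{0}\right)=\bigcup_{E\in\mathcal{C}_{X}}\ns{E}\left[x_{0}\right]$ from \citep[Proposition 2.11]{Ima19}, saturation on this directed family produces $E\in\mathcal{C}_{X}$ with $\ns{A}\subseteq\ns{E}\left[x_{0}\right]$, so $A\subseteq E\left[x_{0}\right]\in\mathcal{B}_{X}$ by transfer.

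For $(2)\Rightarrow(1)$ I would argue contrapositively, constructing an $A\in\mathcal{M}\left(X\right)\setminus\mathcal{B}_{X}$ whenever $X$ fails to be thin. By \prettyref{thm:nonst-characterisation-of-thin-subsets} fix distinct $x_{0},y_{0}\in\INF\left(X\right)$ with $x_{0}\sim_{X}y_{0}$ and a symmetric reflexive $E\in\mathcal{C}_{X}$ with $\left(x_{0},y_{0}\right)\in\ns{E}$, so $\left|\ns{E}\left[x_{0}\right]\right|\geq2$. Imitating \prettyref{lem:dividing-galaxy}, use Zorn's lemma to select a maximal $Y\subseteq X$ for which $\set{E\left[y\right]|y\in Y}$ is pairwise disjoint; set $Y_{0}=\set{y\in Y|\left|E\left[y\right]\right|\geq2}$ and $A=X\setminus Y$. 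Maximality provides, for each $z\in X$, a $y\in Y$ with $E\left[z\right]\cap E\left[y\right]\neq\varnothing$; by symmetry of $E$, $y\in E^{2}\left[z\right]\cap Y$, so $E^{2}\left[z\right]\not\subseteq A$, giving $A\in\mathcal{M}\left(X\right)$. For $A\notin\mathcal{B}_{X}$ I would split cases: if $x_{0}\notin\ns{Y}$ then $x_{0}\in\ns{A}\cap\INF\left(X\right)$ directly; otherwise $x_{0}\in\ns{Y_{0}}$ (transfer forces $\left|\ns{E}\left[y\right]\right|=1$ on $\ns{\left(Y\setminus Y_{0}\right)}$, which would contradict $\left|\ns{E}\left[x_{0}\right]\right|\geq2$), and then any $z\in\ns{E}\left[x_{0}\right]\setminus\set{x_{0}}$ must lie in $\INF\left(X\right)$ (by the same transitivity argument as above) and outside $\ns{Y}$ (else the transferred pairwise disjointness of $\set{\ns{E}\left[y\right]|y\in\ns{Y}}$ forces $z=x_{0}$), so again $\ns{A}\cap\INF\left(X\right)\neq\varnothing$.

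The hard step will be this last unboundedness verification when the witnessing infinite point $x_{0}$ itself lies in $\ns{Y}$: there one must combine the multiplicity $\left|\ns{E}\left[x_{0}\right]\right|\geq2$ with the transferred pairwise disjointness of the packing to exhibit a \emph{second} infinite galactic partner of $x_{0}$ that escapes $\ns{Y}$. The rest of the argument is a routine application of the nonstandard dictionary for $G_{X}^{c}$, $C_{X}^{c}$, and the induced prebornology.
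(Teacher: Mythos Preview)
Your $(1)\Rightarrow(2)$ is essentially the paper's argument: the explicit saturation step you spell out at the end is exactly the content of \citep[Proposition 2.6]{Ima19}, which the paper simply cites.

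For $(2)\Rightarrow(1)$ you take a genuinely different and more direct route. The paper invokes \prettyref{lem:dividing-galaxy} as a black box to obtain an $A$ splitting \emph{some} infinite galaxy, passes to $X\setminus A$ if necessary to make it unbounded, deletes a standard point to get $A'$, and then argues $C_{X}^{c}(\ns{A'})=\varnothing$; the infinite case of that last step rests on the assertion $G_{X}^{c}(x)\nsubseteq\ns{A}$ for \emph{every} $x\in\INF(X)$, which is stronger than what the lemma actually furnishes. You instead set $A=X\setminus Y$ for a maximal $E$-packing $Y$ and verify meshiness by the elementary standard observation $\intr_{X,E^{2}}A=\varnothing$ (every $E^{2}$-ball meets $Y$), then handle unboundedness by the case split on $x_{0}\in\ns{Y}$. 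This buys you a self-contained argument that never needs the galaxy-dividing statement and sidesteps the ``for all infinite $x$'' issue entirely. One small redundancy: the detour through $Y_{0}$ is unnecessary, since $\left|\ns{E}[x_{0}]\right|\geq 2$ is already immediate from your choice of $E$ and the distinct witnesses $x_{0},y_{0}$; you can pass straight to picking $z\in\ns{E}[x_{0}]\setminus\{x_{0}\}$ and run the transferred disjointness argument.
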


\begin{proof}
Suppose $X$ is thin. Let $A$ be an unbounded subset of $X$. Fix
an $x_{0}\in\INF\left(X\right)\cap\ns{A}$ by \citep[Proposition 2.6]{Ima19}.
By \prettyref{thm:nonst-characterisation-of-thin-subsets}, $G_{X}^{c}\left(x_{0}\right)=\set{x_{0}}\subseteq\ns{A}$,
so $x_{0}\in C_{X}^{c}\left(\ns{A}\right)\neq\varnothing$. By \prettyref{thm:nonst-characterisation-of-size-I},
$A$ is not meshy. Hence $\mathcal{M}\left(X\right)\subseteq\mathcal{B}_{X}$.

Conversely, suppose $X$ is not thin. By \prettyref{thm:nonst-characterisation-of-thin-subsets}
and \prettyref{lem:dividing-galaxy}, there exists an $A\subseteq X$
such that $G_{X}^{c}\left(x\right)\cap\ns{A}\neq\varnothing$ and
$G_{X}^{c}\left(x\right)\setminus\ns{A}\neq\varnothing$ for some
$x\in\INF\left(X\right)$. By \citep[Proposition 2.6]{Ima19}, $X=A\cup\left(X\setminus A\right)$
is unbounded (and connected), so either $A$ or $X\setminus A$ is
unbounded. We may assume without loss of generality that $A$ is unbounded.
Fix an $x_{0}\in X$ and define $A'=A\setminus\set{x_{0}}$. Clearly
$A'$ is unbounded too. Let $x\in\ns{X}$. If $x\in\FIN\left(X\right)$,
then $x\sim_{X}x_{0}\notin\ns{A'}$ by \citep[Corollary 3.13]{Ima19},
so $G_{X}^{c}\left(x\right)\nsubseteq\ns{A'}$. If $x\in\INF\left(X\right)$,
then $G_{X}^{c}\left(x\right)\nsubseteq\ns{A}$, so $G_{X}^{c}\left(x\right)\nsubseteq\ns{A'}$.
Hence $C_{X}^{c}\left(\ns{A'}\right)=\varnothing$. By \prettyref{thm:nonst-characterisation-of-size-I},
$A'\in\mathcal{M}\left(X\right)\setminus\mathcal{B}_{X}$.
\end{proof}
Our nonstandard proofs are much simpler (and also intuitive) than
the original standard proofs in \citep{Pro06,DPPZ19}.
\begin{example}
Consider the thin coarse space $X=\set{n^{2}|n\in\mathbb{N}}$. Let
$A$ be any unbounded subset of $X$. Pick an infinite point $n^{2}\in\ns{A}\cap\INF\left(X\right)$,
then $G_{X}^{c}\left(n^{2}\right)=\set{n^{2}}\subseteq\ns{A}$, so
$n^{2}\in C_{X}^{c}\left(\ns{A}\right)\neq\varnothing$. Hence $A$
is not meshy. Next, consider the non-thin coarse space $Y=X\cup\left(X+1\right)$.
Define a function $f\colon Y\to\set{0,1}$ by $f\restriction X\equiv0$
and $f\restriction\left(Y\setminus X\right)\equiv1$, then $\ns{f}\left(n^{2}\right)=0$
and $\ns{f}\left(n^{2}+1\right)=1$ for any $n^{2},n^{2}+1\in\INF\left(Y\right)$,
so $f$ is not slowly oscillating. In this case, $X$ is an unbounded
meshy subset of $Y$, and divides the galaxy $G_{Y}^{c}\left(n^{2}\right)=G_{Y}^{c}\left(n^{2}+1\right)$
into two parts in the sense of \prettyref{lem:dividing-galaxy}.
\end{example}

\subsection{Coarse hyperspaces}

In the rest of this section, we study natural coarse structures on
powersets of coarse spaces, called coarse hyperspaces.

Let $X$ be a metric space. The powerset $\mathcal{P}\left(X\right)$
is endowed with a (generalised) metric $d_{H}\colon X\times X\to\mathbb{R}_{\geq0}\cup\set{+\infty}$,
called the \emph{Hausdorff metric}, defined by
\[
d_{H}\left(A,B\right)=\inf\set{\varepsilon\in\mathbb{R}_{\geq0}|A\subseteq B_{\varepsilon}\text{ and }B\subseteq A_{\varepsilon}},
\]
where $A_{\varepsilon}$ and $B_{\varepsilon}$ are the $\varepsilon$-neighbourhoods
of $A$ and $B$, respectively. The metric space $\left(\mathcal{P}\left(X\right),d_{H}\right)$
is called the \emph{metric hyperspace}. Obviously $\mathcal{P}\left(X\right)$
is equipped with both a uniformity and a coarse structure. This construction
can be generalised to (non-metrisable) uniform spaces and coarse spaces.
\begin{defn}[Standard; \citep{Bou07}]
Let $X$ be a set and $E\subseteq X\times X$. The \emph{exponentiation}
$\exp E$ of $E$ is defined as
\[
\exp E=\set{\left(A,B\right)\in\mathcal{P}\left(X\right)\times\mathcal{P}\left(X\right)|A\subseteq E\left[B\right]\text{ and }B\subseteq E\left[A\right]}.
\]
\end{defn}

The following are evident.
\begin{fact}[{Standard; \citep[Chapter II, p. 34]{Bou07}}]
If $\mathcal{U}_{X}$ is a uniformity on a set $X$, the family $\set{\exp E|E\in\mathcal{U}_{X}}$
generates a uniformity $\exp\mathcal{U}_{X}$ on $\mathcal{P}\left(X\right)$.
\end{fact}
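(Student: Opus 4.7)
The plan is to verify that the family $\mathcal{E}=\set{\exp E|E\in\mathcal{U}_{X}}$ satisfies the four standard axioms for a base of a uniformity on $\mathcal{P}\left(X\right)$: (a) every element contains the diagonal $\Delta_{\mathcal{P}\left(X\right)}$; (b) the family is downward directed (every pair has a common lower bound); (c) each member admits a symmetric refinement; and (d) each member admits a composition-refinement $V\circ V\subseteq U$. Once (a)--(d) are in place, the generated filter is a uniformity by general principles.

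Conditions (a)--(c) fall out directly from the definition. Since $\Delta_{X}\subseteq E$ for every $E\in\mathcal{U}_{X}$, we have $A\subseteq E\left[A\right]$ for every $A\subseteq X$, whence $\left(A,A\right)\in\exp E$, giving $\Delta_{\mathcal{P}\left(X\right)}\subseteq\exp E$. Downward directedness follows from the elementary inclusion $\exp\left(E\cap F\right)\subseteq\exp E\cap\exp F$, which is immediate from $\left(E\cap F\right)\left[A\right]\subseteq E\left[A\right]\cap F\left[A\right]$ (in fact equality, but one inclusion suffices). Symmetry is even more direct: the defining condition ``$A\subseteq E\left[B\right]$ and $B\subseteq E\left[A\right]$'' is visibly symmetric in $A$ and $B$, so $\left(\exp E\right)^{-1}=\exp E$ and one may take $V=U$.

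The only step with any content is (d), the composition axiom. Given $E\in\mathcal{U}_{X}$, I would pick $F\in\mathcal{U}_{X}$ with $F\circ F\subseteq E$ (available from the uniformity axioms on $\mathcal{U}_{X}$) and claim $\exp F\circ\exp F\subseteq\exp E$. To verify: if $\left(A,B\right),\left(B,C\right)\in\exp F$, then $A\subseteq F\left[B\right]\subseteq F\left[F\left[C\right]\right]=\left(F\circ F\right)\left[C\right]\subseteq E\left[C\right]$, and symmetrically $C\subseteq E\left[A\right]$, so $\left(A,C\right)\in\exp E$. I do not anticipate a real obstacle; the proof is essentially formal once the four base axioms are spelled out, the only nontrivial ingredient being the standard trick of absorbing a composition of entourages into a single one.
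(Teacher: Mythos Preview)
Your proof is correct. The paper does not supply its own argument for this Fact: it introduces it with ``The following are evident'' and cites Bourbaki, so there is nothing to compare against beyond noting that your direct verification of the base axioms is exactly the routine check Bourbaki has in mind.
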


\begin{fact}[{Standard; \citep[Proposition 2.1]{Zav19}}]
If $\mathcal{C}_{X}$ is a coarse structure on a set $X$, the family
$\set{\exp E|E\in\mathcal{C}_{X}}$ generates a coarse structure $\exp\mathcal{C}_{X}$
on $\mathcal{P}\left(X\right)$.
\end{fact}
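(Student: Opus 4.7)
The plan is to verify that the family $\mathcal{F} = \set{\exp E | E \in \mathcal{C}_X}$ forms a base for a coarse structure on $\mathcal{P}(X)$; once this is established, one defines $\exp\mathcal{C}_X$ as the downward closure of $\mathcal{F}$ under taking subsets, and the coarse structure axioms follow routinely. The base property amounts to four conditions: some element of $\mathcal{F}$ contains the diagonal $\Delta_{\mathcal{P}(X)}$; $\mathcal{F}$ is closed under finite unions up to enlargement within $\mathcal{F}$; closed under inversion up to enlargement; and closed under composition up to enlargement. Unlike much of the paper, there is no need to invoke nonstandard methods here, since the verifications are purely algebraic in $X \times X$.

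First, for the diagonal: since $\Delta_X \in \mathcal{C}_X$, every $A \in \mathcal{P}(X)$ satisfies $A \subseteq \Delta_X[A] = A$, so $\Delta_{\mathcal{P}(X)} \subseteq \exp\Delta_X$. For unions, a direct calculation gives $\exp E \cup \exp F \subseteq \exp(E \cup F)$, because $E \subseteq E \cup F$ implies $E[A] \subseteq (E \cup F)[A]$ for every $A$. For inversion, I would observe that the defining condition ``$A \subseteq E[B]$ and $B \subseteq E[A]$'' is manifestly symmetric in $A$ and $B$, so every $\exp E$ already satisfies $(\exp E)^{-1} = \exp E$ and the check is trivial.

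The main step, and essentially the only one requiring calculation, is closure under composition. Given $E, F \in \mathcal{C}_X$ and pairs $(A, B) \in \exp E$, $(B, C) \in \exp F$, unwinding the definitions yields $A \subseteq E[B] \subseteq E[F[C]]$ and $C \subseteq F[B] \subseteq F[E[A]]$. Using the identity $E[F[C]] = (F \circ E)[C]$ and likewise $F[E[A]] = (E \circ F)[A]$, both inclusions are witnessed by the single entourage $G = (E \circ F) \cup (F \circ E) \in \mathcal{C}_X$, whence $(A, C) \in \exp G$. The only potential pitfall is bookkeeping with the convention for relational composition so that the $\mathcal{C}_X$-membership of $G$ is genuinely invoked; once that is settled, setting $\exp\mathcal{C}_X = \set{H \subseteq \mathcal{P}(X) \times \mathcal{P}(X) | H \subseteq \exp E \text{ for some } E \in \mathcal{C}_X}$ produces the desired coarse structure.
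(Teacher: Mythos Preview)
Your argument is correct. The paper itself does not prove this statement: it is labelled a \emph{Fact}, prefaced by ``The following are evident,'' and attributed to \citep[Proposition 2.1]{Zav19b}. So there is no in-paper proof to compare against; you have simply supplied the routine verification that the paper chose to omit. Your observation that $\exp E$ is automatically symmetric (regardless of whether $E$ is) is a nice shortcut, and your handling of the composition step via $G=(E\circ F)\cup(F\circ E)$ correctly sidesteps any ambiguity in the composition convention.
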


\begin{defn}[Standard; \citep{DPPZ19,Zav19}]
Let $\left(X,\mathcal{C}_{X}\right)$ be a coarse space and $\mathcal{A}\left(X\right)\subseteq\mathcal{P}\left(X\right)$.
The coarse space $\left(\mathcal{A}\left(X\right),\exp\mathcal{C}_{X}\restriction\mathcal{A}\left(X\right)\right)$
is called the \emph{$\mathcal{A}$-coarse hyperspace}, and is denoted
by $\mathcal{A}\hexp X$. In particular, $\mathcal{P}\hexp X=\left(\mathcal{P}\left(X\right),\exp\mathcal{C}_{X}\right)$
is called the \emph{coarse hyperspace}, and is denoted by $\exp X$.
\end{defn}

First of all, we shall look at the properties of the finite closeness
relation $\sim_{\exp X}$ of $\exp X$.
\begin{lem}
\label{lem:proximity-of-powX}Let $X$ be a standard coarse space.
For any $A,B\in\ns{\left(\mathcal{P}\left(X\right)\right)}$, $A\sim_{\exp X}B$
if and only if $A\subseteq G_{X}^{c}\left(B\right)$ and $B\subseteq G_{X}^{c}\left(A\right)$.
\end{lem}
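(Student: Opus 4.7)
The plan is first to reduce $A\sim_{\exp X}B$ to a concrete statement about the basic generating entourages of $\exp\mathcal{C}_X$. A direct check gives $\exp E\cup\exp F\subseteq\exp(E\cup F)$, $\exp E\circ\exp F\subseteq\exp(E\cup F\cup(E\circ F)\cup(F\circ E))$, and $(\exp E)^{-1}=\exp(E^{-1})$. Hence every entourage in $\exp\mathcal{C}_X$ is contained in some $\exp E$ with $E\in\mathcal{C}_X$, so by transfer $A\sim_{\exp X}B$ is equivalent to the existence of a standard $E\in\mathcal{C}_X$ with $A\subseteq\ns{E}[B]$ and $B\subseteq\ns{E}[A]$.

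The forward direction is then immediate: any such $E$ gives $A\subseteq\ns{E}[B]\subseteq\bigcup_{F\in\mathcal{C}_X}\ns{F}[B]=G_X^c(B)$, and symmetrically $B\subseteq G_X^c(A)$. For the converse, assume $A\subseteq G_X^c(B)=\bigcup_{E\in\mathcal{C}_X}\ns{E}[B]$ and $B\subseteq G_X^c(A)$. The task is to collapse these external directed unions into a single standard entourage. Consider the family of internal sets $\{A\setminus\ns{E}[B]:E\in\mathcal{C}_X\}$; it is downward directed under inclusion, because $\ns{(E_1\cup E_2)}[B]\supseteq\ns{E_1}[B]\cup\ns{E_2}[B]$, and its total intersection is $A\setminus G_X^c(B)=\varnothing$ by assumption. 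By saturation (the same principle underlying the General Approximation Lemma, \prettyref{lem:General-Approximation-Lemma}) some $A\setminus\ns{E_1}[B]$ must already be empty, yielding $A\subseteq\ns{E_1}[B]$ for a standard $E_1\in\mathcal{C}_X$. The symmetric argument produces $E_2\in\mathcal{C}_X$ with $B\subseteq\ns{E_2}[A]$, and $E=E_1\cup E_2\in\mathcal{C}_X$ then witnesses $(A,B)\in\ns{(\exp E)}$, so $A\sim_{\exp X}B$.

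The main obstacle is precisely this descent step. The General Approximation Lemma alone only delivers an internal $F\in\ns{\mathcal{C}_X}$ with ${\sim_X}\subseteq F$, from which one immediately gets $A\subseteq F[B]$ and $B\subseteq F[A]$ internally; turning this into a \emph{standard} entourage $E\in\mathcal{C}_X$ is where saturation of the nonstandard universe has to be explicitly invoked. Once that is in hand, the rest is bookkeeping.
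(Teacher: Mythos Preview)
Your proof is correct and follows essentially the same route as the paper's: both reduce $\sim_{\exp X}$ to membership in some $\ns{(\exp E)}$ with standard $E$, and both use saturation to descend from the external inclusion $A\subseteq G_X^c(B)=\bigcup_{E}\ns{E}[B]$ to a single standard witness. The only cosmetic difference is that the paper packages the descent step via the Underspill Principle (\prettyref{lem:Underspill}) applied to the internal set $\mathcal{E}=\{E\in\ns{\mathcal{C}_X}:A\subseteq E[B]\text{ and }B\subseteq E[A]\}$, whereas you invoke saturation directly on the downward-directed family $\{A\setminus\ns{E}[B]:E\in\mathcal{C}_X\}$.
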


\begin{proof}
Suppose $A\sim_{\exp X}B$. For some $E\in\mathcal{C}_{X}$, we have
that $A\subseteq\ns{E}\left[B\right]$ and $B\subseteq\ns{E}\left[A\right]$.
So $A\subseteq\ns{E}\left[B\right]\subseteq G_{X}^{c}\left(B\right)$
and $B\subseteq\ns{E}\left[A\right]\subseteq G_{X}^{c}\left(A\right)$.

Conversely, suppose $A\subseteq G_{X}^{c}\left(B\right)$ and $B\subseteq G_{X}^{c}\left(A\right)$.
Then $A\subseteq E\left[B\right]$ and $B\subseteq E\left[A\right]$
hold for all $E\supseteq{\sim_{X}}$. In other words, the internal
subset
\[
\mathcal{E}=\set{E\in\ns{\mathcal{C}_{X}}|A\subseteq E\left[B\right]\text{ and }B\subseteq E\left[A\right]}
\]
of $\ns{\mathcal{C}_{X}}$ contains all (sufficiently small) illimited
elements of $\ns{\mathcal{C}_{X}}$ with respect to $\subseteq$.
By \prettyref{lem:Underspill}, $\mathcal{E}$ has a limited element
$E$, which is bounded by some (standard) $F\in\mathcal{C}_{X}$,
i.e., $E\subseteq\ns{F}$. Hence $\left(A,B\right)\in\mathop{\ns{\exp}}E\subseteq\ns{\left(\exp F\right)}$,
and therefore $A\sim_{\exp X}B$.
\end{proof}
\begin{prop}[{Standard; \citep[Fact 2.3]{Zav19}}]
Let $X$ be a coarse space. The map $\iota_{X}\colon X\to\exp X$
defined by $\iota_{X}\left(x\right)=\set{x}$ is an asymorphic embedding.
\end{prop}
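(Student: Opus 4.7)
The plan is to invoke the nonstandard characterisations of bornologicity and effective properness developed in the paper. By \citep[Theorem 3.23]{Ima19}, a map $f \colon Y \to Z$ between standard coarse spaces is bornologous iff $x \sim_Y y$ implies ${}^{*}f(x) \sim_Z {}^{*}f(y)$; by \prettyref{thm:nonst-charact-eff-proper}, $f$ is effectively proper iff ${}^{*}f(x) \sim_Z {}^{*}f(y)$ implies $x \sim_Y y$. Combining these with \prettyref{prop:effectively-properness-and-inverse}, for a bijection $f$ being an asymorphism is equivalent to the biconditional $x \sim_Y y \iff {}^{*}f(x) \sim_Z {}^{*}f(y)$. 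Since $\iota_X$ is manifestly injective, it is a bijection onto its image $\iota_X(X) \subseteq \exp X$, so it suffices to check this biconditional for $\iota_X$ viewed as a map from $X$ onto $\iota_X(X)$.

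Next, I would apply \prettyref{lem:proximity-of-powX} to the singletons $\{x\}, \{y\} \in \ns{(\mathcal{P}(X))}$, where by transfer $\ns{\iota_X}(x) = \{x\}$ and $\ns{\iota_X}(y) = \{y\}$. The lemma gives
\[
\{x\} \sim_{\exp X} \{y\} \iff \{x\} \subseteq G_X^c(\{y\}) \text{ and } \{y\} \subseteq G_X^c(\{x\}),
\]
which simplifies to $x \in G_X^c(y)$ and $y \in G_X^c(x)$, i.e., $x \sim_X y$ and $y \sim_X x$; by symmetry of $\sim_X$ this is just $x \sim_X y$. This establishes the desired biconditional and hence shows that $\iota_X \colon X \to \iota_X(X)$ is a bornologous bijection with a bornologous inverse, completing the proof.

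The argument has no real obstacle: the preceding \prettyref{lem:proximity-of-powX} already does all the work of translating proximity in $\exp X$ into a purely galactic condition in $X$, and singletons make the bi-inclusion condition collapse to the defining relation $\sim_X$ itself. The only point requiring even minor care is the transfer identification $\ns{\iota_X}(x) = \{x\}$ as an element of $\ns{(\mathcal{P}(X))}$, after which the verification is mechanical.
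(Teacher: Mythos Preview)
Your proposal is correct and follows essentially the same route as the paper's own proof: both note injectivity, use \prettyref{lem:proximity-of-powX} to reduce $\{x\}\sim_{\exp X}\{y\}$ to the bi-inclusion condition $\{x\}\subseteq G_X^c(\{y\})$ and $\{y\}\subseteq G_X^c(\{x\})$, observe that for singletons this is just $x\sim_X y$, and then conclude via \citep[Theorem 3.23]{Ima19}, \prettyref{thm:nonst-charact-eff-proper}, and \prettyref{prop:effectively-properness-and-inverse}. Your write-up is simply more explicit about the transfer step $\ns{\iota_X}(x)=\{x\}$, which the paper silently absorbs.
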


\begin{proof}
Obviously $\iota$ is injective. Let $x,y\in\ns{X}$. Then $x\sim_{X}y$
$\iff$ $\set{x}\subseteq G_{X}^{c}\left(\set{y}\right)$ and $\set{y}\subseteq G_{X}^{c}\left(\set{x}\right)$
$\iff$ $\iota_{X}\left(x\right)\sim_{\exp X}\iota_{X}\left(y\right)$.
By \citep[Theorem 3.23]{Ima19} and \prettyref{thm:nonst-charact-eff-proper},
$\iota$ is effectively proper and bornologous. By \prettyref{prop:effectively-properness-and-inverse},
$\iota$ is an asymorphic embedding.
\end{proof}
\begin{prop}[{Standard; Proof of \citep[Theorem 2.2]{DPPZ19}}]
\label{prop:c-is-bornologous-injection}Let $X$ be an unbounded
coarse space. The map $c_{X}\colon X\to\exp X$ defined by $c_{X}\left(x\right)=X\setminus\set{x}$
is a bornologous injection.
\end{prop}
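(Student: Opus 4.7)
The plan is to prove injectivity directly and then reduce bornologicity to the galactic condition furnished by \prettyref{lem:proximity-of-powX}.

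Injectivity is immediate: if $X\setminus\set x=X\setminus\set y$, then taking complements gives $\set x=\set y$, hence $x=y$ (and the unboundedness of $X$ guarantees $\left|X\right|\geq2$, so the map is non-degenerate).

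For bornologicity I would appeal to the nonstandard characterisation of bornologous maps \citep[Theorem 3.23]{Ima19}: it suffices to verify that $x\sim_{X}y$ implies $\ns{c_{X}}\left(x\right)\sim_{\exp X}\ns{c_{X}}\left(y\right)$ for all $x,y\in\ns{X}$. By transfer of the defining formula of $c_{X}$, the internal set $\ns{c_{X}}\left(x\right)$ is precisely $\ns{X}\setminus\set x$, and similarly for $y$. Thus by \prettyref{lem:proximity-of-powX} the task reduces to showing
\[
\ns{X}\setminus\set x\subseteq G_{X}^{c}\left(\ns{X}\setminus\set y\right)\quad\text{and}\quad\ns{X}\setminus\set y\subseteq G_{X}^{c}\left(\ns{X}\setminus\set x\right).
\]

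The inclusions are then handled by a short case analysis. The case $x=y$ is vacuous, so assume $x\neq y$ and let $z\in\ns{X}\setminus\set x$. If $z\neq y$, then $z$ itself lies in $\ns{X}\setminus\set y$ and $z\sim_{X}z$ places it in $G_{X}^{c}\left(\ns{X}\setminus\set y\right)$. If instead $z=y$, then the hypothesis $x\sim_{X}y$ gives $z\sim_{X}x$, and since $x\neq y$ we have $x\in\ns{X}\setminus\set y$, so again $z\in G_{X}^{c}\left(\ns{X}\setminus\set y\right)$. The symmetric inclusion follows by swapping the roles of $x$ and $y$. There is no substantive obstacle here; the only mild subtlety is bookkeeping the condition $x\neq y$ so that the removed point $x$ (respectively $y$) remains available as a witness inside the opposite internal complement.
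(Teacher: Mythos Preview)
Your proof is correct and follows essentially the same route as the paper's: both invoke \citep[Theorem~3.23]{Ima19} for bornologicity, reduce via \prettyref{lem:proximity-of-powX} to the galactic inclusions, and handle the only problematic point (the removed $x$, resp.\ $y$) using $x\sim_{X}y$ together with $x\neq y$. The paper packages the same observation a bit more compactly by noting directly that $G_{X}^{c}\left(\ns{c_{X}}\left(x\right)\right)=G_{X}^{c}\left(\ns{c_{X}}\left(y\right)\right)=\ns{X}$, but this is only a cosmetic difference from your case analysis on $z$.
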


\begin{proof}
The injectivity is trivial. Let $x,y\in\ns{X}$ and suppose $x\sim_{X}y$
and $x\neq y$. Then $x\in G_{X}^{c}\left(y\right)\subseteq G_{X}^{c}\left(\ns{c_{X}}\left(x\right)\right)$
and $y\in G_{X}^{c}\left(x\right)\subseteq G_{X}^{c}\left(\ns{c_{X}}\left(y\right)\right)$,
so $G_{X}^{c}\left(\ns{c_{X}}\left(x\right)\right)=G_{X}^{c}\left(\ns{c_{X}}\left(y\right)\right)=\ns{X}$.
Hence $\ns{c_{X}}\left(x\right)\sim_{\exp X}\ns{c_{X}}\left(y\right)$.
By \citep[Theorem 3.23]{Ima19}, $c_{X}$ is bornologous.
\end{proof}
\begin{prop}[Standard]
Let $\left(X,\mathcal{U}_{X},\mathcal{C}_{X}\right)$ be an uc-space.
If $\mathcal{U}_{X}$ and $\mathcal{C}_{X}$ are compatible (i.e.
$\mathcal{U}_{X}\cap\mathcal{C}_{X}\neq\varnothing$), then $\exp\mathcal{U}_{X}$
and $\exp\mathcal{C}_{X}$ are compatible.
\end{prop}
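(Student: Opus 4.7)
The plan is to exhibit a single entourage that lies in both hyperspace structures, namely the exponentiation of any common witness to compatibility of $\mathcal{U}_X$ and $\mathcal{C}_X$. Concretely, I would begin by invoking the hypothesis to choose some $E \in \mathcal{U}_X \cap \mathcal{C}_X$, and then show that $\exp E$ belongs to $\exp\mathcal{U}_X \cap \exp\mathcal{C}_X$, which immediately forces this intersection to be non-empty.

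The verification splits into two one-line checks. First, by the fact recalled just above (attributed to \citep{Bou07}), the family $\set{\exp F|F\in\mathcal{U}_X}$ generates the uniformity $\exp\mathcal{U}_X$, so since $E\in\mathcal{U}_X$, the entourage $\exp E$ is one of the generators and hence lies in $\exp\mathcal{U}_X$. Second, by the parallel fact (attributed to \citep{Zav19b}), the family $\set{\exp F|F\in\mathcal{C}_X}$ generates the coarse structure $\exp\mathcal{C}_X$, and since $E\in\mathcal{C}_X$, we similarly obtain $\exp E\in\exp\mathcal{C}_X$. These two memberships together give the desired element of the intersection.

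There is essentially no obstacle in this argument; the whole statement is a direct consequence of the fact that both hyperspace structures on $\mathcal{P}\left(X\right)$ are generated by the same templates $\exp\left(-\right)$ applied, respectively, to entourages of $\mathcal{U}_X$ and controlled sets of $\mathcal{C}_X$. The only point that merits a sentence is the mild asymmetry between the two notions of ``generated'': a uniformity is upward-closed under supersets of a generating subbase, whereas a coarse structure is closed under subsets, finite unions, compositions and inversions of a generator; in either case, a generator itself already belongs to the generated structure, so $\exp E$ qualifies on both sides without any further manipulation.
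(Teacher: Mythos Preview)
Your proof is correct and considerably more direct than the paper's. You simply pick $E\in\mathcal{U}_X\cap\mathcal{C}_X$ and observe that $\exp E$ lies in both generating families, hence in both generated structures; this is a purely standard, two-line argument.

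The paper instead routes the proof through its nonstandard machinery: it invokes \prettyref{lem:proximity-of-powX} to characterise $\sim_{\exp X}$ via coarse galaxies, states the analogous characterisation of $\approx_{\exp X}$ via uniform monads, and then uses the nonstandard criterion for compatibility from \citep[Theorem 3.30]{Ima19} (namely $\mu_X^u\subseteq G_X^c$, equivalently ${\approx_X}\subseteq{\sim_X}$) to deduce ${\approx_{\exp X}}\subseteq{\sim_{\exp X}}$ and conclude. This approach is in keeping with the paper's aim of exercising the nonstandard framework, and it incidentally records the description of $\approx_{\exp X}$; but for the bare statement your elementary argument is both shorter and avoids any appeal to the prequel.
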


\begin{proof}
We denote the uc-hyperspace $\left(\mathcal{P}\left(X\right),\exp\mathcal{U}_{X},\exp\mathcal{C}_{X}\right)$
by $\exp X$. Recall \prettyref{lem:proximity-of-powX}:
\[
A\sim_{\exp X}B\iff A\subseteq G_{X}^{c}\left(B\right)\text{ and }B\subseteq G_{X}^{c}\left(A\right).
\]
Similarly, it is easy to verify the following equivalence:
\[
A\approx_{\exp X}B\iff A\subseteq\mu_{X}^{u}\left(B\right)\text{ and }B\subseteq\mu_{X}^{u}\left(A\right).
\]
Since $\mathcal{U}_{X}$ and $\mathcal{C}_{X}$ are compatible, $\mu_{X}^{u}\left(x\right)\subseteq G_{X}^{c}\left(x\right)$
holds for all $x\in\ns{X}$ by \citep[Theorem 3.30]{Ima19}. Hence
$A\approx_{\exp X}B$ implies $A\sim_{\exp}B$ for all $A,B\in\ns{\left(\mathcal{P}\left(X\right)\right)}$.
By \citep[Theorem 3.30]{Ima19}, $\exp\mathcal{U}_{X}$ and $\exp\mathcal{C}_{X}$
are compatible.
\end{proof}

\subsection{$\flat$-coarse hyperspaces}

Isbell \citep[p. 35]{Isb64} conjectured that if $\mathcal{U}_{1}$
and $\mathcal{U}_{2}$ are distinct (compatible) uniformities on a
topological space $X$, then $\exp\mathcal{U}_{1}$ and $\exp\mathcal{U}_{2}$
induce different topologies on $H\left(X\right)=\set{A\subseteq X|A\colon\text{non-empty closed}}$.
Smith \citep{Smi66} gave a counterexample and some positive results
to this conjecture. On the other hand, the large-scale analogue of
this conjecture is false in any case.
\begin{defn}[Standard; \citep{PP18,Zav19}]
Let $X$ be a coarse space. We denote the family of non-empty bounded
subsets of $X$ by $\flat\left(X\right)$, i.e., $\flat\left(X\right)=\mathcal{B}_{X}\setminus\set{\varnothing}$.
\end{defn}

\begin{lem}
\label{lem:prebornology-of-flat-hyperspace}Let $X$ be a standard
coarse space. For any $A\in\ns{\left(\mathcal{P}\left(X\right)\right)}$
and $B\in\flat\left(X\right)$, the following are equivalent:
\begin{enumerate}
\item $A\sim_{\exp X}\ns{B}$;
\item \label{enu:bexp-cond-2}$\forall a\in A\exists b\in B\left(a\in G_{X}\left(b\right)\right)$
and $\forall b\in B\exists a\in A\left(a\in G_{X}\left(b\right)\right)$.
\end{enumerate}
\end{lem}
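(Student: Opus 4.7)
The plan is to reduce both conditions to a single pair of assertions by combining Lemma \ref{lem:proximity-of-powX} with the observation that boundedness of $B$ collapses $\ns{B}$ into one coarse equivalence class. First I apply Lemma \ref{lem:proximity-of-powX} to condition (1), unfolding it into the conjunction $A \subseteq G_{X}^{c}(\ns{B})$ and $\ns{B} \subseteq G_{X}^{c}(A)$.

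Next, since $B \in \flat(X)$, the definition of the induced prebornology gives $B \times B \in \mathcal{C}_{X}$, so by transfer $\ns{B} \times \ns{B} \subseteq {\sim_{X}}$. Fixing an arbitrary $b_{0} \in B$ (available because $B$ is non-empty), symmetry and transitivity of $\sim_{X}$ yield $G_{X}^{c}(b') = G_{X}^{c}(b_{0})$ for every $b' \in \ns{B}$, and the standard identification recalled in Section~1.1 gives $G_{X}^{c}(b_{0}) = G_{X}(b_{0})$. Consequently,
\[
G_{X}^{c}(\ns{B}) \;=\; \bigcup_{b' \in \ns{B}} G_{X}^{c}(b') \;=\; G_{X}(b_{0}) \;=\; \bigcup_{b \in B} G_{X}(b),
\]
where the last equality holds since every standard $b \in B$ shares $b_{0}$'s galaxy by the same argument applied inside $X$.

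Using these equalities, the first clause of (1) and the first clause of (2) both read $A \subseteq G_{X}(b_{0})$. The second clause of (2), since all $G_{X}(b)$ with $b \in B$ coincide with $G_{X}(b_{0})$ and $B \neq \varnothing$, is simply $A \cap G_{X}(b_{0}) \neq \varnothing$. For the second clause of (1), specialising $b' = b_{0} \in \ns{B}$ in $\ns{B} \subseteq G_{X}^{c}(A)$ produces some $a \in A$ with $a \sim_{X} b_{0}$, i.e.\ $a \in G_{X}(b_{0})$; conversely, given $a_{0} \in A \cap G_{X}(b_{0})$, every $b' \in \ns{B}$ satisfies $b' \sim_{X} b_{0} \sim_{X} a_{0}$, hence $b' \in G_{X}^{c}(A)$. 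Thus both second clauses also reduce to $A \cap G_{X}(b_{0}) \neq \varnothing$, establishing the equivalence.

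The only delicate point is that (1) quantifies over the nonstandard set $\ns{B}$ while (2) only quantifies over standard $b \in B$; boundedness of $B$ is precisely what erases this asymmetry, so no further overspill/underspill argument is required beyond the single transfer step used to deduce $\ns{B} \times \ns{B} \subseteq {\sim_{X}}$.
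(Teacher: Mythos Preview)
Your proof is correct and follows essentially the same route as the paper: both arguments unfold condition~(1) via Lemma~\ref{lem:proximity-of-powX} and then exploit boundedness of $B$ (so that $\ns{B}\times\ns{B}\subseteq{\sim_X}$) to pass freely between elements of $\ns{B}$ and standard elements of $B$. Your presentation is slightly more compact in that you fix a single basepoint $b_0\in B$ up front and reduce both conditions to the pair ``$A\subseteq G_X(b_0)$ and $A\cap G_X(b_0)\neq\varnothing$'', whereas the paper verifies the two clauses element by element; the content is the same.
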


\begin{proof}
Suppose $A\sim_{\exp X}\ns{B}$. Let $a\in A$. Since $A\subseteq G_{X}^{c}\left(\ns{B}\right)$,
we can find a $b\in\ns{B}$ so that $a\sim_{X}b$. Since $B$ is non-empty
and bounded, $b\sim_{X}\st{b}$ holds for some $\st{b}\in B$ by \citep[Proposition 2.6]{Ima19}.
Hence $a\in G_{X}\left(\st{b}\right)$ for some $\st{b}\in B$. Next,
let $b\in B$. Since $\ns{B}\subseteq G_{X}^{c}\left(A\right)$, we
can find an $a\in A$ so that $b\sim_{X}a$. Then $a\in G_{X}\left(b\right)$.

Suppose \eqref{enu:bexp-cond-2} holds. By the first half of \eqref{enu:bexp-cond-2},
we have that $A\subseteq\bigcup_{b\in B}G_{X}\left(b\right)=G_{X}^{c}\left(B\right)\subseteq G_{X}^{c}\left(\ns{B}\right)$.
Let $b\in\ns{B}$. Since $B$ is non-empty and bounded, pick a $\st{b}\in B$,
then $b\sim_{X}\st{b}$ by \citep[Proposition 2.6]{Ima19}. By the
last half of \eqref{enu:bexp-cond-2}, we can find an $a\in A$ so
that $a\sim_{X}\st{b}$. Hence $b\in G_{X}^{c}\left(a\right)\subseteq G_{X}^{c}\left(A\right)$.
Therefore $A\sim_{\exp X}\ns{B}$.
\end{proof}
\begin{thm}[Standard]
If $\mathcal{C}_{1}$ and $\mathcal{C}_{2}$ are (compatible) coarse
structures on a prebornological space $X$, then $\exp\mathcal{C}_{1}$
and $\exp\mathcal{C}_{2}$ induce the same prebornology on $\flat\left(X\right)$.
\end{thm}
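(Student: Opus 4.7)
The plan is to give a characterization of boundedness in $(\flat(X),\exp\mathcal{C}\restriction\flat(X))$ that is phrased entirely in terms of the prebornology $\mathcal{B}_X$ induced from $\mathcal{C}$. Since by hypothesis $\mathcal{C}_1$ and $\mathcal{C}_2$ induce the same $\mathcal{B}_X$, such a characterization will at once yield the equality of the two induced prebornologies on $\flat(X)$. Concretely, I would prove: for any coarse structure $\mathcal{C}$ on $X$ with induced prebornology $\mathcal{B}_X$, a subset $\mathcal{A}\subseteq\flat(X)$ is bounded in $(\flat(X),\exp\mathcal{C}\restriction\flat(X))$ if and only if $\bigcup\mathcal{A}\in\mathcal{B}_X$.

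For the easier direction, suppose $B=\bigcup\mathcal{A}\in\mathcal{B}_X$. Then $E:=B\times B\in\mathcal{C}$, and for every non-empty $A\subseteq B$ we have $E[A]=B$. So for any $A_1,A_2\in\mathcal{A}$ (both non-empty, as $\mathcal{A}\subseteq\flat(X)$), $A_1\subseteq B=E[A_2]$ and symmetrically, whence $(A_1,A_2)\in\exp E$. Thus $\mathcal{A}\times\mathcal{A}\subseteq\exp E\in\exp\mathcal{C}$, so $\mathcal{A}$ is bounded.

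For the reverse direction, suppose $\mathcal{A}\times\mathcal{A}\subseteq\exp E$ for some $E\in\mathcal{C}$. If $\mathcal{A}=\varnothing$ there is nothing to prove; otherwise fix $A_0\in\mathcal{A}$. Every $A\in\mathcal{A}$ satisfies $A\subseteq E[A_0]$, so $\bigcup\mathcal{A}\subseteq E[A_0]$. It remains to check $E[A_0]\in\mathcal{B}_X$. Given $x,y\in E[A_0]$, choose $a_x,a_y\in A_0$ with $(a_x,x),(a_y,y)\in E$; then $(x,y)\in E^{-1}\circ(A_0\times A_0)\circ E$. Since $A_0\in\mathcal{B}_X$ we have $A_0\times A_0\in\mathcal{C}$, and the composition lies in $\mathcal{C}$, so $E[A_0]\times E[A_0]\in\mathcal{C}$, i.e., $E[A_0]\in\mathcal{B}_X$. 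Hence $\bigcup\mathcal{A}\in\mathcal{B}_X$, as required.

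The characterization $\mathcal{A}\in\mathcal{B}(\exp\mathcal{C}_i)\iff\bigcup\mathcal{A}\in\mathcal{B}_X$ does not see $\mathcal{C}_i$ beyond its induced prebornology, so applying it to $\mathcal{C}_1$ and $\mathcal{C}_2$ yields the same family of bounded subsets of $\flat(X)$. There is no real obstacle here; the only subtlety is the computation $E[A_0]\times E[A_0]\subseteq E^{-1}\circ(A_0\times A_0)\circ E$, which is what lets one use the compatibility condition $A_0\times A_0\in\mathcal{C}$. Alternatively, one could phrase the same argument nonstandardly via \prettyref{lem:prebornology-of-flat-hyperspace}, whose criterion for $A\sim_{\exp X}\ns{B}$ uses only the prebornological galaxies $G_X$, making the invariance under change of compatible coarse structure manifest; but the direct standard proof above already delivers the result cleanly.
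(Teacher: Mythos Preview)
Your proof is correct. The explicit characterisation
\[
\mathcal{A}\text{ is bounded in }\flat\hexp X \iff \bigcup\mathcal{A}\in\mathcal{B}_X
\]
is valid and immediately yields the statement, since the right-hand side sees only $\mathcal{B}_X$.

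Your route differs from the paper's. The paper works nonstandardly: it invokes \prettyref{lem:prebornology-of-flat-hyperspace}, which shows that for $B\in\flat(X)$ the relation $A\sim_{\exp X}\ns{B}$ is expressed purely through the prebornological galaxies $G_X(b)$, and then appeals to the general principle that the galaxy map determines the induced prebornology. Your argument is entirely standard and more concrete: it identifies the induced prebornology on $\flat(X)$ explicitly, rather than merely showing it is determined by $\mathcal{B}_X$. The paper's proof fits the nonstandard programme of the article and reuses a lemma already in place; your proof is self-contained, avoids the nonstandard machinery altogether, and yields the additional information that the bounded families are exactly those with bounded union. You even note the nonstandard alternative at the end, so you have essentially seen both arguments.
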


\begin{proof}
According to \prettyref{lem:prebornology-of-flat-hyperspace}, the
galaxy map $G_{\flat\left(X\right)}$ of $\flat\left(X\right)$ is
determined by the galaxy map $G_{X}$ of $X$. Hence the induced prebornology
of $\flat\left(X\right)$ is determined by the induced prebornology
of $X$ \citep[Propositions 2.6 and 3.12]{Ima19}.
\end{proof}
As a result, it makes sense to consider the \emph{$\flat$-prebornological
hyperspace} $\flat\hexp\left(X\right)$ from a given prebornological
space $X$ (rather than a coarse space) by noting that each prebornological
space admits a compatible coarse structure such as the satellite coarse
structure. If $X$ is a (connected) bornological space, then so is
$\flat\hexp X$.
\begin{prop}[Standard]
\label{prop:flat-hyperspace-is-connected}For every connected coarse
space $X$, the $\flat$-coarse hyperspace $\flat\hexp X$ is connected.
\end{prop}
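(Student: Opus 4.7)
The plan is to verify connectedness of $\flat\hexp X$ directly from its characterisation (analogous to the proof that $X$ is connected iff $SX$ is): a prebornological space is connected precisely when every finite subset of the underlying set is bounded. So I would fix an arbitrary finite collection $\mathcal{A}=\{A_{1},\ldots,A_{n}\}\subseteq\flat\left(X\right)$ and exhibit an $E\in\mathcal{C}_{X}$ with $\mathcal{A}\times\mathcal{A}\subseteq\exp E\restriction\flat\left(X\right)$.

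The key observation is that connectedness of $X$ lets us absorb all the $A_{i}$ into one bounded set, and then the square of that set (plus the diagonal) gives a single entourage large enough to control every pair $\left(A_{i},A_{j}\right)$ at once. Concretely, I would set $B=A_{1}\cup\cdots\cup A_{n}$, which lies in $\mathcal{B}_{X}$ since $X$ is connected and each $A_{i}$ is bounded; then I would take $E=\left(B\times B\right)\cup\Delta_{X}$, which lies in $\mathcal{C}_{X}$ because $B\in\mathcal{B}_{X}$ forces $B\times B\in\mathcal{C}_{X}$ by definition of the induced prebornology.

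To finish, I would check that $\left(A_{i},A_{j}\right)\in\exp E$ for all $i,j$. If $i=j$ this is immediate from $\Delta_{X}\subseteq E$. Otherwise, using non-emptiness of $A_{j}\in\flat\left(X\right)$, pick any $b\in A_{j}$; then for any $a\in A_{i}\subseteq B$ we have $\left(b,a\right)\in B\times B\subseteq E$, which gives $A_{i}\subseteq E\left[A_{j}\right]$, and the symmetric inclusion $A_{j}\subseteq E\left[A_{i}\right]$ follows the same way. Hence $\mathcal{A}$ is bounded in $\flat\hexp X$, proving connectedness.

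There is no real obstacle here; the entire content is the remark that non-emptiness of the $A_{j}$'s lets a single product entourage $B\times B$ simultaneously witness $\exp E$-proximity of every pair. A nonstandard rephrasing via \prettyref{lem:proximity-of-powX} would also work — one only needs $A_{i}\subseteq G_{X}^{c}\left(A_{j}\right)$ and vice versa, which is automatic because $\ns{A_{i}}\subseteq\FIN\left(X\right)=G_{X}^{c}\left(b\right)$ for any standard $b\in A_{j}$ by connectedness of $X$ and \citep[Corollary 3.13]{Ima19} — but the direct standard argument above is the most transparent.
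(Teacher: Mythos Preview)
Your proof is correct. Your primary argument is a direct standard one: you use that connectedness of $X$ makes $\mathcal{B}_{X}$ a bornology, so the finite union $B=A_{1}\cup\cdots\cup A_{n}$ is bounded, and then the single entourage $E=(B\times B)\cup\Delta_{X}$ witnesses $(A_{i},A_{j})\in\exp E$ for every pair. The paper instead argues nonstandardly: for any standard $A,B\in\flat(X)$ it observes $G_{X}^{c}(\ns{A})=G_{X}^{c}(\ns{B})=\FIN(X)$ (using connectedness of $X$ and \citep[Propositions 2.11 and 3.10]{Ima19}), whence $\ns{A}\sim_{\exp X}\ns{B}$ by \prettyref{lem:proximity-of-powX}, and concludes via \citep[Corollary 3.13]{Ima19}. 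Your closing parenthetical remark is essentially this argument, so you have in fact anticipated the paper's route as well; the standard version you lead with has the advantage of being self-contained and not relying on the nonstandard machinery, while the paper's version fits its overall programme of deriving coarse results from the galaxy/core calculus on $\ns{X}$.
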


\begin{proof}
Let $A,B\in\flat\left(X\right)$. Since $X$ is connected, $G_{X}^{c}\left(\ns{A}\right)=G_{X}^{c}\left(\ns{B}\right)=\FIN\left(X\right)$
by \citep[Propositions 2.11 and 3.10]{Ima19}. Hence $\ns{A}\sim_{\flat\hexp X}\ns{B}$.
By \citep[Corollary 3.13]{Ima19}, $\flat\hexp X$ is connected.
\end{proof}
\begin{example}
Recall the coarse spaces $\mathbb{R}$ and $\mathbb{R}'$ in \prettyref{exa:two-coarse-structures-of-R}.
Since $\mathbb{R}$ and $\mathbb{R}'$ have the same bornology, the
$\flat$-coarse hyperspaces $\flat\hexp\mathbb{R}$ and $\flat\hexp\mathbb{R}'$
have the same underlying set. By \prettyref{lem:prebornology-of-flat-hyperspace},
we have that
\[
\FIN\left(\flat\hexp\mathbb{R}\right)=\FIN\left(\flat\hexp\mathbb{R}'\right)=\Set{B\in\ns{\left(\flat\left(\mathbb{R}\right)\right)}|B\subseteq\FIN\left(\mathbb{R}\right)}.
\]
Hence $\flat\hexp\mathbb{R}$ and $\flat\hexp\mathbb{R}'$ have the
same bornology.
\end{example}

\subsection{Coarse hyperspaces and size properties}

Finally, we discuss the relationship between the size properties of
coarse spaces $X$ and their coarse hyperspaces $\mathcal{A}\hexp X$.
\begin{prop}[{Standard; \citep[Remark 2.6]{Zav19}}]
For every coarse space $X$, the $\mathcal{L}$-coarse hyperspace
$\mathcal{L}\hexp X$ is connected, where $\mathcal{L}\left(X\right)$
is the family of all large subsets of $X$.
\end{prop}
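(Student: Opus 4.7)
The plan is to mirror the proof of \prettyref{prop:flat-hyperspace-is-connected}. For arbitrary $A, B \in \mathcal{L}(X)$, I will show $\ns{A} \sim_{\mathcal{L}\hexp X} \ns{B}$ and then invoke \citep[Corollary 3.13]{Ima19} to conclude that the induced prebornology of $\mathcal{L}\hexp X$ is connected.

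The finite closeness falls out immediately from the nonstandard characterisation of largeness. By \prettyref{thm:nonst-characterisation-of-size-I}, the largeness of $A$ and $B$ gives $G_X^c(\ns{A}) = G_X^c(\ns{B}) = \ns{X}$. Hence trivially $\ns{A} \subseteq \ns{X} = G_X^c(\ns{B})$ and $\ns{B} \subseteq \ns{X} = G_X^c(\ns{A})$, which by \prettyref{lem:proximity-of-powX} is exactly the criterion for $\ns{A} \sim_{\exp X} \ns{B}$. Since $A, B \in \mathcal{L}(X)$, any witnessing controlled set $\exp E$ on $\exp X$ can be restricted to $\mathcal{L}(X) \times \mathcal{L}(X)$ without leaving the (restricted) coarse structure, so the same closeness holds in $\mathcal{L}\hexp X$.

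There is no genuine obstacle here: the nonstandard picture of largeness says precisely that the coarse galaxy of a large set engulfs all of $\ns{X}$, which is exactly the symmetric containment demanded by the proximity relation on $\exp X$. A parallel standard one-liner makes the same point more concretely: given any finite family $A_1, \dots, A_n \in \mathcal{L}(X)$, pick $E_i \in \mathcal{C}_X$ with $E_i[A_i] = X$ and set $E = \bigcup_{i=1}^n E_i \in \mathcal{C}_X$; then for every pair $i, j$ we have $A_i \subseteq X = E_j[A_j] \subseteq E[A_j]$, so $(A_i, A_j) \in \exp E$, and the finite family is $\exp E$-bounded in $\mathcal{L}\hexp X$.
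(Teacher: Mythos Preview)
Your proof is correct and follows essentially the same approach as the paper: both use \prettyref{thm:nonst-characterisation-of-size-I} to obtain $G_X^c(\ns{A}) = G_X^c(\ns{B}) = \ns{X}$, deduce the mutual inclusions required by \prettyref{lem:proximity-of-powX}, and then invoke \citep[Corollary 3.13]{Ima19}. Your version is slightly more explicit (the paper omits the reference to \prettyref{lem:proximity-of-powX}) and appends a direct standard argument, but the core is identical.
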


\begin{proof}
Let $A$ and $B$ be large subsets of $X$. By \prettyref{thm:nonst-characterisation-of-size-I},
$G_{X}^{c}\left(\ns{A}\right)=G_{X}^{c}\left(\ns{B}\right)=\ns{X}$,
so $\ns{A}\subseteq G_{X}^{c}\left(\ns{B}\right)$ and $\ns{B}\subseteq G_{X}^{c}\left(\ns{A}\right)$,
i.e., $A\sim_{\exp X}B$. By \citep[Corollary 3.13]{Ima19}, $\mathcal{L}\hexp X$
is connected.
\end{proof}
\begin{thm}[{Standard; \citep[Proposition 2.7]{Zav19}}]
For every non-empty connected coarse space $X$, the following are
equivalent:
\begin{enumerate}
\item $X$ is unbounded;
\item $\mathcal{L}\hexp X$ is unbounded.
\end{enumerate}
\end{thm}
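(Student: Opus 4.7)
The plan is to establish the equivalence by separating the easy and the substantive directions. The direction (2)$\Rightarrow$(1) admits a contrapositive one-liner: if $X$ is bounded, then $X\times X\in\mathcal{C}_X$, so for any non-empty $A,B\subseteq X$ we have $A\subseteq(X\times X)[B]$ and $B\subseteq(X\times X)[A]$, giving $\mathcal{L}(X)\times\mathcal{L}(X)\subseteq\exp(X\times X)$. Hence $\mathcal{L}\hexp X$ is bounded.

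For (1)$\Rightarrow$(2) I would argue by contradiction. Assuming $\mathcal{L}\hexp X$ bounded, there is a symmetric $E\in\mathcal{C}_X$ with $\Delta\subseteq E$ such that $A\subseteq E[B]$ and $B\subseteq E[A]$ for all $A,B\in\mathcal{L}(X)$. Specializing $A=X$ (which is large, as $X$ is non-empty) yields the key consequence that $E[B]=X$ for every large $B$---a single entourage $E$ witnesses the largeness of \emph{every} large subset. The task is then to exploit the unboundedness of $X$ to exhibit a large $B'$ with $E[B']\neq X$.

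To produce such a $B'$, let $F=E\circ E$ and use Zorn's lemma to pick a maximal $F$-separated subset $B\subseteq X$. Maximality forces $F[B]=X$, so $B$ is large. Unboundedness forces $|B|\geq 2$: if $B=\{b_0\}$, then $X\subseteq F[b_0]$ and so $X\times X\subseteq F\circ F\in\mathcal{C}_X$, contradicting unboundedness. Pick distinct $b_1,b_2\in B$. The $F$-separation of $B$ combined with symmetry of $E$ makes the family $\{E[b]\}_{b\in B}$ pairwise disjoint: $z\in E[b]\cap E[b']$ forces $(b,b')\in E\circ E=F$. Since $E[B]=X$ by hypothesis, $X=\bigsqcup_{b\in B}E[b]$. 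Setting $B'=B\setminus\{b_1\}$ gives $E[B']=X\setminus E[b_1]$, which is a proper subset because $b_1\in E[b_1]$.

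The main obstacle is showing that $B'$ is still large, and this is the unique place where connectedness is used. Since $X$ is connected the induced prebornology is a bornology, so the bounded set $F[b_1]$ together with the singleton $\{b_2\}$ remains bounded, yielding some $D\in\mathcal{C}_X$ with $D[b_2]\supseteq F[b_1]$. An elementary check (if $x\in F[B]\setminus F[b_1]$, then the $b\in B$ witnessing $x\in F[B]$ must lie in $B'$) gives $F[B']\supseteq X\setminus F[b_1]$. Combining, $(F\cup D)[B']\supseteq(X\setminus F[b_1])\cup F[b_1]=X$, so $B'\in\mathcal{L}(X)$. The boundedness hypothesis now demands $E[B']=X$, contradicting $E[B']=X\setminus E[b_1]$ and closing the proof.
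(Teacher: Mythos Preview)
Your proof is correct. Both directions are sound: the contrapositive for (2)$\Rightarrow$(1) is immediate, and for (1)$\Rightarrow$(2) your Zorn construction of a maximal $F$-separated set $B$, the disjointness of the $E$-balls $E[b]$, and the verification that $B'=B\setminus\{b_1\}$ remains large (using connectedness exactly where you say) all go through.

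Your approach is, however, quite different from the paper's. The paper argues nonstandardly: it first invokes \prettyref{thm:unboundedness-and-small-subsets} to know that the singleton $\{x_0\}$ is small, so $X\setminus E[x_0]\in\mathcal{L}(X)$ for every $E\in\mathcal{C}_X$; by transfer this persists for every $E\in\ns{\mathcal{C}_X}$. Choosing an illimited $F\in\ns{\mathcal{C}_X}$ (one containing the whole relation $\sim_X$) via \prettyref{lem:General-Approximation-Lemma}, the internal set $\ns{X}\setminus F[x_0]$ lies in $\ns{\mathcal{L}(X)}$ but its coarse galaxy misses $G_X^c(x_0)$, so it is not $\sim_{\exp X}$-close to $\ns{X}$ and hence is an infinite point of $\mathcal{L}\hexp X$. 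Thus the paper produces an explicit witness of unboundedness in the nonstandard hull, leaning on machinery already in place, whereas you give a self-contained standard argument that instead manufactures, via Zorn, a large subset whose $E$-thickening is provably proper. Your route avoids the nonstandard apparatus entirely (at the cost of the axiom of choice for the maximal separated set), while the paper's route is shorter once \prettyref{thm:unboundedness-and-small-subsets} and the saturation tools are available and fits the paper's overall methodology.
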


\begin{proof}
Suppose $X$ is unbounded. Fix an $x_{0}\in X$. By \prettyref{thm:unboundedness-and-small-subsets},
the singleton $\set{x_{0}}$ is small in $X$, i.e., $X\setminus E\left[x_{0}\right]\in\mathcal{L}\left(X\right)$
holds for all $E\in\mathcal{C}_{X}$. Hence $\ns{X}\setminus E\left[x_{0}\right]\in\ns{\left(\mathcal{L}\left(X\right)\right)}$
holds for all $E\in\ns{\mathcal{C}_{X}}$ by transfer. Now we can
choose an $F\in\ns{\mathcal{C}_{X}}$ so that ${\sim_{X}}\subseteq F$
by \prettyref{lem:General-Approximation-Lemma}. Since $G_{X}^{c}\left(x_{0}\right)\subseteq F\left[x_{0}\right]$,
we have by \prettyref{thm:galaxy-and-core} that
\begin{align*}
G_{X}^{c}\left(\ns{X}\setminus F\left[x_{0}\right]\right) & =\ns{X}\setminus C_{X}^{c}\left(F\left[x_{0}\right]\right)\\
 & \subseteq\ns{X}\setminus C_{X}^{c}\left(G_{X}^{c}\left(x_{0}\right)\right)\\
 & =\ns{X}\setminus G_{X}^{c}\left(x_{0}\right)\\
 & \neq\ns{X}.
\end{align*}
Hence $\ns{X}\setminus F\left[x_{0}\right]\nsim_{\mathcal{L}\hexp X}\ns{X}$.
Therefore $\ns{X}\setminus F\left[x_{0}\right]\in\INF\left(X\right)$.
By \citep[Proposition 2.6]{Ima19}, $\mathcal{L}\hexp X$ is unbounded.

Conversely, suppose $X$ is bounded. Then $G_{X}^{c}\left(A\right)=\ns{X}$
for all non-empty $A\subseteq\ns{X}$ by \citep[Proposition 3.10]{Ima19}.
Hence $A\sim_{\mathcal{L}\hexp X}B$ holds for all $A,B\in\ns{\left(\mathcal{L}\left(X\right)\right)}$
by \prettyref{lem:proximity-of-powX}. (Note that every large subset
of $X$ is non-empty, since $X$ is non-empty.) By \citep[Proposition 3.10]{Ima19},
$\mathcal{L}\hexp X$ is bounded.
\end{proof}
\begin{thm}[{Standard; \citep[Theorem 2.2]{DPPZ19}}]
\label{thm:thin-and-map-c}For every unbounded connected coarse space
$X$, the following are equivalent:
\begin{enumerate}
\item \label{enu:thinness-cond-1}$X$ is thin;
\item \label{enu:thinness-cond-2}$\mathcal{M}'\hexp X$ is connected, where
$\mathcal{M}'\left(X\right)$ is the family of all non-empty meshy
subsets of $X$, i.e., $\mathcal{M}'\left(X\right)=\mathcal{M}\left(X\right)\setminus\set{\varnothing}$;
\item \label{enu:thinness-cond-3}the map $c_{X}\colon X\to\exp X$ defined
by $c_{X}\left(x\right)=X\setminus\set{x}$ is an asymorphic embedding.
\end{enumerate}
\end{thm}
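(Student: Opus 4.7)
The plan is to split the equivalence into $(1)\Leftrightarrow(2)$ and $(1)\Leftrightarrow(3)$, exploiting the nonstandard proximity characterisation \prettyref{lem:proximity-of-powX} together with the thinness criteria \prettyref{thm:nonst-characterisation-of-thin-subsets} and \prettyref{thm:thin-and-meshy}. The central technical input, used in both directions of $(1)\Leftrightarrow(3)$, is the identity
\[
\ns{c_X}(x)\sim_{\exp X}\ns{c_X}(y)\iff x=y\text{ or }\bigl(\left|G_X^c(x)\right|\geq2\text{ and }\left|G_X^c(y)\right|\geq2\bigr)
\]
valid for all $x,y\in\ns{X}$. I would derive it first: by transfer $\ns{c_X}(x)=\ns{X}\setminus\set{x}$ as an internal set, and \prettyref{lem:proximity-of-powX} reduces the proximity to $\ns{X}\setminus\set{x}\subseteq G_X^c(\ns{X}\setminus\set{y})$ and its symmetric counterpart; for each $z\neq x$ different from $y$ one just takes $w=z$, so the only nontrivial case is $z=y$ (which requires $y\neq x$) and demands precisely an element of $G_X^c(y)$ different from $y$.

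For $(1)\Rightarrow(2)$, if $X$ is thin then \prettyref{thm:thin-and-meshy} gives $\mathcal{M}(X)\subseteq\mathcal{B}_X$, so every $A\in\mathcal{M}'(X)$ is non-empty bounded and $G_X^c(\ns{A})=\FIN(X)$ by connectedness; \prettyref{lem:proximity-of-powX} then yields $A\sim_{\exp X}B$ for all $A,B\in\mathcal{M}'(X)$ and $\mathcal{M}'\hexp X$ is connected. Conversely, if $X$ is not thin, \prettyref{thm:thin-and-meshy} produces an unbounded $A\in\mathcal{M}'(X)$. Fixing any $x_0\in X$, the singleton $\set{x_0}$ lies in $\mathcal{M}'(X)$ by \prettyref{cor:bounded-implies-meshy}, yet the inclusion $\ns{A}\subseteq G_X^c(\set{x_0})=\FIN(X)$ forced by \prettyref{lem:proximity-of-powX} fails since $A\notin\mathcal{B}_X$; thus $\set{x_0}$ and $A$ lie in distinct galactic components of $\mathcal{M}'\hexp X$.

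For $(1)\Rightarrow(3)$, using the displayed identity: under thinness, \prettyref{thm:nonst-characterisation-of-thin-subsets} gives $G_X^c(z)=\set{z}$ for every $z\in\INF(X)$, so $\left|G_X^c(z)\right|\geq2$ forces $z\in\FIN(X)$; hence $\ns{c_X}(x)\sim_{\exp X}\ns{c_X}(y)$ entails $x=y$ or $x,y\in\FIN(X)$, and in either event $x\sim_X y$ by connectedness. Combined with \prettyref{prop:c-is-bornologous-injection} and \prettyref{prop:effectively-properness-and-inverse} this makes $c_X$ an asymorphic embedding. For $(3)\Rightarrow(1)$, if $X$ is not thin, \prettyref{thm:nonst-characterisation-of-thin-subsets} furnishes $x_0\in\INF(X)$ with $\left|G_X^c(x_0)\right|\geq2$; pick any standard $y\in X\setminus\set{x_0}$, for which $\left|G_X^c(y)\right|=\left|\FIN(X)\right|\geq\left|X\right|\geq2$ since unboundedness forces $\left|X\right|\geq2$. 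The identity then gives $\ns{c_X}(x_0)\sim_{\exp X}\ns{c_X}(y)$ while $x_0\in\INF(X)$ and $y\in\FIN(X)$ live in different galaxies, violating effective properness of $c_X$.

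The main obstacle is establishing the displayed identity: one has to identify $\ns{c_X}(x)$ with the internal set $\ns{X}\setminus\set{x}$ even when $x$ is nonstandard, and then unpack the double inclusion from \prettyref{lem:proximity-of-powX} carefully enough for the condition to collapse to the pair of cardinality inequalities. Once that identity is in hand, each of $(1)\Rightarrow(3)$ and $(3)\Rightarrow(1)$ is a two-line argument, and the $(1)\Leftrightarrow(2)$ half is essentially a direct reading of \prettyref{thm:thin-and-meshy} through \prettyref{lem:proximity-of-powX}.
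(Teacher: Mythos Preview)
Your proposal is correct and follows essentially the same route as the paper. The only difference is organisational: you isolate the proximity identity for $\ns{c_X}(x)\sim_{\exp X}\ns{c_X}(y)$ as a standalone lemma and apply it in both directions of $(1)\Leftrightarrow(3)$, whereas the paper argues each direction directly (computing $G_X^c(\ns{c_X}(x))=\ns{X}$ in $(3)\Rightarrow(1)$, and showing $x\notin G_X^c(\ns{c_X}(x))$ in $(1)\Rightarrow(3)$); for $(1)\Leftrightarrow(2)$ the paper routes through \prettyref{prop:flat-hyperspace-is-connected} rather than invoking \prettyref{lem:proximity-of-powX} directly, but the content is identical.
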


\begin{proof}
\eqref{enu:thinness-cond-1}$\Rightarrow$\eqref{enu:thinness-cond-2}:
This part is purely standard. By \prettyref{thm:thin-and-meshy},
$\mathcal{M}\left(X\right)\subseteq\mathcal{B}_{X}$, so $\mathcal{M}'\left(X\right)\subseteq\flat\left(X\right)$.
Since $\flat\hexp X$ is connected by \prettyref{prop:flat-hyperspace-is-connected},
the subspace $\mathcal{M}'\hexp X$ is connected too.

\eqref{enu:thinness-cond-2}$\Rightarrow$\eqref{enu:thinness-cond-1}:
Let $A\in\mathcal{M}'\left(X\right)$. Fix an $x_{0}\in X$. Since
$\set{x_{0}}\in\flat\left(X\right)$, we have $\set{x_{0}}\in\mathcal{M}'\left(X\right)$
by \prettyref{cor:bounded-implies-meshy}. $\mathcal{M}'\hexp X$
is connected, so $\ns{A}\sim_{\exp X}\set{x_{0}}$ by \citep[Corollary 3.13]{Ima19},
i.e., $\ns{A}\subseteq G_{X}^{c}\left(x_{0}\right)$ (and $x_{0}\in G_{X}^{c}\left(\ns{A}\right)$).
By \citep[Proposition 2.6]{Ima19}, $A\in\flat\left(X\right)$. Hence
$\mathcal{M}\left(X\right)\subseteq\mathcal{B}_{X}$.

\eqref{enu:thinness-cond-1}$\Rightarrow$\eqref{enu:thinness-cond-3}:
According to \prettyref{prop:c-is-bornologous-injection}, it suffices
to show that $c_{X}$ is effectively proper. Let $x,y\in\ns{X}$ and
suppose $x\nsim_{X}y$. Either $x\in\INF\left(X\right)$ or $y\in\INF\left(X\right)$
holds by \citep[Corollary 3.13]{Ima19}. We may assume without loss
of generality that $x\in\INF\left(X\right)$. By \prettyref{thm:nonst-characterisation-of-thin-subsets},
$x\notin G_{X}^{c}\left(z\right)$ for any $z\in\ns{X}\setminus\set{x}$,
so $x\in\ns{c_{X}}\left(y\right)\nsubseteq G_{X}^{c}\left(\ns{c_{X}}\left(x\right)\right)$,
and therefore $\ns{c_{X}}\left(x\right)\nsim_{\exp X}\ns{c_{X}}\left(y\right)$.
By \prettyref{thm:nonst-charact-eff-proper}, $c_{X}$ is effectively
proper.

\eqref{enu:thinness-cond-3}$\Rightarrow$\eqref{enu:thinness-cond-1}:
Suppose $X$ is not thin. Fix an $x\in\FIN\left(X\right)$. By \prettyref{thm:nonst-characterisation-of-thin-subsets},
there exists a $y\in\INF\left(X\right)$ such that $\left|G_{X}^{c}\left(y\right)\right|\geq2$.
It is easy to see that $G_{X}^{c}\left(\ns{c_{X}}\left(x\right)\right)=\ns{X}$
and $G_{X}^{c}\left(\ns{c_{X}}\left(y\right)\right)=\ns{X}$, so $\ns{c_{X}}\left(x\right)\sim_{\exp X}\ns{c_{X}}\left(y\right)$.
However, since $x\in\FIN\left(X\right)$ and $y\in\INF\left(X\right)$,
we have $x\nsim_{X}y$. By \prettyref{thm:nonst-charact-eff-proper},
$c_{X}$ is not effectively proper.
\end{proof}
\appendix

\section{\label{sec:Overspill-and-underspill}Overspill and underspill principles
for directed sets}

\renewcommand*{\appendixname}{}
\begin{defn}
Let $\varDelta$ be a standard directed set. An element $\delta\in\ns{\varDelta}$
is said to be \emph{limited} if $\delta$ is bounded by some element
of $\varDelta$ (i.e. $\delta\leq\gamma$ for some $\gamma\in\varDelta$);
and $\delta$ is \emph{illimited} if $\delta$ bounds $\varDelta$
(i.e. $\gamma\leq\delta$ for all $\gamma\in\varDelta$).
\end{defn}

Note that limitedness and illimitedness are not the negations of each
other. If $\varDelta$ is not linearly ordered, $\ns{\varDelta}$
may have elements which are neither limited nor illimited. If $\varDelta$
is self-bounded, $\ns{\varDelta}$ has elements which are both limited
and illimited.
\begin{lem}
\label{lem:General-Approximation-Lemma}Let $\varDelta$ be a standard
directed set. Then $\ns{\varDelta}$ has an illimited element.
\end{lem}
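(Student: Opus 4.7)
The plan is to apply a standard saturation (concurrence) argument to the directedness relation. For each standard $\gamma \in \varDelta$, consider the internal set
\[
A_{\gamma} = \ns{\set{\delta \in \varDelta | \gamma \leq \delta}} \subseteq \ns{\varDelta}.
\]
An element $\eta \in \ns{\varDelta}$ is illimited precisely when $\eta \in \bigcap_{\gamma \in \varDelta} A_{\gamma}$, so it suffices to show this intersection is nonempty.

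First I would verify that the family $\set{A_{\gamma}|\gamma \in \varDelta}$ has the finite intersection property. Given $\gamma_{1}, \ldots, \gamma_{n} \in \varDelta$, directedness of $\varDelta$ yields a standard $\gamma \in \varDelta$ with $\gamma_{i} \leq \gamma$ for each $i \leq n$; viewing $\gamma$ as an element of $\ns{\varDelta}$ via the canonical embedding, transfer gives $\gamma \in A_{\gamma_{1}} \cap \cdots \cap A_{\gamma_{n}}$. Then I would invoke the saturation assumption on the nonstandard universe (the same one tacitly used elsewhere in the paper, e.g.\ in the proof of \prettyref{thm:StX-and-UltX} and throughout \prettyref{sec:Nonstandard-points}) to conclude that $\bigcap_{\gamma \in \varDelta} A_{\gamma} \neq \varnothing$.

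Any $\eta$ in this intersection is illimited by construction: for every standard $\gamma \in \varDelta$, $\eta \in A_{\gamma}$ means $\gamma \leq \eta$ in $\ns{\varDelta}$. There is essentially no obstacle here — the whole argument is a one-line instance of concurrence applied to the relation $\leq$ restricted to $\varDelta \times \varDelta$, which is concurrent exactly because $\varDelta$ is directed. The only point requiring care is to ensure the ambient nonstandard framework has enough saturation to cover the cardinality of $\varDelta$, but this is part of the standing setup inherited from \citep{Ima19}.
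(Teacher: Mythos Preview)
Your argument is correct and is essentially identical to the paper's own proof: both use the directedness of $\varDelta$ to verify the finite intersection property of the family $\set{\ns{\set{\delta\in\varDelta|\gamma\leq\delta}}|\gamma\in\varDelta}$ and then invoke (weak) saturation to produce an element in the full intersection. The only cosmetic difference is that the paper phrases this directly as a concurrence statement rather than explicitly naming the sets $A_{\gamma}$.
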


\begin{proof}
Since $\varDelta$ is directed, for each finite subset $A$ of $\varDelta$,
there exists a $\delta\in\varDelta$ such that $\gamma\leq\delta$
for all $\gamma\in A$. By weak saturation, there exists a $\delta\in\ns{\varDelta}$
such that $\gamma\leq\delta$ for all $\gamma\in\varDelta$.
\end{proof}
\begin{example}
\begin{enumerate}
\item The illimited elements of $\ns{\left(\mathbb{R},\leq\right)}$ are
precisely the positive infinite hyperreals.
\item The illimited elements of $\ns{\left(\mathbb{R}_{+},\geq\right)}$
are precisely the positive infinitesimal hyperreals.
\item Let $\left(X,\mathcal{B}_{X}\right)$ be a standard prebornological
space and $x\in X$. The family $\mathcal{BN}_{X}\left(x\right)=\set{B\in\mathcal{B}_{X}|x\in B}$
is directed with respect to $\subseteq$. The illimited elements of
$\ns{\mathcal{BN}_{X}\left(x\right)}$ are precisely the elements
of $\ns{\mathcal{B}_{X}}$ containing the galaxy $G_{X}\left(x\right)$.
See also \citep[Lemma 2.5]{Ima19}.
\item Let $\left(X,\mathcal{C}_{X}\right)$ be a standard coarse space.
$\mathcal{C}_{X}$ is directed with respect to $\subseteq$. The illimited
elements of $\ns{\mathcal{C}_{X}}$ are precisely the elements of
$\ns{\mathcal{C}_{X}}$ containing the finite closeness relation $\sim_{X}$.
See also \citep[Lemma 3.3]{Ima19}.
\end{enumerate}
\end{example}

\begin{lem}[Overspill Principle]
\label{lem:Overspill}Let $\varDelta$ be a standard directed set
and $A$ an internal subset of $\ns{\varDelta}$.
\begin{enumerate}
\item If $A$ contains all sufficiently large limited elements of $\ns{\varDelta}$,
then it also contains all sufficiently small illimited elements of
$\ns{\varDelta}$.
\item If $A$ contains arbitrarily large limited elements of $\ns{\varDelta}$,
then it also contains arbitrarily small illimited elements of $\ns{\varDelta}$.
\end{enumerate}
\end{lem}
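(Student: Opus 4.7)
The plan is to treat both parts by a saturation argument. For Part 1, let $\delta_0 \in \varDelta$ be a standard threshold such that every limited $\delta \in \ns{\varDelta}$ with $\delta \geq \delta_0$ lies in $A$. I aim to build an illimited $\delta_1 \in \ns{\varDelta}$ for which the entire order-interval $\set{\delta \in \ns{\varDelta} | \delta_0 \leq \delta \leq \delta_1}$ is contained in $A$. Once $\delta_1$ is produced, any illimited $\delta \leq \delta_1$ automatically satisfies $\delta \geq \delta_0$ (since $\delta_0 \in \varDelta$ and $\delta$ is illimited), hence $\delta \in A$, which is exactly what Part 1 asserts.

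To produce $\delta_1$, consider the internal set $B = \set{\eta \in \ns{\varDelta} | \eta \geq \delta_0 \text{ and } \forall \delta \in \ns{\varDelta}\,(\delta_0 \leq \delta \leq \eta \Rightarrow \delta \in A)}$. Every standard $\gamma \in \varDelta$ that is an upper bound of $\delta_0$ lies in $B$: any $\delta$ with $\delta_0 \leq \delta \leq \gamma$ is bounded above by the standard $\gamma$, so $\delta$ is limited, and being $\geq \delta_0$ it lies in $A$ by hypothesis. For each $\gamma \in \varDelta$, the internal set $B_\gamma = B \cap \set{\eta \in \ns{\varDelta} | \eta \geq \gamma}$ is then nonempty, since a common upper bound of $\set{\gamma, \delta_0}$ witnesses membership in $B_\gamma$. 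Because $\set{B_\gamma}_{\gamma \in \varDelta}$ has the finite intersection property by directedness, saturation (at the level already used in the proof of \prettyref{lem:General-Approximation-Lemma}) yields an illimited $\delta_1 \in \bigcap_{\gamma \in \varDelta} B_\gamma$, completing Part 1.

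For Part 2, suppose $A$ contains arbitrarily large limited elements and let an illimited $\delta_0 \in \ns{\varDelta}$ be given; I must exhibit an illimited $\delta \in A$ with $\delta \leq \delta_0$. Restrict to the internal set $C = A \cap \set{\delta \in \ns{\varDelta} | \delta \leq \delta_0}$. By transitivity, every limited element of $A$ automatically lies in $C$: if $\delta \in A$ is limited with $\delta \leq \gamma' \in \varDelta$, then $\gamma' \leq \delta_0$ because $\delta_0$ is illimited, so $\delta \leq \delta_0$. Hence $C$ still contains arbitrarily large limited elements, and a direct saturation argument on the internal family $\set{C \cap \set{\delta \in \ns{\varDelta} | \delta \geq \gamma} | \gamma \in \varDelta}$ (which again has the finite intersection property by directedness) produces a $\delta \in C$ with $\delta \geq \gamma$ for every $\gamma \in \varDelta$, that is, an illimited element of $A$ beneath $\delta_0$.

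The main obstacle is handling a general directed set rather than a linear order: transitivity must be invoked to verify both that an order-interval $\set{\delta | \delta_0 \leq \delta \leq \eta}$ with $\eta$ limited really consists only of limited elements, and that any limited element sits beneath any prescribed illimited bound. A secondary concern is that the saturation level in force must be strong enough to handle families indexed by all of $\varDelta$, exactly as is already tacitly assumed in the proof of \prettyref{lem:General-Approximation-Lemma}.
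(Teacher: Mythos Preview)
Your proof is correct and follows essentially the same approach as the paper: both parts proceed by forming an internal family of order-intervals indexed by $\varDelta$, checking the finite intersection property via directedness, and invoking saturation to produce the desired illimited element. Your Part~1 is in fact slightly more explicit than the paper's, since you fix the threshold $\delta_0$ first and work only with intervals $[\delta_0,\eta]$, whereas the paper runs the argument over the whole family $A_L=\set{U|L\leq U\text{ and }[L,U]\subseteq A}$ for all $L\in\varDelta$; your version makes the verification of the finite intersection property cleaner.
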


\begin{proof}
\begin{enumerate}
\item Suppose that $A$ contains all limited elements of $\ns{\varDelta}$
above $L_{0}\in\varDelta$. For $L\in\varDelta$, set $A_{L}:=\set{U\in\ns{\varDelta}|L\leq U\wedge\left[L,U\right]\subseteq A}$.
By assumption, the family $\set{A_{L}|L\in\varDelta,L\geq L_{0}}$
has the finite intersection property. Hence we can pick an element
$U\in\bigcap_{L\in\varDelta,L\geq L_{0}}A_{L}$ by saturation. The
element $U$ is illimited in $\ns{\varDelta}$ and every illimited
element of $\ns{\varDelta}$ below $U$ belongs to $A$.
\item Let $U\in\ns{\varDelta}$ be illimited. For $L\in\varDelta$ set $B_{L}:=\left[L,U\right]\cap A$.
By assumption, the family $\set{B_{L}|L\in\varDelta}$ has the finite
intersection property. Hence we can pick an element $\delta\in\bigcap_{L\in\varDelta}B_{L}$
by saturation. $\delta$ is an illimited element of $\ns{\varDelta}$
below $U$ and belongs to $A$.\qedhere
\end{enumerate}
\end{proof}
\begin{lem}[Underspill Principle]
\label{lem:Underspill}Let $\varDelta$ be a standard directed set
and $A$ an internal subset of $\ns{\varDelta}$.
\begin{enumerate}
\item If $A$ contains all sufficiently small illimited elements of $\ns{\varDelta}$,
then it also contains all sufficiently large limited elements of $\ns{\varDelta}$.
\item If $A$ contains arbitrarily small illimited elements of $\ns{\varDelta}$,
then it also contains arbitrarily large limited elements of $\ns{\varDelta}$.
\end{enumerate}
\end{lem}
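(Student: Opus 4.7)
My plan is to prove both parts of Underspill by dualising the Overspill argument just given: part (1) via a direct saturation argument in the style of Overspill (1), and part (2) by reduction to Overspill (1) through complementation.

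For (1), I will let $U_0$ be an illimited witness (so every illimited $\delta \leq U_0$ lies in $A$) and argue by contradiction. Assuming that for every $L \in \varDelta$ there is a limited $\delta_L \geq L$ with $\delta_L \notin A$, I would use that $\delta_L$ is dominated by some $\gamma \in \varDelta$ and that $\gamma \leq U_0$ (since $U_0$ is illimited) to conclude $\delta_L \in [L,U_0]$. I then consider the internal sets
\[
B_L = \set{\delta \in \ns{\varDelta} | L \leq \delta \leq U_0 \text{ and } \delta \notin A},
\]
which are non-empty by the previous observation, and whose family is directed under reverse inclusion by the directedness of $\varDelta$ (if $L \geq L_1,\dots,L_n$ then $B_L \subseteq \bigcap_i B_{L_i}$). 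Weak saturation then produces a common element $\delta^{*}$ that bounds $\varDelta$ from above and lies below $U_0$; hence $\delta^{*}$ is illimited and $\leq U_0$, so the hypothesis forces $\delta^{*} \in A$, contradicting its membership in each $B_L$.

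For (2), I will fix $L \in \varDelta$ and aim to produce a limited $\delta \geq L$ in $A$. If no such $\delta$ existed, then $A \cap [L,\gamma] = \varnothing$ for every $\gamma \in \varDelta$ with $\gamma \geq L$, so the internal set
\[
A^{c}_{\geq L} = \set{\delta \in \ns{\varDelta} | L \leq \delta \text{ and } \delta \notin A}
\]
would contain every limited element $\geq L$ (any such element is dominated by some $\gamma \in \varDelta$ that may be chosen $\geq L$ via directedness). Overspill (1) applied to $A^{c}_{\geq L}$ would then yield an illimited $U_0$ with every illimited $U \leq U_0$ outside $A$. But the hypothesis of (2) applied to $U_0$ supplies an illimited $\delta \leq U_0$ in $A$, which is forced to satisfy $\delta \geq L$ and therefore lies in $A^{c}_{\geq L}$ --- the desired contradiction.

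The main subtlety is bookkeeping rather than new nonstandard machinery. In (1) one must verify that the $B_L$ are genuinely non-empty (this is exactly where the interplay between $\delta_L$ being limited and $U_0$ being illimited squeezes $\delta_L$ into $[L,U_0]$) and that the family is directed in the sense required by weak saturation. In (2) one must confirm that the contrapositive hypothesis produces the precise ``all sufficiently large limited'' condition needed to invoke Overspill (1). Beyond weak saturation and the principle just established, no further nonstandard tools are needed.
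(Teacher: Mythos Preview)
Your argument is correct in both parts, but the paper's proof is a one-liner: apply the contrapositive of Overspill to the complement $\ns{\varDelta}\setminus A$. The key observation you are implicitly re-deriving is that the negation of ``contains all sufficiently small illimited elements'' is exactly ``the complement contains arbitrarily small illimited elements'', and likewise with ``limited'' in place of ``illimited''; hence Underspill~(1) is precisely the contrapositive of Overspill~(2) for $\ns{\varDelta}\setminus A$, and Underspill~(2) is the contrapositive of Overspill~(1) for $\ns{\varDelta}\setminus A$.

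Your route instead mixes two techniques: for (1) you run a fresh saturation argument (essentially the proof of Overspill~(2) in dual form, with $U_{0}$ playing the role of an upper cap), and for (2) you invoke Overspill~(1) on the truncated complement $A^{c}_{\geq L}$ for each fixed $L$. Both work, and your bookkeeping---checking $\delta_{L}\in[L,U_{0}]$ via limitedness of $\delta_{L}$ and illimitedness of $U_{0}$, and verifying the finite intersection property of the $B_{L}$ via directedness of $\varDelta$---is accurate. The paper's approach buys uniformity and brevity (no separate saturation argument, no per-$L$ case analysis); yours buys a self-contained proof of (1) that does not rely on Overspill at all, which could be useful if one wanted Underspill as a standalone statement.
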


\begin{proof}
Apply the contraposition of \prettyref{lem:Overspill} to the complement
$\ns{\varDelta}\setminus A$.
\end{proof}
\begin{rem}
The overspill principle can be generalised to \textbf{boldface monadic}
subsets, while the underspill principle can be generalised to \textbf{boldface
galactic} subsets (see e.g. \citep{ImaXX}). A set $M$ is said to
be \textbf{\emph{boldface monadic}} if there is a family $\mathcal{M}=\set{M_{i}|i\in S}$
of internal sets, where $S$ is standard, such that $M=\bigcap\mathcal{M}$;
and a set $G$ is said to be \textbf{\emph{boldface galactic}} if
there is a family $\mathcal{G}=\set{G_{i}|i\in S}$ of internal sets,
where $S$ is standard, such that $G=\bigcup\mathcal{G}$. Here $\mathcal{G}$
and $\mathcal{H}$ themselves are not necessarily internal. These
boldface properties are different from the lightface properties considered
in \citep[Definition 2.7 and Remark 3.7]{Ima19}: $M$ is said to
be \emph{lightface monadic} if there is a family $\mathcal{M}=\set{M_{i}|i\in S}$
of standard sets such that $M=\bigcap_{i\in S}\ns{M_{i}}$; and $G$
is said to be \emph{lightface galactic} if there is a family $\mathcal{G}=\set{G_{i}|i\in S}$
of standard sets such that $G=\bigcup_{i\in S}\ns{G_{i}}$.
\end{rem}

\bibliographystyle{IEEEtranSN}
\bibliography{NMLT_II}

\end{document}